\numberwithin{equation}{section}
\DeclareMathOperator{\loc}{\text{loc}}
\DeclareMathOperator{\sym}{\text{sym}}
\DeclareMathOperator{\sign}{\text{sign}}
\DeclareMathOperator{\bigO}{O}
\DeclareMathOperator{\smallo}{o}
\newcommand{\R}{\mathbb{R}}
\newcommand{\Sph}{\mathbb{S}}
\newcommand{\N}{\mathbb{N}}
\newcommand{\<}{\left<}
\renewcommand{\>}{\right>}
\renewcommand{\[}{\left[}
\renewcommand{\]}{\right]}
\renewcommand{\(}{\left(}
\renewcommand{\)}{\right)}
\newcommand{\ve}{\varepsilon}
\newcommand{\gammae}{\overline{\gamma}_\ve}
\newtheorem{theorem}{Theorem}[section]
\newtheorem{definition}[theorem]{Definition}
\newtheorem{proposition}[theorem]{Proposition}
\newtheorem{claim}[theorem]{Claim}
\newtheorem{lemma}[theorem]{Lemma}
\newtheorem{rem}[theorem]{Remark}
\begin{document}

\title[Sign-changing blow-up for the Moser--Trudinger equation]{Sign-changing blow-up for the\\Moser--Trudinger equation}

\author{Luca Martinazzi}

\address{Luca Martinazzi, Universit\`a degli Studi di Padova, Dipartimento di Matematica Tullio Levi-Civita, Via Trieste, 63, 35121 Padova}
\email{luca.martinazzi@math.unipd.it}

\author{Pierre-Damien Thizy}

\address{Pierre-Damien Thizy, Institut Camille Jordan, Universit\'e Claude Bernard Lyon 1, 21 avenue Claude Bernard, 69622 Villeurbanne Cedex, France}
\email{pierre-damien.thizy@univ-lyon1.fr}

\author{J\'er\^ome V\'etois}

\address{J\'er\^ome V\'etois, McGill University, Department of Mathematics and Statistics, 805 Sherbrooke Street West, Montreal, Quebec H3A 0B9, Canada}
\email{jerome.vetois@mcgill.ca}

\thanks{\noindent The first author was supported by the Swiss National Foundation Grants PP00P2-144669 and PP00P2-170588/1. The second author was supported by the French ANR Grant BLADE-JC. The third author was supported by the Discovery Grant RGPIN-2016-04195 from the Natural
Sciences and Engineering Research Council of Canada.}

\date{April 10, 2021}

\begin{abstract}
Given a sufficiently symmetric domain $\Omega\Subset\mathbb{R}^2$, for any $k\in \mathbb{N}\setminus \{0\}$ and $\beta>4\pi k$ we construct blowing-up solutions $\left(u_\varepsilon\right)\subset H^1_0\left(\Omega\right)$ to the Moser--Trudinger equation such that as $\varepsilon\downarrow 0$, we have $\left\|\nabla u_\varepsilon\right\|_{L^2}^2\to \beta$, $u_\varepsilon \rightharpoonup u_0$ in $H^1_0$ where $u_0$ is a sign-changing solution of the Moser--Trudinger equation and $u_\varepsilon$ develops $k$ positive spherical bubbles, all concentrating at $0\in \Omega$. These $3$ features (lack of quantization, non-zero weak limit and bubble clustering) stand in sharp contrast to the positive case ($u_\varepsilon>0$) studied by the second author and Druet~\cite{DruThi}.
\end{abstract}

\maketitle

\section{Introduction and main result}\label{Sec1}

Given a smooth, bounded domain $\Omega\subset\R^2$ and a smooth, positive function $h$ on $\overline\Omega$, we consider the Moser--Trudinger functional $I_h:H^1_0\left(\Omega\)\to\R$ defined as 
$$I_h\(u\):=\int_\Omega h \exp\(u^2\) dx\qquad\forall u\in H^1_0\(\Omega\).$$
For any $\beta>0$, let $E_{h,\beta}$ be the set of all the critical points $u\in H^1_0\(\Omega\)$ of $I_h$ under the constraint $\left\|\nabla u\right\|_{L^2}^2=\beta$. Note that $u\in E_{h,\beta}$ if and only if $u$ is a solution of the problem
\begin{equation}\label{Sec1Eq1}\tag{$\mathcal{E}_{h,\beta}$}
\left\{\begin{aligned}&\Delta u=\lambda h f\(u\)&&\text{in }\Omega\\&u=0&&\text{on }\partial\Omega,\end{aligned}\right.
\end{equation}
where we use the notation $\Delta:=-\partial^2_{x_1}-\partial^2_{x_2}$ ,
\begin{equation}\label{Sec1Eq2}
f\(u\):=u \exp\(u^2\)\quad\text{and}\quad\lambda:= \frac{2\beta}{D I_h\(u\).u}=\frac{\beta}{\int_\Omega h u^2 \exp\(u^2\)dx}\,.
\end{equation}

\noindent 
We first introduce the following definition in the spirit of~\cite{ZurichBook}*{Chapter 5} (see also Remark~\ref{RemStable}):

\begin{definition}\label{Def1}
We say that $\beta>0$ is a \emph{stable energy level of $I_h$} if, for all $(h_\varepsilon)$, $(\beta_\varepsilon)$ and $(\lambda_\varepsilon)$ such that $h_\varepsilon\to h$ in $C^2(\bar{\Omega})$ and $\beta_\varepsilon\to \beta$ with $\lambda_\varepsilon=O(1)$, any family $(u_\varepsilon)$ such that $u_\varepsilon$ solves $(\mathcal{E}_{h_\varepsilon,\beta_\varepsilon})$ with $\lambda=\lambda_\varepsilon$ for all $\varepsilon$ converges in $C^2(\bar{\Omega})$ to some $u$ solving \eqref{Sec1Eq1} as $\varepsilon\to 0$, up to a subsequence. We say that $\beta>0$ is a \emph{positively stable energy level of $I_h$} if the same holds true with $u_\varepsilon\ge 0$.
\end{definition}

As a consequence of the Moser--Trudinger inequality~\cites{Mos,Tru}, every $\beta\in\(0,4\pi\)$ is a stable energy level of $I_h$. Druet--Thizy~\cite{DruThi} obtained that every $\beta\in\(0,\infty\)\backslash4\pi\mathbb{N}^\star$ ($\mathbb{N}^\star:=\mathbb{N}\setminus\{0\}$) is a positively stable energy level of $I_h$. In contrast to this result, we obtain in this paper that every $\beta\ge4\pi$ is an unstable energy level provided $\Omega$ and $h$ are such that $0\in\Omega$ and the following symmetric condition holds true for some even number $l\in2\N^\star$:
\begin{enumerate}
\item[(A)] $\Omega$ is symmetric and $h$ is even with respect to the lines 
$$\ell_j:=\left\{\(t\cos\(\frac{j\pi}{2l}\),t\sin\(\frac{j\pi}{2l}\)\):\,t\in\R\right\},\quad 0\le j\le 2l-1.$$
\end{enumerate}
Under this assumption, we obtain the following:

\begin{theorem}\label{Th1}
Let $\Omega\subset \mathbb{R}^2$ be a smooth, bounded domain, $l\in2\N^*$, $\alpha\in\(0,1\)$ and $h\in C^{l-2,\alpha}\(\Omega\)\cap C^2\(\overline\Omega\)$ be a positive function such that $0\in\Omega$ and (A) holds true. Then every $\beta\ge4\pi$ is an unstable energy level of $I_h$.
\end{theorem}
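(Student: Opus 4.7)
The plan is to construct, for each $\beta>4\pi$, a family $(u_\ve)\subset H^1_0(\Omega)$ that violates Definition~\ref{Def1}. Write $\beta=4\pi k+\beta_0$ with $k\ge1$ integer and $\beta_0\in[0,4\pi)$, and seek $u_\ve$ of the form
\begin{equation*}
u_\ve\;=\;u_0+\sum_{j=1}^{k} B_{j,\ve}+\phi_\ve,
\end{equation*}
where $u_0\in H^1_0(\Omega)$ is a sign-changing solution of \eqref{Sec1Eq1} with $\|\nabla u_0\|_{L^2}^2=\beta_0$, each $B_{j,\ve}$ is a standard positive Moser--Trudinger bubble centered at some $x_{j,\ve}\to 0$, and $\phi_\ve$ is a small error. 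All three pieces are required to lie in the subspace $H_{\sym}$ of functions symmetric with respect to the reflections in~(A); this both forces the points $x_{j,\ve}$ to sit on a symmetric configuration around~$0$ and kills the translation modes of each bubble in the linearization. The boundary case $\beta=4\pi k\in 4\pi\N^\star$ would be handled separately, either using the positive instability of \cite{DruThi} at multiples of $4\pi$ or by a continuity argument from $\beta\notin 4\pi\N^\star$.

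\textbf{Step 1 (limit profile).} Under the dihedral symmetry (A), the effective Moser--Trudinger threshold in $H_{\sym}$ rises well above $4\pi$, so a sign-changing critical point $u_0$ of $I_h$ with $\|\nabla u_0\|_{L^2}^2=\beta_0$ can be produced by a nodal constrained minimax on the symmetric Nehari-type manifold. Non-degeneracy of $u_0$ in $H_{\sym}$, up to the finite-dimensional symmetries of the equation, is needed later; I would argue it holds after a harmless $C^2$-small perturbation of $h$ (still compatible with (A)) if necessary.

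\textbf{Step 2 (bubbles).} I would take $x_{j,\ve}=\rho_\ve R^{\,j}(e_1)$ on a circle of radius $\rho_\ve\downarrow 0$ around $0$, where $R$ is a rotation compatible with (A) chosen so that the $k$ points form a single orbit. Each $B_{j,\ve}$ is the Dirichlet projection of the model Moser--Trudinger bubble
\begin{equation*}
c_\ve+\frac{1}{c_\ve}\log\frac{\mu_\ve^2}{\mu_\ve^2+|x-x_{j,\ve}|^2/8},
\end{equation*}
with $\mu_\ve\ll\rho_\ve\downarrow 0$ and $c_\ve\to+\infty$ linked to $\mu_\ve$ by the Liouville rescaling of $f(u)=u\exp(u^2)$, so that each bubble formally contributes $4\pi$ to the Dirichlet energy in the limit.

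\textbf{Step 3 (reduction).} The linearization $L_\ve=\Delta-\lambda_\ve hf'(\text{ansatz})$ acting on $H_{\sym}$ has a one-dimensional approximate kernel associated to the common scaling mode of the bubbles. A weighted a~priori estimate adapted to the bubble scale yields its invertibility modulo this direction, and a standard contraction argument in the induced weighted norm produces $\phi_\ve$. The remaining one-dimensional reduced equation is closed by a Pohozaev-type identity, which fixes $\mu_\ve$ as a function of $\ve$ and gives
\begin{equation*}
\|\nabla u_\ve\|_{L^2}^2=4\pi k+\|\nabla u_0\|_{L^2}^2+\smallo(1)=\beta+\smallo(1),\qquad\lambda_\ve=\bigO(1).
\end{equation*}
Since $u_0$ changes sign and $\|B_{j,\ve}+\phi_\ve\|_{L^\infty(\Omega\setminus B_{r}(0))}\to 0$ for every $r>0$, the family $(u_\ve)$ is sign-changing for $\ve$ small and directly contradicts Definition~\ref{Def1}.

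\textbf{Main obstacle.} The hard step is the sharp interaction analysis between the bubbles $B_{j,\ve}$ and the background $u_0$ in the reduced problem. Because the Moser--Trudinger nonlinearity $f(u)=u\exp(u^2)$ is not scale-invariant, the cross terms $f(u_0+\sum_j B_{j,\ve})-f(u_0)-\sum_j\bigl[f(u_0+B_{j,\ve})-f(u_0)\bigr]$ must be expanded several orders beyond the leading $4\pi k$ quantum in order to identify $\beta_0$ as a genuinely tunable continuous parameter. This is exactly the non-quantization phenomenon announced in the abstract and is what distinguishes the sign-changing construction from the positive case of~\cite{DruThi}, where the analogous reduced equation forces $\beta\in 4\pi\N^\star$. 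A secondary difficulty is the fine balance between the concentration scale $\mu_\ve$ and the bubble-separation scale $\rho_\ve$ needed to keep the bubble--bubble interactions under control.
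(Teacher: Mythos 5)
Your overall template (a sign-changing background $u_0$ plus $k$ bubbles collapsing at the origin, worked in a symmetric subspace to kill the translational kernel) matches the spirit of the paper, but three pieces are missing, and the first two are structural. First, the bubble geometry: you put the centres $x_{j,\ve}=\rho_\ve R^j(e_1)$ on a circular orbit of a rotation compatible with (A). The rotation subgroup generated by the reflections in (A) has order $2l$, so a single orbit invariant under the dihedral group has cardinality in $\{1,2l,4l\}$ (or a divisor thereof); arbitrary $k$ cannot be realized this way. The paper instead places all $k$ bubbles on the $x_1$-axis at $(\tau_1,0),\dots,(\tau_k,0)$ and works in the smaller space $\mathcal H$ of functions merely even in $x_2$. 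That symmetry removes only the $x_2$-translation mode of each bubble, while the $x_1$-positions $\tau_i$ remain genuine unknowns, solved for by the reduced system. This works for every $k$.

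Second, and more seriously, you have not identified the mechanism that pins the cluster at $0$ and fixes the separation scale, which is the heart of the matter. Under (A) the paper shows (Proposition~\ref{Pr0}(i)) that the sign-changing weak limit satisfies $w_0(x_1,0)\sim a_0 x_1^l$ with $a_0>0$; it is this vanishing of order $l$ — not $\nabla h$, which plays no role here in contrast to \cite{DruThi} — that produces the restoring term $a_0l\tau_i^{l-1}$ balancing the bubble-bubble interaction $2/(\gammae(\tau_i-\tau_j))$ in \eqref{Sec1Eq4}, and hence forces $d_\ve=\gammae^{-1/l}$. Your proposal never exploits the vanishing of $u_0$ at the origin, so you have no equation to close for the $\tau$'s. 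A related missing ingredient is that the bubble height $\gammae$ is not a free parameter: it is slaved to $w_\ve(0)\uparrow 0^-$ via \eqref{Sec32Eq1}, which is arranged by the explicit $h$-perturbation in Proposition~\ref{Pr0}(ii)--(iii). Without arranging $w_\ve(0)$ to have the right sign and rate, the bubble cannot be matched to the background at its centre (the vanishing of $E^{(i)}_{\ve,\gamma,\tau}$). Finally, a smaller point: the reduced system is $3k$-dimensional in $(\gamma,\tau,\theta)$ because heights, positions, and the extra coefficients $\theta$ are strongly coupled for $f(u)=ue^{u^2}$ (Remark~\ref{RemCoupling}); a one-dimensional common scaling mode will not capture this, and the claim that the rest of the kernel is killed by your orbit symmetry fails by the first point.
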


In order to prove Theorem~\ref{Th1} we will construct a sign-changing weak limit $w_0$ with arbitrary energy $\beta_0\in  (0,8\pi l)$ and use a Lyapunov-Schmidt procedure to glue to $w_0$ an arbitrary number $k\in \mathbb{N}^*$ of bubbles, all concentrating at the origin. This is in sharp contrast to the positive case studied by Druet--Thizy~\cite{DruThi}, in which blow-up can happen only at energy levels $\beta\in 4\pi \mathbb{N}^\star$, the weak limit vanishes and the bubbles blow up at distinct points. See also~\cites{delPinoMusRuf1,delPinoMusRuf2,DruMalMarThi} for the constructive counterpart of~\cite{DruThi}. 

\smallskip
To be more concrete, given $h\in C^{l-2,\alpha}\(\Omega\)\cap C^2\(\overline\Omega\)$ and $\beta_0> 0$, using the symmetry of $\Omega$ and $h$, we will construct $w_0\in E_{h,\beta_0}$ such that
\begin{equation}\label{Sec1Eq3}
w_0\(x_1,0\)\sim a_0 x_1^l,\quad \text{as }x_1\to 0,\quad \text{for some }a_0>0.
\end{equation} 
Up to a perturbation and a diagonal argument, we can assume that $w_0$ is non degenerate, and construct families $h_\ve\to h$ in $C^2\(\overline\Omega\)$, $\beta_\ve\to \beta_0$ and $w_\varepsilon$, $0\le\ve\le \ve_0$, smooth with respect to $\ve$ such that $w_\ve\in E_{h_\ve,\beta_\ve}$ and $0>w_\ve\(0\)\uparrow 0$ as $\ve\to 0$.
The behaviour \eqref{Sec1Eq3} of the weak limit $w_0$ near the origin will be crucial to glue bubbles and the value of $w_\ve\(0\)\uparrow 0$ will determine the parameter $\gammae\to \infty$ (see \eqref{Sec32Eq1}), which is the approximate height of the bubbles.

\smallskip
In fact, if $\overline B_\gamma$ is the radial solution to $\Delta \overline{B}_\gamma=f\big(\overline{B}_\gamma\big)$ with $\overline{B}_\gamma\(0\)=\gamma$, we will attach to the function $w_\ve$ a fixed number $k$ of perturbations of $\overline B_{\gamma_\ve}$ along the $x_1$ axis, at points $\(\tau_{\ve,1},0\),\dots,\(\tau_{\ve,k},0\)$. The centers $(\tau_{\ve,i},0)$ of the bubbles will satisfy for some $\delta\in (0,1)$,
\begin{equation}\label{Sec1Eq3b}
-\frac{k d_\ve}{\delta}<\tau_{\ve,1}<\dots<\tau_{\ve,k}<\frac{ k d_\ve}{\delta },\quad |\tau_{\ve,i}-\tau_{\ve,j}|>\delta d_\ve, \quad d_\ve:=\gammae^{-1/l}\to 0,
\end{equation}
and, up to scaling, $\(\tau_{\ve,1}/d_\ve,\dots,\tau_{\ve,k}/d_\ve\)$ will converge to a zero of $N=(N^1,\dots, N^k)$, defined in a suitable convex subset of $\R^k$ as
\begin{equation}\label{Sec1Eq4}
N^i\(y_1,\dots, y_k\):= a_0 ly_i^{l-1} -\sum_{j\ne i}\frac{2}{y_i-y_j}.
\end{equation}
Note that, contrary to the case studied in~\cites{delPinoMusRuf1,delPinoMusRuf2,DruThi}, the function $h$ (more specifically, its gradient) plays no role in \eqref{Sec1Eq4}, hence at main order it does not influence the location of the bubbles, which instead depends on $a_0>0$ and $l$ as in \eqref{Sec1Eq3} and on $k$. 

\smallskip
A diagonal argument allows to treat the case $\beta_0=0$. Thus we finally obtain:

\begin{theorem}\label{Th2} 
Given $\Omega$, $l$, $\alpha$ and $h$ as in Theorem \ref{Th1} and $\beta\ge 4\pi$, $k\in \mathbb{N}^*$, $\beta_0\ge 0$ such that $\beta=\beta_0+4\pi k$, there exist $w_0\in E_{h,\beta_0}$ and $\ve_0>0$ such that for every $\ve\in (0,\ve_0)$, we can find $h_\ve\to h$ in $C^2(\overline{\Omega})$, $\beta'_\ve\to \beta_0$, as $\ve\to 0$, $w_\ve\in E_{h_\ve,\beta_\ve'}$ as in \eqref{Sec1Eq3}, numbers $\beta_\ve\to \beta$, $\gammae\to \infty$, $\gamma_\ve$, $\tau_\ve$, $\theta_\ve\in \R^k$, with $\gamma_{\ve,i}\sim \gammae$, $\theta_{\ve,i}\to0$ as $\ve\to 0$, $\tau_{\ve,i}$ as in \eqref{Sec1Eq3b}, and a function $u_\ve \in E_{h_\ve,\beta_\ve}$ of the form
\begin{equation}\label{Sec1Eq5}
u_\ve = w_\ve +\sum_{i=1}^k(1+\theta_{\ve,i}) B_{\ve,\gamma_{\ve,i},\tau_{\ve,i}}+\Psi_{\ve,\gamma_\ve,\tau_\ve}+\Phi_{\ve,\gamma_\ve,\theta_\ve,\tau_\ve}\,,
\end{equation}
where the approximate bubble $B_{\ve,\gamma_{\ve,i},\tau_{\ve,i}}\in H^1_0(\Omega)$ is as in Section \ref{Sec31} and the remainder $H^1_0(\Omega)$-terms $\Psi_{\ve,\gamma_\ve,\tau_\ve}$ and $\Phi_{\ve,\gamma_\ve,\theta_\ve,\tau_\ve}$ are given by Propositions~\ref{Pr4} and~\ref{Pr8}. In particular,
$$\|\nabla \Psi_{\ve,\gamma_\ve,\tau_\ve}\|_{L^2}=\smallo(1), \quad \|\nabla \Phi_{\ve,\gamma_\ve,\theta_\ve,\tau_\ve}\|_{L^2}=\smallo(1),\quad \text{as }\ve\to 0.$$
\end{theorem}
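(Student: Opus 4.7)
Following the Lyapunov--Schmidt scheme sketched between Theorems \ref{Th1} and \ref{Th2}, I would organize the proof in three phases.

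\smallskip
\textbf{Phase 1 (the weak limit).} First I would produce a sign-changing $w_0 \in E_{h, \beta_0}$ satisfying \eqref{Sec1Eq3}. For $\beta_0 \in (0, 8\pi l)$, its existence follows from a symmetric variational argument in the class of $(A)$-invariant $H^1_0(\Omega)$-functions; the symmetry constrains the Taylor expansion of $w_0$ at $0$ along $\ell_0$ to even powers of $x_1$, and a careful choice of nodal/min--max class (combined if needed with a small perturbation of $h$) ensures the vanishing up to order $l-1$ with $a_0>0$. A second small $C^2(\overline\Omega)$-perturbation of $h$ turns $w_0$ into a non-degenerate critical point, and the implicit function theorem then yields a smooth family $w_\ve \in E_{h_\ve, \beta_\ve'}$ with $w_\ve(0) < 0$ strictly increasing to $0$; this last value pins down the bubble height $\gammae$ via a matching of $w_\ve$ with $\overline{B}_{\gammae}$ at the origin. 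The borderline case $\beta_0 = 0$ is recovered at the end by a diagonal argument in $\beta_0 \downarrow 0$.

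\smallskip
\textbf{Phase 2 (ansatz and linear theory).} I would plug \eqref{Sec1Eq5} into \eqref{Sec1Eq1} and choose the explicit intermediate correction $\Psi_{\ve, \gamma_\ve, \tau_\ve}$ so as to cancel the leading interactions between $w_\ve$ and each bubble $B_{\ve, \gamma_{\ve,i}, \tau_{\ve,i}}$, together with the error caused by the Dirichlet truncation. The residual, measured in a weighted norm adapted to the exponential nonlinearity $f(u) = u\exp(u^2)$, has to be absorbed by solving the linearized equation associated to
$$L_\ve \varphi := \Delta \varphi - \lambda_\ve h_\ve f'(\widetilde u_\ve)\,\varphi,$$
with $\widetilde u_\ve$ the ansatz without $\Phi$, modulo the $3k$-dimensional approximate kernel spanned by the derivatives of each bubble with respect to its three parameters $(\gamma_i, \theta_i, \tau_i)$; the non-degeneracy of $w_\ve$ prevents additional kernel directions from the background. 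Granting uniform estimates on $L_\ve^{-1}$ in the orthogonal complement, a standard Banach contraction produces $\Phi_{\ve, \gamma_\ve, \theta_\ve, \tau_\ve}$, smoothly depending on the parameters and with $\|\nabla \Phi\|_{L^2} = \smallo(1)$.

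\smallskip
\textbf{Phase 3 (reduced problem).} It remains to tune the $3k$ parameters so that the projections of the full error onto the kernel vanish as well. Expanding these projections in powers of $d_\ve = \gammae^{-1/l} \to 0$ decouples the system at leading order: the $\gamma$-- and $\theta$--equations determine $\gamma_{\ve,i} - \gammae$ and $\theta_{\ve,i}$ by the implicit function theorem, while the $\tau$--equations, after rescaling $y_i := \tau_{\ve,i}/d_\ve$, reduce to $N(y) = 0$ with $N$ as in \eqref{Sec1Eq4}. Since $N = \nabla F$ for
$$F(y) := a_0 \sum_{i=1}^k y_i^l - 2\sum_{i<j} \log(y_j - y_i),$$
which is strictly convex on the open chamber $\{y_1 < \dots < y_k\}$, blows up on its boundary, and tends to $+\infty$ as $|y| \to \infty$ (using that $l$ is even), $N$ admits a unique non-degenerate zero there; this zero lies in the convex region described by \eqref{Sec1Eq3b} for some $\delta \in (0,1)$, which closes the construction.

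\smallskip
The hardest step is the uniform invertibility of $L_\ve$ in Phase~2. Because all $k$ bubbles concentrate at the \emph{same} point, with cluster scale $d_\ve \to 0$, the configuration is \emph{not} well-separated in the sense used in \cite{DruThi, delPinoMusRuf1}: one must design weighted norms that simultaneously resolve the tiny bubble scale and the larger cluster scale $d_\ve$, and rule out near-resonant kernel directions between neighbouring bubbles. The repulsive term $-\sum_{j\ne i} 2/(y_i - y_j)$ in \eqref{Sec1Eq4} is precisely the trace of these bubble--bubble interactions, and making the above informal balance rigorous is the technical core of the construction.
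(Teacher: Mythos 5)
Your three-phase outline captures the paper's architecture (construct a symmetric weak limit with a degenerate zero of order $l$, build and correct an ansatz, then solve the finite-dimensional reduced problem), and the reduced variational function you write down coincides with the paper's $J$ in Lemma~\ref{Lemmah}, so Phase~3's treatment of the $\tau$-equation is on target. Where you diverge is the technical core of Phase~2. You propose to run the contraction in weighted norms adapted to the double scale (bubble radius $\mu_\gamma$ and cluster radius $d_\ve$); the paper instead works entirely in $H^1_0(\Omega)$ and explicitly avoids weighted $C^0$-norms (see the discussion after Theorem~\ref{Th2} in Section~\ref{Sec1}). To make the $H^1_0$ route work, the paper needs (i) a very precise first-stage correction $\Psi_{\ve,\gamma,\tau}$ that makes $U_{\ve,\gamma,\tau}$ an exact solution outside small balls (Proposition~\ref{Pr4}), (ii) a subsequent restriction of the $\gamma$-range (Proposition~\ref{Pr5} and \eqref{defGammaepsilon}) and the extra $\theta$-parameters, and (iii) Poincar\'e--Sobolev-type inequalities on rescaled spheres (Section~\ref{SecPS}, Propositions~\ref{Pr14}--Lemma~\ref{Pr14Lem}) exploiting the orthogonality $\Phi\perp B_{\ve,\gamma_i,\tau_i}$ to control the error near each bubble. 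Your weighted-norm approach is the more classical one in this literature; it would demand a bespoke hierarchical weight to resolve simultaneously the scales $\mu_\gamma\ll d_\ve$, and you only sketch, rather than construct, such a norm. A second genuine difference is in Phase~3: you solve the $\gamma$- and $\theta$-equations by the implicit function theorem after ``decoupling at leading order,'' whereas Remark~\ref{RemCoupling} stresses that for the Moser--Trudinger nonlinearity these variables are intricately coupled in the projections \eqref{Pr9LemEq1}--\eqref{Pr9LemEq3}, and the paper instead runs a homotopy/degree argument (Proposition~\ref{Pr9}) that factors through the matrix $\mathcal{Q}$ of \eqref{matrix}; an IFT argument would need this coupling to be handled carefully rather than dismissed. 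Neither divergence is an error in the sense of an impossible step, but both are places where your sketch leaves the genuinely hard work undone and the paper chose a different, more carefully engineered path.
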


\smallskip
In contrast with several other works constructing blowing-up solutions to the Moser--Trudinger equation, starting with del Pino--Musso--Ruf~\cites{delPinoMusRuf1,delPinoMusRuf2}, our Lyapunov-Schmidt reduction will be performed in $H^1_0\(\Omega\)$, avoiding the use of weighted $C^0$-norms. While this is more in the spirit of the seminal work of Rey~\cite{Rey}, the elegance of working with the Hilbert space $H^1_0\(\Omega\)$ requires a very precise ansatz (see Section~\ref{Sec3}), and a very sharp analysis of the radial bubble $\overline{B}_\gamma$, as obtained in~\cites{DruThi, MalMar, ManMar} and further extended in our Section~\ref{App}. In fact, we will construct the ansatz in two steps. First we construct the approximate solution
$$U_{\ve,\gamma,\tau}=w_\ve +\sum_{i=1}^k B_{\ve,\gamma_{i},\tau_{i}}+\Psi_{\ve,\gamma,\tau},$$
for every $\tau$ as in \eqref{Sec1Eq3b} and $\gamma$ in a fairly broad range (see \eqref{defGammaepsilon0} and Proposition \ref{Pr4}). Then we shall strongly restrict the range of $\gamma$ (Proposition \ref{Pr5} and \eqref{defGammaepsilon}) and add the terms $\theta_i B_{\ve,\gamma_i,\tau_i}$ to the ansatz.
This will be crucial in the energy estimates of Section~\ref{Sec4}. In order to estimate the error terms near the bubbles, we shall use the spherical profile of the bubble to treat the blow-up regions as approximate spheres and apply Poincar\'e--Sobolev-type estimates, as given in Section~\ref{SecPS}.
Finally we will perform the Lyapunov-Schmidt reduction to find the correct value $(\gamma_\ve,\theta_\ve,\tau_\ve)$ and the correction term $\Phi_{\ve,\gamma_\ve,\theta_\ve,\tau_\ve}$, to finally obtain $u_\ve$ as in \eqref{Sec1Eq5} (see also Remark \ref{RemCoupling}). 

\smallskip
Recently, Problem \eqref{Sec1Eq1} has received attention also when the nonlinearity $f$ is suitably perturbed. Mancini and the second author~\cite{ManThi} constructed radial (both positive and nodal) solutions $u_\gamma$ to \eqref{Sec1Eq1} on the unit disk, blowing up at $0$ and having non-zero weak limit as $\gamma\to \infty$, in the case $h\equiv 1$, $f_\gamma(u)=\lambda_\gamma u +\beta_\gamma u\exp(u^2)$ or $f_\gamma(u)=\beta_\gamma u\exp(u^2-a u)$ for suitable $\lambda_\gamma$, $\beta_\gamma$ and $a>0$. Grossi--Mancini--Naimen--Pistoia~\cite{GMNP} constructed nodal solutions $u_p$ to \eqref{Sec1Eq1} with $h\equiv 1$ and $f_p(u)=u\exp\(u^2+|u|^p\)$, having \emph{one} blow-up point as $p\downarrow 1$. Naimen~\cite{Nai} further gave a very detailed blow-up analysis of the blow-up of \emph{radial} nodal solutions to \eqref{Sec1Eq1} when $h\equiv 1$ and $f(u)=u\exp\(u^2+\alpha|u|^\beta\)$, $\alpha>0$. To our knowledge, our work is the first one in which non-zero weak limits appear in the unperturbed case $f(u)=u\exp(u^2)$, and an arbitrary number of bubbles concentrates at the same point. Indeed, these two phenomena cannot occur in the unperturbed case without (radial) symmetry breaking.

\section{Preliminary steps}\label{Sec2}

This section is devoted to the construction of a smooth family of critical points satisfying some regularity, symmetry and asymptotic conditions which we will then use in the next sections to construct our blowing-up solutions.

\begin{definition}\label{Def2}
For every $l\in \N^*$, $p\in\N$ and $\alpha\in\(0,1\)$, we let $C_{l,\sym}^{p,\alpha}\(\Omega\)$ be the vector space of all functions in $C^{p,\alpha}\(\Omega\)$  that are even with respect to the line $\ell_{2j}$ for all $j\in\left\{0,\dotsc,l-1\right\}$, where $\ell_{2j}$ is as in (A).
\end{definition}

\begin{definition}\label{Def3}
Let $\beta>0$, $h$ be a continuous, positive function on $\overline\Omega$ and $w\in E_{h,\beta}$. Then we say that $w$ is non-degenerate if there does not exist any solution $v\ne0$ to the problem
\begin{equation}\label{Sec2Eq}
\left\{\begin{aligned}&\Delta v=\lambda h f'\(w\)v&&\text{in }\Omega\\&v=0&&\text{on }\partial\Omega,\end{aligned}\right.
\end{equation}
where $\lambda$ and $f$ are as in \eqref{Sec1Eq2}. We let $E_{h,\beta}^{nd}$ be the set of all non-degenerate elements of $E_{h,\beta}$.
\end{definition}

\noindent The main result of this section is the following:

\begin{proposition}\label{Pr0}
Let $\Omega$ be a smooth, bounded domain, $l\in 2\N^\star$, $\beta_0\in\(0,8l\pi\)$, $\alpha\in\(0,1\)$ and $h\in C^{l-2,\alpha}\(\Omega\)\cap C^{0,\alpha}\(\overline\Omega\)$ be a positive function such that $0\in\Omega$ and {\normalfont(A)} holds true.  Then we have the following:
\begin{enumerate}
\item[(i)] There exist $w_0\in E_{h,\beta_0}\cap C_{l,\sym}^{l,\alpha}\(\Omega\)\cap C^2\(\overline\Omega\)$ and $a_0>0$ such that $w_0\(x_1,0\)\sim a_0x_1^l$ as $x_1\to0$.
\item[(ii)] There exists $\kappa_0>0$ such that for every $\kappa\in\(-\kappa_0,\kappa_0\)\backslash\left\{0\right\}$, $w_\kappa:=\(1+\kappa\)w_0\in E^{nd}_{h_\kappa,\beta_\kappa}\cap C_{l,\sym}^{l,\alpha}\(\Omega\)\cap C^2\(\overline\Omega\)$, where $\beta_\kappa:=(1+\kappa)^2\beta_0$ and $h_\kappa:=h\exp\(-\kappa\(\kappa+2\)w_0^2\)$. 
\item[(iii)] For every $\kappa\in\(-\kappa_0,\kappa_0\)\backslash\left\{0\right\}$, there exist $\hat{h}_\kappa\in C_{l,\sym}^{l-2,\alpha}\(\Omega\)\cap C^2\(\overline\Omega\)$ and $\varepsilon_0\(\kappa\)\in\(0,1\)$ such that for every $\varepsilon\in\(0,\varepsilon_0\(\kappa\)\)$, there exist $\beta_{\kappa,\varepsilon}>0$ and $w_{\kappa,\varepsilon}\in E_{h_{\kappa,\varepsilon},\beta_{\kappa,\varepsilon}}\cap C_{l,\sym}^{l,\alpha}\(\Omega\)\cap C^2\(\overline\Omega\)$, where $h_{\kappa,\varepsilon}:=h_\kappa+\varepsilon\hat{h}_\kappa$, such that the families $\(\beta_{\kappa,\varepsilon}\)_{0\le\varepsilon\le\varepsilon_0\(\kappa\)}$ and $\(w_{\kappa,\varepsilon}\)_{0\le\varepsilon\le\varepsilon_0\(\kappa\)}$, where $\beta_{\kappa,0}:=\beta_\kappa$ and $w_{\kappa,0}:=w_\kappa$, are smooth in $\varepsilon$ and moreover $\partial_\varepsilon\[w_{\kappa,\varepsilon}\(0\)\]_{\varepsilon=0}<0$ and $w_{\kappa,\varepsilon}\(0\)<0$ for all $\varepsilon\in\(0,\varepsilon_0\(\kappa\)\)$.
\end{enumerate}
\end{proposition}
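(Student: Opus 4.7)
The plan is to prove (i), (ii), (iii) in sequence, with (i) being the conceptually heaviest.

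For (i), I would exploit assumption~(A) via the principle of symmetric criticality and look for $w_0$ as a critical point of $I_h$ on the sphere $\{\|\nabla u\|^2_{L^2} = \beta_0\}$ restricted to the $D_l$-invariant subspace of $H^1_0\(\Omega\)$, where $D_l$ denotes the dihedral group generated by the reflections across the $\ell_{2j}$'s. The key point is that $D_l$ fixes $0$ and acts freely on $\Omega \setminus \{0\}$, so any concentration of a $D_l$-symmetric sequence at a non-fixed point would produce $2l$ equivalent bubbles at an energetic cost of at least $2l \cdot 4\pi = 8l\pi$; hence for $\beta_0 < 8l\pi$ the only admissible blow-up is at the origin. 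A symmetric sign-changing critical point $w_0$ can then be obtained by a min-max construction in the invariant class (linking between the positive and negative cones, or continuation of the branch bifurcating from the first $\cos(l\theta)$-type eigenfunction of $\Delta v = \lambda h v$). Elliptic regularity with $h \in C^{l-2,\alpha}\(\Omega\) \cap C^{0,\alpha}\(\overline\Omega\)$ yields $w_0 \in C_{l,\sym}^{l,\alpha}\(\Omega\) \cap C^2\(\overline\Omega\)$.

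The asymptotic $w_0\(x_1,0\) \sim a_0 x_1^l$ with $a_0 > 0$ is the delicate point of~(i) and the expected main obstacle. The Taylor polynomial of $w_0$ at the origin, up to order $l$, is $D_l$-invariant, hence lies in the ring generated by $r^2 = |z|^2$ and $\operatorname{Re}\(z^l\)$ with $z = x_1 + i x_2$; restricted to $\ell_0$ this gives
$$
w_0\(x_1,0\) = c_0 + c_2 x_1^2 + \dots + \(c_{l/2} + a_0\) x_1^l + o\(x_1^l\).
$$
Selecting the construction so that $w_0$ corresponds at the linearized level near $0$ to a pure $\cos(l\theta)$-mode (picking the branch bifurcating from that eigenfunction, or imposing an additional antipodality that kills the invariants $r^{2k}$ for $2k < l$) forces $c_0 = c_2 = \dots = c_{l/2-1} = 0$ and leaves a positive $a_0$ as the coefficient of $\operatorname{Re}\(z^l\)$, giving the desired asymptotic.

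Part~(ii) reduces to a direct computation. With $w_\kappa := (1+\kappa) w_0$, $\beta_\kappa := (1+\kappa)^2 \beta_0$ and $h_\kappa := h\exp\(-\kappa\(\kappa+2\) w_0^2\)$, one checks that $\Delta w_\kappa = \lambda h_\kappa f\(w_\kappa\)$ holds with the \emph{same} Lagrange multiplier $\lambda$ as for $w_0$. The linearization at $w_\kappa$ simplifies to
$$
L_\kappa v := \Delta v - \lambda h\, A_\kappa v, \qquad A_\kappa := \bigl[1 + 2(1+\kappa)^2 w_0^2\bigr]\exp\bigl(w_0^2\bigr),
$$
so all the $\kappa$-dependence is absorbed in the factor $(1+\kappa)^2$. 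Since $\partial_\kappa A_\kappa = 4(1+\kappa) w_0^2 \exp\(w_0^2\) > 0$ on the positive-measure set $\{w_0 \ne 0\}$, by min-max the weighted Dirichlet eigenvalues $\mu_j(\kappa)$ of $\Delta v = \mu \lambda h A_\kappa v$ are continuous and strictly monotone in $\kappa$; hence $\{\kappa : 1 \in \operatorname{spec}\(L_\kappa\)\}$ is discrete and can be excluded from $(-\kappa_0,\kappa_0)\setminus\{0\}$ upon shrinking $\kappa_0$, yielding $w_\kappa \in E^{nd}_{h_\kappa,\beta_\kappa}$.

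For~(iii), I apply the implicit function theorem on $C_{l,\sym}^{l,\alpha} \cap C^2\(\overline\Omega\)$ to the map
$$
F\(u,\varepsilon\) := \Delta u - \lambda \bigl(h_\kappa + \varepsilon \hat{h}_\kappa\bigr) f\(u\),
$$
with $\lambda$ the Lagrange multiplier from~(ii): $F\(w_\kappa, 0\) = 0$ and $D_u F\(w_\kappa, 0\) = L_\kappa$ is invertible on the symmetric class, so one gets a smooth family $w_{\kappa,\varepsilon}$. Pairing the equation with $w_{\kappa,\varepsilon}$ itself recovers the Lagrange-multiplier identity $\lambda = \beta_{\kappa,\varepsilon}/\int h_{\kappa,\varepsilon} w_{\kappa,\varepsilon}^2 \exp\(w_{\kappa,\varepsilon}^2\)$ with $\beta_{\kappa,\varepsilon} := \|\nabla w_{\kappa,\varepsilon}\|^2_{L^2}$, so $w_{\kappa,\varepsilon} \in E_{h_{\kappa,\varepsilon}, \beta_{\kappa,\varepsilon}}$ automatically, and $\beta_{\kappa,\varepsilon}$ is smooth in $\varepsilon$. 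Differentiating at $\varepsilon = 0$, $\hat w := \partial_\varepsilon w_{\kappa,\varepsilon}|_{\varepsilon = 0}$ solves $L_\kappa \hat{w} = \lambda \hat{h}_\kappa f\(w_\kappa\)$, making $\hat{h}_\kappa \mapsto \hat{w}\(0\)$ a non-trivial continuous linear functional on $C^{l-2,\alpha}_{l,\sym} \cap C^2\(\overline\Omega\)$ (pair $\hat{h}_\kappa f\(w_\kappa\)$ against the symmetric Green's function of $L_\kappa$ at $0$, using $f\(w_\kappa\) \not\equiv 0$). Choosing $\hat{h}_\kappa$ so that $\hat{w}\(0\) < 0$ and using $w_\kappa\(0\) = (1+\kappa) w_0\(0\) = 0$ from~(i), we obtain $w_{\kappa,\varepsilon}\(0\) = \varepsilon \hat{w}\(0\) + O\(\varepsilon^2\) < 0$ for small $\varepsilon > 0$, closing~(iii).
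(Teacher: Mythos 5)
For part (iii) your argument is essentially identical to the paper's Proposition~\ref{Pr3}: implicit function theorem on the un-constrained equation with fixed multiplier $\lambda$, then derive the formula $\partial_\varepsilon[w_{\kappa,\varepsilon}(0)]_{\varepsilon=0}=\int_\Omega G_{\overline h}(\cdot,0)\hat h_\kappa f(w_\kappa)\,dx$ and choose $\hat h_\kappa$ to make it negative. For part (ii) you take a genuinely different route: instead of the paper's compactness/contradiction argument (normalize a hypothetical sequence of degeneracy eigenfunctions as $\kappa_j\to 0$ and show the limit must vanish), you use the identity $h_\kappa f'(w_\kappa)=h\,[1+2(1+\kappa)^2w_0^2]e^{w_0^2}$ to reduce to a one-parameter family of weighted eigenvalue problems, then invoke strict monotonicity of the eigenvalue curves in $\kappa$. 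This is a clean alternative, though the step from ``each eigenvalue curve crosses $1$ at most once'' to ``the crossing set does not accumulate at $0$'' needs the additional remark that on a compact $\kappa$-interval only finitely many curves can reach $1$ (since $\mu_j(\kappa)\to\infty$ uniformly in $j$). Both arguments rest, in the end, on the same unique-continuation fact that $\{w_0\ne 0\}$ is dense.

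Your argument for part (i), however, has a genuine gap in exactly the place you flag as ``the delicate point.'' You propose working in the $D_l$-invariant subspace of $H^1_0(\Omega)$, where $D_l$ is generated by the reflections across the \emph{even} lines $\ell_{2j}$. But that symmetry is strictly weaker than what is needed: the ring of $D_l$-invariant polynomials contains $r^{2k}$ for all $k$, so a $D_l$-invariant critical point can perfectly well have $w_0(x_1,0)=c_0+c_2x_1^2+\cdots$ with $c_0,c_2\ne0$, and nothing in a min--max construction in that class rules this out. Your two suggested fixes do not repair it. ``Imposing an additional antipodality'' is either vacuous or counter-productive: since $l$ is even, $z\mapsto -z$ is already a rotation in the $D_l$-symmetry group and $\operatorname{Re}(z^l)$ is \emph{even} under it, so imposing anti-invariance under $z\mapsto -z$ would also destroy the $x_1^l$ term you want to keep. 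And ``the branch bifurcating from the $\cos(l\theta)$-eigenfunction'' is not a well-defined object in this constrained variational problem and in any case would not \emph{force} the low-order Taylor coefficients of a fully nonlinear critical point to vanish.

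What the paper actually does is impose the stronger symmetry in which $w_0$ is \emph{odd} across the lines $\ell_{2j+1}$ (equivalently, transforms by the sign character under rotation by $\pi/l$, under which $r^{2k}$ is even while $\operatorname{Re}(z^l)$ is odd): concretely, one minimizes on the slice $\Omega_1$ to get a \emph{positive} solution there, then extends to $\Omega$ by successive odd reflections. Since the extended $w_0$ then vanishes on the $l$ distinct lines $\ell_1,\ell_3,\dots,\ell_{2l-1}$ and lies in $C^{l,\alpha}$, every homogeneous Taylor component of degree $<l$ at $0$ must vanish (a degree-$m$ homogeneous polynomial cannot vanish on $l>m$ lines), giving $w_0(x_1,0)=a_0x_1^l+\bigO(x_1^{l+\alpha})$ for free. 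You also give no argument for $a_0>0$, which is not automatic: the paper establishes it by a barrier comparison in the sector $S_{l,\varepsilon}(r_0)$ with the harmonic function $r^{l+\varepsilon}\cos((l+\varepsilon)\theta)$, using $w_0>0$, $\Delta w_0>0$ in the sector and $\varepsilon<\alpha$. Without the stronger symmetry and the barrier argument, part (i) as you have it would not go through.
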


\proof[Proof of Proposition~\ref{Pr0} (i)]
Define 
$$\Omega_1:=\left\{\(x_1,x_2\)\in\Omega:\,\left|x_2\right|<x_1\tan\(\frac{\pi}{2l}\)\right\}.$$
Since $\Omega$ satisfies (A), we obtain that $\Omega_1$ is symmetric with respect to the line $\ell_0$. In particular, we can define the vector space $\mathcal{H}$  of all functions in $H^1_0\(\Omega_1\)$ that are even in $x_2$. Note that (A) also gives that $\left.h\right|_{\Omega_1}$ is even in $x_2$. By applying standard variational arguments (see for instance Proposition~6 of Mancini--Martinazzi~\cite{ManMar} in case $h\equiv1$ and $\mathcal{H}=H^1_0\(\Omega_1\)$), we then obtain that for every $\beta_0\in\(0,8l\pi\)$, there exists a critical point $w_0$ of the functional $\left.I_h\right|_{\mathcal{H}}$ under the constraint $\left\|\nabla w_0\right\|_{L^2\(\Omega_1\)}^2=\beta_0/2l$ such that $w_0>0$ in $\Omega_1$. By using (A), we can then extend $w_0$ to the whole domain $\Omega$ as an odd function with respect to the line $\ell_{2j+1}$ for all $j\in\left\{0,\dotsc,l-1\right\}$. We claim that $w_0\in E_{h,\beta_0}$. To see this, for every test function $v\in H_0^1\(\Omega\)$, we define
$$v_{\sym}:=\sum_{j=0}^{l-1}v\circ S_{2j+1}\circ S_1-\sum_{j=0}^{l-1}v\circ S_{2j+1},$$
where $S_{2j+1}:\Omega\to\Omega$ is the symmetry operator with respect to the line $\ell_{2j+1}$. By remarking that $v_{\sym}\in H_0^1\(\Omega_1\)$ and using $v_{\sym}$ as a test function for the Euler--Lagrange equation of $u_0$, we obtain
\begin{equation}\label{Pr1Eq1}
\int_{\Omega_1}\left<\nabla u_0,\nabla v_{\sym}\right>dx=\frac{\beta_0}{l\int_{\Omega_1}hw_0^2\exp\(w_0^2\)dx}\int_{\Omega_1}hf\(w_0\)v_{\sym}dx.
\end{equation}
By changes of variable and using the symmetry of $w_0$ and $h$, we obtain
\begin{align}
\int_{\Omega_1}\left<\nabla w_0,\nabla v_{\sym}\right>dx&=\int_\Omega\left<\nabla w_0,\nabla v\right>dx,\label{Pr1Eq2}\allowdisplaybreaks\\
\int_{\Omega_1}hw_0^2\exp\(w_0^2\)dx&=\frac{1}{2l}\int_{\Omega}hw_0^2\exp\(w_0^2\)dx=\frac{1}{2l}DI_{h}\(w_0\).w_0,\label{Pr1Eq3}\allowdisplaybreaks\\
\int_{\Omega_1}hf\(w_0\)v_{\sym}dx&=\int_\Omega hf\(w_0\)vdx.\label{Pr1Eq4}
\end{align}
By putting together \eqref{Pr1Eq1}--\eqref{Pr1Eq4}, we obtain
$$\int_\Omega\left<\nabla w_0,\nabla v\right>dx=\frac{2\beta_0}{DI_{h}\(w_0\).w_0}\int_\Omega hf\(w_0\)vdx$$
and so $w_0\in E_{h,\beta_0}$. Since $h\in C^{l-2,\alpha}\(\Omega\)\cap C^{0,\alpha}\(\overline\Omega\)$ and $\partial\Omega$ is smooth, by using the Moser--Trudinger inequality together with standard elliptic regularity theory, we then obtain that $w_0\in C^{l,\alpha}\(\Omega\)\cap C^{2,\alpha}\(\overline\Omega\)$. Since $\left.w_0\right|_{\Omega_1}$ is even in $x_2$ and $w_0$ is odd with respect to the line $\ell_{2j+1}$ for all $j\in\left\{0,\dotsc,l-1\right\}$, we then obtain that $w_0$ is even with respect to $\ell_{2j}$ for all $j\in\left\{0,\dotsc,l-1\right\}$, i.e. $w_0\in C_{l,\sym}^{l,\alpha}\(\Omega\)$. Furthermore, since $w_0\in C^{l,\alpha}\(\Omega\)$ and $w_0=0$ on $\ell_{2j+1}$ for all $j\in\left\{0,\dotsc,l-1\right\}$, we obtain that $D^jw_0\(0\)=0$ for all $j\in\left\{0,\dotsc,l-1\right\}$ and so
\begin{equation}\label{Pr1Eq5}
w_0\(x_1,0\)=a_0x_1^l+\bigO\(x_1^{l+\alpha}\)
\end{equation}
as $x_1\to0$ for some $a_0\in\R$. It remains to prove that $a_0>0$. Since $0\in\Omega$ and $\Omega$ is open, there exists $r_0>0$ such that $B\(0,r_0\)\subset\Omega$. For every $\varepsilon>0$, we define
$$S_{l,\varepsilon}\(r_0\):=\left\{\(r\cos\theta,r\sin\theta\):\,0<r<r_0\text{ and }\left|\theta\right|<\pi/\(2\(l+\varepsilon\)\)\right\}$$
and let $v_{l,\varepsilon}:S_{l,\varepsilon}\(r_0\)\to\R$ be the function defined as
$$v_{l,\varepsilon}\(r\cos\theta,r\sin\theta\):=r^{l+\varepsilon}\cos\(\(l+\varepsilon\)\theta\)$$
for all $\(r\cos\theta,r\sin\theta\)\in S_{l,\varepsilon}\(r_0\)$. It is easy to check that $v_{l,\varepsilon}$ is harmonic in $S_{l,\varepsilon}\(r_0\)$, continuous on $\overline{S_{l,\varepsilon}\(r_0\)}$ and $v_{l,\varepsilon}=0$ on $B\(0,r_0\)\cap\partial S_{l,\varepsilon}\(r_0\)$. On the other hand, since $S_{l,\varepsilon}\(r_0\)\subset\Omega_1$, we have that $w_0$ is continuous on $\overline{S_{l,\varepsilon}\(r_0\)}$ and positive on $\overline{S_{l,\varepsilon}\(r_0\)}\backslash\left\{0\right\}$. Furthermore, since $h,w_0>0$ in $S_{l,\varepsilon}\(r_0\)$, it follows from the Euler--Lagrange equation of $w_0$ that $\Delta w_0>0$ in $S_{l,\varepsilon}\(r_0\)$. It follows that there exists $\delta_{l,\varepsilon}>0$ such that $w_0\ge\delta_{l,\varepsilon}v_{l,\varepsilon}$ on $\partial S_{l,\varepsilon}\(r_0\)\cap\partial B\(0,r_0\)$. By comparison, we then obtain that $w_0\ge\delta_{l,\varepsilon}v_{l,\varepsilon}$ in $S_{l,\varepsilon}\(r_0\)$. Since $v_{l,\varepsilon}\(r,0\)=r^{l+\varepsilon}$, by taking $\varepsilon<\alpha$, we then obtain that the number $a_0$ in \eqref{Pr1Eq5} is positive. This ends the proof of (i) in Proposition~\ref{Pr0}.
\endproof

\proof[Proof of Proposition~\ref{Pr0} (ii)]
It is easy to check that $w_\kappa\in E_{h_\kappa,\beta_\kappa}\cap C_{l,\sym}^{2,\alpha}\(\overline\Omega\)$ for all $\kappa\in\(-1,1\)$. It remains to prove that $w_\kappa\in E^{nd}_{h_\kappa,\beta_\kappa}$ for $\kappa\in\(-\kappa_0,\kappa_0\)\backslash\left\{0\right\}$ with $\kappa_0$ small enough. Assume by contradiction that this is not the case, i.e. there exists a sequence of real numbers $\(\kappa_j\)_{j\in\N}$ such that $w_{\kappa_j}$ is degenerate and $\kappa_j\to0$. Let $v_j$ be a nonzero solution of the linearized equation
$$\left\{\begin{aligned}&\Delta v_j=\lambda_{\kappa_j} h_{\kappa_j} f'\(w_{\kappa_j}\)v_j&&\text{in }\Omega\\
&v_j=0&&\text{on }\partial\Omega,\end{aligned}\right.$$
with
$$\lambda_{\kappa_j}:=\frac{2\beta_{\kappa_j} }{ DI_{h_{\kappa_j}}\big(w_{\kappa_j}\big).w_{\kappa_j}}= \frac{2\beta_0}{ DI_{h}\(w_0\).w_0}=\lambda.$$
By renormalizing and passing to a subsequences, we may assume without loss of generality that $\left\|\nabla v_j\right\|_{L^2}=1$ and $\(v_j\)_{j\in\N}$ converges weakly to some function $v_0$ in $H^1_0\(\Omega\)$. By using the compactness of the embedding $H^1_0\(\Omega\)\hookrightarrow L^2\(\Omega\)$ and remarking that $\beta_\kappa\to\beta_0$ and $h_\kappa,w_\kappa\to h,w_0$ in $C^0\(\overline\Omega\)$, we obtain that $\(v_j\)_{j\in\N}$ converges strongly to $v_0$ in $H^1_0\(\Omega\)$ and so $\left\|\nabla v_0\right\|_{L^2}=1$. Furthermore, we obtain that $v_0$ is a solution of \eqref{Sec2Eq} with $\kappa=0$. By using the definitions of $h_\kappa$, $\beta_\kappa$ and $w_\kappa$, in particular noticing that $h_\kappa \exp\(w_\kappa^2\)= h\exp\(w_0^2\)$, and recalling the equation satisfied by $v_j$ and $v_0$, it follows that
$$\int_\Omega h\(1+2w_{\kappa_j}^2\)\exp\(w_0^2\)v_jv_0dx=\frac{1}{2}DI_{h}\(w_0\).w_0\int_\Omega\left<\nabla v_j,\nabla v_0\right>dx=\int_\Omega hf'\(w_0\)v_jv_0dx$$
and so 
\begin{equation}\label{Pr2Eq}
\int_\Omega hw_0^2\exp\(w_0^2\)v_jv_0dx=\int_\Omega h\frac{w_{\kappa_j}^2-w_0^2}{\kappa_j\(\kappa_j+2\)}\exp\(w_0^2\)v_jv_0dx=0.
\end{equation}
By passing to the limit into \eqref{Pr2Eq}, we obtain
$$\int_\Omega hw_0^2\exp\(w_0^2\)v_0^2dx=0,$$
which gives $w_0v_0=0$ in $\Omega$. Since $\left\|\nabla w_0\right\|_{L^2}^2=\beta_0\ne0$, by unique continuation (see Aronszajn~\cite{Aro} and Cordes~\cite{Cor}), we obtain that $w_0\ne0$ in a dense subset $D$ of $\Omega$ and so $v_0=0$ on $D$. By continuity of $v_0$, it follows that $v_0=0$ in $\Omega$. This is in contradiction with $\left\|\nabla v_0\right\|_{L^2}=1$. This ends the proof of (ii) in Proposition~\ref{Pr0}.
\endproof

The result of (iii) in Proposition~\ref{Pr0} will follow from the following:

\begin{proposition}\label{Pr3}
Let $l\in\N^\star$, $p\ge2$, $\alpha\in\(0,1\)$ and $\Omega$ be a smooth, bounded domain such that $0\in\Omega$ and $\Omega$ is symmetric with respect to the line $\ell_{2j}$ for all $j\in\left\{0,\dotsc,l-1\right\}$. Let $\overline{\beta}_0>0$, $\overline{h}\in C_{l,\sym}^{p-2,\alpha}\(\Omega\)\cap C^2\(\overline\Omega\)$ be positive in $\overline\Omega$ and $\overline{w}_0\in E^{nd}_{\overline{h},\overline{\beta}_0}\cap C_{l,\sym}^{p,\alpha}\(\Omega\)\cap C^2\(\overline\Omega\)$ be such that $\overline{w}_0\(0,0\)=0$ and $\overline{w}_0\(r,0\)>0$ for small $r>0$. Let $D$ be the set of all functions $\hat{h}\in C_{l,\sym}^{p-2,\alpha}\(\Omega\)\cap C^2\(\overline\Omega\)$ such that
$$\int_\Omega G_{\overline{h}}\(\cdot,0\)\hat{h}f\(\overline{w}_0\)dx<0,$$
where $G_{\overline{h}}$ is the Green's function of the operator
$$\Delta-\frac{2\overline{\beta}_0\overline{h}f'\(\overline{w}_0\)}{DI_{\overline{h}}\(\overline{w}_0\).\overline{w}_0}$$ 
with boundary condition $\left.G_{\overline{h}}\(\cdot,0\)\right|_{\partial\Omega}=0$. Then $D$ is a non-empty open subset of $C_{l,\sym}^{p-2,\alpha}\(\Omega\)\cap C^2\(\overline\Omega\)$ and for every $\hat{h}\in D$, there exists $\varepsilon_0>0$ such that for every $\varepsilon\in\(0,\varepsilon_0\)$, there exist $\overline{\beta}_\varepsilon>0$ and $\overline{w}_\varepsilon\in E_{\overline{h}_\varepsilon,\overline{\beta}_\varepsilon}\cap C_{l,\sym}^{p,\alpha}\(\Omega\)\cap C^2\(\overline\Omega\)$, where $\overline{h}_\varepsilon:=\overline{h}+\varepsilon\hat{h}$, such that $\(\overline{\beta}_\varepsilon\)_{0\le\varepsilon\le\varepsilon_0}$ and $\(\overline{w}_\varepsilon\)_{0\le\varepsilon\le\varepsilon_0}$ are smooth in $\varepsilon$ and $\partial_\varepsilon\[\overline{w}_\varepsilon\(0\)\]_{\varepsilon=0}<0$ and $\overline{w}_\varepsilon\(0\)<0$ for all $\varepsilon\in\(0,\varepsilon_0\)$.
\end{proposition}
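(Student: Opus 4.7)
The plan is to realize $\overline{w}_\varepsilon$ as a smooth family produced by the implicit function theorem, built on the non-degeneracy of $\overline{w}_0$. I would fix the Lagrange multiplier $\overline{\lambda}:=2\overline{\beta}_0/(DI_{\overline{h}}(\overline{w}_0).\overline{w}_0)>0$ carried by $\overline{w}_0$ and consider, for $h$ near $\overline{h}$, the PDE $\Delta w=\overline{\lambda}\,h\,f(w)$ in $\Omega$ with zero Dirichlet data. Testing against $w$ gives $\overline{\lambda}=2\|\nabla w\|_{L^2}^2/(DI_h(w).w)$, so any such $w$ automatically lies in $E_{h,\overline{\beta}}$ with $\overline{\beta}:=\|\nabla w\|_{L^2}^2$; this reduces Proposition~\ref{Pr3} to solving the PDE smoothly in the perturbation parameter.

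I would set up the IFT in the symmetric scales $X:=\{w\in C^{p,\alpha}_{l,\sym}(\Omega)\cap C^2(\overline\Omega):w|_{\partial\Omega}=0\}$ and $Y:=C^{p-2,\alpha}_{l,\sym}(\Omega)\cap C^2(\overline\Omega)$, and apply it to the smooth map $F:X\times Y\to Z$, with $Z:=C^{p-2,\alpha}_{l,\sym}(\Omega)\cap C^0(\overline\Omega)$, defined by $F(w,h):=\Delta w-\overline{\lambda}\,h\,f(w)$. The partial derivative $D_wF(\overline{w}_0,\overline{h})v=\Delta v-\overline{\lambda}\,\overline{h}\,f'(\overline{w}_0)v$ is an isomorphism $X\to Z$: its symmetric-subspace kernel is contained in the full $H^1_0$-kernel, which is trivial by the non-degeneracy hypothesis, and Fredholm theory together with standard elliptic regularity up to the boundary upgrade triviality of the kernel into bijectivity at the required regularity. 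The IFT then produces a smooth family $(\overline{w}_\varepsilon)$ in $X$ with $F(\overline{w}_\varepsilon,\overline{h}_\varepsilon)=0$, and $\overline{\beta}_\varepsilon:=\|\nabla\overline{w}_\varepsilon\|_{L^2}^2$ depends smoothly on $\varepsilon$.

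Differentiating $F(\overline{w}_\varepsilon,\overline{h}_\varepsilon)=0$ at $\varepsilon=0$ and setting $v:=\partial_\varepsilon\overline{w}_\varepsilon|_{\varepsilon=0}$ yields
\begin{equation*}
\(\Delta-\overline{\lambda}\,\overline{h}\,f'(\overline{w}_0)\)v=\overline{\lambda}\,\hat h\,f(\overline{w}_0)\text{ in }\Omega,\qquad v|_{\partial\Omega}=0,
\end{equation*}
so the Green's representation gives $v(0)=\overline{\lambda}\int_\Omega G_{\overline{h}}(\cdot,0)\,\hat h\,f(\overline{w}_0)\,dx$. Since $\overline{\lambda}>0$, the condition defining $D$ is precisely $v(0)<0$, and hence $\overline{w}_\varepsilon(0)<0$ on $(0,\varepsilon_0)$ after possibly shrinking $\varepsilon_0$.

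Finally, $D$ is open in $Y$ because the defining integral depends continuously on $\hat h$; for non-emptiness, the symmetries of $\overline{h}$ and $\overline{w}_0$ make the operator $\Delta-\overline{\lambda}\,\overline{h}\,f'(\overline{w}_0)$ commute with the reflections across the $\ell_{2j}$, so $G_{\overline{h}}(\cdot,0)\,f(\overline{w}_0)$ is itself $\ell_{2j}$-symmetric, and the linear functional $\hat h\mapsto\int_\Omega G_{\overline{h}}(\cdot,0)\,\hat h\,f(\overline{w}_0)dx$ on $Y$ is therefore nontrivial as soon as this product is not identically zero. The latter holds because $G_{\overline{h}}(\cdot,0)\sim-\log|x|/(2\pi)$ is positive near $0$ while $f(\overline{w}_0)(x_1,0)>0$ for small $x_1>0$ by hypothesis. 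The only delicate point in the whole argument is the symmetric-subspace invertibility feeding the IFT, which however reduces at once to the $H^1_0$-non-degeneracy of $\overline{w}_0$.
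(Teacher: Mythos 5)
Your proof is correct and follows essentially the same route as the paper's: fix the Lagrange multiplier $\overline\lambda$, solve $\Delta w=\overline\lambda h f(w)$ by the implicit function theorem in symmetric H\"older spaces using the non-degeneracy of $\overline w_0$, differentiate in $\varepsilon$ and use the Green's representation of the linearized operator to read off the sign of $\partial_\varepsilon[\overline w_\varepsilon(0)]_{\varepsilon=0}$. (Your factor $\overline\lambda$ in front of the integral is actually the more careful version; since $\overline\lambda>0$ the sign conclusion is the same.) The only stylistic difference is the non-emptiness of $D$: you present it abstractly as ``the symmetric product $G_{\overline h}(\cdot,0)f(\overline w_0)$ is nonzero, hence the corresponding linear functional on $C^{p-2,\alpha}_{l,\sym}\cap C^2(\overline\Omega)$ is nontrivial,'' whereas the paper makes this concrete by building an explicit symmetrized bump $\chi_{\sym}$ supported near a point $(r,0)$ with $r>0$ small and showing $-\chi_{\sym}\in D$. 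Your version implicitly uses the same mechanism (test the functional against a localized symmetric bump near a point of the positive $x_1$-axis, away from the log singularity), and it would be worth spelling out that one-line step; but the argument is sound.
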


\proof[Proof]
We begin with proving that $D$ is not empty. Since $G_{\overline{h}}\(\cdot,0\)>0$ near 0, $\overline{w}_0\in C_{l,\sym}^{p,\alpha}\(\Omega\)$ and $\overline{w}_0\(r,0\)>0$ for small $r>0$, we obtain that there exists $x_0\in\Omega$ and $r_0>0$ such that $G_{\overline{h}}\(\cdot,0\)\overline{w}_0>0$ in $B\(x_0,r_0\)$ and $B\(x_0,r_0\)\subset\Omega_0$, where 
$$\Omega_0:=\left\{\(x_1,x_2\)\in\Omega:\,0<x_2<x_1\tan\(\pi/l\)\right\}.$$ 
Let $\chi\in C^\infty\(\Omega\)$ be such that $\chi>0$ in $B\(x_0,r_0\)$ and $\chi\equiv0$ in $B\(x_0,r_0\)^c$. Let $\chi_{\sym}$ be the unique function in $C_{l,\sym}^{p-2,\alpha}\(\Omega\)\cap C^2\(\overline\Omega\)$ such that $\chi_{\sym}\equiv\chi$ in $\Omega_0$. By symmetry and since $G_{\overline{h}}\(\cdot,0\)\chi \overline{w}_0>0$ in $B\(x_0,r_0\)$ and $\chi=0$ in $B\(x_0,r_0\)^c$, we obtain 
$$\int_\Omega G_{\overline{h}}\(\cdot,0\)\chi_{\sym}f\(\overline{w}_0\)dx=2l\int_\Omega G_{\overline{h}}\(\cdot,0\)\chi f\(\overline{w}_0\)dx>0,$$
i.e. $-\chi_{\sym}\in D$. This proves that $D$ is not empty. Now, we prove the second part of Proposition~\ref{Pr3}. Since $\overline{h}\in C_{l,\sym}^{p-2,\alpha}\(\Omega\)\cap C^2\(\overline\Omega\)$ and $\overline{w}_0\in E^{nd}_{\overline{h},\overline{\beta}_0}\cap C_{l,\sym}^{p,\alpha}\(\Omega\)\cap C^2\(\overline\Omega\)$, it follows from the implicit function theorem together with standard elliptic regularity that there exist a neighborhood $\mathcal{N}$ of $\overline{h}$ in $C_{l,\sym}^{p-2,\alpha}\(\Omega\)\cap C^2\(\overline\Omega\)$ and a smooth mapping $\overline{w}:\mathcal{N}\to C_{l,\sym}^{p,\alpha}\(\Omega\)\cap C^2\(\overline\Omega\)$ such that $\overline{w}\(\overline{h}\)=\overline{w}_0$ and for every $\tilde{h}\in \mathcal{N}$, $\widetilde{U}=\overline{w}\big(\tilde{h}\big)$ is a solution of the problem
\begin{equation}\label{Pr3Eq1}
\left\{\begin{aligned}&\Delta\widetilde{U}=\frac{2\overline{\beta}_0\tilde{h}f\big(\widetilde{U}\big)}{DI_{\overline{h}}\(\overline{w}_0\).\overline{w}_0}&&\text{in }\Omega\\&\widetilde{U}=0&&\text{on }\partial\Omega.\end{aligned}\right.
\end{equation}
Note that \eqref{Pr3Eq1} is equivalent to $\widetilde{U}\in E_{\tilde{h},\overline\beta\big(\tilde{h}\big)}$, where 
$$\overline\beta\big(\tilde{h}\big):=\frac{\overline{\beta}_0DI_{\tilde{h}}\big(\widetilde{U}\big).\widetilde{U}}{DI_{\overline{h}}\(\overline{w}_0\).\overline{w}_0}\,.$$ 
In particular, we obtain that for every $\hat{h}\in D$, there exists $\varepsilon_0\in\(0,1\)$ such that for every $\varepsilon\in\(0,\varepsilon_0\)$, there exist $\overline{\beta}_\varepsilon=\overline\beta\(\overline{h}_\varepsilon\)>0$ and $\overline{w}_\varepsilon=\overline{w}\(\overline{h}_\varepsilon\)\in E_{\overline{h}_\varepsilon,\overline{\beta}_\varepsilon}\cap C_{l,\sym}^{p,\alpha}\(\Omega\)\cap C^2\(\overline\Omega\)$, where $\overline{h}_\varepsilon:=\overline{h}+\varepsilon\hat{h}$ such that $\(\overline{\beta}_\varepsilon\)_{0\le\varepsilon\le\varepsilon_0}$ and $\(\overline{w}_\varepsilon\)_{0\le\varepsilon\le\varepsilon_0}$ are smooth in $\varepsilon$. Furthermore, by differentiating \eqref{Pr3Eq1}, we obtain
$$\left\{\begin{aligned}&\(\Delta-2\overline{\beta}_0\(DI_{\overline{h}}\(\overline{w}_0\).\overline{w}_0\)^{-1}\overline{h}f'\(\overline{w}_0\)\)\partial_\varepsilon\[\overline{w}_\varepsilon\]_{\varepsilon=0}=\frac{2\overline{\beta}_0\hat{h}f\(\overline{w}_0\)}{DI_{\overline{h}}\(\overline{w}_0\).\overline{w}_0}&&\text{in }\Omega\\&\partial_\varepsilon\[\overline{w}_\varepsilon\]_{\varepsilon=0}=0&&\text{on }\partial\Omega.\end{aligned}\right.$$
Since $\hat{h}\in D$, it follows that
$$\partial_\varepsilon\[\overline{w}_\varepsilon\(0\)\]_{\varepsilon=0}=\int_\Omega G_{\overline{h}}\(\cdot,0\)\hat{h}f\(\overline{w}_0\)dx<0.$$
Since $\overline{w}_0\(0\)=0$, by taking $\varepsilon_0$ smaller if necessary, we then obtain that $\overline{w}_\varepsilon\(0\)<0$ for all $\varepsilon\in\(0,\varepsilon_0\)$. This ends the proof of Proposition~\ref{Pr3}.
\endproof

\proof[Proof of Proposition~\ref{Pr0} (iii)]
The result of (iii) in Proposition~\ref{Pr0} is a direct consequence of Proposition~\ref{Pr3} applied to $\overline{\beta}_0:=\beta_\kappa$, $\overline{h}_0:=h_\kappa$ and $\overline{w}_0:=w_\kappa$.
\endproof

\section{Construction of the ansatz}\label{Sec3}

This section is devoted to the construction of our ansatz. We let $\Omega$, $l$, $\alpha$ and $h$ be as in Theorem~\ref{Th1}, fix $\beta>4\pi$, $\beta_0>0$ and $k\in\N^*$ such that $\beta = \beta_0 +4k\pi$ and let $a_0$, $\kappa_0$, $\varepsilon_0$, $\beta_\kappa$, $w_\kappa$, $h_\kappa$, $\beta_{\kappa,\varepsilon}$, $u_{\kappa,\varepsilon}$ and $h_{\kappa,\varepsilon}$ be as in Proposition~\ref{Pr0}. To prove that $\beta$ is an unstable energy level of $I_h$, by using a diagonal argument, one can easily see that it suffices to show that for every $\kappa\in\(-\kappa_0,\kappa_0\)\backslash\left\{0\right\}$, the number $\beta_\kappa+4k\pi$ is an unstable energy level of $I_{h_\kappa}$. In what follows, we fix $\kappa\in\(-\kappa_0,\kappa_0\)\backslash\left\{0\right\}$ and for the sake of simplicity, we drop the dependance in $\kappa$ from our notations. More precisely, we denote $\varepsilon_0:=\varepsilon_0\(\kappa\)$, $\beta_0:=\beta_\kappa$, $h_0:=h_\kappa$, $w_0:=w_\kappa$, $\beta_\varepsilon:=\beta_{\kappa,\varepsilon}$, $h_\varepsilon:=h_{\kappa,\varepsilon}$ and $w_\varepsilon:=w_{\kappa,\varepsilon}$. Remark that the new function $w_0$ still satisfies the properties of (i) in Proposition~\ref{Pr0} but now this function is moreover non-degenerate.

\subsection{The bubbles}\label{Sec31} 
For every $\gamma_0>0$, we let $\overline{B}_{\gamma_0}$ be the unique radial solution to the problem 
$$\left\{\begin{aligned}&\Delta\overline{B}_{\gamma_0}=f\(\overline{B}_{\gamma_0}\)&&\text{in }\R^2\\&\overline{B}_{\gamma_0}\(0\)=\gamma_0,&&\end{aligned}\right.$$
where $f\(s\):=s\exp\(s^2\)$ for all $s\in\R$. Note that by standard ordinary differential equations theory, $\overline{B}_{\gamma_0}$ is defined on $\[0,\infty\)$. For every $\varepsilon\in\(0,\varepsilon_0\)$, $\gamma_0>0$ and $x_0\in\Omega$, we then define
$$\overline{B}_{\varepsilon,\gamma_0,x_0}\(x\):=\overline{B}_{\gamma_0}\big(\sqrt{\lambda_\varepsilon h_\varepsilon\(x_0\)}\left|x-x_0\right|\big)\qquad\forall x\in\R^2,$$
where
$$\lambda_\varepsilon:=\frac{2\beta_\varepsilon}{DI_{h_\varepsilon}\(w_\varepsilon\).w_\varepsilon}\longrightarrow\frac{2\beta}{DI_{h}\(w_0\).w_0} =:\lambda>0,$$
so that $\overline{B}_{\varepsilon,\gamma_0,x_0}$ solves the problem 
$$\left\{\begin{aligned}&\Delta\overline{B}_{\varepsilon,\gamma_0,x_0}=\lambda_\varepsilon h_\varepsilon\(x_0\)f\(\overline{B}_{\varepsilon,\gamma_0,x_0}\)&&\text{in }\R^2\\&\overline{B}_{\varepsilon,\gamma_0,x_0}\(x_0\)=\gamma_0.&&\end{aligned}\right.$$
For every $r>0$ such that $B\(x_0,r\)\subset\Omega$, we then let $B_{\varepsilon,\gamma_0,x_0,r}:\Omega\to\R$ be the function defined as
\begin{equation}\label{DefBeps}
B_{\varepsilon,\gamma_0,x_0,r}\(x\):=\left\{\begin{aligned}&\overline{B}_{\varepsilon,\gamma_0,x_0}\(x\)-C_{\varepsilon,\gamma_0,x_0,r}+A_{\varepsilon,\gamma_0,x_0,r}H\(x,x_0\)&&x\in B\(x_0,r\)\\
&A_{\varepsilon,\gamma_0,x_0,r}G\(x,x_0\)&&\text{otherwise}\end{aligned}\right.
\end{equation}
for all $x\in\Omega$, where $G$ is the Green's function of the Laplace operator in $\Omega$ with boundary condition $\left.G\(\cdot,x_0\)\right|_{\partial\Omega}=0$, $H$ is the regular part of $G$, i.e. 
$$G\(x,x_0\)=\frac{1}{2\pi}\ln\frac{1}{\left|x-x_0\right|}+H\(x,x_0\)$$
and $A_{\varepsilon,\gamma_0,x_0,r}$, $C_{\varepsilon,\gamma_0,x_0,r}$ are constants chosen so that $B_{\varepsilon,\gamma_0,x_0,r}\in C^1\(\overline\Omega\)$, i.e.
\begin{align}
A_{\varepsilon,\gamma_0,x_0,r}&:=\int_{B(x_0,r)}\Delta\overline{B}_{\varepsilon,\gamma_0,x_0}\,dx,\label{Sec31Eq1}\allowdisplaybreaks\\
C_{\varepsilon,\gamma_0,x_0,r}&:=\overline{B}_{\gamma_0}\big(\sqrt{\lambda_\varepsilon h_\varepsilon\(x_0\)}r\big)-\frac{A_{\varepsilon,\gamma_0,x_0,r}}{2\pi}\ln\frac{1}{r}\,.\label{Sec31Eq2}
\end{align}

\subsection{The primary ansatz}\label{Sec32}  
For every $\varepsilon\in\(0,\varepsilon_0\)$ and $\delta\in\(0,1\)$, let $\Gamma_\varepsilon^k$ and $T_\varepsilon^k\(\delta\)$ be the sets of parameters defined as
\begin{align}
\Gamma_\varepsilon^k\(\delta\)&:=\big\{\gamma=\(\gamma_1,\dotsc,\gamma_k\)\in\(0,\infty\)^k:\,\left|\gamma_i-\overline{\gamma}_\varepsilon\right|<\delta\overline{\gamma}_\varepsilon,\,\forall i\in\left\{1,\dotsc,k\right\}\big\},\label{defGammaepsilon0}\allowdisplaybreaks\\
T_\varepsilon^k\(\delta\)&:=\Big\{\tau=\(\tau_1,\dotsc,\tau_k\)\in\R^k:\,{-\frac{kd_\varepsilon}{\delta}<\tau_1<\dots<\tau_k<\frac{kd_\varepsilon}{\delta}}\nonumber\\
&\hspace{139pt}\text{and }\left|\tau_i-\tau_j\right|>\delta d_\varepsilon,\,
\forall i,j\in\left\{1,\dotsc,k\right\},i\ne j\Big\},\label{defTepsilon}
\end{align}
where 
\begin{equation}\label{Sec32Eq1}
\overline{\gamma}_\varepsilon:=\frac{2\(k+l-1\)}{l\left|w_\varepsilon\(0\)\right|}\ln\frac{1}{\left|w_\varepsilon\(0\)\right|}\quad\text{and}\quad d_\varepsilon:=\overline{\gamma}_\varepsilon^{-1/l}.
\end{equation} 
From \eqref{Sec32Eq1}, $w_0(0)=0$ and $\partial_\ve[w_\ve(0)]_{\ve=0}\ne 0$, we get
\begin{equation}\label{Sec32Eq1bis}
w_\ve(0)\sim  -\frac{2\(k+l-1\)}{l}\frac{\ln\gammae}{\gammae},\quad \ve\sim\frac{w_\ve(0)}{\partial_\ve[w_\ve(0)]_{\ve=0}}=\bigO\(\frac{\ln\gammae}{\gammae}\),\quad \text{as }\ve\to 0,
\end{equation}
and since $w_0\(r,0\)\sim a_0r^l$ as $r\to0$, using the continuity of $\partial_\ve w_\ve(x)$ jointly in $\ve$ and $x$, and \eqref{Sec32Eq1bis} we get for some $\ve_1 \in (0,\ve)$
\begin{equation}\label{Pr5Eq3}
\begin{split}
w_\varepsilon\(\overline{\tau_i}\)&=  w_0\(\overline{\tau_i}\) +[w_\ve\(\overline{\tau_i}\)-w_0\(\overline{\tau_i}\)] = \bigO\(d_\varepsilon^l\) +\ve \partial_\ve[w_\ve\(\overline{\tau_i}\)]_{\ve=\ve_1} \\ 
&\sim \ve \partial_\ve[w_\ve\(0\)]_{\ve=0} \sim w_\varepsilon\(0\)\sim-\frac{2\(k+l-1\)\ln\overline\gamma_\varepsilon}{l\overline\gamma_\varepsilon},\quad \text{as }\varepsilon\to0,
\end{split}
\end{equation}
uniformly in $\tau\in T_\varepsilon^k\(\delta\)$.
For every $(\gamma,\tau)\in \Gamma_\varepsilon^k\(\delta\)\times T_\varepsilon^k\(\delta\)$, we define
$$\widetilde{U}_{\varepsilon,\gamma,\tau}:=w_\varepsilon+\sum_{i=1}^kB_{\varepsilon,\gamma_i,\tau_i},$$
where $B_{\varepsilon,\gamma_i,\tau_i}:=B_{\varepsilon,\gamma_i,\overline{\tau_i},r_\varepsilon}$, $\overline{\tau_i}:=\(\tau_i,0\)$, and for $\delta_0\in(0,1/2)$ to be fixed later,
\begin{equation}\label{Sec32Eq2}
r_\varepsilon:=\overline\mu_\varepsilon^{\delta_0},\quad \overline\mu_\varepsilon^2:=\exp\big(-\overline\gamma_\varepsilon^2\big).
\end{equation}

\begin{claim}\label{Claim1} 
Set $A_{\varepsilon,\gamma_i,\tau_i}:=A_{\varepsilon,\gamma_i,\overline{\tau_i},r_\varepsilon}$ and $C_{\varepsilon,\gamma_i,\tau_i}:=C_{\varepsilon,\gamma_i,\overline{\tau_i},r_\varepsilon}$.
For every $\delta\in\(0,1\)$ and $i\in\left\{1,\dotsc,k\right\}$, we have
\begin{align}
A_{\varepsilon,\gamma_i,\tau_i}&=\frac{4\pi}{\gamma_i}+\bigO\(\frac{1}{\overline\gamma_\varepsilon^3}\)\,,\quad C_{\varepsilon,\gamma_i,\tau_i}=-\frac{2\ln\overline\gamma_\varepsilon}{\gamma_i}+\bigO\(\frac{1}{\overline\gamma_\varepsilon}\),\label{Sec32Eq3}\allowdisplaybreaks\\
\partial_{\gamma_i}\[A_{\varepsilon,\gamma_i,\tau_i}\]&=-\frac{4\pi}{\gamma_i^2}+\bigO\(\frac{1}{\overline\gamma_\varepsilon^4}\)\quad\text{and}\quad\partial_{\gamma_i}\[C_{\varepsilon,\gamma_i,\tau_i}\]=\frac{2\ln\overline\gamma_\varepsilon}{\gamma_i^2}+\bigO\(\frac{1}{\overline\gamma_\varepsilon^2}\)\label{Sec32Eq4}
\end{align}
as $\varepsilon\to0$, uniformly in $\(\gamma,\tau\)\in \Gamma_\varepsilon^k\(\delta\)\times T_\varepsilon^k\(\delta\)$. Furthermore, for every $a\ge0$ and $\delta'\in\(0,1-\sqrt{\delta_0}\)$ (i.e. such that $\(1-\delta'\)^2>\delta_0$), we have
\begin{equation}
\partial_{\tau_i}\[A_{\varepsilon,\gamma_i,\tau_i}\]=\bigO\(\frac{1}{\gammae^a}\)\quad\text{and}\quad\partial_{\tau_i}\[C_{\varepsilon,\gamma_i,\tau_i}\]\sim-\frac{\partial_{x_1}h_\varepsilon\(\overline{\tau_i}\)}{h_\varepsilon\(\overline{\tau_i}\)\gamma_i}=\bigO\(\frac{1}{\overline\gamma_\varepsilon}\)\label{Sec32Eq5}
\end{equation}
as $\varepsilon\to0$, uniformly in $\(\gamma,\tau\)\in \Gamma_\varepsilon^k\(\delta'\)\times T_\varepsilon^k\(\delta\)$. 
\end{claim}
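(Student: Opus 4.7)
My approach is to reduce each quantity to the sharp asymptotic expansion of the radial bubble $\overline{B}_{\gamma}$ and its derivatives evaluated at the scale $s_\varepsilon := \sqrt{\lambda_\varepsilon h_\varepsilon(\overline{\tau_i})}\, r_\varepsilon$. Such expansions (in the spirit of \cites{DruThi,MalMar,ManMar}) will be compiled in Section~\ref{App}. Since $r_\varepsilon = \exp(-\delta_0\gammae^2/2)$, while the natural width of $\overline{B}_{\gamma_i}$ is $\mu_{\gamma_i} \sim (2/\gamma_i)\exp(-\gamma_i^2/2)$, the ratio $s_\varepsilon/\mu_{\gamma_i}$ is super-polynomially large in $\gammae$ whenever $\gamma_i^2 > \delta_0\gammae^2$; under the weaker $\gamma \in \Gamma_\varepsilon^k(\delta)$ this ratio is at least polynomially large, which is enough for \eqref{Sec32Eq3}--\eqref{Sec32Eq4}. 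In this far-field regime one has $\overline{B}_{\gamma_i}(s_\varepsilon) = (2/\gamma_i)\log(2/(\gamma_i s_\varepsilon)) + \bigO(1/\gammae)$ and $\overline{B}_{\gamma_i}'(s_\varepsilon) = -2/(\gamma_i s_\varepsilon) + \bigO(1/(\gammae^3 s_\varepsilon))$, together with the $\gamma$-differentiated counterparts obtained by differentiating the bubble ODE with respect to $\gamma$.

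For \eqref{Sec32Eq3}, I apply the divergence theorem with the paper's sign $\Delta = -\Delta_{\mathrm{std}}$ to reduce $A_{\varepsilon,\gamma_i,\tau_i}$ to a boundary flux:
\begin{equation*}
A_{\varepsilon,\gamma_i,\tau_i} = -2\pi s_\varepsilon\, \overline{B}_{\gamma_i}'(s_\varepsilon),
\end{equation*}
and insert the above expansion to read off $A_{\varepsilon,\gamma_i,\tau_i} = 4\pi/\gamma_i + \bigO(1/\gammae^3)$. For $C_{\varepsilon,\gamma_i,\tau_i}$, plugging the expansion of $\overline{B}_{\gamma_i}(s_\varepsilon)$ into \eqref{Sec31Eq2} and writing $\log s_\varepsilon = \tfrac{1}{2}\log(\lambda_\varepsilon h_\varepsilon(\overline{\tau_i})) + \log r_\varepsilon$, I observe that the $\log(1/r_\varepsilon)$ contributions coming from $\overline{B}_{\gamma_i}(s_\varepsilon)$ and from $(A_{\varepsilon,\gamma_i,\tau_i}/2\pi)\log(1/r_\varepsilon)$ cancel exactly; what remains is $(2/\gamma_i)\log(2/(\gamma_i\sqrt{\lambda_\varepsilon h_\varepsilon(\overline{\tau_i})})) + \bigO(1/\gammae) = -2\log\gammae/\gamma_i + \bigO(1/\gammae)$, using $\log\gamma_i - \log\gammae = \bigO(1)$. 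The $\gamma_i$-derivatives in \eqref{Sec32Eq4} follow from the same scheme applied to $\partial_\gamma \overline{B}_\gamma(r)$ and $\partial_\gamma \overline{B}_\gamma'(r)$.

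The delicate part is the $\tau_i$-derivatives in \eqref{Sec32Eq5}. Since $A_{\varepsilon,\gamma_i,\tau_i}$ depends on $\tau_i$ only through $s_\varepsilon$, the radial form of $\Delta\overline{B}_\gamma = f(\overline{B}_\gamma)$, namely $\partial_r[r\overline{B}_\gamma'(r)] = -rf(\overline{B}_\gamma(r))$, yields the exact formula
\begin{equation*}
\partial_{\tau_i}\bigl[A_{\varepsilon,\gamma_i,\tau_i}\bigr] = \pi\,\lambda_\varepsilon\,(\partial_{x_1}h_\varepsilon)(\overline{\tau_i})\,r_\varepsilon^2\, f\bigl(\overline{B}_{\gamma_i}(s_\varepsilon)\bigr).
\end{equation*}
The main obstacle is then bounding $r_\varepsilon^2\, e^{\overline{B}_{\gamma_i}(s_\varepsilon)^2}$: using $\overline{B}_{\gamma_i}(s_\varepsilon) \sim \delta_0\gammae^2/\gamma_i$, the exponent equals $\delta_0\gammae^2\bigl(\delta_0\gammae^2/\gamma_i^2 - 1\bigr)$, and the hypothesis $\delta' < 1-\sqrt{\delta_0}$ is precisely what makes the bracket strictly negative (via $\gamma_i^2 > (1-\delta')^2\gammae^2 > \delta_0\gammae^2$), yielding the advertised super-polynomial gain. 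This is exactly why one has to restrict from $\Gamma_\varepsilon^k(\delta)$ to $\Gamma_\varepsilon^k(\delta')$. Finally, $\partial_{\tau_i} C_{\varepsilon,\gamma_i,\tau_i} = \overline{B}_{\gamma_i}'(s_\varepsilon)\partial_{\tau_i}s_\varepsilon - (2\pi)^{-1}\log(1/r_\varepsilon)\,\partial_{\tau_i}A_{\varepsilon,\gamma_i,\tau_i}$; the second term is killed by the preceding bound (since $\log(1/r_\varepsilon) = \delta_0\gammae^2/2$ is only polynomial), while the first produces $-(\partial_{x_1}h_\varepsilon)(\overline{\tau_i})/(h_\varepsilon(\overline{\tau_i})\gamma_i)$ upon inserting $\overline{B}_{\gamma_i}'(s_\varepsilon) \sim -2/(\gamma_i s_\varepsilon)$ and $\partial_{\tau_i}s_\varepsilon = \sqrt{\lambda_\varepsilon}\,(\partial_{x_1}h_\varepsilon)(\overline{\tau_i})\,r_\varepsilon/(2\sqrt{h_\varepsilon(\overline{\tau_i})})$.
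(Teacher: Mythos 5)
Your proposal matches the paper's proof in all essential respects: you compute $A_{\varepsilon,\gamma_i,\tau_i}$ as a boundary flux via the divergence theorem, expand $\overline{B}_{\gamma_i}$ and $\overline{B}_{\gamma_i}'$ at the matching scale $s_\varepsilon=\sqrt{\lambda_\varepsilon h_\varepsilon(\overline{\tau_i})}\,r_\varepsilon$ using the sharp bubble asymptotics from Section~\ref{App}, observe the cancellation of the $\ln(1/r_\varepsilon)$ terms in $C_{\varepsilon,\gamma_i,\tau_i}$, differentiate in $\gamma_i$ via the $Z_{0,\gamma}$ expansion, and identify the super-polynomial smallness of $r_\varepsilon^2 f(\overline{B}_{\gamma_i}(s_\varepsilon))$ as the content of the restriction $\delta'<1-\sqrt{\delta_0}$. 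The only cosmetic difference is that for $\partial_{\tau_i}A$ you differentiate the flux identity directly using the radial ODE $(r\overline{B}_\gamma')'=-rf(\overline{B}_\gamma)$, whereas the paper first rewrites $A$ as $2\pi\int_0^{s_\varepsilon}f(\overline{B}_{\gamma_i})\,r\,dr$ and differentiates the upper limit; these are the same computation.

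One small imprecision (shared by the paper's own wording, and harmless for the application): you assert that under $\gamma\in\Gamma_\varepsilon^k(\delta)$ the ratio $s_\varepsilon/\mu_{\gamma_i}$ is always at least polynomially large. In fact for $\delta$ close to $1$ one can have $\gamma_i^2<\delta_0\gammae^2$, in which case this ratio is exponentially \emph{small}. The estimates \eqref{Sec32Eq3}--\eqref{Sec32Eq4} as stated thus implicitly require $(1-\delta)^2>\delta_0$; this is satisfied everywhere the Claim is used (with $\delta'=\tfrac{1-\sqrt{2\delta_0}}{2}$), so it does not affect the validity of the construction, but the blanket ``for every $\delta\in(0,1)$'' is slightly stronger than what the argument proves.
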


The proof of Claim~\ref{Claim1} is based on a precise asymptotic study of the bubbles $\overline{B}_\gamma$ and is postponed to the Appendix.

\subsection{Correction of the error at the bottom of the bubbles}\label{Sec33}  
In this section, we modify our ansatz so to correct the error made outside the balls $B\(\overline{\tau_i},2r_\varepsilon\)$. We prove the following:

\begin{proposition}\label{Pr4}
Let $\Omega$, $l$ $\alpha$ and $h$ be as in Theorem~\ref{Th1}. Let $k$, $\varepsilon_0$, $h_\varepsilon$, $w_\varepsilon$, $\lambda_\varepsilon$, $\overline\gamma_\varepsilon$, $\overline{\tau_i}$, $r_\varepsilon$, $\delta_0$, $\Gamma_\varepsilon^k\(\delta\)$, $T_\varepsilon^k\(\delta\)$ and $\widetilde{U}_{\varepsilon,\gamma,\tau}$ be as in Sections~\ref{Sec31} and~\ref{Sec32}. Let $\chi\in C^\infty\(\R\)$ be such that $0\le\chi\le1$ in $\R$, $\chi\equiv1$ in $\(-\infty,1\]$ and $\chi\equiv0$ in $\[2,\infty\)$. Define
$$\chi_{\varepsilon,\tau}\(x\):=1-\sum_{i=1}^k\chi\(\(\left|x-\overline{\tau_i}\right|+r_\varepsilon^2-r_\varepsilon\)/r_\varepsilon^2\)\qquad\forall x\in\R^2.$$
For every $\delta\in\(0,1\)$ and $\delta'\in (0,1-\sqrt{2\delta_0})$, there exist $\varepsilon_1(\delta,\delta')\in (0,\varepsilon_0)$ 
and $C_1=C_1\(\delta,\delta'\)>0$ such that for every $\varepsilon\in\(0,\varepsilon_1\(\delta,\delta'\)\)$ and $(\gamma,\tau)\in \Gamma_\varepsilon^k\(\delta'\)\times T_\varepsilon^k\(\delta\)$, there exists a unique solution $\Psi_{\varepsilon,\gamma,\tau}\in C^{l,\alpha}\(\Omega\)\cap C^2\(\overline\Omega\)$ to the problem
\begin{equation}\label{Pr4Eq1}
\left\{\begin{aligned}&\Delta\(w_\varepsilon+\Psi_{\varepsilon,\gamma,\tau}\)=\lambda_\varepsilon h_\varepsilon\chi_{\varepsilon,\tau}f\big(\widetilde{U}_{\varepsilon,\gamma,\tau}+\Psi_{\varepsilon,\gamma,\tau}\big)&&\text{in }\Omega\\&\Psi_{\varepsilon,\gamma,\tau}=0&&\text{on }\partial\Omega\end{aligned}\right.
\end{equation}
such that $\Psi_{\varepsilon,\gamma,\tau}$ is even in $x_2$, continuously differentiable in $(\gamma,\tau)$ and
\begin{align}
&\left\|\Psi_{\varepsilon,\gamma,\tau}\right\|_{C^1}\le\frac{C_1}{\overline{\gamma}_\varepsilon}\,,\quad\left\|D_\gamma\[\Psi_{\varepsilon,\gamma,\tau}\]\right\|_{C^1}\le\frac{C_1}{\overline{\gamma}_\varepsilon^2},\label{Pr4Eq2}\allowdisplaybreaks\\
&\left\|D_\tau\[\Psi_{\varepsilon,\gamma,\tau}\]\right\|_{H^1}+\left\|D_\tau\[\Psi_{\varepsilon,\gamma,\tau}\]\right\|_{C^0}\le\frac{C_1}{\overline{\gamma}_\varepsilon}\,.\label{Pr4Eq3}
\end{align}
Finally, setting $U_{\ve,\gamma,\tau}:= \widetilde{U}_{\varepsilon,\gamma,\tau}+\Psi_{\ve,\gamma,\tau}$, there exists $p_0=p_0\(\delta_0,\delta'\)$ such that for every $p\in\[1,p_0\]$, $a\ge0$ and $i\in\left\{1,\dotsc,k\right\}$, we have
\begin{align}
&\left\|\exp\big(U_{\ve,\gamma,\tau}^2\big)\mathbf{1}_{A\(\overline{\tau_i},r_\ve,R_\ve\)}\right\|_{L^p}=\bigO\(\frac{1}{\gammae^a}\),\,\left\|\exp\big(U_{\ve,\gamma,\tau}^2\big)B_{\ve,\gamma_i,\tau_i}^a\mathbf{1}_{\Omega_{R_\ve,\tau}}\right\|_{L^p}=\bigO\(\frac{1}{\gammae^a}\),\label{Pr4Eq3b}\allowdisplaybreaks\\
&\left\|\partial_{\tau_i}\[\chi_{\ve,\tau}\]f\(U_{\ve,\gamma,\tau}\)\right\|_{L^p}=\bigO\(\frac{1}{\gammae^a}\),\,\left\| f'\(U_{\ve,\gamma,\tau}\)\partial_{\tau_i}\[U_{\ve,\gamma,\tau}\]\mathbf{1}_{\Omega_{r_\ve,\tau}}\right\|_{L^p}=\bigO\(\gammae\)\label{Pr4Eq3c}
\end{align}
uniformly in $(\gamma,\tau)\in \Gamma_\varepsilon^k\(\delta'\)\times T_\varepsilon^k\(\delta\)$, where $R_\ve:=\exp\(-\gammae\)$ and
\begin{equation}\label{defAOmega}
A\(\overline{\tau_i},r,R\):=B\(\overline{\tau_i},R\)\backslash B\(\overline{\tau_i},r\)\quad\text{and}\quad\Omega_{r,\tau}:=\Omega\backslash\(\bigcup_{i=1}^k B\(\overline{\tau_i},r\)\)
\end{equation}
for all $R>r>0$
\end{proposition}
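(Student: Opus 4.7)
The plan is to recast \eqref{Pr4Eq1} as a fixed-point problem for $\Psi$, linearize around $w_\varepsilon$, and solve by contraction exploiting the non-degeneracy supplied by Proposition~\ref{Pr0}~(ii). Subtracting $\Delta w_\varepsilon = \lambda_\varepsilon h_\varepsilon f(w_\varepsilon)$, \eqref{Pr4Eq1} is equivalent to
$$
L_{\varepsilon,\tau}\Psi = \mathcal{S}_{\varepsilon,\gamma,\tau} + \mathcal{N}_{\varepsilon,\gamma,\tau}(\Psi),
$$
where $L_{\varepsilon,\tau} := \Delta - \lambda_\varepsilon h_\varepsilon \chi_{\varepsilon,\tau} f'(w_\varepsilon)$, $\mathcal{S}_{\varepsilon,\gamma,\tau} := \lambda_\varepsilon h_\varepsilon[\chi_{\varepsilon,\tau} f(\widetilde{U}_{\varepsilon,\gamma,\tau}) - f(w_\varepsilon)]$, and $\mathcal{N}_{\varepsilon,\gamma,\tau}(\Psi) := \lambda_\varepsilon h_\varepsilon \chi_{\varepsilon,\tau}[f(\widetilde{U}_{\varepsilon,\gamma,\tau} + \Psi) - f(\widetilde{U}_{\varepsilon,\gamma,\tau}) - f'(w_\varepsilon)\Psi]$. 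Since $\chi_{\varepsilon,\tau} \to 1$ in every $L^q(\Omega)$, a standard perturbation argument from the non-degeneracy of $w_\varepsilon$ shows that for $\varepsilon$ small the operator $L_{\varepsilon,\tau}$ is invertible from $W^{2,p}(\Omega)\cap W^{1,p}_0(\Omega)$ onto $L^p(\Omega)$, uniformly in $(\gamma,\tau)\in\Gamma^k_\varepsilon(\delta')\times T^k_\varepsilon(\delta)$, for every $p\in(1,\infty)$.

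The contraction is then run in the ball $\{\|\Psi\|_{C^1}\le C_1/\overline\gamma_\varepsilon\}$, with the source and nonlinearity estimated in $L^p$ for some $p\in(2,p_0]$, so that Sobolev embedding $W^{2,p}\hookrightarrow C^{1,\alpha}$ converts the bound into a $C^1$-bound. The piece $-(1-\chi_{\varepsilon,\tau})\lambda_\varepsilon h_\varepsilon f(w_\varepsilon)$ of $\mathcal{S}_{\varepsilon,\gamma,\tau}$ is supported in $\bigcup_i B(\overline{\tau_i},2r_\varepsilon)$ where $f(w_\varepsilon)$ is bounded, hence it is $O(r_\varepsilon^{2/p})$, super-exponentially small in $\overline\gamma_\varepsilon$. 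Near the cutoff interface, $\widetilde{U}_{\varepsilon,\gamma,\tau} - w_\varepsilon = \sum_i B_{\varepsilon,\gamma_i,\tau_i}$ can reach size $\sim\delta_0\overline\gamma_\varepsilon$ (because $A_{\varepsilon,\gamma_i,\tau_i} G(\cdot,\overline{\tau_i})\sim (4\pi/\overline\gamma_\varepsilon)(2\pi)^{-1}\log(1/r_\varepsilon)$ at $|x-\overline{\tau_i}|=r_\varepsilon$), so $f(\widetilde{U})$ is pointwise huge there, but the interface has measure $\lesssim r_\varepsilon^3$ and the trade-off $\exp(\delta_0^2\overline\gamma_\varepsilon^2)\cdot r_\varepsilon^{3/p}$ remains super-exponentially small precisely when $p<3/(2\delta_0)$; this dictates the choice of $p_0(\delta_0,\delta')$. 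Outside a small neighborhood of the $\overline{\tau_i}$, $\widetilde{U}-w_\varepsilon$ is $O(\overline\gamma_\varepsilon^{-1})$ (with only an $L^p$-integrable log singularity), so a first-order Taylor expansion of $f$ yields $\|\mathcal{S}_{\varepsilon,\gamma,\tau}\|_{L^p} = O(\overline\gamma_\varepsilon^{-1})$. The nonlinear piece $\mathcal{N}_{\varepsilon,\gamma,\tau}(\Psi)$ is quadratic in $\Psi$ with an exponential prefactor that remains $L^{p_0}$-controlled on $\operatorname{supp}\chi_{\varepsilon,\tau}$, so standard contraction closes the argument and gives the first estimate in \eqref{Pr4Eq2}. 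Evenness in $x_2$ is preserved because the data and the equation are invariant under $x_2\mapsto -x_2$, so uniqueness transfers the symmetry to $\Psi$.

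Differentiability in $(\gamma,\tau)$ and the remaining bounds in \eqref{Pr4Eq2}--\eqref{Pr4Eq3} come from the implicit function theorem applied to the same $C^1$-mapping. The derivative $\partial_{\gamma_i}\Psi$ solves an equation of the form $L_{\varepsilon,\tau}(\partial_{\gamma_i}\Psi) = -\lambda_\varepsilon h_\varepsilon \chi_{\varepsilon,\tau} f'(\widetilde{U}+\Psi)\,\partial_{\gamma_i}\widetilde{U} + (\text{lower order})$, and $\partial_{\gamma_i}\widetilde{U}=\partial_{\gamma_i}B_{\varepsilon,\gamma_i,\tau_i}$ is $O(\overline\gamma_\varepsilon^{-2})$ on $\operatorname{supp}\chi_{\varepsilon,\tau}$ by \eqref{Sec32Eq4}. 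For $\partial_{\tau_i}\Psi$ there is an extra source $\lambda_\varepsilon h_\varepsilon\partial_{\tau_i}(\chi_{\varepsilon,\tau})f(\widetilde{U}+\Psi)$ of pointwise size $\sim r_\varepsilon^{-2}f(\widetilde{U})$ but supported in $A(\overline{\tau_i},r_\varepsilon,r_\varepsilon+r_\varepsilon^2)$, whose tiny measure again delivers $L^p$-smallness and hence \eqref{Pr4Eq3}. Finally, \eqref{Pr4Eq3b}--\eqref{Pr4Eq3c} are independent of the PDE: on $A(\overline{\tau_i},r_\varepsilon,R_\varepsilon)$, the Appendix asymptotics of $\overline{B}_{\gamma_i}$ give $U_{\varepsilon,\gamma,\tau}^2\le(1-\delta_0+o(1))\overline\gamma_\varepsilon^2$ at the inner boundary and $U_{\varepsilon,\gamma,\tau}^2=O(1)$ at the outer one, while $|A(\overline{\tau_i},r_\varepsilon,R_\varepsilon)|\lesssim R_\varepsilon^2 = \exp(-2\overline\gamma_\varepsilon)$, yielding the first bound; on $\Omega_{R_\varepsilon,\tau}$, $\exp(U^2)\in L^\infty$ and $B_{\varepsilon,\gamma_i,\tau_i}=A_{\varepsilon,\gamma_i,\tau_i}G(\cdot,\overline{\tau_i})=O(\overline\gamma_\varepsilon^{-1})$ by Claim~\ref{Claim1}. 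The bounds \eqref{Pr4Eq3c} follow analogously from the inclusion $\operatorname{supp}\partial_{\tau_i}\chi_{\varepsilon,\tau}\subset A(\overline{\tau_i},r_\varepsilon,r_\varepsilon+r_\varepsilon^2)$ and pointwise control of $f'(U)$. The main technical obstacle is the precise calibration of $p_0$ as a function of $\delta_0$ and $\delta'$, balancing the growth of $\exp(U^2)$ against the tiny measure of the transition annulus; this forces $\delta_0\in(0,1/2)$ and sets the scale $r_\varepsilon=\overline\mu_\varepsilon^{\delta_0}$ used throughout.
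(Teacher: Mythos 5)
The overall architecture you propose — recast \eqref{Pr4Eq1} as a fixed-point problem $L_{\varepsilon,\tau}\Psi=\mathcal{S}+\mathcal{N}(\Psi)$, invert $L_{\varepsilon,\tau}$ from non-degeneracy of $w_\varepsilon$, contract in a small $W^{2,p}$ (hence $C^1$) ball, then differentiate the fixed-point relation for the $\gamma,\tau$-derivatives — is exactly the paper's strategy. The gap lies in the quantitative estimate of the source $\chi_{\varepsilon,\tau}[f(\widetilde{U}_{\varepsilon,\gamma,\tau})-f(w_\varepsilon)]$ on $\Omega_{r_\varepsilon,\tau}$.

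You claim that, away from a small neighborhood of the $\overline{\tau_i}$, $\widetilde{U}_{\varepsilon,\gamma,\tau}-w_\varepsilon$ is $O(\overline\gamma_\varepsilon^{-1})$, so that a first-order Taylor expansion of $f$ gives $\|\mathcal{S}_{\varepsilon,\gamma,\tau}\|_{L^p}=O(\overline\gamma_\varepsilon^{-1})$; and you treat only the cutoff interface $A(\overline{\tau_i},r_\varepsilon,r_\varepsilon+r_\varepsilon^2)$ as the dangerous set, whose tiny measure $r_\varepsilon^3$ buys you the constraint $p<3/(2\delta_0)$. But by \eqref{Pr4Eq14}, for $|x-\overline{\tau_i}|>r_\varepsilon$ one has $B_{\varepsilon,\gamma_i,\tau_i}(x)=\frac{2}{\gamma_i}\big(\ln\frac{1}{|x-\overline{\tau_i}|}+O(1)\big)$; at $|x-\overline{\tau_i}|\sim r_\varepsilon$ this is $\approx\delta_0\overline\gamma_\varepsilon^2/\gamma_i\sim\delta_0\overline\gamma_\varepsilon$, and it remains of order $1$ (not $\overline\gamma_\varepsilon^{-1}$) all the way out to $|x-\overline{\tau_i}|\sim R_\varepsilon=e^{-\overline\gamma_\varepsilon}$. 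Consequently $f'(\xi)$ in your mean-value form $f(\widetilde U)-f(w_\varepsilon)=f'(\xi)(\widetilde U-w_\varepsilon)$ is exponentially large on the whole transition zone $A(\overline{\tau_i},r_\varepsilon,R_\varepsilon)$, not just on the cutoff interface, and the $L^p$-mass of the exponential tail is dominated by annuli of thickness comparable to $r_\varepsilon$ (measure $\sim r_\varepsilon^2$), not $r_\varepsilon^3$. Redoing the tradeoff with measure $r_\varepsilon^2$ produces the constraint $p<(1-\delta')^2/\delta_0$ — which is the one the paper extracts by actually integrating in $r$ from $r_\varepsilon$ to $R_\varepsilon$ (see \eqref{Pr4Eq15}--\eqref{Pr4Eq17}) — and is strictly smaller than your claimed $p_0\approx 3/(2\delta_0)$. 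For $\delta_0$ near $2-\sqrt 3$, your $p_0$ would allow choices of $p$ for which the true $L^p$-norm of the source diverges, so the contraction would not close.

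The same issue appears in your sketch of \eqref{Pr4Eq3b}: taking a supremum of $\exp(U_{\varepsilon,\gamma,\tau}^2)$ on $A(\overline{\tau_i},r_\varepsilon,R_\varepsilon)$ and multiplying by $|A|\lesssim R_\varepsilon^2=e^{-2\overline\gamma_\varepsilon}$ does not give smallness, since the sup is $\exp\big(\delta_0^2\overline\gamma_\varepsilon^2/(1-\delta')^2+\smallo(\overline\gamma_\varepsilon^2)\big)$ and $\overline\gamma_\varepsilon^2\gg\overline\gamma_\varepsilon$ — the product blows up. (Incidentally, your stated pointwise bound $U_{\varepsilon,\gamma,\tau}^2\le(1-\delta_0+\smallo(1))\overline\gamma_\varepsilon^2$ at the inner boundary is also an overestimate; the correct value is $\approx\delta_0^2\overline\gamma_\varepsilon^2/(1-\delta')^2$, as recorded implicitly in \eqref{Estannuli}. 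But this makes no difference: sup-times-measure fails with either.) The only way through is to integrate the decaying exponential in $r$, exploiting that $\exp(U^2)$ falls sharply as $|x-\overline{\tau_i}|$ increases past $r_\varepsilon$; this is what produces the exponentially small bounds in \eqref{Pr4Eq3b}--\eqref{Pr4Eq3c}, and it is precisely what fixes $p_0$ as a function of $\delta_0$ and $\delta'$.
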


In other words, the function
\begin{equation}\label{EqUegt}
U_{\varepsilon,\gamma,\tau}:=\widetilde{U}_{\varepsilon,\gamma,\tau}+\Psi_{\ve,\gamma,\tau}=w_\varepsilon+\sum_{i=1}^kB_{\varepsilon,\gamma_i,\tau_i}+\Psi_{\ve,\gamma,\tau},
\end{equation}
where $\Psi_{\ve,\gamma,\tau}$ is given by Proposition~\ref{Pr4}, is an exact solution outside the balls $B\(\overline\tau_i,r_\ve+r_\ve^2\)$ for all $i\in\left\{1,\dotsc,k\right\}$, and it satisfies
\begin{equation}\label{Pr4Eq4b}
\Delta U_{\varepsilon,\gamma,\tau}=\left\{
\begin{array}{ll}
\Delta B_{\ve,\gamma_i,\tau_i} =\Delta \overline{B}_{\ve,\gamma_i,\overline{\tau_i}}=\lambda_\ve h_\ve\(\overline{\tau_i}\)f\big(\overline{B}_{\ve,\gamma_i,\overline{\tau_i}}\big) & \text{in } B\(\overline{\tau_i},r_\ve\)\\
\lambda_\ve h_\ve \chi_{\ve,\tau}f\(U_{\ve,\gamma,\tau}\)&\text{in }\Omega_{r_\ve,\tau}.
\end{array}
\right.
\end{equation}
Since the proof of Proposition~\ref{Pr4} is lenghty, but not necessary to understand the rest of the construction, it is postponed to Section~\ref{ProofPr4}.

For later use, we also observe that \eqref{DefBeps}, \eqref{Sec32Eq1}, \eqref{Sec32Eq2}, \eqref{Sec32Eq3} and \eqref{Pr4Eq2} give $U_{\ve,\gamma,\tau}=\delta_0\gammae(1+\smallo(1))$ in $\Omega^i_\ve:=B(\overline{\tau_i},r_\ve+r_\ve^2)\setminus B(\overline{\tau_i},r_\ve)$, hence
\begin{equation}\label{Estannuli}
f(U_{\ve,\gamma,\tau})=\bigO\Big(\overline{\mu}_\ve^{-2\delta_0^2+\smallo(1)}\Big),\quad \text{in }\Omega^i_\ve.
\end{equation}

\subsection{Adjustment of the values at the centers of the bubbles}\label{Sec34} 
In this section, we refine the range of the parameters $\gamma_i$ so to optimize the error made in the regions $B\(\overline{\tau_i},r_\ve\)$. Let us start by expanding
\begin{equation}\label{UBEF}
U_{\ve,\gamma,\tau}\(x\)=\overline{B}_{\ve,\gamma_i,\overline{\tau_i}}\(x\)+ E^{\(i\)}_{\ve,\gamma,\tau}+F^{\(i\)}_{\ve,\gamma,\tau}\(x\)
\end{equation}
for all $x\in B\(\overline{\tau_i},r_\ve\)$,
where
\begin{align}
&E^{\(i\)}_{\ve,\gamma,\tau}:=w_\ve\(\overline{\tau_i}\)-C_{\ve,\gamma_i,\tau_i}+ A_{\ve,\gamma_i,\tau_i}H\(\overline{\tau_i},\overline{\tau_i}\)+\sum_{j\ne i}A_{\ve,\gamma_j,\tau_j}G\(\overline{\tau_i},\overline{\tau_j}\)+\Psi_{\ve,\gamma,\tau}\(\overline{\tau_i}\),\label{eqEive}\allowdisplaybreaks\\
&F^{\(i\)}_{\ve,\gamma,\tau}\(x\):=w_\ve\(x\)-w_\ve\(\overline{\tau_i}\)+A_{\ve,\gamma_i,\tau_i}\(H(x,\overline{\tau_i}\)-H\(\overline{\tau_i},\overline{\tau_i})\)\nonumber\\
&\qquad\qquad\qquad+\sum_{j\ne i}A_{\ve,\gamma_j,\tau_j} \( G\(x,\overline{\tau_j}\)-G\(\overline{\tau_i},\overline{\tau_j}\)\)+\Psi_{\ve,\gamma,\tau}\(x\)-\Psi_{\ve,\gamma,\tau}\(\overline{\tau_i}\).\label{eqFive}
\end{align}

Note that $F^{\(i\)}_{\ve,\gamma,\tau}\(\overline{\tau_i}\)=0$, so $F^{\(i\)}_{\ve,\gamma,\tau}$ is small in $B\(\overline \tau_i,r_\ve\)$. Instead the constant $E^{\(i\)}_{\ve,\gamma,\tau}$ might be large depending on the choice of $\gamma$ and $\tau$. In the next proposition we show that we can choose $\overline{\gamma}_\ve\(\tau\)\sim \overline{\gamma}_\ve$ depending on $\tau$ and $\ve$ in such a way that $E^{\(i\)}_{\ve,\overline{\gamma}_\ve\(\tau\),\tau}=0$ for all $i\in\left\{1,\dotsc,k\right\}$.

\begin{proposition}\label{Pr5}
Let $\delta_0$, $\varepsilon_1$ and $\Psi_{\varepsilon,\gamma,\tau}$ be as in Proposition~\ref{Pr4}. Then for every $\delta\in\(0,1\)$ and $\delta'\in\(0,1-\sqrt{2\delta_0}\)$, there exists $\varepsilon_2\(\delta,\delta'\)\in\(0,\varepsilon_1\(\delta,\delta'\)\)$ such that for every $\varepsilon\in\(0,\varepsilon_2\(\delta,\delta'\)\)$ and $\tau\in T_\varepsilon^k\(\delta\)$, there exists a unique $\overline{\gamma}_\varepsilon\(\tau\)=\(\overline{\gamma}_{1,\varepsilon}\(\tau\),\dotsc,\overline{\gamma}_{k,\varepsilon}\(\tau\)\)\in\Gamma_\varepsilon^k\(\delta'\)$ such that $\overline{\gamma}_{k,\varepsilon}\(\tau\)$ is continuous in $\tau$ and for every $i\in\left\{1,\dots,k\right\}$, we have
\begin{equation}\label{Pr5Eq1}
U_{\varepsilon,\overline{\gamma}_\varepsilon\(\tau\),\tau}\(\overline{\tau_i}\)=\overline\gamma_{i,\varepsilon}\(\tau\)\quad\text{and}\quad\overline\gamma_{i,\varepsilon}\(\tau\)\sim\overline\gamma_\varepsilon
\end{equation}
as $\varepsilon\to0$, uniformly in $\tau\in T_\varepsilon^k\(\delta\)$.
\end{proposition}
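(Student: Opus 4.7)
The plan is to recast the first identity in \eqref{Pr5Eq1} as a nonlinear system in $\gamma$. Since $\overline B_{\varepsilon,\gamma_i,\overline{\tau_i}}(\overline{\tau_i}) = \gamma_i$ by construction and $F^{(i)}_{\varepsilon,\gamma,\tau}(\overline{\tau_i}) = 0$ by inspection of \eqref{eqFive}, the decomposition \eqref{UBEF} yields $U_{\varepsilon,\gamma,\tau}(\overline{\tau_i}) = \gamma_i + E^{(i)}_{\varepsilon,\gamma,\tau}$, so the $k$ conditions in \eqref{Pr5Eq1} reduce to the system $E^{(i)}_{\varepsilon,\gamma,\tau} = 0$ for $i = 1,\dots,k$, to be solved for $\gamma\in\Gamma_\varepsilon^k(\delta')$.

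I would then extract the leading-order behaviour of $E^{(i)}$ from Claim~\ref{Claim1}, Proposition~\ref{Pr4}, and \eqref{Pr5Eq3}, using the spacing $\delta d_\varepsilon < |\overline{\tau_i}-\overline{\tau_j}| < 2kd_\varepsilon/\delta$ from \eqref{defTepsilon} together with $d_\varepsilon=\overline\gamma_\varepsilon^{-1/l}$. Inserting in \eqref{eqEive} the expansions $-C_{\varepsilon,\gamma_i,\tau_i}=\frac{2\ln\overline\gamma_\varepsilon}{\gamma_i}+O(\overline\gamma_\varepsilon^{-1})$, $w_\varepsilon(\overline{\tau_i})=-\frac{2(k+l-1)\ln\overline\gamma_\varepsilon}{l\overline\gamma_\varepsilon}(1+o(1))$, and, after splitting $G=\frac{1}{2\pi}\ln|\cdot|^{-1}+H$,
\[
\sum_{j\ne i}A_{\varepsilon,\gamma_j,\tau_j}G(\overline{\tau_i},\overline{\tau_j})=\sum_{j\ne i}\frac{2\ln\overline\gamma_\varepsilon}{l\gamma_j}+O(\overline\gamma_\varepsilon^{-1}),
\]
while the contributions $A_{\varepsilon,\gamma_i,\tau_i}H(\overline{\tau_i},\overline{\tau_i})$ and $\Psi_{\varepsilon,\gamma,\tau}(\overline{\tau_i})$ are both $O(\overline\gamma_\varepsilon^{-1})$. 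Parametrising $\gamma_i=\overline\gamma_\varepsilon(1+\sigma_i)$ with $|\sigma_i|<\delta'$ and multiplying by $\frac{\overline\gamma_\varepsilon}{2\ln\overline\gamma_\varepsilon}$, the system $E^{(i)}=0$ becomes equivalent to
\[
M\sigma = v_\varepsilon(\tau) + R_\varepsilon^\tau(\sigma),
\]
where $M:=\tfrac{l-1}{l}I+\tfrac{1}{l}J$ with $J\in\mathbb{R}^{k\times k}$ the all-ones matrix, $v_\varepsilon(\tau)=o(1)$ uniformly in $\tau\in T_\varepsilon^k(\delta)$, and $R_\varepsilon^\tau(0)=0$ with $R_\varepsilon^\tau$ Lipschitz of small constant, via the derivative bounds \eqref{Sec32Eq4} and \eqref{Pr4Eq2}.

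The key algebraic point is that $M$ has spectrum $\{\tfrac{l-1}{l},\tfrac{l+k-1}{l}\}\subset(0,\infty)$ (with eigenvectors from $\mathbf{1}^\perp$ and $\mathbf{1}$), hence $M^{-1}$ is a bounded operator independent of $\varepsilon$ and $\tau$. The Banach contraction principle applied to $\sigma\mapsto M^{-1}\bigl(v_\varepsilon(\tau)+R_\varepsilon^\tau(\sigma)\bigr)$ on the closed ball $\{|\sigma|\le 2\|M^{-1}\|\,|v_\varepsilon(\tau)|\}$ then yields, for $\varepsilon$ small enough uniformly in $\tau$, a unique fixed point $\sigma_\varepsilon(\tau)$ of size $o(1)$. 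Setting $\overline\gamma_{i,\varepsilon}(\tau):=\overline\gamma_\varepsilon(1+\sigma_{i,\varepsilon}(\tau))$ gives the asserted solution and immediately $\overline\gamma_{i,\varepsilon}(\tau)\sim\overline\gamma_\varepsilon$. Uniqueness in all of $\Gamma_\varepsilon^k(\delta')$, rather than only inside this small ball, can be recovered by observing that the normalised Jacobian of $(E^{(i)})_i$ at any $\gamma=\overline\gamma_\varepsilon(1+\sigma)$ with $|\sigma_j|<\delta'<1$ factorises as $-M\cdot\mathrm{diag}(1/(1+\sigma_j)^2)(1+o(1))$ and so stays globally invertible; a standard degree/inverse-function argument then rules out any second zero.

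Continuity of $\overline\gamma_\varepsilon(\cdot)$ in $\tau$ follows directly from continuity in $\tau$ of every ingredient: the Green's function $G$ and its regular part $H$ are smooth off the diagonal, the constants $A_{\varepsilon,\gamma_i,\tau_i}$ and $C_{\varepsilon,\gamma_i,\tau_i}$ are differentiable in $\tau$ by Claim~\ref{Claim1}, and $\Psi_{\varepsilon,\gamma,\tau}$ is $C^1$ in $(\gamma,\tau)$ by Proposition~\ref{Pr4}, so the fixed point inherits $\tau$-continuity via the contraction. The main technical obstacle I foresee is maintaining uniformity of all the error estimates in $\tau\in T_\varepsilon^k(\delta)$, especially for the Green's function contribution whose logarithmic part depends on the collapsing distances $|\tau_i-\tau_j|\asymp d_\varepsilon\to 0$; this is exactly what the uniformity clauses of Claim~\ref{Claim1} and Proposition~\ref{Pr4} are built to deliver.
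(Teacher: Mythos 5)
Your proposal is correct and follows essentially the same approach as the paper's proof. Both reduce the problem to solving $E^{(i)}_{\ve,\gamma,\tau}=0$ for $i=1,\dots,k$, normalise by $\overline\gamma_\varepsilon/\ln\overline\gamma_\varepsilon$, parametrise $\tilde\gamma=\gamma/\overline\gamma_\varepsilon$ near $(1,\dots,1)$, identify the limiting $C^1$ system and its non-degenerate linearisation, and then apply a fixed-point/implicit-function argument uniformly in $\tau$. The only superficial differences are that the paper invokes the implicit function theorem directly (showing $E_{\ve,\tau}\to E_0$ in $C^1(I^k)$ and using $\det DE_0(1,\dots,1)\ne0$), whereas you run a Banach contraction on a shrinking ball — these are interchangeable — and that you make the non-degeneracy explicit by computing the spectrum $\{\tfrac{l-1}{l},\tfrac{l+k-1}{l}\}$ of $M$, which the paper only computes later in the proof of Proposition~\ref{Pr9} as the matrix $\mathcal{Q}$ in \eqref{matrix}. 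One small caution: global invertibility of the Jacobian on $I^k$ alone does not automatically rule out a second zero; the cleaner way to obtain uniqueness in all of $\Gamma_\ve^k(\delta')$ is to note (as follows from your own reduction, since $E_0=0$ is the linear system $Mu=\tfrac{k+l-1}{l}\mathbf{1}$ in $u_i=1/\tilde\gamma_i$) that $E_0$ has $(1,\dots,1)$ as its \emph{only} zero in $I^k$ and then use the $C^1$-convergence together with the non-degenerate linearisation to preclude a second zero for small $\ve$; this is implicitly what both you and the paper are doing.
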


\proof
For every $\gamma\in\Gamma_\varepsilon^k\(\delta'\)$, we denote $\tilde\gamma:=\gamma/\overline\gamma_\varepsilon$. We let $I:=\(1-\delta',1+\delta'\)$ and $E_{\varepsilon,\tau}:I^k\to\R^k$, $E_{\varepsilon,\tau}=\big(E_{\varepsilon,\tau}^{\(1\)},\dotsc,E_{\varepsilon,\tau}^{(k)}\big)$ be the function defined by
$$E_{\varepsilon,\tau}^{\(i\)}\(\tilde\gamma\):=\frac{\overline\gamma_\varepsilon}{\ln\overline\gamma_\varepsilon}E^{\(i\)}_{\ve,\gamma,\tau}\quad\forall\gamma\in I^k,\,i\in\left\{1,\dotsc,k\right\}.$$
In particular, $E_{\varepsilon,\tau}\in C^1(I^k)$. By definition of $d_\varepsilon$, $G$ and $H$, we obtain
\begin{equation}\label{Pr5Eq4}
G\(\overline{\tau_i},\overline{\tau_j}\)\sim\frac{1}{2\pi}\ln\frac{1}{d_\varepsilon}\sim\frac{\ln\overline\gamma_\varepsilon}{2l\pi}\quad\text{and}\quad H\(\overline{\tau_i},\overline{\tau_i}\)=\bigO\(1\)
\end{equation}
as $\varepsilon\to0$, uniformly in $\tau\in T_\varepsilon^k\(\delta\)$. It follows from \eqref{Pr5Eq3}, \eqref{Sec32Eq3}, \eqref{Sec32Eq4}, \eqref{Pr4Eq2} and \eqref{Pr5Eq4} that $E_{\varepsilon,\tau}\to E_{0}=\big(E_{0}^{\(1\)},\cdots,E_{0}^{(k)}\big)$ in $C^1(I^k)$ as $\varepsilon\to0$, uniformly in $\tau\in T^k_\varepsilon\(\delta\)$, where
$$E_{0}^{\(i\)}\(\tilde\gamma\):=\frac{2}{\tilde\gamma_i}+\frac{2}{l}\sum_{j\ne i}\frac{1}{\tilde\gamma_j}-\frac{2\(k+l-1\)}{l}$$
for all $i\in\left\{1,\dotsc,k\right\}$ and $\tilde\gamma=\(\tilde\gamma_1,\dotsc,\tilde\gamma_k\)\in I^k$. In particular, 
\begin{equation}\label{Pr5Eq8}
E_0\(1,\dotsc,1\)=0\quad\text{and}\quad\det\(DE_0\(1,\dotsc,1\)\)\ne0.
\end{equation}
By applying the implicit function theorem, it follows from \eqref{Pr5Eq8} that there exists $\varepsilon_2\(\delta,\delta'\)\in\(0,\varepsilon_1\(\delta,\delta'\)\)$ such that for every $\varepsilon\in\(0,\varepsilon_2\(\delta,\delta'\)\)$ and $\tau\in T_\varepsilon^k\(\delta\)$, there exists a unique $\tilde{\gamma}_\varepsilon\(\tau\)\in I^k$ such that $\tilde{\gamma}_\varepsilon\(\tau\)$ is continuous in $\tau$, $E_{\varepsilon,\tau}\(\tilde{\gamma}_\varepsilon\(\tau\)\)=0$ and $\tilde{\gamma}_\varepsilon\(\tau\)\to\(1,\dotsc,1\)$ as $\varepsilon\to0$, uniformly in $\tau\in T_\varepsilon^k\(\delta\)$, i.e. there exists a unique $\overline{\gamma}_\varepsilon\(\tau\)=\overline\gamma_\varepsilon\tilde{\gamma}_\varepsilon\(\tau\)\in\Gamma_\varepsilon^k\(\delta'\)$ such that $\overline\gamma_\varepsilon\(\tau\)$ is continuous in $\tau$ and \eqref{Pr5Eq1} holds true. This ends the proof of Proposition~\ref{Pr5}.
\endproof


Now, we refine the set $\Gamma_\varepsilon^k(\delta')$ by defining
\begin{equation}\label{defGammaepsilon}
\overline{\Gamma}_\varepsilon^k\(\tau\):=\big\{\gamma=\(\gamma_1,\dotsc,\gamma_k\)\in\(0,\infty\)^k:\,|\gamma_i-\overline\gamma_{i,\varepsilon}\(\tau\)|<\frac{\delta_\ve}{{\gammae}},\;\forall i\in\left\{ 1,\dotsc,k\right\}\big\},
\end{equation}
where $\overline\gamma_{1,\varepsilon}\(\tau\),\dotsc,\overline\gamma_{k,\varepsilon}\(\tau\)$ are the numbers obtained in Proposition~\ref{Pr5} and
\begin{equation}\label{defdeltaepsilon}
\delta_\ve:= \overline\mu_\varepsilon^{\delta_1+1/2},
\end{equation}
where $\overline\mu_\varepsilon$ is as in \eqref{Sec32Eq2} and $\delta_1\in\(0,1/2\)$ is a number that we shall fix later. 

\smallskip
Note that for every $\delta,\delta'\in\(0,1\)$, we have 
\begin{equation}\label{inclusion}
\overline{\Gamma}_\varepsilon^k\(\tau\)\subset\Gamma_\varepsilon^k\(\delta'\)
\end{equation}
for small $\varepsilon>0$, uniformly in $\tau\in T_\varepsilon^k\(\delta\)$. Therefore, we can fix 
$$\delta':=\frac{1-\sqrt{2\delta_0}}{2}$$ 
in what follows and let $\varepsilon_3\(\delta\)\in\(0,\varepsilon_2\(\delta,\delta'\)\)$ be such that \eqref{inclusion} holds true together with the results of Propositions~\ref{Pr4} and~\ref{Pr5} for all $\varepsilon\in\(0,\varepsilon_3\(\delta\)\)$ and $\tau\in T_\varepsilon^k\(\delta\)$.

\subsection{An additional variation in the directions of the bubbles}\label{Sec35}
We now introduce an additional family of parameters $\theta=\(\theta_1,\dotsc,\theta_k\)\in\R^k$ and define our final ansatz as
$$U_{\varepsilon,\gamma,\tau,\theta}:=U_{\varepsilon,\gamma,\tau}+\sum_{i=1}^k\theta_iB_{\varepsilon,\gamma_i,\tau_i}=w_\ve +\sum_{i=1}^k\(1+\theta_i\)B_{\varepsilon,\gamma_i,\tau_i}+\Psi_{\varepsilon,\gamma,\tau},$$
for
\begin{equation}\label{defThetaepsilon}
\theta\in \Theta^k_\varepsilon(\delta):=\bigg\{\theta=\(\theta_1,\dotsc,\theta_k\)\in\R^k:\,\left|\theta_i\right|<\frac{\delta_\ve \ln\overline\gamma_\varepsilon}{\overline\gamma_\varepsilon^4},\;\forall i\in\left\{1,\dotsc,k\right\}\bigg\},
\end{equation}
where $\overline\gamma_\varepsilon$ and $\delta_\varepsilon$ are as in \eqref{Sec32Eq1} and \eqref{defdeltaepsilon}. Finally, we define
$$P^k_\varepsilon\(\delta\):=\Big\{\(\gamma,\tau,\theta\)\in\(0,\infty\)^k\times T^k_\varepsilon\(\delta\)\times \Theta^k_\varepsilon:\,\gamma\in\overline\Gamma^k_\varepsilon\(\tau\)\Big\},$$
where $T^k_\varepsilon\(\delta\)$, $\overline\Gamma^k_\varepsilon\(\tau\)$ and $\Theta^k_\varepsilon$ are defined as in \eqref{defTepsilon}, \eqref{defGammaepsilon} and \eqref{defThetaepsilon}, respectively.

\subsection{Pointwise estimates near the centers of the bubbles}\label{Sec36}

We can now prove the following:

\begin{proposition}\label{PrEive} Let $\overline\gamma_{\ve}\(\tau\)$ be as in Proposition~\ref{Pr5}. Then for every $i\in\left\{1,\dotsc,k\right\}$ and $\delta\in \(0,1\)$ we have
\begin{align*}
&\partial_{\gamma_i}\big[E^{\(i\)}_{\ve,\gamma,\tau}\big]=-\frac{2\ln \overline{\gamma}_\ve}{\overline{\gamma}_\ve^2}+\smallo\(\frac{\ln \overline{\gamma}_\ve}{\overline{\gamma}_\ve^2}\),\quad\partial_{\gamma_j}\big[E^{\(i\)}_{\ve,\gamma,\tau}\big]=-\frac{2\ln \overline{\gamma}_\ve}{l\overline{\gamma}_\ve^2}+\smallo\(\frac{\ln \overline{\gamma}_\ve}{\overline{\gamma}_\ve^2}\),\quad \text{for }j\ne i,\allowdisplaybreaks\\
&E^{\(i\)}_{\ve,\gamma,\tau}=-\frac{2\ln \overline{\gamma}_{\ve}}{\overline{\gamma}_{\ve}^2}\((\gamma_i-\overline{\gamma}_{i,\ve}\(\tau\))+\sum_{j\ne i}\frac{\gamma_j-\overline{\gamma}_{j,\ve}\(\tau\)}{l}\)+\smallo\(\left|\gamma-\overline{\gamma}_{\ve}\(\tau\)\right|\frac{\ln \overline{\gamma}_{\ve}}{\overline{\gamma}_{\ve}^2}\)
\end{align*}
as $\ve \to 0$, uniformly in $\tau \in  T_\ve^k\(\delta\)$ and $\gamma \in \overline{\Gamma}_\ve^k\(\tau\)$.
\end{proposition}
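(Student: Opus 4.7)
The plan is to set up a Taylor expansion around $\overline{\gamma}_\ve(\tau)$, whose feasibility rests on the identity $U_{\ve,\gamma,\tau}(\overline{\tau_i})=\gamma_i+E^{(i)}_{\ve,\gamma,\tau}$. Indeed, evaluating \eqref{UBEF} at $\overline{\tau_i}$ and using $\overline{B}_{\gamma_i}(0)=\gamma_i$ together with $F^{(i)}_{\ve,\gamma,\tau}(\overline{\tau_i})=0$, the first part of \eqref{Pr5Eq1} immediately gives $E^{(i)}_{\ve,\overline{\gamma}_\ve(\tau),\tau}=0$. Since the set $\overline{\Gamma}^k_\ve(\tau)$ is a product of intervals (hence convex) and $\Psi_{\ve,\gamma,\tau}$ is $C^1$ in $\gamma$ by Proposition~\ref{Pr4}, the map $\gamma\mapsto E^{(i)}_{\ve,\gamma,\tau}$ is $C^1$ on $\overline{\Gamma}^k_\ve(\tau)$, so the third formula in the statement will follow from the first two by writing
\[
E^{(i)}_{\ve,\gamma,\tau}=\sum_{j=1}^k (\gamma_j-\overline{\gamma}_{j,\ve}(\tau))\int_0^1\partial_{\gamma_j}\big[E^{(i)}_{\ve,\gamma^t,\tau}\big]\,dt,\qquad \gamma^t:=\overline{\gamma}_\ve(\tau)+t(\gamma-\overline{\gamma}_\ve(\tau)),
\]
and invoking the uniformity of the derivative asymptotics on $\overline{\Gamma}^k_\ve(\tau)$.

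The heart of the proof is thus the differentiation of the explicit formula \eqref{eqEive} term by term. For $\partial_{\gamma_i}$, the only nonzero contributions come from $-C_{\ve,\gamma_i,\tau_i}$, $A_{\ve,\gamma_i,\tau_i}H(\overline{\tau_i},\overline{\tau_i})$ and $\Psi_{\ve,\gamma,\tau}(\overline{\tau_i})$. Using \eqref{Sec32Eq4} yields $-\partial_{\gamma_i}[C_{\ve,\gamma_i,\tau_i}]=-2\ln\overline{\gamma}_\ve/\gamma_i^2+O(1/\overline{\gamma}_\ve^2)$, and combining \eqref{Sec32Eq4} with the bound $H(\overline{\tau_i},\overline{\tau_i})=O(1)$ from \eqref{Pr5Eq4} gives an $O(1/\overline{\gamma}_\ve^2)$ contribution from $A_{\ve,\gamma_i,\tau_i}H(\overline{\tau_i},\overline{\tau_i})$; the $\Psi$-term is controlled by $\|D_\gamma\Psi_{\ve,\gamma,\tau}\|_{C^0}\le C/\overline{\gamma}_\ve^2$ from \eqref{Pr4Eq2}. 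Since $\gamma_i\in\overline{\Gamma}^k_\ve(\tau)$ forces $\gamma_i=\overline{\gamma}_\ve(1+o(1))$, one has $1/\gamma_i^2=1/\overline{\gamma}_\ve^2+o(1/\overline{\gamma}_\ve^2)$, and these three contributions collapse into $-2\ln\overline{\gamma}_\ve/\overline{\gamma}_\ve^2+o(\ln\overline{\gamma}_\ve/\overline{\gamma}_\ve^2)$. For $j\neq i$ the only $\gamma_j$-dependence sits in $A_{\ve,\gamma_j,\tau_j}G(\overline{\tau_i},\overline{\tau_j})$ and $\Psi_{\ve,\gamma,\tau}(\overline{\tau_i})$; using \eqref{Sec32Eq4} and $G(\overline{\tau_i},\overline{\tau_j})\sim\ln\overline{\gamma}_\ve/(2l\pi)$ from \eqref{Pr5Eq4} yields a leading term $-4\pi/\gamma_j^2\cdot\ln\overline{\gamma}_\ve/(2l\pi)=-2\ln\overline{\gamma}_\ve/(l\gamma_j^2)$, with the $\Psi$-contribution again absorbed in the $O(1/\overline{\gamma}_\ve^2)$ error.

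Combining these derivative estimates with the integral representation of the first paragraph delivers the stated expansion, the $o(|\gamma-\overline{\gamma}_\ve(\tau)|\ln\overline{\gamma}_\ve/\overline{\gamma}_\ve^2)$ remainder coming from pointwise $o(\ln\overline{\gamma}_\ve/\overline{\gamma}_\ve^2)$ errors in $\partial_{\gamma_j}[E^{(i)}]$, uniformly along the segment $\gamma^t$. The only mildly delicate point is ensuring that all $o(\cdot)$'s are uniform in $\tau\in T^k_\ve(\delta)$ and in $\gamma\in\overline{\Gamma}^k_\ve(\tau)$; this is automatic because Claim~\ref{Claim1}, Proposition~\ref{Pr4} and Proposition~\ref{Pr5} state their asymptotics uniformly in the relevant parameter sets, and because \eqref{inclusion} places $\overline{\Gamma}^k_\ve(\tau)$ inside the (larger) set $\Gamma^k_\ve(\delta')$ where those estimates hold. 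Since no genuinely new estimate is required beyond \eqref{Sec32Eq4}, \eqref{Pr4Eq2} and \eqref{Pr5Eq4}, the proof is essentially bookkeeping; the only conceptual step is the observation $E^{(i)}_{\ve,\overline{\gamma}_\ve(\tau),\tau}=0$ that makes the Taylor expansion effective.
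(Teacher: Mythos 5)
Your proposal is correct and follows essentially the same route as the paper: differentiate the explicit formula \eqref{eqEive} term by term using \eqref{Sec32Eq4}, \eqref{Pr5Eq4} and \eqref{Pr4Eq2}, then obtain the last identity by integrating the gradient from $\overline{\gamma}_\ve(\tau)$ (where $E^{(i)}$ vanishes) to $\gamma$. The only cosmetic difference is that you spell out why $E^{(i)}_{\ve,\overline{\gamma}_\ve(\tau),\tau}=0$ follows from \eqref{UBEF} and \eqref{Pr5Eq1}, whereas the paper simply asserts it; your justification is correct.
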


\proof Using \eqref{Sec32Eq4}, \eqref{eqEive}, \eqref{Pr4Eq2} and noticing that for $\(\gamma_1,\dots,\gamma_k\)\in \overline{\Gamma}_\ve^k\(\tau\)$ we have $\gamma_j\sim \overline \gamma_\ve$ for $j=1,\dots,k$, we get
\begin{align*}
\partial_{\gamma_i}\big[E^{\(i\)}_{\ve,\gamma,\tau}\big]&=-\partial_{\gamma_i}\[C_{\ve,\gamma_i,\tau_i}\]+\partial_{\gamma_i}\[A_{\ve,\gamma_i,\tau_i}\] H\(\overline{\tau_i},\overline{\tau_i}\) +\partial_{\gamma_i}\[\Psi_{\ve,\gamma,\tau}\]\\
&=-\frac{2\ln \overline{\gamma}_\ve}{\gamma_i^2}\(1+\smallo\(1\)\)-\frac{4\pi}{\gamma_i^2}\bigO\(1\)+\bigO\(\frac{1}{\overline \gamma_\ve^2}\)=-\frac{2\ln \overline{\gamma}_\ve}{\overline{\gamma}_\ve^2}+\smallo\(\frac{\ln \overline{\gamma}_\ve^2}{\overline{\gamma}_\ve^2}\)
\end{align*}
as $\ve\to 0$. For the case $j\ne i$, using \eqref{Sec32Eq1}, we estimate
$$G\(\overline{\tau_i},\overline{\tau_j}\)=\frac{1}{2\pi}\ln\frac{1}{d_\ve}+\bigO\(1\)=\frac{1}{2\pi l}\ln \overline{\gamma}_\ve +\bigO\(1\),$$
uniformly in $\tau \in T_\ve^k\(\delta\),$
hence
\begin{align*}
\partial_{\gamma_j}\big[E^{\(i\)}_{\ve,\gamma,\tau}\big]&=\partial_{\gamma_j}\[A_{\ve,\gamma_j,\tau_j}\] G\(\overline{\tau_i},\overline{\tau_j}\) +\partial_{\gamma_j}\[\Psi_{\ve,\gamma,\tau}\]\\
&=-\frac{4\pi}{\gamma_j^2}\(\frac{1}{2\pi l}\ln\overline{\gamma}_\ve +\bigO\(1\)\)+\bigO\(\frac{1}{\overline \gamma_\ve^2}\)=-\frac{2\ln \overline{\gamma}_\ve}{l\overline{\gamma}_\ve^2}+\smallo\(\frac{\ln \overline{\gamma}_\ve^2}{\overline{\gamma}_\ve^2}\).
\end{align*}
Now, since $E^{\(i\)}_{\ve,\overline{\gamma}_\ve\(\tau\),\tau}=0$, integrating the gradient of $E^{\(i\)}_{\ve,\gamma,\tau}$ with respect to $\gamma$ from $\overline{\gamma}_\ve\(\tau\)$ to a generic $\gamma\in \overline{\Gamma}^k_\ve\(\tau\)$, the last identity follows at once.
\endproof

\begin{proposition}\label{PrFive}
For every $i\in\left\{1,\dotsc,k\right\}$ and $\delta\in\(0,1\)$, we have
\begin{equation}\label{Pr6Eq0}
F^{\(i\)}_{\ve,\gamma,\tau}\(x\)=\Bigg(a_0l\tau_i^{l-1}-\frac{2}{\overline\gamma_\varepsilon}\sum_{j\ne i}\frac{1}{\tau_i-\tau_j}\Bigg)\(x_1-\tau_i\)+\smallo\(\frac{\left|x-\overline{\tau_i}\right|}{\overline\gamma_\varepsilon d_\ve}\),
\end{equation}
and for every $i,j\in \left\{1,\dotsc,k\right\}$,
\begin{equation}\label{Pr6Eq0b}
\partial_{\gamma_j}\big[F^{\(i\)}_{\ve,\gamma,\tau}\big]\(x\)= \bigO\(\frac{\left|x-\overline{\tau_i}\right|}{\overline\gamma_\ve^2 d_\ve}\)
\end{equation}
as $\varepsilon\to0$, uniformly in $x=\(x_1,x_2\)\in B\(\overline{\tau_i},r_\varepsilon\)$ and $\(\gamma,\tau,\theta\)\in P^k_\varepsilon\(\delta\)$.
\end{proposition}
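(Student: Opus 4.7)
The plan is to decompose $F^{(i)}_{\ve,\gamma,\tau}$ into its five summands as written in \eqref{eqFive} and perform a first-order Taylor expansion of each around $\overline{\tau_i}=(\tau_i,0)$, keeping only contributions of order $|x-\overline{\tau_i}|/(\overline\gamma_\ve d_\ve) = \overline\gamma_\ve^{-(l-1)/l}|x-\overline{\tau_i}|$ or larger, and absorbing the rest into the $\smallo$-remainder. Two prefactor scales are available to us: $r_\ve=\overline\mu_\ve^{\delta_0}$ is exponentially small, hence negligible against any polynomial in $\overline\gamma_\ve^{-1}$; and $\ve=\bigO(\ln\overline\gamma_\ve/\overline\gamma_\ve)$ by \eqref{Sec32Eq1bis}, hence $\smallo(\overline\gamma_\ve^{-(l-1)/l})$ since $l\ge 2$.

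First I would treat $w_\ve(x)-w_\ve(\overline{\tau_i})$. Since $w_\ve\in C^{l,\alpha}_{l,\sym}(\Omega)$ is even in $x_2$, we have $\partial_{x_2}w_\ve(\overline{\tau_i})=0$ exactly, and the key input is the asymptotic $w_0(x_1,0)=a_0 x_1^l+\bigO(x_1^{l+\alpha})$ combined with the smooth convergence $w_\ve\to w_0$ in $\ve$. This gives $\partial_{x_1}w_\ve(\overline{\tau_i})=a_0 l\tau_i^{l-1}+\bigO(\ve)+\bigO(d_\ve^{l-1+\alpha})$, both error terms being $\smallo(d_\ve^{l-1})=\smallo(1/(\overline\gamma_\ve d_\ve))$. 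A Taylor remainder of size $\bigO(|x-\overline{\tau_i}|^2)=\bigO(r_\ve|x-\overline{\tau_i}|)$ is then trivially absorbed. Next, the diagonal regular-part contribution $A_{\ve,\gamma_i,\tau_i}(H(x,\overline{\tau_i})-H(\overline{\tau_i},\overline{\tau_i}))$ is $\bigO(|x-\overline{\tau_i}|/\overline\gamma_\ve)$ by \eqref{Sec32Eq3} and smoothness of $H$, which is again $\smallo(|x-\overline{\tau_i}|/(\overline\gamma_\ve d_\ve))$ since $d_\ve\to 0$. The final $\Psi$-difference is handled identically using $\|\Psi_{\ve,\gamma,\tau}\|_{C^1}\le C_1/\overline\gamma_\ve$ from \eqref{Pr4Eq2}.

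The crux is the off-diagonal Green's function sum $\sum_{j\ne i}A_{\ve,\gamma_j,\tau_j}(G(x,\overline{\tau_j})-G(\overline{\tau_i},\overline{\tau_j}))$, which produces the second main term. Writing $G(y,\overline{\tau_j})=\tfrac{1}{2\pi}\ln(1/|y-\overline{\tau_j}|)+H(y,\overline{\tau_j})$, a first-order Taylor expansion in $y$ around $\overline{\tau_i}$ yields $\nabla_y G(\overline{\tau_i},\overline{\tau_j})\cdot(x-\overline{\tau_i})+\bigO(|x-\overline{\tau_i}|^2/d_\ve^2)$. Using $\overline{\tau_i}-\overline{\tau_j}=(\tau_i-\tau_j,0)$, the logarithmic part of the gradient equals $-\tfrac{1}{2\pi(\tau_i-\tau_j)}e_1$, while the $H$-part is $\bigO(1)$. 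Multiplying by $A_{\ve,\gamma_j,\tau_j}=4\pi/\gamma_j+\bigO(\overline\gamma_\ve^{-3})=4\pi/\overline\gamma_\ve+\smallo(1/\overline\gamma_\ve)$ (using $\gamma_j\sim\overline\gamma_\ve$ in $\overline\Gamma_\ve^k(\tau)$) and summing over $j\ne i$ delivers the announced principal term $-\frac{2}{\overline\gamma_\ve}\sum_{j\ne i}(\tau_i-\tau_j)^{-1}(x_1-\tau_i)$, while the second-order Taylor remainder gives $\bigO(|x-\overline{\tau_i}|^2/(\overline\gamma_\ve d_\ve^2))=\bigO(r_\ve|x-\overline{\tau_i}|/(\overline\gamma_\ve d_\ve^2))=\smallo(|x-\overline{\tau_i}|/(\overline\gamma_\ve d_\ve))$ thanks to the exponential decay of $r_\ve$.

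For \eqref{Pr6Eq0b}, I would differentiate \eqref{eqFive} in $\gamma_j$: the $w_\ve$-difference drops out, and one is left with contributions from $\partial_{\gamma_j}A$ terms and from $\partial_{\gamma_j}\Psi_{\ve,\gamma,\tau}$. Combining \eqref{Sec32Eq4} (giving $\partial_{\gamma_j}A=\bigO(\overline\gamma_\ve^{-2})$) with $|H(x,\overline{\tau_i})-H(\overline{\tau_i},\overline{\tau_i})|=\bigO(|x-\overline{\tau_i}|)$ for the $j=i$ term, $|G(x,\overline{\tau_j})-G(\overline{\tau_i},\overline{\tau_j})|=\bigO(|x-\overline{\tau_i}|/d_\ve)$ for $j\ne i$, and $\|D_\gamma\Psi_{\ve,\gamma,\tau}\|_{C^1}\le C_1/\overline\gamma_\ve^2$ from \eqref{Pr4Eq2}, the worst term is of order $|x-\overline{\tau_i}|/(\overline\gamma_\ve^2 d_\ve)$, proving \eqref{Pr6Eq0b}. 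The main technical obstacle is verifying the consistency of scales in Step 1: ensuring that $\ve\,|x-\overline{\tau_i}|$ and $d_\ve^{l-1+\alpha}|x-\overline{\tau_i}|$ are genuinely $\smallo$ with respect to the reference scale $|x-\overline{\tau_i}|/(\overline\gamma_\ve d_\ve)$, which is where the evenness of $w_\ve$ in $x_2$ and the precise exponent $l-1$ in the Taylor expansion of $w_0$ on the axis play together.
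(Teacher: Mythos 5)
Your proposal is correct and follows essentially the same route as the paper's proof: you note $F^{(i)}_{\ve,\gamma,\tau}(\overline{\tau_i})=0$, Taylor-expand the summands of \eqref{eqFive} around $\overline{\tau_i}$, use the evenness of $w_\ve$ in $x_2$, compute the gradient of the logarithmic kernel at $\overline{\tau_i}$, and absorb all remainders using $r_\ve\ll d_\ve$, $\ve=\bigO(\ln\gammae/\gammae)$, and the estimates of Claim~\ref{Claim1} and Proposition~\ref{Pr4} — which is precisely what the paper does (the paper writes the Taylor expansion in integral form and groups the error into a single $\bigO(\ve|x-\overline{\tau_i}|)$ term, but the content is identical), and your argument for \eqref{Pr6Eq0b} via termwise differentiation with \eqref{Sec32Eq4} and \eqref{Pr4Eq2} also matches.
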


\proof
Note that $F^{\(i\)}_{\ve,\gamma,\tau}\(\overline{\tau_i}\)=0$. Then, by using \eqref{Sec32Eq1bis}, \eqref{Sec32Eq3} and \eqref{Pr4Eq3} and since $w_\varepsilon=w_0+\bigO\(\varepsilon\)$ in $C^1\(\Omega\)$, $w_0\(r,0\)\sim a_0r^l$ as $r\to0$ and $\partial_{x_2}w_0\(0,0\)=0$, we obtain 
\begin{multline*}
F^{\(i\)}_{\ve,\gamma,\tau}\(x\)=\int_0^1\<\nabla F^{\(i\)}_{\ve,\gamma,\tau}\(\(1-t\)\overline{\tau_i}+tx\),x-\overline{\tau_i}\>dt\\
=\int_0^1\Bigg(\<\nabla w_0\(\(1-t\)\overline{\tau_i}+tx\),x-\overline{\tau_i}\>-\sum_{j\ne i}\frac{A_{\varepsilon,\gamma_j,\tau_j}\<\(1-t\)\overline{\tau_i}+tx-\overline{\tau_j},x-\overline{\tau_i}\>}{2\pi\left|\(1-t\)\overline{\tau_i}+tx-\overline{\tau_j}\right|^2}\Bigg)dt\\
+\bigO\(\varepsilon\left|x-\overline\tau_i\right|\)=\Bigg(a_0l\tau_i^{l-1}-\frac{2}{\overline\gamma_\varepsilon}\sum_{j\ne i}\frac{1}{\tau_i-\tau_j}\Bigg)\(x_1-\tau_i\)+\smallo\(\frac{\left|x-\overline\tau_i\right|}{\overline\gamma_\varepsilon^{1-1/l}}\)
\end{multline*}
as $\varepsilon\to0$, uniformly in $x=\(x_1,x_2\)\in B\(\overline{\tau_i},r_\varepsilon\)$, $\tau\in T^k_\varepsilon\(\delta\)$ and $\gamma\in\overline{\Gamma}^k_\varepsilon\(\tau\)$, hence proving \eqref{Pr6Eq0}. Differentiating \eqref{eqFive} and using Claim~\ref{Claim1}, \eqref{Pr6Eq0b} also follows at once.
\endproof

\begin{proposition}\label{Pr6}
For every $i\in\left\{1,\dotsc,k\right\}$ and $\delta\in\(0,1\)$, we have
\begin{multline}\label{Pr6Eq1}
U_{\varepsilon,\gamma,\tau,\theta}\(x\)=\overline{B}_{\varepsilon,\gamma_i,\tau_i}\(x\)+\Bigg(a_0l\tau_i^{l-1}-\frac{2}{\overline\gamma_\varepsilon}\sum_{j\ne i}\frac{1}{\tau_i-\tau_j}\Bigg)\(x_1-\tau_i\)\\
+\smallo\(\frac{\left|x-\overline{\tau_i}\right|}{\overline\gamma_\varepsilon^{1-1/l}}\)+\bigO\(\frac{\delta_\ve\ln\overline\gamma_\varepsilon}{\overline\gamma_\varepsilon^3}\)
\end{multline}
as $\varepsilon\to0$, uniformly in $x=\(x_1,x_2\)\in B\(\overline{\tau_i},r_\varepsilon\)$ and $\(\gamma,\tau,\theta\)\in P^k_\varepsilon\(\delta\)$. In particular, for every $\delta\in\(0,1\)$, there exists $\varepsilon_4\(\delta\)\in\(0,\varepsilon_3\(\delta\)\)$, where $\varepsilon_3\(\delta\)$ is as in Section~\ref{Sec35}, such that
\begin{equation}\label{positivity}
\overline{B}_{\varepsilon,\gamma_i,\tau_i}\(x\)>0\text{ and }U_{\varepsilon,\gamma,\tau,\theta}>0\text{ in }B\(\overline{\tau_i},r_\varepsilon\)
\end{equation}
for all $\varepsilon\(\delta\)\in\(0,\varepsilon_4\(\delta\)\)$, $\(\gamma,\tau,\theta\)\in P^k_\varepsilon\(\delta\)$ and $i\in\left\{1,\dotsc,k\right\}$.
\end{proposition}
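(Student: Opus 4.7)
The plan is to start from the decomposition \eqref{UBEF}--\eqref{eqFive}, which on $B\(\overline{\tau_i},r_\ve\)$ expresses $U_{\ve,\gamma,\tau}$ as $\overline{B}_{\ve,\gamma_i,\overline{\tau_i}}+E^{(i)}_{\ve,\gamma,\tau}+F^{(i)}_{\ve,\gamma,\tau}$, to add the $\theta$-term $\sum_j\theta_j B_{\ve,\gamma_j,\tau_j}$ coming from Section~\ref{Sec35}, and to control the three auxiliary blocks separately using the preparatory estimates already at our disposal.

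For $E^{(i)}_{\ve,\gamma,\tau}$: the identity $E^{(i)}_{\ve,\gammae(\tau),\tau}=0$ holds by the defining property of $\gammae(\tau)$ in Proposition~\ref{Pr5}, while each partial $\partial_{\gamma_j}\big[E^{(i)}_{\ve,\gamma,\tau}\big]$ is of size $\ln\gammae/\gammae^2$ by Proposition~\ref{PrEive}. Integrating along the segment from $\gammae(\tau)$ to $\gamma$ and using the constraint $|\gamma_j-\overline{\gamma}_{j,\ve}(\tau)|<\delta_\ve/\gammae$ built into \eqref{defGammaepsilon} yields $E^{(i)}_{\ve,\gamma,\tau}=\bigO\(\delta_\ve\ln\gammae/\gammae^3\)$. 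For $F^{(i)}_{\ve,\gamma,\tau}(x)$, estimate \eqref{Pr6Eq0} directly supplies the linear term advertised in \eqref{Pr6Eq1} plus a remainder $\smallo\(|x-\overline{\tau_i}|/(\gammae d_\ve)\)$; since $d_\ve=\gammae^{-1/l}$, this is exactly $\smallo\(|x-\overline{\tau_i}|/\gammae^{1-1/l}\)$.

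The $\theta$-sum I split according to whether $j=i$ or $j\ne i$. For $j=i$, formula \eqref{DefBeps} gives $B_{\ve,\gamma_i,\tau_i}=\overline{B}_{\ve,\gamma_i,\overline{\tau_i}}-C_{\ve,\gamma_i,\tau_i}+A_{\ve,\gamma_i,\tau_i}H(\cdot,\overline{\tau_i})$ on $B\(\overline{\tau_i},r_\ve\)$; since $\overline{B}_{\ve,\gamma_i,\overline{\tau_i}}\le\gamma_i=\bigO(\gammae)$ and the $C,A$ estimates of Claim~\ref{Claim1} give smaller contributions, this is $\bigO(\gammae)$, so $|\theta_i B_{\ve,\gamma_i,\tau_i}|=\bigO\(\delta_\ve\ln\gammae/\gammae^3\)$ by the bound \eqref{defThetaepsilon} on $|\theta_i|$. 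For $j\neq i$, the gap condition in $T^k_\ve(\delta)$ forces $|x-\overline{\tau_j}|\ge\delta d_\ve/2\gg r_\ve$ for $\ve$ small, so one is on the Green's-function branch $B_{\ve,\gamma_j,\tau_j}(x)=A_{\ve,\gamma_j,\tau_j}G(x,\overline{\tau_j})$, which by Claim~\ref{Claim1} and \eqref{Pr5Eq4} is $\bigO(\ln\gammae/\gammae)$, strictly smaller than the $j=i$ term. Adding the three blocks yields \eqref{Pr6Eq1}.

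For the positivity \eqref{positivity}, I argue by a size comparison against $\overline{B}_{\ve,\gamma_i,\tau_i}$. Writing the radial ODE in the paper's sign convention, $(r\overline{B}_{\gamma_i}')'=-rf(\overline{B}_{\gamma_i})<0$ as long as $\overline{B}_{\gamma_i}>0$, so $\overline{B}_{\gamma_i}$ is strictly radially decreasing on its first positivity interval. From \eqref{Estannuli}, $U_{\ve,\gamma,\tau}\sim\delta_0\gammae$ on the outer annulus $\Omega^i_\ve$; subtracting $w_\ve$, $C_{\ve,\gamma_i,\tau_i}$, $A_{\ve,\gamma_i,\tau_i}H$, the other bubbles' Green's contributions and $\Psi_{\ve,\gamma,\tau}$ (all $\smallo(\gammae)$ by Claim~\ref{Claim1}, \eqref{Pr4Eq2} and \eqref{Pr5Eq4}) forces $\overline{B}_{\ve,\gamma_i,\overline{\tau_i}}\gtrsim\delta_0\gammae$ on $\partial B\(\overline{\tau_i},r_\ve\)$. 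Monotonicity then propagates this lower bound throughout $B\(\overline{\tau_i},r_\ve\)$, giving the first half of \eqref{positivity}, and since every correction term in \eqref{Pr6Eq1} is $\smallo(\gammae)$ uniformly on $P^k_\ve(\delta)$, the second half follows by decreasing $\varepsilon_4(\delta)$ if necessary. The only real obstacle is bookkeeping compatibility: the $\gamma$- and $\theta$-boxes in \eqref{defGammaepsilon} and \eqref{defThetaepsilon} were tuned precisely so that $E^{(i)}$ and the $\theta$-contribution both absorb into a common $\bigO\(\delta_\ve\ln\gammae/\gammae^3\)$ remainder matching \eqref{Pr6Eq1}; once that is verified, everything else is direct.
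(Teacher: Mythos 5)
Your derivation of the expansion \eqref{Pr6Eq1} is correct and matches the paper's own proof: you use the same decomposition $U_{\ve,\gamma,\tau,\theta} = \overline{B}_{\ve,\gamma_i,\tau_i} + E^{(i)}_{\ve,\gamma,\tau} + F^{(i)}_{\ve,\gamma,\tau} + \sum_j \theta_j B_{\ve,\gamma_j,\tau_j}$ on $B\(\overline{\tau_i},r_\ve\)$, bound $E^{(i)}_{\ve,\gamma,\tau}$ via Proposition~\ref{PrEive} (integration along the $\gamma$-segment against the $\overline\Gamma^k_\ve(\tau)$ box), read off the linear term from Proposition~\ref{PrFive}, and absorb the $\theta$-sum using $\left|\theta_j\right|<\delta_\ve\ln\gammae/\gammae^4$ together with $B_{\ve,\gamma_i,\tau_i}=\bigO(\gammae)$ (in-ball) and $B_{\ve,\gamma_j,\tau_j}=\bigO(\ln\gammae/\gammae)$ ($j\ne i$, Green's branch). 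Your bookkeeping (including the observation that $1/(\gammae d_\ve)=\gammae^{-(1-1/l)}$) is accurate.

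However, your positivity argument for $\overline{B}_{\ve,\gamma_i,\tau_i}$ has a genuine gap. You correctly establish (i) $\overline{B}_{\gamma_i}$ is strictly radially decreasing on its first positivity interval, and (ii) $\overline{B}_{\ve,\gamma_i,\overline{\tau_i}}$ is $\gtrsim\delta_0\gammae>0$ on $\partial B\(\overline{\tau_i},r_\ve\)$. But these two facts alone do not preclude $\overline{B}_{\gamma_i}$ vanishing (and going negative) somewhere in the interior $(0,r_\ve)$ and returning to a positive value at $r_\ve$ --- the monotonicity is only known on the portion where the function is already known to be positive, so invoking it to conclude that $(0,r_\ve)$ lies inside the positivity interval is circular. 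To rule out sign changes one would need an additional ODE argument (say, a Pohozaev/energy estimate) which you do not supply. The paper avoids this entirely by reading positivity directly off the asymptotic expansion \eqref{Pr10Eq1}: for $\gamma_i\sim\gammae$ and $s\le r_\ve$, one has $t\big(s/\mu_{\gamma_i}\big)\le(1-\delta_0+\smallo(1))\gamma_i^2$, hence $\overline{B}_{\gamma_i}(s)\ge\delta_0\gamma_i(1+\smallo(1))>0$ uniformly on the whole ball, and then \eqref{positivity} for $U_{\ve,\gamma,\tau,\theta}$ follows since every correction term in \eqref{Pr6Eq1} is $\smallo(\gammae)$. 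Replacing your monotonicity step with this direct pointwise lower bound closes the gap.
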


\proof
In order to prove \eqref{Pr6Eq1}, it suffices to write
$$U_{\varepsilon,\gamma,\tau,\theta}\(x\)= \overline{B}_{\varepsilon,\gamma_i,\tau_i}\(x\) + E^{\(i\)}_{\ve,\gamma,\tau}+F^{\(i\)}_{\ve,\gamma,\tau}\(x\)+\sum_{j=1}^k\theta_j B_{\ve,\gamma_j,\tau_j}\(x\)\quad \text{in }B\(\overline{\tau_i},r_\ve\)$$
and apply Proposition~\ref{PrEive} to bound
$$E^{\(i\)}_{\ve,\gamma,\tau}+\sum_{j=1}^k\theta_i B_{\ve,\gamma_j,\tau_j}=\bigO\(\frac{\delta_\ve \ln\overline\gamma_\varepsilon}{\overline\gamma_\varepsilon^3}\)$$
and Proposition~\ref{PrFive} to estimate $F^{\(i\)}_{\ve,\gamma,\tau}\(x\)$. It then follows from \eqref{Pr6Eq1} and \eqref{Pr10Eq1} that \eqref{positivity} holds true for small $\varepsilon>0$, uniformly in $\(\gamma,\tau,\theta\)\in P^k_\varepsilon\(\delta\)$.
\endproof

\section{Proof of Theorems~\ref{Th1} and \ref{Th2}}\label{Sec4}

This section is devoted to the proof of Theorems~\ref{Th1} and \ref{Th2}. We let $\Omega$, $l$, $\alpha$ and $h$ be as in Theorem~\ref{Th1}, fix $\beta>4\pi$, $\beta_0>0$ and $k\in\N^*$ such that $\beta = \beta_0 +4k\pi$ and let $\beta_\varepsilon$, $h_\varepsilon$, $w_\varepsilon$, $\lambda_\varepsilon$, $\overline\gamma_\varepsilon$, $\overline\mu_\varepsilon$, $d_\varepsilon$, $r_\varepsilon$, $\delta_\varepsilon$, $\delta_0$, $\delta_1$, $\overline\gamma_{i,\varepsilon}\(\tau\)$, $\overline{B}_{\varepsilon,\gamma_0,x_0}$, $A_{\varepsilon,\gamma,x,r}$, $C_{\varepsilon,\gamma,x,r}$, $G$, $H$, $B_{\varepsilon,\gamma_i,\tau_i}$, $\widetilde{U}_{\varepsilon,\gamma,\tau}$, $\chi_{\varepsilon,\tau}$, $\Psi_{\varepsilon,\gamma,\tau}$, $U_{\varepsilon,\gamma,\tau,\theta}$, $\Gamma^k_\varepsilon\(\delta\)$, $\overline\Gamma^k_\varepsilon\(\tau\)$, $T^k_\varepsilon\(\delta\)$, $\Theta^k_\varepsilon$ and $P^k_\varepsilon\(\delta\)$ be as in Section~\ref{Sec3}. We define
\begin{equation}\label{Sec4Eq}
R_{\varepsilon,\gamma,\tau,\theta}:=U_{\varepsilon,\gamma,\tau,\theta}-\Delta^{-1}\[\lambda_\varepsilon h_\varepsilon f\(U_{\varepsilon,\gamma,\tau,\theta}\)\].
\end{equation}
As a first step, we obtain the following:

\begin{proposition}\label{Pr7}
Let $\varepsilon_4$ be as in Proposition~\ref{Pr6}. Assume that
\begin{equation}\label{Pr7Eq0}
\tfrac{3-\sqrt5}{4}<\delta_0<\tfrac12\quad\text{and}\quad0<\delta_1<3\delta_0-2\delta_0^2-\tfrac12\,.
\end{equation}
Then for every $\delta\in\(0,1\)$, there exist $\varepsilon_5\(\delta\)\in\(0,\varepsilon_4\(\delta\)\)$ and $C_5=C_5\(\delta\)>0$ such that
\begin{equation}\label{Pr7Eq1}
\left\|R_{\varepsilon,\gamma,\tau,\theta}\right\|_{H^1_0}\le C_5\frac{\delta_\ve\ln\overline\gamma_\varepsilon}{\overline\gamma_\varepsilon^2}
\end{equation}
for all $\varepsilon\in\(0,\varepsilon_5\(\delta\)\)$ and $\(\gamma,\tau,\theta\)\in P^k_\varepsilon\(\delta\)$.
\end{proposition}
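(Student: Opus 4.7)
The plan is to bound $\|R_{\ve,\gamma,\tau,\theta}\|_{H^1_0}$ by duality: for any $\varphi\in H^1_0(\Omega)$ with $\|\nabla\varphi\|_{L^2}\le 1$, integration by parts yields
$$\int_\Omega \nabla R_{\ve,\gamma,\tau,\theta}\cdot\nabla\varphi\,dx=\int_\Omega\bigl(\Delta U_{\ve,\gamma,\tau,\theta}-\lambda_\ve h_\ve f(U_{\ve,\gamma,\tau,\theta})\bigr)\varphi\,dx,$$
so it suffices to estimate the right-hand side uniformly in such $\varphi$. I would split $\Omega$ into the bubble balls $B(\overline{\tau_i},r_\ve)$, the transition annuli $\Omega^i_\ve := B(\overline{\tau_i},r_\ve+r_\ve^2)\setminus B(\overline{\tau_i},r_\ve)$ on which $\chi_{\ve,\tau}$ drops from $1$ to $0$, and the far region $\Omega_\ve' := \Omega\setminus\bigcup_i B(\overline{\tau_i},r_\ve+r_\ve^2)$, and handle each piece separately.

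On $\Omega_\ve'$, where $\chi_{\ve,\tau}\equiv 1$ and every $\Delta B_{\ve,\gamma_j,\tau_j}$ vanishes, \eqref{Pr4Eq4b} reduces the integrand to $\lambda_\ve h_\ve[f(U_{\ve,\gamma,\tau})-f(U_{\ve,\gamma,\tau,\theta})]$; the mean value theorem, the $L^p$-estimates \eqref{Pr4Eq3b}, Moser--Trudinger for $\varphi$, and the smallness \eqref{defThetaepsilon} of $\theta$ then give a contribution bounded by $\delta_\ve\log\gammae/\gammae^3$. On each annulus $\Omega^i_\ve$ the integrand is of order $\overline\mu_\ve^{-2\delta_0^2+\smallo(1)}$ by \eqref{Estannuli}, while $|\Omega^i_\ve|=\bigO(\overline\mu_\ve^{3\delta_0})$, so Cauchy--Schwarz against $\varphi$ contributes at most $\overline\mu_\ve^{3\delta_0-2\delta_0^2-1/2+\smallo(1)}$, which beats the target $\delta_\ve\log\gammae/\gammae^2\sim\overline\mu_\ve^{\delta_1+1/2}\log\gammae/\gammae^2$ precisely under \eqref{Pr7Eq0}; this is how the numerical constraints on $\delta_0,\delta_1$ there arise.

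The main work is on the bubble balls. By \eqref{Pr4Eq4b} and \eqref{UBEF}, the integrand there equals $\lambda_\ve h_\ve(\overline{\tau_i})(1+\theta_i)f(\overline{B}_i)-\lambda_\ve h_\ve f(\overline{B}_i+\xi_i)$ with $\overline{B}_i:=\overline{B}_{\ve,\gamma_i,\overline{\tau_i}}$ and $\xi_i := E^{(i)}_{\ve,\gamma,\tau}+F^{(i)}_{\ve,\gamma,\tau}+\sum_j\theta_j B_{\ve,\gamma_j,\tau_j}$. Taylor-expanding $f$ around $\overline{B}_i$ to second order and $h_\ve$ around $\overline{\tau_i}$ to first order, the zeroth-order contributions cancel and the residual splits into (a) a smoothness error $\lambda_\ve(h_\ve(\overline{\tau_i})-h_\ve)f(\overline{B}_i)$; (b) a linear term $-\lambda_\ve h_\ve f'(\overline{B}_i)\xi_i$ together with the ``diagonal'' $\theta_i$-piece $\lambda_\ve h_\ve(\overline{\tau_i})\theta_i f(\overline{B}_i)$; and (c) a quadratic remainder involving $h_\ve f''(\eta)\xi_i^2$. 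Each contribution is paired with $\varphi$ and controlled through the Poincar\'e--Sobolev estimates of Section \ref{SecPS} that treat the bubble region as an approximate sphere, using $\int_{B(\overline{\tau_i},r_\ve)} f(\overline{B}_i)\sim 1/\gammae$ from \eqref{Sec32Eq3} together with sharp $L^p$-control of $f'(\overline{B}_i)$ obtained in the Appendix.

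The main obstacle is the constant-in-$x$ component of (b): the pairing $E^{(i)}_{\ve,\gamma,\tau}\int h_\ve f'(\overline{B}_i)\varphi\,dx$ is dangerous because $\int f'(\overline{B}_i)\sim\gammae\int f(\overline{B}_i)\sim 1$, so the bound \eqref{Pr7Eq1} demands $E^{(i)}_{\ve,\gamma,\tau}=\bigO(\delta_\ve\log\gammae/\gammae^2)$. This is exactly what the parameter restriction $\gamma\in\overline\Gamma^k_\ve(\tau)$ from Section \ref{Sec34} achieves: Proposition \ref{Pr5} forces $E^{(i)}_{\ve,\overline\gamma_\ve(\tau),\tau}=0$, and Proposition \ref{PrEive} then refines this to $E^{(i)}_{\ve,\gamma,\tau}=\bigO(\delta_\ve\log\gammae/\gammae^3)$ on $\overline\Gamma^k_\ve(\tau)$, with even a little room to spare. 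The $F^{(i)}_{\ve,\gamma,\tau}$-contribution, of pointwise size $|x-\overline{\tau_i}|/(\gammae d_\ve)$ by Proposition \ref{PrFive}, is essentially odd about $\overline{\tau_i}$, so integrating against $\varphi$ extracts only its antisymmetric modes and a Poincar\'e inequality on the bubble ball trades a factor $r_\ve$ against $\|\nabla\varphi\|_{L^2}$, yielding \eqref{Pr7Eq1} after standard bookkeeping on the remaining terms.
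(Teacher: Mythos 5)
Your overall strategy is the right one and matches the paper's: duality pairing, decomposition into bubble balls, transition annuli $\Omega^i_\ve$, and the far region, Taylor expansion of $f$ around $\overline{B}_i$ in the bubble balls, and — crucially — recognising that the restricted range $\overline\Gamma^k_\ve(\tau)$ via Propositions~\ref{Pr5} and~\ref{PrEive} renders $E^{(i)}_{\ve,\gamma,\tau}=\bigO(\delta_\ve\ln\gammae/\gammae^3)$, which is what makes the constant-in-$x$ error controllable. The treatment of the far region and of the annuli also agrees in spirit with \eqref{Pr7Eq8}--\eqref{Pr7Eq12}. However, two of your specific mechanisms on the bubble balls do not actually deliver the stated bound.

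First, you invoke the Poincar\'e--Sobolev estimates of Section~\ref{SecPS}, but those (Lemma~\ref{Pr14Lem}, Proposition~\ref{Pr14}) require an orthogonality/zero-average constraint on the test function, which the arbitrary $\varphi\in H^1_0$ in a duality argument does not have. The pairing the paper actually uses, see \eqref{Pr7Eq6}, is much more elementary: writing $\lambda_\ve h_\ve(\overline{\tau_i})f(\overline{B}_{\ve,\gamma_i,\tau_i})\mathbf{1}_{B(\overline{\tau_i},r_\ve)}=\Delta B_{\ve,\gamma_i,\tau_i}$ and integrating by parts gives $\int_{B(\overline{\tau_i},r_\ve)}f(\overline{B}_{\ve,\gamma_i,\tau_i})|\varphi|\,dx=\bigO(\|\varphi\|_{H^1_0})$ for \emph{any} $\varphi$, because $\|B_{\ve,\gamma_i,\tau_i}\|_{H^1_0}=\bigO(1)$. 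Relatedly, your heuristic ``$\int f'(\overline B_i)\sim 1$, so the bound demands $E^{(i)}=\bigO(\delta_\ve\ln\gammae/\gammae^2)$'' underestimates the cost of the pairing: since $\varphi$ is not bounded, $\int f'(\overline B_i)\varphi$ is controlled only by $\gammae\int f(\overline B_i)|\varphi|=\bigO(\gammae\|\varphi\|_{H^1_0})$, so one actually needs $E^{(i)}=\bigO(\delta_\ve\ln\gammae/\gammae^3)$; you get this from Proposition~\ref{PrEive}, but there is no ``room to spare'' — the estimate is essentially tight.

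Second, and more seriously, your treatment of the $F^{(i)}$-contribution by odd symmetry plus a ball Poincar\'e inequality fails quantitatively. Extracting the odd part of $\varphi$ and using $\|\varphi_{\mathrm{odd}}\|_{L^2(B(\overline{\tau_i},r_\ve))}\lesssim r_\ve\|\nabla\varphi\|_{L^2}$ against $\|f'(\overline B_i)F^{(i)}\|_{L^2(B(\overline{\tau_i},r_\ve))}=\bigO(\gammae/(\gammae d_\ve))$ yields a bound of size roughly $r_\ve/d_\ve=\overline\mu_\ve^{\delta_0}\gammae^{1/l}$, whereas the target is $\delta_\ve\ln\gammae/\gammae^2\sim\overline\mu_\ve^{\delta_1+1/2}$. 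Since $\delta_0<\tfrac12<\delta_1+\tfrac12$ under \eqref{Pr7Eq0}, you have $\overline\mu_\ve^{\delta_0}\gg\overline\mu_\ve^{\delta_1+1/2}$, so the odd-symmetry approach overshoots by a diverging factor no matter how $\delta_0,\delta_1$ are chosen in the admissible range. The paper avoids this by \emph{not} pairing in $L^2$: it uses H\"older with $p$ close to $1$ and the embedding $H^1_0\hookrightarrow L^{p'}$ for all finite $p'$, so that the bubble-side factor becomes $\|f(\overline B_i)|x-\overline{\tau_i}|\mathbf{1}_{B(\overline{\tau_i},r_\ve)}\|_{L^p}=\bigO(\mu_{i,\ve}^{2/p-1}/\gammae)$ with $2/p-1$ close to $1$, which comfortably beats $\delta_\ve$ under the constraint $p<4/(2\delta_1+3)$ (see \eqref{Pr7Eq7} and \eqref{Pr7Eq10}). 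This $L^p$-$L^{p'}$ pairing is the ingredient your proposal is missing.
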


\proof
For every $\psi\in H_0^1\(\Omega\)$, using that $B_{\varepsilon,\gamma_i,\tau_i}\in V_{\varepsilon,\gamma,\tau}$, integrating by parts and using \eqref{Pr4Eq4b}, we obtain
\begin{align}\label{Pr7Eq2}
\<R_{\varepsilon,\gamma,\tau,\theta},\psi\>_{H^1_0}&=\<U_{\varepsilon,\gamma,\tau,\theta}-\Delta^{-1}\[\lambda_\varepsilon h_\varepsilon f\(U_{\varepsilon,\gamma,\tau,\theta}\)\],\psi\>_{H^1_0}\nonumber\\
&=\int_\Omega\big(\Delta U_{\varepsilon,\gamma,\tau,\theta}-\lambda_\varepsilon h_\varepsilon f\(U_{\varepsilon,\gamma,\tau,\theta}\)\big)\psi dx\allowdisplaybreaks\nonumber\\
&=\lambda_\varepsilon\int_\Omega\bigg(\sum_{i=1}^k\(1+\theta_i\)h_\varepsilon\(\overline{\tau_i}\)f\(\overline{B}_{\varepsilon,\gamma_i,\tau_i}\)\mathbf{1}_{B\(\overline{\tau_i},r_\varepsilon\)}+h_\varepsilon\chi_{\varepsilon,\tau}f\big(U_{\varepsilon,\gamma,\tau}\big)\nonumber\\
&\quad-h_\varepsilon f\(U_{\varepsilon,\gamma,\tau,\theta}\)\bigg)\psi dx+\sum_{i=1}^k\theta_i\int_\Omega\psi\Delta B_{\varepsilon,\gamma_i,\tau_i}dx.
\end{align}
By using the definition of $\chi_{\varepsilon,\tau}$ together with the mean value theorem, we obtain
\begin{align}\label{Pr7Eq3}
&\left|\sum_{i=1}^k\(1+\theta_i\)h_\varepsilon\(\overline{\tau_i}\)f\(\overline{B}_{\varepsilon,\gamma_i,\tau_i}\)\mathbf{1}_{B\(\overline{\tau_i},r_\varepsilon\)}+h_\varepsilon\chi_{\varepsilon,\tau}f\big(U_{\varepsilon,\gamma,\tau}\big)-h_\varepsilon f\(U_{\varepsilon,\gamma,\tau,\theta}\)\right|\nonumber\\
&\quad\le\sum_{i=1}^k\(\left|h_\varepsilon\(\overline{\tau_i}\)f\(\overline{B}_{\varepsilon,\gamma_i,\tau_i}\)-h_\varepsilon f\(U_{\varepsilon,\gamma,\tau,\theta}\)\right|+\left|\theta_i\right|h_\varepsilon\(\overline{\tau_i}\)f\(\overline{B}_{\varepsilon,\gamma_i,\tau_i}\)\)\mathbf{1}_{B\(\overline{\tau_i},r_\varepsilon\)}\nonumber\\
&\qquad+h_\varepsilon\sum_{i=1}^k\big|f\big(U_{\varepsilon,\gamma,\tau}\big)\big|\mathbf{1}_{A\(\overline{\tau_i},r_\varepsilon,r_\varepsilon+r_\varepsilon^2\)}+h_\varepsilon\big|f\big(U_{\varepsilon,\gamma,\tau}\big)-f\(U_{\varepsilon,\gamma,\tau,\theta}\)\big|\mathbf{1}_{\Omega_{r_\varepsilon,\tau}}\nonumber\allowdisplaybreaks\\
&\quad\le\sum_{i=1}^k\big(h_\varepsilon f'\(\(1-t_1\)\overline{B}_{\varepsilon,\gamma_i,\tau_i}+t_1U_{\varepsilon,\gamma,\tau,\theta}\)\left|U_{\varepsilon,\gamma,\tau,\theta}-\overline{B}_{\varepsilon,\gamma_i,\tau_i}\right|\nonumber\\
&\qquad\quad\quad+\left|\nabla h_\varepsilon\(\(1-t_2\)\overline{\tau_i}+t_2x\)\right|\left|x-\overline{\tau_i}\right|f\(\overline{B}_{\varepsilon,\gamma_i,\tau_i}\)\big)\mathbf{1}_{B\(\overline{\tau_i},r_\varepsilon\)}\nonumber\allowdisplaybreaks\\
&\quad\quad+h_\varepsilon\sum_{i=1}^k\big|f\big(U_{\varepsilon,\gamma,\tau}\big)\big|\mathbf{1}_{A\(\overline{\tau_i},r_\varepsilon,r_\varepsilon+r_\varepsilon^2\)}+\bigg(h_\varepsilon\(\overline{\tau_i}\)f\(\overline{B}_{\varepsilon,\gamma_i,\tau_i}\)\nonumber\\
&\quad\quad+h_\varepsilon f'\bigg(U_{\varepsilon,\gamma,\tau}+t_3\sum_{i=1}^k\theta_iB_{\varepsilon,\gamma_i,\tau_i}\bigg)\bigg)\sum_{i=1}^k\left|\theta_i\right|B_{\varepsilon,\gamma_i,\tau_i}\mathbf{1}_{\Omega_{r_\varepsilon,\tau}}
\end{align}
for some functions $t_1,t_2,t_3:\Omega\to\[0,1\]$, where $A\(\overline{\tau_i},r_\varepsilon,r_\varepsilon+r_\varepsilon^2\)$ and $\Omega_{r_\varepsilon,\tau}$ are as in \eqref{defAOmega}.  Since $\lambda_\varepsilon\to\lambda_0$ and $h_\varepsilon\to h_0$ in $C^1\(\overline\Omega\)$, it follows from \eqref{Pr7Eq2} and \eqref{Pr7Eq3} that
\begin{multline}\label{Pr7Eq4}
\<R_{\varepsilon,\gamma,\tau,\theta},\psi\>_{H^1_0}=\bigO\bigg(\sum_{i=1}^k\int_\Omega\bigg(\big(f'\(\(1-t_1\)\overline{B}_{\varepsilon,\gamma_i,\tau_i}+t_1U_{\varepsilon,\gamma,\tau,\theta}\)\left|U_{\varepsilon,\gamma,\tau,\theta}-\overline{B}_{\varepsilon,\gamma_i,\tau_i}\right|\\
+\left|x-\overline{\tau_i}\right|f\(\overline{B}_{\varepsilon,\gamma_i,\tau_i}\)\big)\mathbf{1}_{B\(\overline{\tau_i},r_\varepsilon\)} +\big|f\big(U_{\varepsilon,\gamma,\tau}\big)\big|\mathbf{1}_{A\(\overline{\tau_i},r_\varepsilon,r_\varepsilon+r_\varepsilon^2\)}\\
+f'\bigg(U_{\varepsilon,\gamma,\tau}+t_4\sum_{j=1}^k\theta_jB_{\varepsilon,\gamma_j,\tau_j}\bigg)\left|\theta_i\right|B_{\varepsilon,\gamma_i,\tau_i}\mathbf{1}_{\Omega_{r_\varepsilon,\tau}}\bigg) |\psi|dx\bigg).
\end{multline}
For every $i\in\left\{1,\dotsc,k\right\}$, by using \eqref{Pr6Eq1} and remarking that  $f'\(u\)\le3uf\(u\)$ for all $u\ge1$, we obtain
\begin{multline}\label{Pr7Eq5}
\int_{B\(\overline{\tau_i},r_\varepsilon\)}\(f'\(\(1-t_1\)\overline{B}_{\varepsilon,\gamma_i,\tau_i}+t_1U_{\varepsilon,\gamma,\tau,\theta}\)\left|U_{\varepsilon,\gamma,\tau,\theta}-\overline{B}_{\varepsilon,\gamma_i,\tau_i}\right|+\left|x-\overline{\tau_i}\right|f\(\overline{B}_{\varepsilon,\gamma_i,\tau_i}\)\)\\
\times|\psi|dx=\bigO\bigg(\int_{B\(\overline{\tau_i},r_\varepsilon\)}f\(\overline{B}_{\varepsilon,\gamma_i,\tau_i}\)\bigg(\frac{\delta_\varepsilon\ln\overline\gamma_\varepsilon}{\overline\gamma_\varepsilon^2}+\overline{\gamma}_\varepsilon^{1/l}\left|x-\overline{\tau_i}\right|\bigg)|\psi |dx\bigg).
\end{multline}
By integrating by parts, we obtain
\begin{align}\label{Pr7Eq6}
\int_{B\(\overline{\tau_i},r_\varepsilon\)}f\(\overline{B}_{\varepsilon,\gamma_i,\tau_i}\)\left|\psi\right|dx&=\(\lambda_\varepsilon h_\varepsilon\(\overline{\tau_i}\)\)^{-1}\<B_{\varepsilon,\gamma_i,\tau_i},\left| \psi \right|\>_{H_0^1}\nonumber\\
&\le\(\lambda_\varepsilon h_\varepsilon\(\overline{\tau_i}\)\)^{-1}\left\|B_{\varepsilon,\gamma_i,\tau_i}\right\|_{H^1_0}\left\|\psi \right\|_{H^1_0}=\bigO\big(\left\|\psi\right\|_{H^1_0}\big).
\end{align}
On the other hand, for every $p>1$, by using H\"older's inequality together with the continuity of the embedding $H^1_0\(\Omega\)\hookrightarrow L^{p'}\(\Omega\)$, where $p'$ is the conjugate exponent of $p$, we obtain
\begin{align}
&\int_{B\(\overline{\tau_i},r_\varepsilon\)}f\(\overline{B}_{\varepsilon,\gamma_i,\tau_i}\)\left|x-\overline{\tau_i}\right|\left|\psi \right|dx=\bigO\big(\big\|f\(\overline{B}_{\varepsilon,\gamma_i,\tau_i}\)\left|x-\overline{\tau_i}\right|\mathbf{1}_{B\(\overline{\tau_i},r_\varepsilon\)}\big\|_{L^p}\left\|\psi\right\|_{H^1_0}\big),\label{Pr7Eq7}\allowdisplaybreaks\\ 
&\int_{A\(\overline{\tau_i},r_\varepsilon,r_\varepsilon+r_\varepsilon^2\)}\big|f\big(U_{\varepsilon,\gamma,\tau}\big)\big|\left|\psi \right|dx=\bigO\big(\big\|f\big(U_{\varepsilon,\gamma,\tau}\big)\mathbf{1}_{A\(\overline{\tau_i},r_\varepsilon,r_\varepsilon+r_\varepsilon^2\)}\big\|_{L^p}\left\|\psi\right\|_{H^1_0}\big),\label{Pr7Eq8}\allowdisplaybreaks\\ 
&\int_{\Omega_{r_\varepsilon,\tau}}\bigg(f\(\overline{B}_{\varepsilon,\gamma_i,\tau_i}\)+f'\bigg(U_{\varepsilon,\gamma,\tau}+t_4\sum_{j=1}^k\theta_jB_{\varepsilon,\gamma_j,\tau_j}\bigg)B_{\varepsilon,\gamma_i,\tau_i}\bigg)\left|\psi \right|dx\nonumber\\
&\quad=\bigO\bigg(\bigg\|\bigg(f\(\overline{B}_{\varepsilon,\gamma_i,\tau_i}\)+f'\bigg(U_{\varepsilon,\gamma,\tau}+t_4\sum_{j=1}^k\theta_jB_{\varepsilon,\gamma_j,\tau_j}\bigg)B_{\varepsilon,\gamma_i,\tau_i}\bigg)\mathbf{1}_{\Omega_{r_\varepsilon,\tau}}\bigg\|_{L^p}\left\|\psi\right\|_{H^1_0}\bigg).\label{Pr7Eq9}
\end{align}
By rescaling, we obtain
$$\big\|f\(\overline{B}_{\varepsilon,\gamma_i,\tau_i}\)\left|x-\overline{\tau_i}\right|\mathbf{1}_{B\(\overline{\tau_i},r_\varepsilon\)}\big\|_{L^p}^p=\mu_{i,\varepsilon}^{p+2}\int_{B\(0,r_\varepsilon/\mu_{i,\varepsilon}\)}f\(\overline{B}_{\varepsilon,\gamma_i,\tau_i}\(\overline{\tau_i}+\mu_{i,\varepsilon}x\)\)^p\left|x\right|^pdx,$$
where $\mu_{i,\varepsilon}$ is defined by $\mu_{i,\varepsilon}^2:=4\gamma_{i,\varepsilon}^{-2}\exp\(-\gamma_{i,\varepsilon}^2\)$. By using \eqref{Sec32Eq3} and \eqref{Pr10Eq1}, it follows that
\begin{align}\label{Pr7Eq10}
&\big\|f\(\overline{B}_{\varepsilon,\gamma_i,\tau_i}\)\left|x-\overline{\tau_i}\right|\mathbf{1}_{B\(\overline{\tau_i},r_\varepsilon\)}\big\|_{L^p}^p\nonumber\\
&=\bigO\(\mu_{i,\varepsilon}^{p+2}\int_{B\(0,r_\varepsilon/\mu_{i,\varepsilon}\)}f\(\gamma_{i,\varepsilon}-\frac{1}{\gamma_{i,\varepsilon}}\ln\frac{1}{1+\lambda_\varepsilon h_\varepsilon\(\overline{\tau_i}\)\left|x\right|^2}\)^p\left|x\right|^pdx\)\nonumber\allowdisplaybreaks\\
&=\bigO\(\frac{\mu_{i,\varepsilon}^{2-p}}{\overline{\gamma}_\varepsilon^p}\int_{B\(0,r_\varepsilon/\mu_{i,\varepsilon}\)}\frac{\left|x\right|^pdx}{\big(1+\lambda_\varepsilon h_\varepsilon\(\overline{\tau_i}\)\left|x\right|^2\big)^{2p}}\)=\bigO\(\frac{\mu_{i,\varepsilon}^{2-p}}{\overline{\gamma}_\varepsilon^p}\)=\smallo\(\(\frac{\delta_\varepsilon\ln\overline\gamma_\varepsilon}{\overline\gamma_\varepsilon^{2+1/l}}\)^p\)
\end{align}
provided we choose $p$ such that $2-p>p\(\delta_1+1/2\)$, i.e. $1<p<4/\(2\delta_1+3\)$, which is possible since $\delta_1<3\delta_0-2\delta_0^2-1/2<1/2$. As regards the terms in the right-hand sides of \eqref{Pr7Eq8} and \eqref{Pr7Eq9}, by using \eqref{Sec32Eq3} and \eqref{Pr4Eq14} and proceeding as in \eqref{Pr4Eq15}--\eqref{Pr4Eq18} and \eqref{Pr4Eq32}, we obtain
\begin{align}
&\big\|f\big(U_{\varepsilon,\gamma,\tau}\big)\mathbf{1}_{A\(\overline{\tau_i},r_\varepsilon,r_\varepsilon+r_\varepsilon^2\)}\big\|^p_{L^p}=\bigO\(\overline\gamma_\varepsilon^p\ln\(1+r_\varepsilon\)\exp\(\(p\delta_0-1\)\delta_0\overline\gamma_\varepsilon^2+\smallo\(\overline\gamma_\varepsilon\)\)\)\nonumber\\
&\qquad=\bigO\(\overline\gamma_\varepsilon^p\exp\(\(p\delta_0-\frac{3}{2}\)\delta_0\overline\gamma_\varepsilon^2+\smallo\(\overline\gamma_\varepsilon^2\)\)\)=\smallo\(\(\frac{\delta_\varepsilon\ln\overline\gamma_\varepsilon}{\overline\gamma_\varepsilon^2}\)^p\),\label{Pr7Eq11}\allowdisplaybreaks\\
&\bigg\|\bigg(f\(\overline{B}_{\varepsilon,\gamma_i,\tau_i}\)+f'\bigg(U_{\varepsilon,\gamma,\tau}+t_4\sum_{j=1}^k\theta_jB_{\varepsilon,\gamma_j,\tau_j}\bigg)B_{\varepsilon,\gamma_i,\tau_i}\bigg)\mathbf{1}_{\Omega_{r_\varepsilon,\tau}}\bigg\|_{L^p}^p\nonumber\\
&\qquad=\bigO\Bigg(\overline\gamma_\varepsilon^{3p+2}\exp\(\(p\delta_0-1\)\delta_0\overline\gamma_\varepsilon^2+\smallo\(\overline\gamma_\varepsilon^2\)\)+\frac{1}{\overline\gamma_\varepsilon^p}\int_{\Omega_{R_\varepsilon,\tau}}\left|\ln\left|x-\overline{\tau_i}\right|+\bigO\(1\)\right|^pdx\Bigg)\nonumber\\
&\qquad=\smallo\(1\)\label{Pr7Eq12}
\end{align}
as $\varepsilon\to0$, uniformly in $\(\gamma,\tau,\theta\)\in P^k_\varepsilon\(\delta\)$, provided we choose $p$ such that 
\begin{multline*}
\(p\delta_0-\frac{3}{2}\)\delta_0<-\frac{p}{2}\(\delta_1+\frac{1}{2}\)\quad\text{and}\quad p\delta_0-1<0,\\
\text{i.e. }1<p<\min\(\frac{3\delta_0}{2\delta_0^2+\delta_1+1/2},\frac{1}{\delta_0}\)=\frac{3\delta_0}{2\delta_0^2+\delta_1+1/2}\,,
\end{multline*}
which is possible when assuming \eqref{Pr7Eq0}. By putting together \eqref{Pr7Eq4}--\eqref{Pr7Eq12} and using the fact that $\left|\theta_i\right|<\delta_\varepsilon\overline\gamma_\varepsilon^{-4}\ln\overline\gamma_\varepsilon$, we obtain \eqref{Pr7Eq1}. This ends the proof of Proposition~\ref{Pr7}.
\endproof

We let $\mathcal{H}$ be the vector space of all functions in $H^1_0\(\Omega\)$ that are even in $x_2$. For every $\tau\in T^k_\varepsilon\(\delta\)$ and $\gamma\in\overline\Gamma^k_\varepsilon\(\tau\)$, we define
$${V_{\varepsilon,\gamma,\tau}}:=\text{span}\left\{Z_{0,i,\varepsilon,\gamma,\tau},Z_{1,i,\varepsilon,\gamma,\tau},B_{\varepsilon,\gamma_i,\tau_i}\right\}_{1\le i\le k},$$
where
$$Z_{0,i,\varepsilon,\gamma,\tau}:=\partial_{\gamma_i}\[U_{\varepsilon,\gamma,\tau,0}\]\quad\text{and}\quad Z_{1,i,\varepsilon,\gamma,\tau}:=\partial_{\tau_i}\[U_{\varepsilon,\gamma,\tau,0}\]\quad\forall i\in\left\{1,\dotsc,k\right\}.$$
Note that $U_{\varepsilon,\gamma,\tau,0}\in\mathcal{H}$ and $V_{\varepsilon,\gamma,\tau}\subset\mathcal{H}$. We let $\Pi_{\varepsilon,\gamma,\tau}$ and $\Pi_{\varepsilon,\gamma,\tau}^\perp$ be the orthogonal projection of $\mathcal{H}$ onto $V_{\varepsilon,\gamma,\tau}$ and $V_{\varepsilon,\gamma,\tau}^\perp$, respectively. We obtain the following:

\begin{proposition}\label{Pr8}
Assume that \eqref{Pr7Eq0} holds true. Let $\varepsilon_5$ be as in Proposition~\ref{Pr7}. Then for every $\delta\in\(0,1\)$, there exist $\varepsilon_6\(\delta\)\in\(0,\varepsilon_5\(\delta\)\)$ and $C_6=C_6\(\delta\)>0$ such that for every $\varepsilon\in\(0,\varepsilon_6\(\delta\)\)$ and $\(\gamma,\tau,\theta\)\in P^k_\varepsilon\(\delta\)$, there exists a unique solution $\Phi_{\varepsilon,\gamma,\tau,\theta}\in V_{\varepsilon,\gamma,\tau}^\perp$ to the equation
\begin{equation}\label{Pr8Eq1}
\Pi_{\varepsilon,\gamma,\tau}^\perp\(U_{\varepsilon,\gamma,\tau,\theta}+\Phi_{\varepsilon,\gamma,\tau,\theta}-\Delta^{-1}\[\lambda_\varepsilon h_\varepsilon f\(U_{\varepsilon,\gamma,\tau,\theta}+\Phi_{\varepsilon,\gamma,\tau,\theta}\)\]\)=0
\end{equation}
such that
\begin{equation}\label{Pr8Eq2}
\left\|\Phi_{\varepsilon,\gamma,\tau,\theta}\right\|_{H^1_0}\le C_6\frac{\delta_\varepsilon\ln\overline\gamma_\varepsilon}{\overline\gamma_\varepsilon^2}.
\end{equation}
Furthermore, $\Phi_{\varepsilon,\gamma,\tau,\theta}$ is continuous in $\(\gamma,\tau,\theta\)$.
\end{proposition}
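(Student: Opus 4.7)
The plan is to recast equation~\eqref{Pr8Eq1} as a fixed-point problem in a small ball of $V_{\ve,\gamma,\tau}^\perp$ and apply the Banach contraction principle. Introducing the quadratic remainder
$$N_{\ve,\gamma,\tau,\theta}\(\Phi\):=f\(U_{\ve,\gamma,\tau,\theta}+\Phi\)-f\(U_{\ve,\gamma,\tau,\theta}\)-f'\(U_{\ve,\gamma,\tau,\theta}\)\Phi$$
and letting $L_{\ve,\gamma,\tau,\theta}:V_{\ve,\gamma,\tau}^\perp\to V_{\ve,\gamma,\tau}^\perp$ be the restriction of
$$\Phi\mapsto \Pi_{\ve,\gamma,\tau}^\perp\(\Phi-\Delta^{-1}\[\lambda_\ve h_\ve f'\(U_{\ve,\gamma,\tau,\theta}\)\Phi\]\),$$
equation~\eqref{Pr8Eq1} is equivalent to
$$L_{\ve,\gamma,\tau,\theta}\(\Phi\)=-\Pi_{\ve,\gamma,\tau}^\perp R_{\ve,\gamma,\tau,\theta}+\Pi_{\ve,\gamma,\tau}^\perp \Delta^{-1}\[\lambda_\ve h_\ve N_{\ve,\gamma,\tau,\theta}\(\Phi\)\],$$
with $R_{\ve,\gamma,\tau,\theta}$ as in \eqref{Sec4Eq}.

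The hard part, and the actual content of the proposition, will be establishing uniform invertibility of $L_{\ve,\gamma,\tau,\theta}$: there is $C>0$ such that $\|L_{\ve,\gamma,\tau,\theta}^{-1}\|\le C$ for all small $\ve$, uniformly in $\(\gamma,\tau,\theta\)\in P^k_\ve\(\delta\)$. I would argue by contradiction, taking sequences $\ve_n\downarrow 0$, $\(\gamma_n,\tau_n,\theta_n\)\in P^k_{\ve_n}\(\delta\)$ and $\Phi_n\in V_{\ve_n,\gamma_n,\tau_n}^\perp$ with $\|\Phi_n\|_{H^1_0}=1$ and $L_{\ve_n,\gamma_n,\tau_n,\theta_n}\(\Phi_n\)\to 0$ in $H^1_0$. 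Rescaling $\Phi_n$ around each bubble center $\overline{\tau_{n,i}}$ at the bubble scale $\mu_{i,\ve_n}$, the rescaled sequence would subconverge to a finite-energy solution of $\Delta\phi=f'\(\overline{B}_{\gamma_0}\)\phi$ on $\R^2$, which by the known non-degeneracy of the radial Moser--Trudinger bubble (see~\cites{DruThi,MalMar,ManMar}) lies in the span of its dilation and translation modes. The orthogonality of $\Phi_n$ to $\partial_{\gamma_i}\[U_{\ve,\gamma,\tau,0}\]$, $\partial_{\tau_i}\[U_{\ve,\gamma,\tau,0}\]$ and $B_{\ve,\gamma_i,\tau_i}$, combined with the evenness in $x_2$, will force these limiting modes to vanish. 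In the outer region, $\Phi_n$ converges weakly to a solution $v$ of $\Delta v=\lambda h f'\(w_0\) v$ with $v=0$ on $\partial\Omega$, which must vanish by the non-degeneracy of $w_0$ provided by (ii) of Proposition~\ref{Pr0} together with Definition~\ref{Def3}. Upgrading to strong $H^1_0$ convergence via the Moser--Trudinger inequality and the sharp bubble estimates of Claim~\ref{Claim1} and Proposition~\ref{Pr4}, in the spirit of the proof of Proposition~\ref{Pr7}, will contradict $\|\Phi_n\|_{H^1_0}=1$.

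Granted invertibility, the remaining steps are standard. The pointwise control of $U_{\ve,\gamma,\tau,\theta}$ from Proposition~\ref{Pr6} together with the Moser--Trudinger inequality will yield quadratic and Lipschitz estimates of the form
$$\left\|\Delta^{-1}\[\lambda_\ve h_\ve N_{\ve,\gamma,\tau,\theta}\(\Phi\)\]\right\|_{H^1_0}=\bigO\(\gammae^a\|\Phi\|^2_{H^1_0}\),$$
and similarly for $N_{\ve,\gamma,\tau,\theta}\(\Phi_1\)-N_{\ve,\gamma,\tau,\theta}\(\Phi_2\)$, valid on any ball of radius $\bigO\(\delta_\ve\ln\gammae/\gammae^2\)$ in $V_{\ve,\gamma,\tau}^\perp$, for some fixed $a>0$. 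Combined with $\|R_{\ve,\gamma,\tau,\theta}\|_{H^1_0}=\bigO\(\delta_\ve\ln\gammae/\gammae^2\)$ from Proposition~\ref{Pr7} and the fact that $\delta_\ve=\overline\mu_\ve^{\delta_1+1/2}$, cf.~\eqref{defdeltaepsilon} and \eqref{Sec32Eq2}, decays exponentially in $\gammae^2$ and hence dominates any polynomial factor in $\gammae$, the map
$$\Phi\mapsto L_{\ve,\gamma,\tau,\theta}^{-1}\(-\Pi_{\ve,\gamma,\tau}^\perp R_{\ve,\gamma,\tau,\theta}+\Pi_{\ve,\gamma,\tau}^\perp\Delta^{-1}\[\lambda_\ve h_\ve N_{\ve,\gamma,\tau,\theta}\(\Phi\)\]\)$$
will be a contraction of the closed ball $\{\Phi\in V_{\ve,\gamma,\tau}^\perp:\|\Phi\|_{H^1_0}\le C_6\delta_\ve\ln\gammae/\gammae^2\}$ for a suitable $C_6=C_6\(\delta\)>0$. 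Its unique fixed point is the desired $\Phi_{\ve,\gamma,\tau,\theta}$, and it satisfies \eqref{Pr8Eq2}. Continuity in $\(\gamma,\tau,\theta\)$ then follows from uniqueness of the fixed point together with the continuous dependence of the contracting map on these parameters.
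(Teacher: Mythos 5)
Your overall plan coincides with the paper's: recast \eqref{Pr8Eq1} as a fixed point problem, prove uniform invertibility of the linearized operator $L_{\ve,\gamma,\tau,\theta}$ on $V_{\ve,\gamma,\tau}^\perp$ by a rescaling/compactness argument and the Baraket--Pacard classification, then close with Banach's fixed point theorem. The invertibility sketch (Lemma~\ref{Pr8Lem}) is on the right track, including the roles of the orthogonality constraints, the evenness in $x_2$, and the non-degeneracy of $w_0$.

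There is, however, a genuine quantitative gap in the contraction step. You claim
$$\left\|\Delta^{-1}\[\lambda_\ve h_\ve N_{\ve,\gamma,\tau,\theta}\(\Phi\)\]\right\|_{H^1_0}=\bigO\(\gammae^a\left\|\Phi\right\|^2_{H^1_0}\)$$
for some fixed polynomial exponent $a>0$, and then argue the contraction closes because $\delta_\ve$ decays exponentially, ``dominating any polynomial factor.'' This is false: the nonlinearity $f(u)=u e^{u^2}$ has second derivative $f''(u)\sim u^3 e^{u^2}$, and the $L^p$-norm of $f''\big(\widetilde{U}_{\ve,\gamma,\tau,\theta}\big)$ over the bubble region $B\(\overline{\tau_i},r_\ve\)$ is not polynomially bounded but behaves like $\overline\gamma_i\(\tau\)\overline\mu_i\(\tau\)^{-2/p'}$ (see \eqref{Pr8Eq9}--\eqref{Pr8Eq10}), which is of order $\exp\(\gammae^2/p'\)$ and therefore \emph{exponentially} large in $\gammae^2$. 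Your heuristic ``exponential beats polynomial'' thus does not apply; instead one must compare two competing exponentials, namely $\overline\mu_\ve^{-2/p'}$ from the quadratic coefficient against $\delta_\ve=\overline\mu_\ve^{\delta_1+1/2}$ from the ball radius, and verify that $\overline\mu_\ve^{-2/p'}\,\delta_\ve\to 0$. This forces the explicit constraint $2/p'<\delta_1+1/2$, i.e.\ $p<4/\(3-2\delta_1\)$ (cf.\ \eqref{Pr8Eq14}), which in turn rests on the relations among $\delta_0$, $\delta_1$ in \eqref{Pr7Eq0} and the choice of $\delta'$ in the construction. Without this delicate exponent bookkeeping, the contraction constant cannot be shown to be $\smallo(1)$; as written, your argument leaves that bound unjustified.

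A secondary omission: the contraction estimate must also cover the outer region $\Omega_{R_\ve,\tau}$ and the intermediate annuli, where one uses \eqref{Pr4Eq3b} and the Moser--Trudinger inequality on the small perturbations $\widetilde\Phi_\ve$ (this is the splitting \eqref{Pr8Eq7} between $\widetilde{U}_{\ve,\gamma,\tau,\theta}$ and $\widetilde\Phi_\ve$); ``pointwise control from Proposition~\ref{Pr6}'' alone does not suffice because $\Phi$ is only in $H^1_0$, not $L^\infty$.
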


The proof of Proposition~\ref{Pr8} relies on the following:

\begin{lemma}\label{Pr8Lem}
Let $\varepsilon_5$ be as in Proposition~\ref{Pr7}. For every $\delta\in\(0,1\)$, there exist $\varepsilon_5'\(\delta\)\in\(0,\varepsilon_5\(\delta\)\)$ and $C_5'=C_5'\(\delta\)>0$ such that for every $\varepsilon\in\(0,\varepsilon_5'\(\delta\)\)$ and $\(\gamma,\tau,\theta\)\in P^k_\varepsilon\(\delta\)$, the operator $L_{\varepsilon,\gamma,\tau,\theta}:V_{\varepsilon,\gamma,\tau}^\perp\to V_{\varepsilon,\gamma,\tau}^\perp$ defined by
\begin{equation}\label{Pr8LemEq0}
L_{\varepsilon,\gamma,\tau,\theta}\(\Phi\)=\Pi_{\varepsilon,\gamma,\tau}^\perp\(\Phi-\Delta^{-1}\[\lambda_\varepsilon h_\varepsilon f'\(U_{\varepsilon,\gamma,\tau,\theta}\)\Phi\]\)\quad\forall\Phi\in V_{\varepsilon,\gamma,\tau}^\perp
\end{equation}
satisfies
\begin{equation}\label{Pr8LemEq1}
\left\|\Phi\right\|_{H^1_0}\le C'_5\left\|L_{\varepsilon,\gamma,\tau,\theta}\(\Phi\)\right\|_{H^1_0}.
\end{equation}
In particular, $L_{\varepsilon,\gamma,\tau,\theta}$ is an isomorphism.
\end{lemma}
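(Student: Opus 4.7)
The plan is to argue by contradiction in the classical Lyapunov--Schmidt style, as in Rey~\cite{Rey}. Suppose the estimate fails: then there exist sequences $\varepsilon_n\to 0$, $(\gamma_n,\tau_n,\theta_n)\in P^k_{\varepsilon_n}(\delta)$ and $\Phi_n\in V_{\varepsilon_n,\gamma_n,\tau_n}^\perp$ with $\|\Phi_n\|_{H^1_0}=1$ while $\|L_{\varepsilon_n,\gamma_n,\tau_n,\theta_n}(\Phi_n)\|_{H^1_0}=\smallo(1)$. Expanding the projection, I would write
\begin{equation*}
\Phi_n-\Delta^{-1}\bigl[\lambda_{\varepsilon_n}h_{\varepsilon_n}f'(U_{\varepsilon_n,\gamma_n,\tau_n,\theta_n})\Phi_n\bigr]=L_n(\Phi_n)+\sum_{i=1}^{k}\Bigl(a_{i,n}Z_{0,i,n}+b_{i,n}Z_{1,i,n}+c_{i,n}B_{\varepsilon_n,\gamma_{i,n},\tau_{i,n}}\Bigr),
\end{equation*}
where the scalars $a_{i,n},b_{i,n},c_{i,n}$ are determined by testing the equation against the three families of elements of $V_{\varepsilon_n,\gamma_n,\tau_n}$ and inverting the resulting $3k\times 3k$ linear system. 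The first step is therefore to compute the matrix of scalar products $\langle Z_{0,i,n},Z_{0,j,n}\rangle_{H^1_0}$, etc., using Claim~\ref{Claim1}, Propositions~\ref{PrEive}--\ref{Pr6} and the explicit form of the bubble derivatives, and to verify that this matrix is uniformly invertible; once this is done, a standard computation bounds $|a_{i,n}|,|b_{i,n}|,|c_{i,n}|$ in terms of quantities that vanish as $n\to\infty$.

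The core of the proof is the blow-up analysis of $\Phi_n$ near each bubble. For each $i$, I rescale
\begin{equation*}
\widetilde{\Phi}_{i,n}(y):=\Phi_n(\overline{\tau_{i,n}}+\mu_{i,n}y),\qquad \mu_{i,n}^2:=4\gamma_{i,n}^{-2}\exp(-\gamma_{i,n}^2),
\end{equation*}
and, using the sharp asymptotics of $\overline{B}_{\gamma}$ from the Appendix together with the uniform bound $\|\widetilde{\Phi}_{i,n}\|_{H^1(\mathbb{R}^2)}\le\|\Phi_n\|_{H^1_0}=1$, extract a weak limit $\Phi_{i,\infty}\in D^{1,2}(\mathbb{R}^2)$. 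The rescaled equation, combined with the convergence $\mu_{i,n}^2\lambda_{\varepsilon_n}h_{\varepsilon_n}(\overline{\tau_{i,n}})f'(\overline{B}_{\varepsilon_n,\gamma_{i,n},\overline{\tau_{i,n}}}(\overline{\tau_{i,n}}+\mu_{i,n}\cdot))$ to the linearization of the standard Liouville-type bubble on $\mathbb{R}^2$, forces $\Phi_{i,\infty}$ to solve the limiting linearized problem. Under the imposed $x_2$-evenness its kernel is two-dimensional, spanned by the $y_1$-translation mode and the dilation mode of the limiting bubble, which are precisely the rescaled limits of $Z_{1,i,n}$ and of a suitable combination of $Z_{0,i,n}$ and $B_{\varepsilon_n,\gamma_{i,n},\tau_{i,n}}$. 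The orthogonality $\Phi_n\perp V_{\varepsilon_n,\gamma_n,\tau_n}$ then rules out both modes and forces $\Phi_{i,\infty}\equiv 0$.

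Next I examine $\Phi_n$ on the complement $\Omega_{r_{\varepsilon_n},\tau_n}$. On this region $U_{\varepsilon_n,\gamma_n,\tau_n,\theta_n}$ is close to $w_0$ in $C^0$ away from the origin, so $\lambda_{\varepsilon_n}h_{\varepsilon_n}f'(U_{\varepsilon_n,\gamma_n,\tau_n,\theta_n})$ converges in $L^p$ (for $p$ in the range given by~\eqref{Pr7Eq0}) to $\lambda h f'(w_0)$, while the Moser--Trudinger inequality combined with the cut-off estimates of Proposition~\ref{Pr4} (in particular~\eqref{Pr4Eq3b} and~\eqref{Estannuli}) gives the required tightness. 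Together with Rellich compactness, $\Phi_n$ converges weakly in $H^1_0(\Omega)$ to a limit $\Phi_\infty\in\mathcal{H}$ which solves~\eqref{Sec2Eq} with $w=w_0$; by the non-degeneracy of $w_0$ (Section~\ref{Sec3}), $\Phi_\infty\equiv 0$. The final step is to turn these weak vanishings into a strong one: testing the equation for $\Phi_n$ against $\Phi_n$ itself and splitting the integral into the $k$ bubble regions plus $\Omega_{r_{\varepsilon_n},\tau_n}$, the contributions from the bubble regions are controlled by $\|\widetilde{\Phi}_{i,n}\|_{H^1(B(0,r_{\varepsilon_n}/\mu_{i,n}))}\to 0$, while the exterior contribution is controlled by $\|\Phi_n\|_{L^2(\Omega_{r_{\varepsilon_n},\tau_n})}^2\to 0$ via the $L^p$ convergence above. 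This yields $\|\Phi_n\|_{H^1_0}=\smallo(1)$, contradicting $\|\Phi_n\|_{H^1_0}=1$.

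The main obstacle I anticipate is the blow-up analysis near the bubbles: the Moser--Trudinger nonlinearity $f'(u)=(1+2u^2)\exp(u^2)$ is enormously large in $B(\overline{\tau_{i,n}},r_{\varepsilon_n})$, so the rescaling has to be carried out with the sharp pointwise asymptotics of $\overline{B}_\gamma$ (as in~\cites{DruThi,MalMar,ManMar} and the Appendix) in order to obtain integrability of the coefficient in the limiting linearized equation on $\mathbb{R}^2$ and to identify its kernel exactly; once this is in place, the invertibility~\eqref{Pr8LemEq1} follows from the contradiction argument, and $L_{\varepsilon,\gamma,\tau,\theta}$ is a Fredholm-type operator of index zero on $V_{\varepsilon,\gamma,\tau}^\perp$, hence an isomorphism.
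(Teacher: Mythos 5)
Your overall architecture is right: contradiction, blow-up near each bubble yielding the two-dimensional (after imposing $x_2$-evenness) kernel of the linearized Liouville equation, orthogonality killing the limit, non-degeneracy of $w_0$ killing the exterior limit, and a final pairing with $\Phi_n$ to upgrade the weak vanishing to a contradiction. But there are two points to flag.

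First, the Gram-matrix step is a detour you do not need. Since $\Phi_n\in V_{\varepsilon_n,\gamma_n,\tau_n}^\perp$, pairing $L_n(\Phi_n)$ directly with $\Phi_n$ gives $1+\smallo(1)=\lambda_n\int_\Omega h_nf'(U_n)\Phi_n^2\,dx$ at once, with no need to invert a $3k\times3k$ system. That is what the paper does, and it lets you focus immediately on showing $\int f'(U_n)\Phi_n^2\to0$.

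Second, and more seriously, the way you propose to close the argument in the bubble regions does not work as stated. You assert that the contribution from $B(\overline{\tau_{i,n}},r_{\varepsilon_n})$ is controlled because $\|\widetilde\Phi_{i,n}\|_{H^1(B(0,r_{\varepsilon_n}/\mu_{i,n}))}\to 0$, but this is false: the rescaled functions satisfy only $\|\nabla\widetilde\Phi_{i,n}\|_{L^2}=1$, and all you obtain from the blow-up analysis is weak convergence to zero in $D^{1,2}(\mathbb{R}^2)$ and strong $L^p_{\loc}$ convergence to zero. The problematic part is the tail $R\le|y|\le r_{\varepsilon_n}/\mu_{i,n}$, where $L^p_{\loc}$ compactness is no help. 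The paper handles this by exploiting the \emph{third} orthogonality condition $\langle B_{\varepsilon_n,\gamma_{i,n},\tau_{i,n}},\Phi_n\rangle_{H^1_0}=0$, which after integration by parts becomes a zero-mean condition $\int_{B(\overline{\tau_{i,n}},r_n)}f(\overline B_{i,n})\Phi_n\,dx=0$; this is precisely what makes the Poincar\'e--Sobolev inequality of Section~\ref{SecPS} (Lemma~\ref{Pr14Lem} and Proposition~\ref{Pr14}) applicable on the bubble-scaled domain, yielding the uniform bound $\int_{B(\overline{\tau_{i,n}},r_n)}f'(\overline B_{i,n})\Phi_n^2\,dx=\bigO(1)$ and the tail estimate \eqref{Pr8LemEq17b}--\eqref{Pr8LemEq17g}. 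In your write-up the directions $B_{\varepsilon,\gamma_i,\tau_i}$ appear only as part of ``a suitable combination'' identifying the dilation kernel; you miss that their genuine role is to furnish the Poincar\'e--Sobolev estimate. Without this ingredient the tail of the bubble-region integral is not controlled and the contradiction does not close.
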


Proposition~\ref{Pr8} and Lemma~\ref{Pr8Lem} (together with Proposition~\ref{Pr9} and Lemma~\ref{Pr9Lem}), are the heart of the Lyapunov--Schmidt procedure. We prove them by using a similar approach as in the case of higher dimensions (see for instance Deng--Musso--Wei~\cite{DengMusWei} and Robert--V\'etois~\cites{RobVet1,RobVet2}). Aside from the usual differences in the computations due to the exponential term, the main difference here lies in the use of the Poincar\'e--Sobolev inequalities \eqref{PSB} and \eqref{Pr14Eq2}, which take advantage of the additional dimensions of the kernel $V_{\varepsilon,\gamma,\tau}$ given by the directions of the bubbles. 

\proof[Proof of Lemma~\ref{Pr8Lem}]
We proceed by contradiction. We assume that there exist sequences $\(\varepsilon_n,\gamma_n,\tau_n,\theta_n,\Phi_n\)_{n\in\N^\ast}$ such that $\varepsilon_n\to0$, $\(\gamma_n,\tau_n,\theta_n\)\in P^k_\varepsilon\(\delta\)$ and 
\begin{equation}\label{Pr8LemEq2}
\Phi_n\in V_{\varepsilon_n,\gamma_n,\tau_n}^\perp,\quad\left\|\Phi_n\right\|_{H^1_0}=1\quad\text{and}\quad\left\|L_{\varepsilon_n,\gamma_n,\tau_n,\theta_n}\(\Phi_n\)\right\|_{H^1_0}=\smallo\(1\)
\end{equation}
as $n\to\infty$. For simplicity of notations, we denote $\overline\gamma_n:=\overline\gamma_{\varepsilon_n}$, $r_n:=r_{\varepsilon_n}$, $d_n:=d_{\varepsilon_n}$, $\lambda_n:=\lambda_{\varepsilon_n}$, $h_n:=h_{\varepsilon_n}$, $w_n:=w_{\varepsilon_n}$, $\Psi_n:=\Psi_{\varepsilon_n,\gamma_n,\tau_n}$, $U_n:=U_{\varepsilon_n,\gamma_n,\tau_n,\theta_n}$, $\overline{B}_{i,n}:=\overline{B}_{\varepsilon_n,\gamma_{i,n},\overline{\tau_{i,n}}}$, $B_{i,n}:=B_{\varepsilon_n,\gamma_{i,n},\tau_{i,n}}$, $L_n:=L_{\varepsilon_n,\gamma_n,\tau_n,\theta_n}$, $V_n^\perp:=V_{\varepsilon_n,\gamma_n,\tau_n}^\perp$ and $Z_{j,i,n}:=Z_{j,i,\varepsilon_n,\gamma_n,\tau_n}$ for all $i\in\left\{1,\dotsc,k\right\}$ and $j\in\left\{0,1\right\}$, where $\gamma_n:=\(\gamma_{1,n},\dotsc,\gamma_{k,n}\)$, $\tau_n:=\(\tau_{1,n},\dotsc,\tau_{k,n}\)$, $\overline{\tau_{i,n}}:=\(\tau_{i,n},0\)$ and $\theta_n:=\(\theta_{1,n},\dotsc,\theta_{k,n}\)$. It follows from \eqref{Pr8LemEq2} that 
\begin{equation}\label{Pr8LemEq3}
\lambda_n\int_\Omega h_n f'\(U_n\)\Phi_n^2dx=\left\|\Phi_n\right\|_{H^1_0}^2-\left<\Phi_n,L_n\(\Phi_n\)\right>_{H^1_0}=1+\smallo\(1\)
\end{equation}
as $n\to\infty$. On the other hand, since $f'>0$, $\lambda_n\to\lambda_0$ and $h_n\to h_0$ in $C^0\(\overline\Omega\)$, we obtain
\begin{equation}\label{Pr8LemEq4}
\lambda_n\int_\Omega h_n f'\(U_n\)\Phi_n^2dx=\bigO\(I_n\),\quad\text{where }I_n:=\int_\Omega f'\(U_n\)\Phi_n^2dx.
\end{equation}
In what follows, we will prove that $I_n\to0$ as $n\to\infty$, thus contradicting \eqref{Pr8LemEq3} and \eqref{Pr8LemEq4}. 

\smallskip\noindent
{\it Estimation of $I_n$ in the balls $B\(\overline{\tau_{i,n}},r_n\)$.} For $i\in\left\{1,\dotsc,k\right\}$, by rescaling and using \eqref{Pr8LemEq2}, we obtain
\begin{align}
&\int_{B\(\overline{\tau_{i,n}},r_n\)}f'\(U_n\)\Phi_n^2dx=\mu_{i,n}^2\int_{B\(0,r_n/\mu_{i,n}\)}f'\big(\gamma_{i,n}^{-1}\widehat{U}_n+\gamma_{i,n}\big)\widehat\Phi_n^2dx,\label{Pr8LemEq5}\allowdisplaybreaks\\
&\int_{(\Omega-\overline{\tau_{i,n}})/\mu_{i,n}}\big<\nabla\widehat\Phi_n,\nabla\psi\big>dx-\lambda_n\mu_{i,n}^2\int_{(\Omega-\overline{\tau_{i,n}})/\mu_{i,n}}\hat{h}_nf'\big(\gamma_{i,n}^{-1}\widehat{U}_n+\gamma_{i,n}\big)\widehat\Phi_n\psi dx\nonumber\\
&\hspace{214pt}=\smallo\(\left\|\nabla\psi\right\|_{L^2}\)\qquad\forall\psi\in C^\infty_c\(\R^2\)\label{Pr8LemEq6}
\end{align}
as $n\to\infty$, where $\mu_{i,n}$, $\hat{h}_n$, $\widehat\Phi_n$ and $\widehat{U}_n$ are defined by
\begin{align*}
\mu_{i,n}^2&:=4\gamma_{i,n}^{-2}\exp\(-\gamma_{i,n}^2\),\quad\hat{h}_n\(x\):=h_n\(\overline{\tau_{i,n}}+\mu_{i,n}x\),\allowdisplaybreaks\\
\widehat\Phi_n\(x\)&:=\Phi_n\(\overline{\tau_{i,n}}+\mu_{i,n}x\)\quad\text{and}\quad\widehat{U}_n\(x\):=\gamma_{i,n}\(U_n\(\overline{\tau_{i,n}}+\mu_{i,n}x\)-\gamma_{i,n}\)
\end{align*}
for all $x\in (\Omega-\overline{\tau_{i,n}})/n$. By using \eqref{Sec32Eq3}, \eqref{Pr6Eq1} and \eqref{Pr10Eq1}, we obtain
\begin{equation}\label{Pr8LemEq7}
\widehat{U}_n\(x\)\sim\gamma_{i,n}\(\overline{B}_{i,n}\(\overline{\tau_{i,n}}+\mu_{i,n}x\)-\gamma_{i,n}\)\sim\ln\frac{1}{1+\lambda_n h_n\(\overline{\tau_{i,n}}\)\left|x\right|^2}
\end{equation}
as $n\to\infty$, uniformly in $x\in B\(0,r_n/\mu_{i,n}\)$. By using \eqref{Pr8LemEq7} together with the definition of $\mu_{i,n}$, we obtain
\begin{equation}\label{Pr8LemEq8}
\mu_{i,n}^2f'\big(\gamma_{i,n}^{-1}\widehat{U}_n+\gamma_{i,n}\big)\sim\frac{8}{\big(1+\lambda_n h_n\(\overline{\tau_{i,n}}\)\left|x\right|^2\big)^2}
\end{equation}
as $n\to\infty$, uniformly in $x\in B\(0,r_n/\mu_{i,n}\)$. By remarking that 
$$\big\|\nabla\widehat\Phi_n\big\|_{L^2}=\left\|\nabla\Phi_n\right\|_{L^2}=1$$ 
and using \eqref{Pr8LemEq6} and \eqref{Pr8LemEq8}, we obtain that $\big(\widehat\Phi_n\big)_{n}$ converges, up to a subsequence, weakly in $D^{1,2}\(\R^2\)$, strongly in $L^p_{\loc}\(\R^2\)$ for all $p\ge1$ and pointwise almost everywhere in $\R^2$ to a solution $\widehat\Phi_0$ of the equation
\begin{equation}\label{EqPhi0}
\Delta\widehat\Phi_0=\frac{8\lambda_0 h_0\(0\)\widehat\Phi_0}{\big(1+\lambda_0 h_0\(0\)\left|x\right|^2\big)^2}\qquad\text{in }\R^2.
\end{equation}
Furthermore, since $\Phi_n\in\mathcal{H}$, we obtain that $\widehat\Phi_0$ is even in $x_2$. By using a result of Baraket--Pacard~\cite{BarPac}, it follows that $\widehat\Phi_0\in\text{span}\left\{Z_0,Z_1\right\}$,
where
$$Z_0\(x\):=\frac{1-\lambda_0 h_0\(0\)\left|x\right|^2}{1+\lambda_0 h_0\(0\)\left|x\right|^2}\quad\text{and}\quad Z_1\(x\):=\frac{2\lambda_0 h_0\(0\)x_1}{1+\lambda_0 h_0\(0\)\left|x\right|^2}\quad\forall x\in\R^2\,.$$
In particular, note that the Poincar\'e--Sobolev inequality \eqref{PSB} applies to $\widehat\Phi_0$. On the other hand, for every $i\in\left\{1,\dotsc,k\right\}$, since $\Phi_n\in E_{\varepsilon_n,\gamma_n,\tau_n}^\perp$, we get $\<B_{i,n},\Phi_n\>_{H^1_0}=\<Z_{0,i,n},\Phi_n\>_{H^1_0}=\<Z_{1,i,n},\Phi_n\>_{H^1_0}=0$, which, by integrating by parts and using the equations satisfied by $B_{i,n}$, $Z_{0,i,n}$ and $Z_{1,i,n}$, gives
\begin{align}
&\int_{B\(\overline{\tau_{i,n}},r_n\)}f\(\overline{B}_{i,n}\)\Phi_ndx=0,\label{Pr8LemEq9_0}\allowdisplaybreaks\\
\lambda_nh_n\(\overline{\tau_{i,n}}\)&\int_{B\(\overline{\tau_{i,n}},r_n\)}\hspace{-2pt}f'\(\overline{B}_{i,n}\)\partial_{\gamma_i}\[\overline{B}_{\varepsilon_n,\gamma,\overline{\tau_{i,n}}}\]_{\gamma=\gamma_{i,n}}\Phi_ndx+\<\Phi_n,\partial_{\gamma_i}\[\Psi_{\varepsilon_n,\gamma,\tau_n,0}\]_{\gamma=\gamma_n}\>_{H^1_0}=0\label{Pr8LemEq9}
\end{align}
together with an analogous estimate for the derivative in $\tau_i$. It follows from \eqref{Pr8LemEq2}, \eqref{Pr8LemEq9_0} and the Poincar\'e--Sobolev inequality \eqref{Pr14Eq2} that
\begin{equation}\label{Pr8LemEq10}
\int_{B\(\overline{\tau_{i,n}},r_n\)}f'\(\overline{B}_{i,n}\)\Phi_n^2dx=\bigO\Big(\left\|\nabla\Phi_n\right\|_{L^2}^2\Big)=\bigO\(1\).
\end{equation}
On the other hand, by using Cauchy--Schwartz' inequality together with \eqref{Pr4Eq2}, \eqref{Pr4Eq3} and \eqref{Pr8LemEq2}, we obtain
\begin{equation}\label{Pr8LemEq11}
\<\Phi_n,\partial_{\gamma_i}\[\Psi_{\varepsilon_n,\gamma,\tau_n,0}\]_{\gamma=\gamma_n}\>_{H^1_0}=\smallo\(1\)\text{ and }\<\Phi_n,\partial_{\tau_i}\[\Psi_{\varepsilon_n,\gamma_n,\tau,0}\]_{\tau=\tau_n}\>_{H^1_0}=\smallo\(1\)
\end{equation}
as $n\to\infty$. By rescaling, it follows from \eqref{Pr8LemEq9} and \eqref{Pr8LemEq10} that
\begin{align}
&\mu_{i,n}^2\int_{B\(0,r_n/\mu_{i,n}\)}f'\(\overline{B}_{i,n}\(\overline{\tau_{i,n}}+\mu_{i,n}x\)\)\widehat\Phi_n\(x\)^2dx=\bigO\(1\),\label{Pr8LemEq12}\allowdisplaybreaks\\
&\<\Phi_n,\partial_{\gamma_i}\[\Psi_{\varepsilon_n,\gamma,\tau_n,0}\]_{\gamma=\gamma_n}\>_{H^1_0}=-\lambda_nh_n\(\overline{\tau_{i,n}}\)\mu_{i,n}^2\int_{B(0,r_n/\mu_{i,n})}f'\(\overline{B}_{i,n}\(\overline{\tau_{i,n}}+\mu_{i,n}x\)\)\nonumber\\
&\hspace{162pt}\times\partial_{\gamma_i}\[\overline{B}_{\varepsilon_n,\gamma,\overline{\tau_{i,n}}}\(\overline{\tau_{i,n}}+\mu_{i,n}x\)\]_{\gamma=\gamma_n}\widehat\Phi_n\(x\)dx.\label{Pr8LemEq13}
\end{align}
Here again, we obtain an analogous estimate to \eqref{Pr8LemEq13} for the derivative in $\tau_i$.
By using \eqref{Pr10Eq1} and \eqref{Pr11Eq1} together with the definition of $\mu_{i,n}$, we obtain
\begin{align}
\partial_{\gamma_i}\[\overline{B}_{\varepsilon_n,\gamma,\tau_n}\(\overline{\tau_{i,n}}+\mu_{i,n}x\)\]_{\gamma=\gamma_n}&\longrightarrow Z_0\(x\)\text{ for a.e. }x\in\R^2,\label{Pr8LemEq15}\allowdisplaybreaks\\
\partial_{\gamma_i}\[\overline{B}_{\varepsilon_n,\gamma,\tau_n}\(\overline{\tau_{i,n}}+\mu_{i,n}x\)\]_{\gamma=\gamma_n}&=\bigO\(1\),\label{Pr8LemEq15bis}\\
\partial_{\tau_i}\[\overline{B}_{\varepsilon_n,\gamma_{i,n},\(\tau,0\)}\(\overline{\tau_{i,n}}+\mu_{i,n}x\)\]_{\tau=\tau_{i,n}}&=\frac{Z_1\(x\)}{\mu_{i,n}\gamma_{i,n}}+\smallo\(\frac{1}{\mu_{i,n}\gamma_{i,n}}\)\label{Pr8LemEq16}
\end{align}
as $n\to\infty$, uniformly in $x\in B\(0,r_n/\mu_{i,n}\)$. For every $R>0$, since  $\big(\widehat\Phi_n\big)_{n}$ converges strongly to $\widehat\Phi_0$ in $L^1_{\loc}\(\R^2\)$, it follows from \eqref{Pr8LemEq7}, \eqref{Pr8LemEq8}  and \eqref{Pr8LemEq15} that 
\begin{multline}\label{Pr8LemEq17a}
\mu_{i,n}^2\int_{B\(0,R\)}f'\(\overline{B}_{i,n}\(\overline{\tau_{i,n}}+\mu_{i,n}x\)\)\partial_{\gamma_i}\[\overline{B}_{\varepsilon_n,\gamma,\overline{\tau_{i,n}}}\(\overline{\tau_{i,n}}+\mu_{i,n}x\)\]_{\gamma=\gamma_n}\widehat\Phi_n\(x\)dx\\
\longrightarrow8\int_{B\(0,R\)}\frac{8Z_0\(x\)\widehat\Phi_0\(x\)dx}{\big(1+\lambda_0 h_0\(0\)\left|x\right|^2\big)^2}
\end{multline}
as $n\to\infty$. On the other hand, by using H\"older's inequality together with \eqref{Pr8LemEq7}, \eqref{Pr8LemEq8}, \eqref{Pr8LemEq12}, \eqref{Pr8LemEq15bis} and \eqref{PSB}, we obtain
\begin{align}
&\Bigg|\mu_{i,n}^2\int_{A\(0,R,r_n/\mu_{i,n}\)}f'\(\overline{B}_{i,n}\(\overline{\tau_{i,n}}+\mu_{i,n}x\)\)\partial_{\gamma_i}\[\overline{B}_{\varepsilon_n,\gamma,\overline{\tau_{i,n}}}\(\overline{\tau_{i,n}}+\mu_{i,n}x\)\]_{\gamma=\gamma_n}\widehat\Phi_n\(x\)dx\Bigg|\nonumber\\
&\qquad=\bigO\(\(\mu_{i,n}^2\int_{A\(0,R,r_n/\mu_{i,n}\)}f'\(\overline{B}_{i,n}\(\overline{\tau_{i,n}}+\mu_{i,n}x\)\)dx\)^{1/2}\)\nonumber\\
&\qquad=\bigO\(\(\int_{B\(0,R\)^c}\frac{dx}{\big(1+\lambda_n h_n\(\overline{\tau_{i,n}}\)\left|x\right|^2\big)^2}
\)^{1/2}\)=\smallo_R\(1\),\label{Pr8LemEq17b}\allowdisplaybreaks\\
&\left|\int_{B\(0,R\)^c}\hspace{-3pt}\frac{Z_0\(x\)\widehat\Phi_0\(x\)dx}{\big(1+\lambda_0 h_0\(0\)\left|x\right|^2\big)^2}\right|=\bigO\(\(\int_{B\(0,R\)^c}\hspace{-3pt}\frac{dx}{\big(1+\lambda_0 h_0\(0\)\left|x\right|^2\big)^2}\)^{1/2}\)=\smallo_R\(1\),\label{Pr8LemEq17c}
\end{align}
where $\smallo_R\(1\)\to0$ as $R\to\infty$, uniformly in $n\in\N^\ast$, where $A\(0,R,r_n/\mu_{i,n}\)$ is as in \eqref{defAOmega}. It follows from \eqref{Pr8LemEq11}, \eqref{Pr8LemEq13} and \eqref{Pr8LemEq17a}--\eqref{Pr8LemEq17c} that
\begin{equation}\label{Pr8LemEq17d}
\int_{\R^2}\frac{Z_0\(x\)\widehat\Phi_0\(x\)dx}{\big(1+\lambda_0 h_0\(0\)\left|x\right|^2\big)^2}=0.
\end{equation}
By proceeding in the same way but using \eqref{Pr8LemEq16} instead of \eqref{Pr8LemEq15}--\eqref{Pr8LemEq15bis}, we obtain
\begin{equation}\label{Pr8LemEq17e}
\int_{\R^2}\frac{Z_1\(x\)\widehat\Phi_0\(x\)dx}{\big(1+\lambda_0 h_0\(0\)\left|x\right|^2\big)^2}=0.
\end{equation}
Since $\widehat\Phi_0\in\text{span}\left\{Z_0,Z_1\right\}$, it follows from \eqref{EqPhi0}, \eqref{Pr8LemEq17d} and \eqref{Pr8LemEq17e} that $\widehat\Phi_0\equiv0$. For every $R>0$, by using \eqref{Pr8LemEq8} and since  $\big(\widehat\Phi_n\big)_{n}$ converges strongly to $\widehat\Phi_0$ in $L^2_{\loc}\(\R^2\)$, we then obtain
\begin{equation}\label{Pr8LemEq17f}
\mu_{i,n}^2\int_{B\(0,R\)}f'\big(\gamma_{i,n}^{-1}\widehat{U}_n+\gamma_{i,n}\big)\Phi_n^2dx=\smallo\(1\)
\end{equation}
as $n\to\infty$. On the other hand, by proceeding as in \eqref{Pr8LemEq17b}, we obtain
\begin{equation}\label{Pr8LemEq17g}
\mu_{i,n}^2\int_{A\(0,R,r_n/\mu_{i,n}\)}f'\big(\gamma_{i,n}^{-1}\widehat{U}_n+\gamma_{i,n}\big)\Phi_n^2dx=\smallo_R\(1\),
\end{equation}
where $\smallo_R\(1\)\to0$ as $R\to\infty$, uniformly in $n\in\N^\ast$. It follows from \eqref{Pr8LemEq5}, \eqref{Pr8LemEq17f} and \eqref{Pr8LemEq17g} that
\begin{equation}\label{Pr8LemEq18}
\int_{B\(\overline{\tau_{i,n}},r_n\)}f'\(U_n\)\Phi_n^2dx=\smallo\(1\)
\end{equation}
as $n\to\infty$.

\smallskip\noindent
{\it Estimation of $I_n$ in the annuli $A\(\overline{\tau_{i,n}},r_n,R_n\)$, where $R_n:=\exp\(-\overline\gamma_n\)$.} For every $i\in\left\{1,\dotsc,k\right\}$, for small $p>1$, by using H\"older's inequality, \eqref{Pr4Eq3b} and \eqref{Pr8LemEq2} together with the continuity of the embedding $H^1_0\(\Omega\)\hookrightarrow L^{2p'}\(\Omega\)$, we obtain
\begin{align}\label{Pr8LemEq19}
\int_{A\(\overline{\tau_{i,n}},r_n,R_n\)}f'\(U_n\)\Phi_n^2dx&=\bigO\(\left\|f'\(U_n\)\mathbf{1}_{A\(\overline{\tau_{i,n}},r_n,R_n\)}\right\|_{L^p}\left\|\Phi_n\right\|^2_{H^1_0}\)\nonumber\\
&=\bigO\(\left\|f'\(U_n\)\mathbf{1}_{A\(\overline{\tau_{i,n}},r_n,R_n\)}\right\|_{L^p}\)=\smallo\(1\)
\end{align}
as $n\to\infty$.

\smallskip\noindent
{\it Estimation of $I_n$ in $\Omega_{R_n,\tau_n}$.} By using \eqref{Pr8LemEq2}, we obtain that $\(\Phi_n\)_{n}$ converges, up to a subsequence, weakly in $H^1_0\(\Omega\)$ and pointwise almost everywhere in $\Omega$ to a function $\Phi_0$. Furthermore, \eqref{Pr8LemEq2} gives that
\begin{equation}\label{Pr8LemEq23}
\int_\Omega\left<\nabla\Phi_n,\nabla\psi\right>dx-\lambda_n\int_\Omega h_nf'\(U_n\)\Phi_n\psi dx=\smallo\(1\)
\end{equation}
as $n\to\infty$, for all $\psi\in C^\infty_c\(\Omega\)$. By rescaling as in \eqref{Pr8LemEq5} and using \eqref{Pr8LemEq8} together with the fact that $\widehat\Phi_n\rightharpoonup0$ in $D^{1,2}\(\R^2\)$, we obtain that
\begin{equation}\label{Pr8LemEq24}
\sum_{i=1}^k\int_{B\(\overline{\tau_{i,n}},r_n\)}h_nf'\(U_n\)\Phi_n\psi dx=\smallo\(1\)
\end{equation}
as $n\to\infty$. By using similar estimates as in \eqref{Pr8LemEq19}, we obtain
\begin{equation}\label{Pr8LemEq25}
\sum_{i=1}^k\int_{A\(\overline{\tau_{i,n}},r_n,R_n\)}h_nf'\(U_n\)\Phi_n\psi dx=\smallo\(1\)
\end{equation}
as $n\to\infty$. By using \eqref{Sec32Eq3}, \eqref{Pr4Eq2} and since $w_n\to w_0$ in $C^0\(\overline\Omega\)$, we obtain that $U_n\mathbf{1}_{\Omega_{R_n,\tau_n}}$ is uniformly bounded and converges pointwise to $u_0$ in $\Omega$. Since moreover $\Phi_n\rightharpoonup\Phi_0$ in $H^1_0\(\Omega\)$, $\lambda_n\to\lambda_0$ and $h_n\to h_0$ in $C^0\(\overline\Omega\)$, it follows from \eqref{Pr8LemEq23}--\eqref{Pr8LemEq25} that $\Phi_0$ is a solution of the equation
$$\Delta\Phi_0=\lambda_0 h_0f'\(w_0\)\Phi_0\qquad\text{in }\R^n.$$
Since $w_0$ is non-degenerate, we then obtain that $\Phi_0\equiv0$. It then follows from standard integration theory that 
\begin{equation}\label{Pr8LemEq26}
\int_{\Omega_{R_n,\tau_n}}f'\(U_n\)\Phi_n^2dx=\smallo\(1\)
\end{equation}
as $n\to\infty$.

\smallskip
Finally, by combining \eqref{Pr8LemEq18}, \eqref{Pr8LemEq19} and \eqref{Pr8LemEq26}, we obtain a contradiction with \eqref{Pr8LemEq3} and \eqref{Pr8LemEq4}. This ends the proof of Lemma~\ref{Pr8Lem}.
\endproof

\proof[Proof of Proposition~\ref{Pr8}]
We let $N_{\varepsilon,\gamma,\tau,\theta}:V_{\varepsilon,\gamma,\tau}^\perp\to V_{\varepsilon,\gamma,\tau}^\perp$ and $T_{\varepsilon,\gamma,\tau,\theta}:V_{\varepsilon,\gamma,\tau}^\perp\to V_{\varepsilon,\gamma,\tau}^\perp$ be the operators defined as
\begin{align*}
N_{\varepsilon,\gamma,\tau,\theta}\(\Phi\)&:=\Pi_{\varepsilon,\gamma,\tau}^\perp\big(\Delta^{-1}[\lambda_\varepsilon h_\varepsilon(f\(U_{\varepsilon,\gamma,\tau,\theta}+\Phi\)-f\(U_{\varepsilon,\gamma,\tau,\theta}\)-f'\(U_{\varepsilon,\gamma,\tau,\theta}\)\Phi)]\big),\allowdisplaybreaks\\
T_{\varepsilon,\gamma,\tau,\theta}\(\Phi\)&:=L^{-1}_{\varepsilon,\gamma,\tau,\theta}\(N_{\varepsilon,\gamma,\tau,\theta}\(\Phi\)-\Pi_{\varepsilon,\gamma,\tau}^\perp\(R_{\varepsilon,\gamma,\tau,\theta}\)\)
\end{align*}
for all $\Phi\in V_{\varepsilon,\gamma,\tau}^\perp$, where $R_{\varepsilon,\gamma,\tau,\theta}$ and $L_{\varepsilon,\gamma,\tau,\theta}$ are as in \eqref{Sec4Eq} and \eqref{Pr8LemEq0}. Remark that the equation \eqref{Pr8Eq1} can be rewritten as the fixed point equation $T_{\varepsilon,\gamma,\tau,\theta}\(\Phi\)=\Phi$. For every $C>0$, $\varepsilon\in\(0,\varepsilon_5'\)$ and $\(\gamma,\tau,\theta\)\in P^k_\varepsilon\(\delta\)$, we define
$$V_{\varepsilon,\gamma,\tau,\theta}\(C\):=\left\{\Phi\in V_{\varepsilon,\gamma,\tau}^\perp:\,\left\|\Phi\right\|_{H^1_0}\le C\frac{\delta_\varepsilon\ln\overline\gamma_\varepsilon}{\overline\gamma_\varepsilon^2}\right\}.$$
We will prove that if $C$ is chosen large enough, then $T_{\varepsilon,\gamma,\tau,\theta}$ has a fixed point in $V_{\gamma,\tau,\theta}\(C\)$. By using \eqref{Pr8LemEq1}, we obtain
\begin{equation}\label{Pr8Eq3}
\left\|T_{\varepsilon,\gamma,\tau,\theta}\(\Phi\)\right\|_{H^1_0}\le C_5'\big(\left\|N_{\varepsilon,\gamma,\tau,\theta}\(\Phi\)\right\|_{H^1_0}+\left\|R_{\varepsilon,\gamma,\tau,\theta}\right\|_{H^1_0}\big).
\end{equation}
For every $\Phi_1,\Phi_2\in V_{\varepsilon,\gamma,\tau,\theta}\(C\)$ and $\psi\in V_{\varepsilon,\gamma,\tau}^\perp$, by integrating by parts and applying the mean value theorem, we obtain
\begin{align}\label{Pr8Eq4}
&\<N_{\varepsilon,\gamma,\tau,\theta}\(\Phi_1\)-N_{\varepsilon,\gamma,\tau,\theta}\(\Phi_2\),\psi\>_{H^1_0}\nonumber\\
&\qquad=\lambda_\varepsilon\int_\Omega h_\varepsilon \(f'\(U_{\varepsilon,\gamma,\tau,\theta}+t\Phi_1+\(1-t\)\Phi_2\)-f'\(U_{\varepsilon,\gamma,\tau,\theta}\)\)\(\Phi_1-\Phi_2\)\psi dx\nonumber\allowdisplaybreaks\\
&\qquad=\lambda_\varepsilon\int_\Omega h_\varepsilon f''\(U_{\varepsilon,\gamma,\tau,\theta}+st\Phi_1+s\(1-t\)\Phi_2\)\(t\Phi_1+\(1-t\)\Phi_2\)\(\Phi_1-\Phi_2\)\psi dx
\end{align}
for some functions $s,t:\Omega\to\[0,1\]$. Since $\lambda_\varepsilon\to\lambda_0$, $h_\varepsilon\to h_0$ in $C^0\(\overline\Omega\)$ and $f''$ is increasing, it follows from \eqref{Pr8Eq4} that
\begin{multline}\label{Pr8Eq5}
\<N_{\varepsilon,\gamma,\tau,\theta}\(\Phi_1\)-N_{\varepsilon,\gamma,\tau,\theta}\(\Phi_2\),\psi\>_{H^1_0}\\
=\bigO\bigg(\int_\Omega f''\(\left|U_{\varepsilon,\gamma,\tau,\theta}\right|+\left|\Phi_1\right|+\left|\Phi_2\right|\)\(\left|\Phi_1\right|+\left|\Phi_2\right|\)\left|\Phi_1-\Phi_2\right|\left|\psi\right|dx\bigg).
\end{multline}
For every $p>1$, by using H\"older's inequality together with the continuity of the embedding $H^1_0\(\Omega\)\hookrightarrow L^{3p'}\(\Omega\)$, we obtain
\begin{multline}\label{Pr8Eq6}
\int_{\Omega} f''\(\left|U_{\varepsilon,\gamma,\tau,\theta}\right|+\left|\Phi_1\right|+\left|\Phi_2\right|\)\(\left|\Phi_1\right|+\left|\Phi_2\right|\)\left|\Phi_1-\Phi_2\right|\left|\psi\right|dx\\
=\bigO\Big(\left\|f''\(\left|U_{\varepsilon,\gamma,\tau,\theta}\right|+\left|\Phi_1\right|+\left|\Phi_2\right|\)\right\|_{L^p}\left\|\left|\Phi_1\right|+\left|\Phi_2\right|\right\|_{H^1_0}\left\|\Phi_1-\Phi_2\right\|_{H^1_0}\left\|\psi\right\|_{H^1_0}\Big).
\end{multline}
Since $f''$ is increasing, we obtain
\begin{equation}\label{Pr8Eq7}
f''\(\left|U_{\varepsilon,\gamma,\tau,\theta}\right|+\left|\Phi_1\right|+\left|\Phi_2\right|\)\le f''\big(\widetilde{U}_{\varepsilon,\gamma,\tau,\theta}\big)+f''\big(\widetilde{\Phi}_\varepsilon\big),
\end{equation}
where
$$\widetilde{U}_{\varepsilon,\gamma,\tau,\theta}:=\(1+\delta_\varepsilon\)\left|U_{\varepsilon,\gamma,\tau,\theta}\right|\quad\text{and}\quad\widetilde{\Phi}_\varepsilon:=\(1+\delta_\varepsilon^{-1}\)\(\left|\Phi_1\right|+\left|\Phi_2\right|\).$$
Remark that $\widetilde{\Phi}_\varepsilon\to0$ in $H^1_0\(\Omega\)$ as $\varepsilon\to0$ since $\Phi_1,\Phi_2\in V_{\varepsilon,\gamma,\tau,\theta}\(C\)$. By using H\"older's inequality together with the Moser--Trudinger's inequality and the continuity of the embedding $H^1_0\(\Omega\)\hookrightarrow L^{6p}\(\Omega\)$, we then obtain
\begin{align}\label{Pr8Eq8}
\big\|f''\big(\widetilde{\Phi}_\varepsilon\big)\big\|_{L^p}&=2\big\|\widetilde{\Phi}_\varepsilon\big(3+2\widetilde{\Phi}_\varepsilon^2\big)\exp\big(\widetilde{\Phi}_\varepsilon^2\big)\big\|_{L^{2p}}\nonumber\\
&\le2\big\|\widetilde{\Phi}_\varepsilon\big\|_{H^1_0}\big(3+2\big\|\widetilde{\Phi}_\varepsilon\big\|_{H^1_0}^2\big)\big\|\exp\big(\widetilde{\Phi}_\varepsilon^2\big)\big\|_{L^{2p}}=\smallo\(1\)
\end{align}
as $\varepsilon\to0$. For every $i\in\left\{1,\dotsc,k\right\}$, by remarking that $f''\(s\)\le6sf'\(s\)$ for all $s\ge0$ and using similar estimates as in \eqref{Pr8LemEq7} and \eqref{Pr8LemEq8}, we obtain
\begin{equation}\label{Pr8Eq9}
f''\big(\widetilde{U}_{\varepsilon,\gamma,\tau,\theta}\(x\)\big)=\bigO\(\frac{\overline\gamma_i\(\tau\)\overline\mu_i\(\tau\)^2}{\(\overline\mu_i\(\tau\)^2+\left|x-\overline{\tau_i}\right|^2\)^2}\)
\end{equation}
uniformly in $x\in B\(\overline{\tau_i},r_\varepsilon\)$, where $\overline\mu_i\(\tau\)$ is defined by
$$\overline\mu_i\(\tau\)^2:=4\overline\gamma_i\(\tau\)^{-2}\exp\big(-\overline\gamma_i\(\tau\)^2\big).$$
It follows from \eqref{Pr8Eq9} that
\begin{equation}\label{Pr8Eq10}
\big\|f''\big(\widetilde{U}_{\varepsilon,\gamma,\tau,\theta}\big)\mathbf{1}_{B\(\overline{\tau_i},r_\varepsilon\)}\big\|_{L^p}=\bigO\(\overline\gamma_i\(\tau\)\overline\mu_i\(\tau\)^{-2/p'}\),
\end{equation}
where $p'$ is the conjugate exponent of $p$. By using \eqref{Pr4Eq3b}, we obtain
\begin{equation}\label{Pr8Eq11}
\big\|f''\big(\widetilde{U}_{\varepsilon,\gamma,\tau,\theta}\big)\mathbf{1}_{A\(\overline{\tau_i},r_\varepsilon,R_\varepsilon\)}\big\|_{L^p}=\smallo\(1\)
\end{equation}
as $\varepsilon\to0$, where $R_\varepsilon:=\exp\(-\overline\gamma_\varepsilon\)$, provided we choose $p$ such that $p<1/\delta_0$. Furthermore, since $U_{\varepsilon,\gamma,\tau,\theta}$ is uniformly bounded in $\Omega_{R_\varepsilon,\tau}$, we obtain
\begin{equation}\label{Pr8Eq12}
\big\|f''\big(\widetilde{U}_{\varepsilon,\gamma,\tau,\theta}\big)\mathbf{1}_{\Omega_{R_\varepsilon,\tau}}\big\|_{L^p}=\bigO\(1\).
\end{equation}
By putting together \eqref{Pr8Eq5}--\eqref{Pr8Eq7}, \eqref{Pr8Eq8} and \eqref{Pr8LemEq7} and \eqref{Pr8Eq12}, we obtain
\begin{equation}\label{Pr8Eq13}
\left\|N_{\varepsilon,\gamma,\tau,\theta}\(\Phi_1\)-N_{\varepsilon,\gamma,\tau,\theta}\(\Phi_2\)\right\|_{H^1_0}=\bigO\(\overline\gamma_i\(\tau\)\overline\mu_i\(\tau\)^{-2/p'}\left\|\left|\Phi_1\right|+\left|\Phi_2\right|\right\|_{H^1_0}\left\|\Phi_1-\Phi_2\right\|_{H^1_0}\).
\end{equation}
Remark that since $\Phi_1,\Phi_2\in V_{\varepsilon,\gamma,\tau,\theta}\(C\)$, we obtain
\begin{equation}\label{Pr8Eq14}
\overline\gamma_i\(\tau\)\overline\mu_i\(\tau\)^{-2/p'}\left\|\left|\Phi_1\right|+\left|\Phi_2\right|\right\|_{H^1_0}=\smallo\(1\)
\end{equation}
as $\varepsilon\to0$, provided we choose $p$ such that $2/p'<\delta_1+1/2$, i.e. $p<4/\(3-2\delta_1\)$. It follows from \eqref{Pr8Eq13} and \eqref{Pr8Eq14} that
\begin{equation}\label{Pr8Eq15}
\left\|N_{\varepsilon,\gamma,\tau,\theta}\(\Phi_1\)-N_{\varepsilon,\gamma,\tau,\theta}\(\Phi_2\)\right\|_{H^1_0}=\smallo\big(\left\|\Phi_1-\Phi_2\right\|_{H^1_0}\big)
\end{equation}
as $\varepsilon\to0$. By using \eqref{Pr7Eq1}, \eqref{Pr8Eq3}, \eqref{Pr8Eq15} and since $N_{\varepsilon,\gamma,\tau,\theta}\(0\)=0$, we obtain that there exist $\varepsilon_6\(\delta\)\in\(0,\varepsilon_5\(\delta\)\)$ and $C_6=C_6\(\delta\)>0$ such that for every $\varepsilon\in\(0,\varepsilon_6\(\delta\)\)$ and $\(\gamma,\tau,\theta\)\in P^k_\varepsilon\(\delta\)$, $T_{\varepsilon,\gamma,\tau,\theta}$ is a contraction mapping on $V_{\varepsilon,\gamma,\tau,\theta}\(C_6\)$. We can then apply the fixed point theorem, which gives that there exists a unique solution $\Phi_{\varepsilon,\gamma,\tau,\theta}\in V_{\varepsilon,\gamma,\tau,\theta}\(C_6\)$ to the equation \eqref{Pr8Eq1}. The continuity of $\Phi_{\varepsilon,\gamma,\tau,\theta}$ in $\(\gamma,\tau,\theta\)$ follows from the continuity of $U_{\varepsilon,\gamma,\tau,\theta}$, $Z_{0,i,\varepsilon,\gamma,\tau,\theta}$ and $Z_{1,i,\varepsilon,\gamma,\tau,\theta}$ in $\(\gamma,\tau,\theta\)$. This ends the proof of Proposition~\ref{Pr8}.
\endproof

As a last step, we prove the following:

\begin{proposition}\label{Pr9}
Let $\varepsilon_6$ and $\Phi_{\varepsilon,\gamma,\tau,\theta}$ be as in Proposition~\ref{Pr8}. Then there exists $\delta_7\in\(0,1\)$ such that for every $\delta\in\(0,\delta_7\)$, there exists $\varepsilon_7\(\delta\)\in\(0,\varepsilon_7\(\delta\)\)$ such that for every $\varepsilon\in\(0,\varepsilon_7\(\delta\)\)$, there exists $(\gamma_\varepsilon,\tau_\varepsilon,\theta_\varepsilon)\in P^k_\varepsilon\(\delta\)$ such that
\begin{equation}\label{Pr9Eq1}
U_{\varepsilon,\gamma_\varepsilon,\tau_\varepsilon,\theta_\varepsilon}+\Phi_{\varepsilon,\gamma_\varepsilon,\tau_\varepsilon,\theta_\varepsilon}=\Delta^{-1}\[\lambda_\varepsilon h_\varepsilon f\(U_{\varepsilon,\gamma_\varepsilon,\tau_\varepsilon,\theta_\varepsilon}+\Phi_{\varepsilon,\gamma_\varepsilon,\tau_\varepsilon,\theta_\varepsilon}\)\].
\end{equation}
\end{proposition}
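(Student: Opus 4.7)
The plan is to reduce \eqref{Pr9Eq1} to a $3k$-dimensional problem and solve it by a Brouwer degree argument. By Proposition~\ref{Pr8}, \eqref{Pr9Eq1} holds as soon as
$$\Pi_{\varepsilon,\gamma,\tau}\(U_{\varepsilon,\gamma,\tau,\theta}+\Phi_{\varepsilon,\gamma,\tau,\theta}-\Delta^{-1}\[\lambda_\varepsilon h_\varepsilon f\(U_{\varepsilon,\gamma,\tau,\theta}+\Phi_{\varepsilon,\gamma,\tau,\theta}\)\]\)=0,$$
which, by definition of $V_{\varepsilon,\gamma,\tau}$, amounts to $3k$ scalar equations obtained by taking the $H^1_0$-inner product of the bracketed expression against each of $B_{\varepsilon,\gamma_i,\tau_i}$, $Z_{0,i,\varepsilon,\gamma,\tau}$ and $Z_{1,i,\varepsilon,\gamma,\tau}$, $i\in\{1,\dots,k\}$. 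Denoting by $\mathcal F_\varepsilon:P^k_\varepsilon\(\delta\)\to\R^{3k}$ the corresponding map, I would look for a zero of $\mathcal F_\varepsilon$ in the interior of $P^k_\varepsilon\(\delta\)$.

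To analyse the components of $\mathcal F_\varepsilon$, I would expand $f\(U+\Phi\)=f\(U\)+f'\(U\)\Phi+\bigO\(f''\(U\)\Phi^2\)$, integrate by parts, and substitute $\Delta U_{\varepsilon,\gamma,\tau,\theta}$ using \eqref{Pr4Eq4b} together with the explicit contributions from $\sum_i\theta_i\Delta B_{\varepsilon,\gamma_i,\tau_i}$ and $\Delta\Phi_{\varepsilon,\gamma,\tau,\theta}$. Using the bound \eqref{Pr8Eq2} on $\Phi$ and the pointwise expansion of $U_{\varepsilon,\gamma,\tau,\theta}$ near each $\overline{\tau_i}$ from Proposition~\ref{Pr6}, each of the three families of equations should be driven by a single leading mechanism. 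The block obtained by testing against $B_{\varepsilon,\gamma_i,\tau_i}$ should give, after cancellation via \eqref{Pr4Eq4b}, a diagonal leading term proportional to $\theta_i\|B_{\varepsilon,\gamma_i,\tau_i}\|_{H^1_0}^2$. The block obtained by testing against $Z_{0,i,\varepsilon,\gamma,\tau}=\partial_{\gamma_i}U_{\varepsilon,\gamma,\tau,0}$ should produce, via Proposition~\ref{PrEive}, a linear system in $\gamma-\gammae\(\tau\)$ whose leading matrix is precisely $DE_0\(1,\dots,1\)$, invertible by \eqref{Pr5Eq8}. Finally, the block obtained by testing against $Z_{1,i,\varepsilon,\gamma,\tau}=\partial_{\tau_i}U_{\varepsilon,\gamma,\tau,0}$ should, after the rescaling $\tau\mapsto\tau/d_\varepsilon$ and using Proposition~\ref{PrFive}, reproduce the vector field $N$ of \eqref{Sec1Eq4}: the linear term $a_0 l\tau_i^{l-1}\(x_1-\tau_i\)$ in $F^{\(i\)}_{\varepsilon,\gamma,\tau}$ integrates against the odd profile of $Z_{1,i}\sim\partial_{x_1}\overline B_{\varepsilon,\gamma_i,\overline{\tau_i}}$ to yield $a_0 l y_i^{l-1}$, while the reciprocal terms $2/\(\gammae\(\tau_i-\tau_j\)\)$ encode the bubble--bubble interaction. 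After rescaling to order-one variables $\(\tilde\gamma,\tilde\tau,\tilde\theta\)$, the linearisation of $\mathcal F_\varepsilon$ becomes block triangular with two invertible linear blocks and one nonlinear block governed by $N$, the off-diagonal couplings and the $\Phi$-dependent remainders being $\smallo\(1\)$ by the estimates of Sections~\ref{Sec31}--\ref{Sec36} and Proposition~\ref{Pr8}.

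The central remaining step, and the main obstacle in my view, is to exhibit a nondegenerate zero of $N$ inside the rescaled admissible set $d_\varepsilon^{-1}T^k_\varepsilon\(\delta\)$. Observing that $N=\nabla\mathcal J$ with
$$\mathcal J\(y_1,\dots,y_k\):=a_0\sum_{i=1}^k y_i^l-2\sum_{1\le i<j\le k}\ln|y_i-y_j|,$$
the hypotheses $l\in 2\N^\star$ and $a_0>0$ make $\mathcal J$ coercive on the open cone $\{y_1<\dots<y_k\}$, while the logarithmic repulsion forces $\mathcal J\to+\infty$ on its boundary. Hence $\mathcal J$ attains a minimum $\tilde\tau^*$ in that cone, at which its Hessian is positive definite; $\tilde\tau^*$ is therefore a nondegenerate zero of $N$. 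Choosing $\delta_7\in\(0,1\)$ small enough that $\tilde\tau^*$ lies well inside $d_\varepsilon^{-1}T^k_\varepsilon\(\delta\)$ for every $\delta\in\(0,\delta_7\)$, a Brouwer degree computation on a small box around $\(\gammae\(d_\varepsilon\tilde\tau^*\),d_\varepsilon\tilde\tau^*,0\)$, with a linear homotopy to the block-triangular limit map, should produce a zero $\(\gamma_\varepsilon,\tau_\varepsilon,\theta_\varepsilon\)\in P^k_\varepsilon\(\delta\)$ of $\mathcal F_\varepsilon$ for every sufficiently small $\varepsilon$. Combined with Proposition~\ref{Pr8}, this yields the desired triple and completes the construction. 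The technically delicate point is the sharp verification that the off-diagonal couplings and the $\Phi$-dependent remainders are strictly of lower order than the three diagonal blocks; this rests on the identity $E^{\(i\)}_{\varepsilon,\gammae\(\tau\),\tau}=0$ from Proposition~\ref{Pr5}, on the estimates \eqref{Pr4Eq3b}--\eqref{Pr4Eq3c}, and on the bubble asymptotics collected in Section~\ref{App}.
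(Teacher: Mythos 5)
Your overall strategy is the paper's: reduce \eqref{Pr9Eq1} to $3k$ scalar equations by pairing against $B_{\varepsilon,\gamma_i,\tau_i}$, $Z_{0,i}$, $Z_{1,i}$, expand via \eqref{Taylor} and \eqref{Pr4Eq4b}, use Propositions~\ref{PrEive}--\ref{PrFive} to extract leading terms, and close the argument via Brouwer degree and the observation that $N=\nabla\mathcal J$ is the gradient of a strictly convex function (this last step is precisely the paper's Lemma~\ref{Lemmah}, with the same coercivity/repulsion argument). So the route is essentially correct.

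However, there is a genuine gap in your description of the structure of the reduced system, and it is precisely the point the paper flags in Remark~\ref{RemCoupling}. You claim that after rescaling to order-one variables the linearisation becomes block triangular, with the $\gamma$-block and $\theta$-block being ``two invertible linear blocks'' and the off-diagonal couplings $\smallo(1)$. That is not the case. By Lemma~\ref{Pr9Lem}, testing against $Z_{0,i}$ gives a leading term
$-8\pi\sum_j \partial_{\gamma_i}\bigl[E^{(j)}_{\ve,\gamma,\tau}\bigr]\bigl(E^{(j)}_{\ve,\gamma,\tau}+\theta_j\gammae\bigr)+\tfrac{4\pi}{\gammae^2}E^{(i)}_{\ve,\gamma,\tau}$, which is quadratic in $E$ and inextricably mixes $\theta$ with $\gamma$; testing against $B_{\varepsilon,\gamma_i,\tau_i}$ gives $-8\pi\gammae\bigl(E^{(i)}_{\ve,\gamma,\tau}+\theta_i\gammae\bigr)$, not something proportional to $\theta_i\|B_{\varepsilon,\gamma_i,\tau_i}\|_{H^1_0}^2$ (the coefficient is $\gammae^2$, not $\|B\|^2\sim 4\pi$). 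Moreover, in the admissible boxes $\widehat\Gamma_\ve^k\times\Theta^k_\ve$ one has $E^{(i)}_{\ve,\gamma,\tau}=\bigO(\delta_\ve\ln\gammae/\gammae^3)$ and $\theta_i\gammae=\bigO(\delta_\ve\ln\gammae/\gammae^3)$, so the two are of the \emph{same} order: the $\theta$--$\gamma$ coupling is not a lower-order correction. The paper resolves this by first taking the combination $L^i - \sum_j \partial_{\gamma_i}\bigl[E^{(j)}\bigr]M^j$ to isolate $E^{(i)}$, then solving successively for $\theta$ and $\hat\gamma$, and only at that point exhibiting a block lower-triangular matrix $\mathcal A$ whose sign-determinant $(-1)^k$ must be tracked to conclude $\deg\ne 0$. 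Without this decoupling manipulation (or an equivalent one), your degree computation ``with a linear homotopy to the block-triangular limit map'' is not justified: the naive limit map does not have the structure you assert, and the homotopy could acquire boundary zeros.

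A smaller point: your description of the $Z_{0,i}$-block as ``a linear system in $\gamma-\gammae(\tau)$ whose leading matrix is $DE_0(1,\dots,1)$'' conflates the matrix appearing in Proposition~\ref{Pr5} (used there to \emph{define} $\gammae(\tau)$) with the matrix $\mathcal Q$ of \eqref{matrix} governing the behaviour of $E^{(i)}_{\ve,\gamma,\tau}$ on the refined set $\overline\Gamma^k_\ve(\tau)$; the latter is what actually enters the degree count.
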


The proof of Proposition~\ref{Pr9} relies on the following:

\begin{lemma}\label{Pr9Lem} 
Set 
$$\widetilde{R}_{\ve,\gamma,\tau,\theta}:=U_{\varepsilon,\gamma,\tau,\theta}+\Phi_{\varepsilon,\gamma,\tau,\theta}-\Delta^{-1}\[\lambda_\varepsilon h_\varepsilon f\(U_{\varepsilon,\gamma,\tau,\theta}+\Phi_{\varepsilon,\gamma,\tau,\theta}\)\].$$
Then for every $i\in\left\{1,\dotsc,k\right\}$ and $\delta\in\(0,1\)$, we have
\begin{align}\label{Pr9LemEq1}
\<\widetilde{R}_{\ve,\gamma,\tau,\theta},Z_{0,i,\varepsilon,\gamma,\tau}\>_{H^1_0}&
=-8\pi \sum_{j=1}^k \partial_{\gamma_i}\big[E^{\(j\)}_{\ve,\gamma,\tau}\big]\(E^{\(j\)}_{\ve,\gamma,\tau} +\theta_j \gammae \)+ \frac{4\pi }{\gammae^2}E^{\(i\)}_{\ve,\gamma,\tau} +\smallo\(\frac{\delta_\ve \ln\overline{\gamma}_\ve}{\overline{\gamma}_\ve^5}\),\allowdisplaybreaks\\
\<\widetilde{R}_{\ve,\gamma,\tau,\theta},B_{\varepsilon,\gamma_i,\tau_i}\>_{H^1_0}&=-8\pi  \gammae \(E^{\(i\)}_{\ve,\gamma,\tau}+\theta_i\gammae\) +\smallo\(\frac{\delta_\ve}{\gammae^2}\),\label{Pr9LemEq2}\allowdisplaybreaks\\
\<\widetilde{R}_{\ve,\gamma,\tau,\theta},Z_{1,i,\varepsilon,\gamma,\tau}\>_{H^1_0}&=-\frac{4\pi}{\overline\gamma_\varepsilon}\Bigg(a_0l\tau_i^{l-1}-\frac{2}{\overline\gamma_\varepsilon}\sum_{j\ne i}\frac{1}{\tau_i-\tau_j}\Bigg)+\smallo\(\frac{1}{\overline\gamma_\varepsilon^2 d_\ve}\)\label{Pr9LemEq3}
\end{align}
as $\varepsilon\to0$, uniformly in $\(\gamma,\tau,\theta\)\in P^k_\varepsilon\(\delta\)$.
\end{lemma}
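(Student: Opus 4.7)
The plan is to reduce each pairing $\<\widetilde R_{\ve,\gamma,\tau,\theta},Z\>_{H^1_0}$ to a bulk integral, split it over the bubble balls $B(\overline{\tau_i},r_\ve)$ and the exterior $\Omega_{r_\ve,\tau}$, and extract the main order by Taylor expanding $f(U+\Phi)$ around each radial bubble $\overline B_{\ve,\gamma_i,\overline{\tau_i}}$.

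Since $Z\in V_{\ve,\gamma,\tau}$ and $\Phi_{\ve,\gamma,\tau,\theta}\in V_{\ve,\gamma,\tau}^\perp$, an integration by parts yields
\[
\<\widetilde R_{\ve,\gamma,\tau,\theta},Z\>_{H^1_0}=\int_\Omega\bigl(\Delta U_{\ve,\gamma,\tau,\theta}-\lambda_\ve h_\ve f(U_{\ve,\gamma,\tau,\theta}+\Phi_{\ve,\gamma,\tau,\theta})\bigr)Z\,dx.
\]
Combining \eqref{Pr4Eq4b} with the harmonicity of $G(\cdot,\overline{\tau_j})$ on $B(\overline{\tau_i},r_\ve)$ for $j\neq i$, one has $\Delta U_{\ve,\gamma,\tau,\theta}=(1+\theta_i)\lambda_\ve h_\ve(\overline{\tau_i})f(\overline B_{\ve,\gamma_i,\overline{\tau_i}})$ on each bubble ball and $\Delta U_{\ve,\gamma,\tau,\theta}=\lambda_\ve h_\ve\chi_{\ve,\tau}f(U_{\ve,\gamma,\tau})$ on $\Omega_{r_\ve,\tau}$ (the $\sum_j\theta_j\Delta B_{\ve,\gamma_j,\tau_j}$ vanishes there because each $B_{\ve,\gamma_j,\tau_j}$ is harmonic away from $\overline{\tau_j}\in B(\overline{\tau_j},r_\ve)$). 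On $B(\overline{\tau_i},r_\ve)$ I would then write $U_{\ve,\gamma,\tau,\theta}=\overline B_{\ve,\gamma_i,\overline{\tau_i}}+G_i$ with $G_i:=E^{(i)}_{\ve,\gamma,\tau}+F^{(i)}_{\ve,\gamma,\tau}+\sum_j\theta_j B_{\ve,\gamma_j,\tau_j}$, and Taylor expand $f(\overline B_i+G_i+\Phi)=f(\overline B_i)+f'(\overline B_i)(G_i+\Phi)+\tfrac12 f''(\overline B_i)(G_i+\Phi)^2+\cdots$, so that the bubble-region integrand becomes a main linear piece in $E^{(i)},\theta_j,F^{(i)},\Phi$ plus quadratic and cubic remainders.

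For $Z=B_{\ve,\gamma_i,\tau_i}$, the key integrals are $\int\lambda_\ve h_\ve(\overline{\tau_i})f(\overline B_i)\,dx=A_{\ve,\gamma_i,\tau_i}\sim 4\pi/\overline\gamma_\ve$ (Claim~\ref{Claim1}), $\int\lambda_\ve h_\ve(\overline{\tau_i})f'(\overline B_i)B_i\,dx\sim 8\pi\overline\gamma_\ve$ and $\int\lambda_\ve h_\ve(\overline{\tau_i})f'(\overline B_i)B_i^2\,dx\sim 8\pi\overline\gamma_\ve^2$ (both from $f'(\overline B_i)\sim 2\overline B_i f(\overline B_i)$ and a rescaling to the bubble profile \eqref{Pr10Eq1}); the two leading linear contributions $-E^{(i)}\int\lambda_\ve h_\ve f'(\overline B_i)B_i\,dx$ and $-\theta_i\int\lambda_\ve h_\ve f'(\overline B_i)B_i^2\,dx$ sum to $-8\pi\overline\gamma_\ve(E^{(i)}+\theta_i\overline\gamma_\ve)$, matching \eqref{Pr9LemEq2}. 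For $Z=Z_{0,i}$, I would split $Z_{0,i}$ on each $B(\overline{\tau_j},r_\ve)$ into its almost-constant-in-$x$ part, which at leading order equals $\partial_{\gamma_i}E^{(j)}_{\ve,\gamma,\tau}$ (computed in Proposition~\ref{PrEive}), and its bubble-derivative part, non-trivial only for $j=i$: the constant part paired against the same main integrals gives $-8\pi\sum_j\partial_{\gamma_i}E^{(j)}(E^{(j)}+\theta_j\overline\gamma_\ve)$, while the bubble-derivative part contributes $-E^{(i)}\int\lambda_\ve h_\ve f'(\overline B_i)\partial_{\gamma_i}\overline B_i\,dx=-E^{(i)}\cdot\partial_{\gamma_i}A_{\ve,\gamma_i,\tau_i}\sim 4\pi E^{(i)}/\overline\gamma_\ve^2$, giving \eqref{Pr9LemEq1}. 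For $Z=Z_{1,i}$, using $\partial_{\tau_i}\overline B_{\ve,\gamma_i,(\tau,0)}=-\partial_{x_1}\overline B_i$ and the explicit linear form of $F^{(i)}$ from Proposition~\ref{PrFive}, radial symmetry of $f'(\overline B_i)$ annihilates the $E^{(i)}$ and $\theta_j$ contributions, while the $F^{(i)}$-integral is evaluated via the Pohozaev-type identity $\int\lambda_\ve h_\ve(\overline{\tau_i})f'(\overline B_i)(x_1-\tau_i)\partial_{x_1}\overline B_i\,dx=-A_{\ve,\gamma_i,\tau_i}+\smallo(1/\overline\gamma_\ve)$ (obtained from $f'(\overline B_i)\partial_{x_1}\overline B_i=\partial_{x_1}[f(\overline B_i)]$ and integration by parts), giving the right-hand side of \eqref{Pr9LemEq3}.

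The hard part will be to control the quadratic and cubic Taylor remainders together with the exterior integrals over $\Omega_{r_\ve,\tau}$ within the tight target error orders $\smallo(\delta_\ve\ln\overline\gamma_\ve/\overline\gamma_\ve^5)$, $\smallo(\delta_\ve/\overline\gamma_\ve^2)$, and $\smallo(1/(\overline\gamma_\ve^2 d_\ve))$. For the $\Phi$-contributions I would combine the $H^1$-bound \eqref{Pr8Eq2}, H\"older's inequality, the Moser--Trudinger inequality and the $L^p$ bounds \eqref{Pr4Eq3b}--\eqref{Pr4Eq3c} in the spirit of Lemma~\ref{Pr8Lem}; for the purely bubble quadratic terms one rescales around each $\overline{\tau_i}$ and reduces to bounded integrals on $\R^2$ of $f''$ times a rescaled bubble profile; the exterior contribution from $\Omega_{r_\ve,\tau}$ is subleading because $U_{\ve,\gamma,\tau}$ solves its equation exactly there except in the thin annuli $A(\overline{\tau_i},r_\ve,r_\ve+r_\ve^2)$, where \eqref{Estannuli} supplies the requisite smallness.
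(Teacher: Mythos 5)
Your overall strategy matches the paper's: integrate by parts to reduce to a bulk integral, split over the bubble balls and the exterior, Taylor expand $f$ around the radial bubble $\overline B_{\ve,\gamma_i,\overline{\tau_i}}$ in each ball, and read off the leading contributions from explicit bubble integrals. Your leading-order bookkeeping is correct: $-8\pi\overline\gamma_\ve\bigl(E^{(i)}_{\ve,\gamma,\tau}+\theta_i\overline\gamma_\ve\bigr)$ for \eqref{Pr9LemEq2}, $-8\pi\sum_j\partial_{\gamma_i}\bigl[E^{(j)}_{\ve,\gamma,\tau}\bigr]\bigl(E^{(j)}_{\ve,\gamma,\tau}+\theta_j\overline\gamma_\ve\bigr)+4\pi E^{(i)}_{\ve,\gamma,\tau}/\overline\gamma_\ve^2$ for \eqref{Pr9LemEq1}, and your Pohozaev-type integration by parts for the $F^{(i)}$--$Z_{1,i}$ pairing is a valid alternative to the paper's use of \eqref{Pr12Eq4}.

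However, your plan for the $\Phi$-contributions --- Hölder, Moser--Trudinger, Poincar\'e--Sobolev, the $L^p$ bounds from Proposition~\ref{Pr4} --- has a genuine gap on the two terms where the orthogonality $\Phi_{\ve,\gamma,\tau,\theta}\in V_{\varepsilon,\gamma,\tau}^\perp$ must be used \emph{structurally}, not merely as an input to a Poincar\'e--Sobolev bound. For \eqref{Pr9LemEq1} the term $\partial_{\gamma_i}\bigl[E^{(j)}_{\ve,\gamma,\tau}\bigr]\int_{B(\overline{\tau_j},r_\ve)}f'_\ve\bigl(\overline B_{\ve,\gamma_j,\overline{\tau_j}}\bigr)\Phi\,dx$ gives, via Hölder/Poincar\'e--Sobolev alone, only $\bigO\bigl((\ln\overline\gamma_\ve/\overline\gamma_\ve^2)\,\|\nabla\Phi\|_{L^2}\bigr)=\bigO\bigl(\delta_\ve(\ln\overline\gamma_\ve)^2/\overline\gamma_\ve^4\bigr)$, missing $\smallo(\delta_\ve\ln\overline\gamma_\ve/\overline\gamma_\ve^5)$ by a full factor of $\overline\gamma_\ve\ln\overline\gamma_\ve$; the paper recovers this factor by writing $f'(\overline B_j)=2\overline B_jf(\overline B_j)+\exp(\overline B_j^2)$ and exploiting the exact cancellation $\int_{B(\overline{\tau_j},r_\ve)}\Delta B_{\ve,\gamma_j,\tau_j}\,\Phi\,dx=\<\nabla B_{\ve,\gamma_j,\tau_j},\nabla\Phi\>_{L^2}=0$, i.e.\ \eqref{orthog}. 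For \eqref{Pr9LemEq3} the situation is worse: the term $\int_{B(\overline{\tau_i},r_\ve)}f'_\ve\bigl(\overline B_{\ve,\gamma_i,\overline{\tau_i}}\bigr)\partial_{\tau_i}\bigl[\overline B_{\ve,\gamma_i,\overline{\tau_i}}\bigr]\Phi\,dx$ cannot be controlled within $\smallo(1/(\overline\gamma_\ve^2d_\ve))$ by any combination of Hölder and Poincar\'e--Sobolev, because $|\partial_{\tau_i}\overline B_i|\sim\overline\mu_\ve^{-1+\smallo(1)}$ near the center while $\|\nabla\Phi\|_{L^2}=\bigO\bigl(\overline\mu_\ve^{\delta_1+1/2+\smallo(1)}\bigr)$ with $\delta_1<1/2$, so such estimates produce a bound of order $\overline\mu_\ve^{\delta_1-1/2+\smallo(1)}$, which is exponentially divergent. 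The paper handles this by converting $\<\nabla Z_{1,i},\nabla\Phi\>_{L^2}=0$ via integration by parts into the exterior integral over $\Omega\setminus B(\overline{\tau_i},r_\ve)$ plus a boundary integral over $\partial B(\overline{\tau_i},r_\ve)$ carrying the jump $\partial_\nu Z_{1,i}^{int}-\partial_\nu Z_{1,i}^{ext}$ (the glued bubble is $C^1$ but not $C^2$ across that sphere), and then bounding the boundary term via the trace inequality together with a Jensen + Moser--Trudinger estimate for $\|\Phi\|_{L^1(\partial B(\overline{\tau_i},r_\ve))}$; see \eqref{EqZ1iint}--\eqref{EqPhibordo}. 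None of this is a corollary of the generic tools you cite, so this part of the argument is genuinely missing from your proposal.
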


\begin{rem}\label{RemCoupling}
As an evidence of the \emph{strong} interaction generated by the Moser-Trudinger critical nonlinearity, we stress that the variables $\theta$ and $\gamma$ are intricately coupled in the expansions \eqref{Pr9LemEq1}--\eqref{Pr9LemEq3} used to determine $(\gamma_\ve,\theta_\ve,\tau_\ve)$. This is not the case for 2-dimensional Liouville-type equations (see for instance~\cite{ChenLin-Liouville}), for which it is possible to construct blowing-up solutions without introducing neither the parameter $\theta$ nor the bubbles $B_{\varepsilon,\gamma_i,\tau_i}$ in $V_{\varepsilon,\gamma,\tau}$ (see for instance~\cite{EGP} working also in the $H^1_0(\Omega)$-framework). Finally, even not facing a situation with clustering or nonzero weak limit like ours, it is delicate to get a clean energy expansion in the Moser-Trudinger critical case (see~\cite{delPinoMusRuf2}). In particular, this expansion has to eventually fit with the cancellation pointed out by~\cite{ManMar} for the blow-up solutions.
\end{rem}

\proof[Proof of \eqref{Pr9LemEq1}] We start with computations that will be used also in the proofs of \eqref{Pr9LemEq2}-\eqref{Pr9LemEq2}. Given $Z\in H^1_0(\Omega)$, integration by parts yields
\begin{equation*}
\<\widetilde{R}_{\ve,\gamma,\tau,\theta},Z\>_{H^1_0} =\int_\Omega[\Delta(U_{\varepsilon,\gamma,\tau,\theta}+\Phi_{\varepsilon,\gamma,\tau,\theta})-f_\ve(U_{\varepsilon,\gamma,\tau,\theta}+\Phi_{\varepsilon,\gamma,\tau,\theta})]Zdx,
\end{equation*}
where we use the notation $f_\ve= \lambda_\ve h_\ve  f$.\footnote{We shall always write $f_\ve(U)$ instead of $f_\ve(x,U)$, ignoring the dependence on $x$.} 
We now expand for real numbers $U$ and $R$,
\begin{equation}\label{expUR}
\exp[(U+R)^2]=\exp(U^2) \exp(2UR+R^2)=\exp(U^2)[1+2UR+\bigO(U^2R^2)]
\end{equation}
uniformly for $\left|UR\right|\le 1$ and $\left|R\right|\le 1\le  \left|U\right|$, so that, recalling that $f'\(t\)=\(1+2t^2\)\exp\(t^2\)$,
\begin{equation}\label{Taylor}
f\(U+R\)=f\(U\)+f'\(U\)R+\bigO\(U^3R^2\exp\(U^2\)\),
\end{equation}
and similarly for $f_\ve$ since $\lambda_\ve h_\ve=\bigO\(1\)$. We apply this to
\begin{equation}
U= U_{\ve,\gamma,\tau}=u_\ve+\sum_{i=1}^k B_{\ve,\gamma_i,\tau_i}+\Psi_{\ve,\gamma,\tau},\quad R=\widetilde{\Phi}_{\ve,\gamma,\tau,\theta}:=\sum_{i=1}^k\theta_i B_{\ve,\gamma_i,\tau_i} +\Phi_{\ve,\gamma,\tau,\theta}\label{EqURBPhi}
\end{equation}
to obtain
\begin{multline*}
f_\ve\(U_{\varepsilon,\gamma,\tau,\theta}+\Phi_{\varepsilon,\gamma,\tau,\theta}\)=f_\ve\(U_{\ve,\gamma,\tau}\) +f_\ve'\(U_{\ve,\gamma,\tau}\)\(\sum_{i=1}^k\theta_i B_{\ve,\gamma_i,\tau_i} +\Phi_{\ve,\gamma,\tau,\theta} \)\\
+\bigO\(\exp\(U^2_{\ve,\gamma,\tau}\)U_{\ve,\gamma,\tau}^3\widetilde{\Phi}_{\ve,\gamma,\tau,\theta}^2\).
\end{multline*}
Recalling Proposition~\ref{Pr4}, and in particular that $U_{\ve,\gamma,\tau}$ is an exact solution outside the balls $B\(\overline{\tau_j},r_\ve+r_\ve^2\)$, we get
\begin{align}\label{bigexpansion}
\<\widetilde{R}_{\ve,\gamma,\tau,\theta},Z\>_{H^1_0}&=\sum_{j=1}^k\int_{B\(\overline{\tau_j},r_\ve\)} \[\Delta U_{\ve,\gamma,\tau}-f_\ve\(U_{\ve,\gamma,\tau}\)\] Zdx \nonumber\\
&\quad + \sum_{j=1}^k\int_{\Omega^j_\ve} \[\Delta U_{\ve,\gamma,\tau}-f_\ve\(U_{\ve,\gamma,\tau}\)\] Zdx\allowdisplaybreaks\nonumber\\
&\quad + \sum_{j=1}^k \theta_j \int_\Omega \[\Delta  B_{\ve,\gamma_j,\tau_j}-f_\ve'\(U_{\ve,\gamma,\tau}\)B_{\ve,\gamma_j,\tau_j}\] Zdx\allowdisplaybreaks\nonumber\\
&\quad +\int_\Omega \[\Delta \Phi_{\ve,\gamma,\tau,\theta}-f_\ve'\(U_{\ve,\gamma,\tau}\)\Phi_{\ve,\gamma,\tau,\theta}\] Zdx\nonumber\\
&\quad +\bigO\(\int_{\Omega}\left|U_{\ve,\gamma,\tau}\right|^3 \exp\big(U_{\ve,\gamma,\tau}^2\big)\widetilde{\Phi}_{\ve,\gamma,\tau,\theta}^2 \left|Z\right|dx\)\nonumber\\
&=: \sum_{j=1}^k \[(A)_j+(A')_j+(B)_j\]+(C)+(D),
\end{align}
where $\Omega^{j}_\ve:=B\(\overline{\tau_i},r_\varepsilon+r_\varepsilon^2\)\backslash B\(\overline{\tau_i},r_\varepsilon\)$. We now set $Z=Z_{0,i,\ve,\gamma,\tau}$ in \eqref{bigexpansion} and estimate the various terms.

 In order to evaluate $(A):=\sum_{j=1}^k(A)_j$, expand as in \eqref{UBEF}
\begin{equation}\label{EqUve}
U_{\ve,\gamma,\tau}=\overline{B}_{\ve,\gamma_j,\overline{\tau_j}} + E^{\(j\)}_{\ve,\gamma,\tau}+F^{\(j\)}_{\ve,\gamma,\tau},\quad \text{in }B\(\overline{\tau_j},r_\ve\).
\end{equation}
Using Proposition~\ref{PrFive} and omitting some indices, we get
\begin{equation}\label{EqRive}
R_j\(x\):=E^{\(j\)}_{\ve,\gamma,\tau}+F^{\(j\)}_{\ve,\gamma,\tau}\(x\)=E^{\(j\)}_{\ve,\gamma,\tau}+\bigO\(\frac{\left|x-\overline{\tau_j}\right|}{\gammae d_\ve}\)
=:R_j^s\(x\)+R_j^r,
\end{equation}
for all $x\in B\(\overline{\tau_j},r_\ve\)$, where the letters $s$ and $r$ stand for ``symmetric'' and ``remainder'', respectively. Using \eqref{EqUve} and \eqref{Pr6Eq0b}, we get
\begin{align}\label{EqZ0i1}
Z_{0,i}:=Z_{0,i,\ve,\gamma,\tau}&=\partial_{\gamma_i}\big[\overline{B}_{\ve,\gamma_j,\overline{\tau_j}}+R_j\big]\nonumber\\
&=\partial_{\gamma_i}\big[\overline{B}_{\ve,\gamma_j,\overline{\tau_j}}\(x\)+E^{\(j\)}_{\ve,\gamma,\tau}\big] + \bigO\(\left|x-\overline{\tau_j}\right|\),\quad \text{in }B\(\overline{\tau_j},r_\ve\),
\end{align}
where we also replaced $\bigO\(\left|x-\overline{\tau_j}\right|/\(\overline \gamma_\ve^2 d_\ve\)\)$ by $\bigO\(\left|x-\overline{\tau_j}\right|\)$ for simplicity.
Using Proposition~\ref{Pr4} and \eqref{Pr4Eq4b}, i.e. $\Delta{B}_{\ve,\gamma_j,\tau_j}=\Delta \overline{B}_{\ve,\gamma_j,\overline{\tau_j}}$, in $B\(\overline{\tau_j},r_\ve\)$,
we can write
$$(A)_j=\int_{B\(\overline{\tau_j},r_\ve\)}\[\Delta \overline{B}_{\ve,\gamma_j,\overline{\tau_j}}-f_\ve\(U_{\ve,\gamma,\tau}\) \] Z_{0,i}dx.$$
We now Taylor expand as in \eqref{Taylor} with
$$U=\overline{B}_{\ve,\gamma_j,\overline{\tau_j}},\quad R= R_j= E^{\(j\)}_{\ve,\gamma,\tau}+F^{\(j\)}_{\ve,\gamma,\tau},$$
and since $ \overline{B}_{\ve,\gamma_i,\overline{\tau_i}}$ is an exact solution in $B\(\overline{\tau_i},r_\ve\)$, we estimate
\begin{multline}\label{EqAj2}
(A)_j=\int_{B\(\overline{\tau_j},r_\ve\)}\underbrace{\[\Delta \overline{B}_{\ve,\gamma_j,\overline{\tau_j}}- \lambda_\ve h_\ve\(\overline{\tau_j}\)f\(\overline{B}_{\ve,\gamma_j,\overline{\tau_j}}\)\]}_{=0} Z_{0,i}dx\\
-\int_{B\(\overline{\tau_j},r_\ve\)}\lambda_\ve \(h_\ve -h_\ve\(\overline{\tau_j}\)\) f\(\overline{B}_{\ve,\gamma_j,\overline{\tau_j}}\)Z_{0,i} dx -\int_{B\(\overline{\tau_j},r_\ve\)} \lambda_\ve h_\ve f'\(\overline{B}_{\ve,\gamma_j,\overline{\tau_j}}\)R_jZ_{0,i}dx\\
+\bigO\(\int_{B\(\overline{\tau_j},r_\ve\)}\overline\gamma_\ve^3\exp \big(\overline{B}_{\ve,\gamma_j,\overline{\tau_j}}^2\big) R_j^2  \left|Z_{0,i}\right|dx \).
\end{multline}
Observing that $h_\ve -h_\ve\(\overline{\tau_j}\)=\bigO\(\left|x-\overline{\tau_j}\right|\)$, using \eqref{EqRive} to bound $F^{\(j\)}_{\ve,\gamma,\tau}$, writing
$$f'\(\overline{B}_{\ve,\gamma_j,\overline{\tau_j}}\)=\bigO\(\overline{\gamma}_\ve f\(\overline{B}_{\ve,\gamma_j,\overline{\tau_j}}\)\) =\bigO\big(\overline{\gamma}_\ve^2 \exp\big(\overline{B}_{\ve,\gamma_j,\overline{\tau_j}}^2\big)\big),\quad \text{in }B\(\overline{\tau_j},r_\ve\),$$ 
and using $\left|Z_{0,i}\right|=\bigO\(1\)$, we simplify to
\begin{align}\label{eqAib}
(A)_j&=-\int_{B\(\overline{\tau_j},r_\ve\)} \lambda_\ve h_\ve\(\overline{\tau_j}\)f'\(\overline{B}_{\ve,\gamma_j,\overline{\tau_j}}\) E^{\(j\)}_{\ve,\gamma,\tau} \partial_{\gamma_i}\big[ \overline{B}_{\ve,\gamma_j,\overline{\tau_j}}+ E^{\(j\)}_{\ve,\gamma,\tau}\big]dx\nonumber\\
&\quad+\bigO\(\int_{B\(\overline{\tau_j},r_\ve\)}\overline\gamma_\ve^3\exp\big(\overline{B}_{\ve,\gamma_j,\overline{\tau_j}}^2\big) \(|E^{\(j\)}_{\ve,\gamma,\tau}|^2+\left|x-\overline{\tau_j}\right|\)dx \).
\end{align}

Now write
\begin{equation*}
\partial_{\gamma_i}\big[ \overline{B}_{\ve,\gamma_j,\overline{\tau_j}}+ E^{\(j\)}_{\ve,\gamma,\tau}\big]=\delta_{ij}\partial_{\gamma_i}\[\overline{B}_{\ve,\gamma_i,\overline{\tau_i}}\]+\partial_{\gamma_i}\big[E^{\(j\)}_{\ve,\gamma,\tau}\big],
\quad \text{on } B\(\overline{\tau_j},r_\ve\),\\
\end{equation*}
where $\delta_{ij}$ is the Kronecker symbol. Observing that
\begin{equation}\label{EqBeBj}
\overline{B}_{\ve,\gamma_j,\overline{\tau_j}}\(x\)=\overline{B}_{\gamma_j}\big(\sqrt{\lambda_{\ve,j}}\(x-\overline{\tau_j}\)\big),\quad\text{where }\lambda_{\ve,j}:=\lambda_\ve h_\ve\(\overline{\tau_j}\),
\end{equation}
with the change of variables $\sqrt{\lambda_{\ve,j}}\(x-\overline{\tau_j}\)= y$ and Proposition~\ref{Pr12}, we get
\begin{equation}\label{intf'}
\int_{B\(\overline{\tau_j},r_\ve\)}\lambda_{\ve,j}f'\(\overline{B}_{\ve,\gamma_j,\overline{\tau_j}}\)dx=\int_{B(0,\sqrt{\lambda_{\ve,j}} r_\ve)}f'\(\overline B_{\gamma_j}\)dy=8\pi+\bigO\(\frac{1}{\gammae^2}\),
\end{equation}
where $\overline B_{\gamma_j}$ is as in Proposition~\ref{Pr10}. With the same change of variables and Proposition~\ref{Pr12}, we also get
\begin{align*}
\int_{B\(\overline{\tau_i},r_\ve\)}\lambda_{\ve,i}f'\big(\overline{B}_{\ve,\gamma_i,\overline{\tau_i}}\big) \partial_{\gamma_i}\[\overline{B}_{\ve,\gamma_i,\overline{\tau_i}}\]dx&=\int_{B(0,\sqrt{\lambda_{\ve,i}} r_\ve)}f'\(\overline B_{\gamma_i}\)Z_{0,\gamma_i}dy=-\frac{4\pi+\smallo\(1\)}{\overline\gamma_\ve^2},
\end{align*}
where $Z_{0,\gamma_i}$ is as in Proposition~\ref{Pr11}. Now, using Proposition~\ref{PrEive}, the dominant term in $(A)_j$ becomes
\begin{align*}
&-E^{\(j\)}_{\ve,\gamma,\tau}\int_{B\(\overline{\tau_j},r_\ve\)}\lambda_{\ve,j}f'\(\overline{B}_{\ve,\gamma_j,\overline{\tau_j}}\) \(\partial_{\gamma_i}\big[E^{\(j\)}_{\ve,\gamma,\tau}\big]+\partial_{\gamma_i}\[\overline{B}_{\ve,\gamma_j,\overline{\tau_j}}\]\)dy\\
&\qquad  = -E^{\(j\)}_{\ve,\gamma,\tau}\(\(8\pi+\bigO\(\frac{1}{\overline{\gamma}_\ve^2}\)\)  \partial_{\gamma_i}\big[E^{\(j\)}_{\ve,\gamma,\tau}\big]-\delta_{ij}\frac{4\pi+\smallo\(1\)}{\overline{\gamma}_\ve^2} \)\\
&\qquad=-E^{\(j\)}_{\ve,\gamma,\tau}\(8\pi \partial_{\gamma_i}\big[E^{\(j\)}_{\ve,\gamma,\tau}\big]-\delta_{ij}\frac{4\pi}{\overline{\gamma}_\ve^2} \) +\smallo\(\frac{\delta_\ve \ln\overline{\gamma}_\ve}{\overline{\gamma}_\ve^5}\)
\end{align*}
Concerning the remainder term in \eqref{eqAib}, again using Proposition~\ref{Pr12}, we have
$$\int_{B\(\overline{\tau_j},r_\ve\)}\exp\big(\overline{B}_{\ve,\gamma_j,\overline{\tau_j}}^2\big)\overline\gamma_\ve^3\delta_\ve^2dx=\bigO\(\overline\gamma_\ve \delta_\ve^2\)=\smallo\(\frac{\delta_\ve \ln \overline{\gamma}_{\ve}}{\overline{\gamma}_{\ve}^5}\),$$
and, with the usual change of variables and Proposition~\ref{Pr13}, we obtain
\begin{align*}
\int_{B\(\overline{\tau_j},r_\ve\)}\exp\big(\overline{B}_{\ve,\gamma_j,\overline{\tau_j}}^2\big)\overline\gamma_\ve^3\left|x-\overline{\tau_j}\right|dx=\bigO\(\gammae^3 \mu_{\gamma_j}^{3\delta_0-2\delta_0^2+\smallo\(1\)}\)&=\bigO\(\mu_\ve^{3\delta_0-2\delta_0^2+\smallo\(1\)}\)\\
&=\smallo\(\frac{\delta_\ve \ln \overline{\gamma}_{\ve}}{\overline{\gamma}_{\ve}^5}\),
\end{align*}
where in the last identity, we used that $\delta_\ve=\overline \mu_\ve^{\delta_1+1/2}$ and $3\delta_0-2\delta_0^2 > \delta_1+\frac12$ thanks to \eqref{Pr7Eq0}, so that 
$$\mu_\ve^{3\delta_0-2\delta_0^2+\smallo\(1\)}=\bigO\(\frac{\delta_\ve}{\gammae^{a}}\),\quad \text{for any }a\in\R.$$ 
We therefore get
$$(A)_j=-E^{\(j\)}_{\ve,\gamma,\tau}\(8\pi \partial_{\gamma_i}\big[E^{\(j\)}_{\ve,\gamma,\tau}\big]-\delta_{ij}\frac{4\pi}{\overline{\gamma}_\ve^2} \) +\smallo\(\frac{\delta_\ve \ln\overline{\gamma}_\ve}{\overline{\gamma}_\ve^5}\)$$
Summing over $j$, we then obtain
\begin{equation}\label{EqAfinal}
(A)=\sum_{j=1}^k(A)_j=-8\pi \sum_{j=1}^k E^{\(j\)}_{\ve,\gamma,\tau}\partial_{\gamma_i}\big[E^{\(j\)}_{\ve,\gamma,\tau}\big] +\frac{4\pi}{\gammae^2}E^{\(i\)}_{\ve,\gamma,\tau}+\smallo\(\frac{\delta_\ve \ln\overline{\gamma}_\ve}{\overline{\gamma}_\ve^5}\).
\end{equation}

As for the error term in the annuli, we have from Proposition~\ref{Pr4}, 
$$(A')_j=\int_{\Omega^j_\ve} \(\chi_{\ve,\tau}-1\) f_\ve\(U_{\ve,\gamma,\tau}\)Z_{0,i,\ve,\gamma,\tau}dx,$$
hence, from \eqref{Estannuli}, 
\begin{equation}\label{EqA'final}
(A')=\sum_{j=1}^k\bigO\(|\Omega^j_\ve|\overline\mu_\ve^{-2\delta_0^2+\smallo\(1\)} \)=\bigO\(\overline \mu_\ve^{3\delta_0-2\delta_0^2+\smallo\(1\)}\)= \smallo\(\frac{\delta_\ve \ln\overline{\gamma}_\ve}{\overline{\gamma}_\ve^5}\),
\end{equation}
where in the last line, we used that $\delta_\ve=\overline \mu_\ve^{\delta_1+1/2}$ and $3\delta_0-2\delta_0^2>\delta_1+1/2$.

We now move on to the estimate of $(B)$. Integration by parts and using that $U_{\ve,\gamma,\tau}$ is an exact solution outside the balls $B\(\overline{\tau_m},r_\ve+r_\ve^2\)$ give
\begin{align}\label{EqBinitial}
(B)_j&= \theta_j\int_\Omega \[\Delta Z_{0,i}- f_\ve'\(U_{\ve,\gamma,\theta}\)Z_{0,i}\]B_{\ve,\gamma_j,\tau_j}dx\nonumber\\
&=\theta_j\int_\Omega \partial_{\gamma_i}\[\Delta U_{\ve,\gamma,\tau}- f_\ve\(U_{\ve,\gamma,\theta}\)\]B_{\ve,\gamma_j,\tau_j}dx\nonumber\allowdisplaybreaks\\
&=\theta_j\sum_{m=1}^k\int_{B\(\overline{\tau_m},r_\ve\)} \partial_{\gamma_i}\[\Delta U_{\ve,\gamma,\tau}- f_\ve\(U_{\ve,\gamma,\theta}\)\]B_{\ve,\gamma_j,\tau_j}dx\nonumber\allowdisplaybreaks\\
&\quad +\theta_j\sum_{m=1}^k\int_{\Omega^m_\ve}\partial_{\gamma_i}\[\Delta U_{\ve,\gamma,\tau}- f_\ve\(U_{\ve,\gamma,\theta}\)\]B_{\ve,\gamma_j,\tau_j}dx\nonumber\\
&=:\sum_{m=1}^k\[(B)_{jm}+(B')_{jm}\].
\end{align}
Using the same notations as in \eqref{EqUve}--\eqref{EqRive} and using \eqref{expUR}, which gives
\begin{equation}\label{f'BR}
f'\(B+R\)=f'\(B\)+\bigO\(B^3\left|R\right|\exp\(B^2\)\)
\end{equation}
with
$$B=\overline B_m=\overline B_{\ve, \gamma_m,\tau_m},\quad R=R_m= E^{\(m\)}_{\ve,\gamma,\tau}+F^{\(m\)}_{\ve,\gamma,\tau},$$
on $B\(\overline{\tau_m},r_\ve\)$, we can now write
\begin{align}\label{f'BR2}
&\partial_{\gamma_i}\[\Delta U_{\ve,\gamma,\tau}- f_\ve\(U_{\ve,\gamma,\tau}\)\]=\partial_{\gamma_i}\[\Delta \overline B_m\]-f_\ve'\(U_{\ve,\gamma,\tau}\)\partial_{\gamma_i}\[U_{\ve,\gamma,\tau}\]\nonumber\\
&\qquad=\partial_{\gamma_i}\[\Delta \overline B_m\]-\big[ f_\ve'\(\overline B_m\)+\bigO\big( \gammae^3\exp\big(\overline B_m^2\big) \left|R_m\right|\big)\big]\partial_{\gamma_i}\[\overline B_m + R_m\]\nonumber\allowdisplaybreaks\\
&\qquad=\partial_{\gamma_i}\underbrace{\[\Delta \overline B_m-\lambda_\ve h_\ve\(\overline{\tau_m}\)f\(\overline B_m\)\]}_{=0}+\bigO\big( \gammae^3\exp\big(\overline B_m^2\big)\(\left|x-\overline{\tau_m}\right|+ \left|R_m\right|\)\partial_{\gamma_i}\[B_m\]\big)\nonumber\\
&\hspace{157pt}-\big[f'_\ve\(\overline B_m\) + \bigO\big( \gammae^3\exp\big(\overline B_m^2\big) \left|R_m\right|\big)\big]\partial_{\gamma_i}\[R_m\],
\end{align}
where we have also bound $h_\ve-h_\ve\(\overline{\tau_m}\)=\bigO\(\left|x-\overline{\tau_m}\right|\)$.
Expanding $\partial_{\gamma_i}\[R_m\]$ as in \eqref{EqZ0i1}, we then get
\begin{multline*}
(B)_{jm}=-\theta_j \int_{B\(\overline{\tau_m},r_\ve\)} f'_\ve\(\overline B_{\ve, \gamma_m,\overline{\tau_m}}\)B_{\ve,\gamma_j,\tau_j}\partial_{\gamma_i}\[R_m\]dx\nonumber\\
+\bigO\(\left|\theta_j\right|\gammae^4\int_{B\(\overline{\tau_m},r_\ve\)} \exp\big(\overline B_{\ve, \gamma_m,\overline{\tau_m}}^2\big)\(\delta_\ve+\left|x-\overline{\tau_m}\right|\)dx\),
\end{multline*}
hence 
$$(B)_{jm}=-\theta_j \partial_{\gamma_i}\big[E^{\(m\)}_{\ve,\gamma,\tau}\big]\int_{B\(\overline{\tau_m},r_\ve\)} f'_\ve\(\overline B_{\ve, \gamma_m,\overline{\tau_m}}\) B_{\ve,\gamma_j,\tau_j} dx  + \smallo\(\frac{\delta_\ve \ln\overline{\gamma}_\ve}{\overline{\gamma}_\ve^5}\).$$

Together with \eqref{Pr12Eq1}, for $j=m$, we obtain
\begin{align*}
(B)_{jj}&=-8\pi \gammae \theta_j \partial_{\gamma_i}\big[E^{\(j\)}_{\ve,\gamma,\tau}\big] +\smallo\(\frac{\delta_\ve \ln\overline{\gamma}_\ve}{\overline{\gamma}_\ve^5}\),
\end{align*}
while, observing that $B_{\ve,\gamma_j,\tau_j}=\bigO\(1\)$ on $B\(\overline{\tau_m},r_\ve\)$ if $j\ne m$, we get
$$(B)_{jm}= \smallo\(\frac{\delta_\ve \ln\overline{\gamma}_\ve}{\overline{\gamma}_\ve^5}\),\quad\text{for }j\ne m.$$
As for $(B')_{jm}$, similarly as in \eqref{EqA'final}, we can bound with \eqref{Estannuli} and Proposition~\ref{Pr13}
\begin{multline}\label{EqB'}
(B')_{jm}=\int_{\Omega^m_\ve}\(\chi_{\ve,\tau}-1\)\partial_{\gamma_i}\[f_\ve\(U_{\ve,\gamma,\tau}\)\]B_{\ve,\gamma_j,\tau_j}dx\\
=\bigO\(\gammae \int_{\Omega^m_\ve}\left|f_\ve'\(U_{\ve,\gamma,\tau}\)\right|\left|Z_{0,j,\ve,\gamma,\tau}\right|dx\)=\bigO\(\overline\mu_\ve^{3\delta_0-2\delta_0^2+\smallo\(1\)}\)=\smallo\(\frac{\delta_\ve \ln\overline{\gamma}_\ve}{\overline{\gamma}_\ve^5}\).
\end{multline}
Hence, finally, summing over $m$ and $j$, we obtain
\begin{equation}\label{EqBfinal}
(B)=\sum_{j=1}^k(B)_j=-8\pi \gammae\sum_{j=1}^k \theta_j \partial_{\gamma_i}\big[E^{\(j\)}_{\ve,\gamma,\tau}\big] +\smallo\(\frac{\delta_\ve \ln\overline{\gamma}_\ve}{\overline{\gamma}_\ve^5}\).
\end{equation}

We now estimate the term $(C)$. Similar to \eqref{EqBinitial}, integration by parts and Proposition~\ref{Pr4} give
\begin{align*}
(C)&=\int_\Omega \[\Delta Z_{0,i}- f_\ve'\(U_{\ve,\gamma,\theta}\)Z_{0,i}\]\Phi_{\ve,\gamma,\tau,\theta}dx\nonumber\allowdisplaybreaks\\
&=\sum_{j=1}^k \int_{B\(\overline{\tau_j},r_\ve\)}\partial_{\gamma_i}\[\Delta U_{\ve,\gamma,\tau}- f_\ve\(U_{\ve,\gamma,\tau}\)\]\Phi_{\ve,\gamma,\tau,\theta}dx\nonumber\\
&\quad + \sum_{j=1}^k \int_{\Omega^j_\ve}\partial_{\gamma_i}\[\Delta U_{\ve,\gamma,\tau}- f_\ve\(U_{\ve,\gamma,\tau}\)\]\Phi_{\ve,\gamma,\tau,\theta}dx=:\sum_{j=1}^k\[(C)_j+(C')_j\].
\end{align*}
We can now use \eqref{f'BR2}, and with the same notations, we write
\begin{multline*}
(C)_j=-\int_{B\(\overline{\tau_j},r_\ve\)} f_\ve'\(\overline{B}_{\ve,\gamma_j,\overline{\tau_j}}\) \partial_{\gamma_i}\big[R_j\big]\Phi_{\ve,\gamma,\tau,\theta}dx+\bigO\Bigg(\int_{B\(\overline{\tau_j},r_\ve\)}\gammae^3\exp\big(\overline B_{\ve,\gamma_j,\overline{\tau_j}}^2\big)\\
\times\(\(\left|x-\overline{\tau_j}\right|+\left|R_j\right|\)\left|\partial_{\gamma_i}\[\overline{B}_j\]\right|+\left|R_j\right| \left|\partial_{\gamma_i}\[R_j\]\right|\)\left|\Phi_{\ve,\gamma,\tau,\theta}\right|dx\Bigg).
\end{multline*}
The main term in $(C)_j$ will be
\begin{align*}
(C_1)_j&=-\partial_{\gamma_i}\big[E^{\(j\)}_{\ve,\gamma,\tau}\big]\int_{B\(\overline{\tau_j},r_\ve\)} f_\ve'\(\overline B_{\ve,\gamma_j,\overline{\tau_j}}\) \Phi_{\ve,\gamma,\tau,\theta}dx\\
&=-\partial_{\gamma_i}\big[E^{\(j\)}_{\ve,\gamma,\tau}\big]\int_{B\(\overline{\tau_j},r_\ve\)} 2\gamma_j f_\ve\(\overline B_{\ve,\gamma_j,\overline{\tau_j}}\) \Phi_{\ve,\gamma,\tau,\theta}dx\\
&\quad +\bigO\(\frac{\ln\gammae}{\gammae^2}\int_{B\(\overline{\tau_j},r_\ve\)} |2\gamma_j f_\ve\(\overline B_{\ve,\gamma_j,\overline{\tau_j}}\)-f'_\ve\(\overline B_{\ve,\gamma_j,\overline{\tau_j}}\)| \left| \Phi_{\ve,\gamma,\tau,\theta}\right|dx \)\allowdisplaybreaks\\
&=-2\gamma_j \partial_{\gamma_i}\big[E^{\(j\)}_{\ve,\gamma,\tau}\big]\int_{B\(\overline{\tau_j},r_\ve\)} \Delta B_{\ve,\gamma_j,\tau_j} \Phi_{\ve,\gamma,\tau,\theta}dx\\
&\quad +\bigO\(\gammae \int_{B\(\overline{\tau_j},r_\ve\)} \left|x-\overline\tau_j\right| f\(\overline{B}_{\ve,\gamma_j,\overline{\tau_j}}\) \left| \Phi_{\ve,\gamma,\tau,\theta}\right|dx\)\\
&+\bigO\(\frac{\ln \gammae}{\gammae^2}\int_{B\(\overline{\tau_j},r_\ve\)}\exp\big(\overline B_{\ve,\gamma_j,\overline{\tau_j}}^2\big) |2\gamma_j \overline B_{\ve,\gamma_j,\overline{\tau_j}}-2\overline B_{\ve,\gamma_j,\overline{\tau_j}}^2-1| \left| \Phi_{\ve,\gamma,\tau,\theta}\right|dx \),
\end{align*}
where we used that
$$\lambda_{\ve,j}f\(\overline{B}_{\ve,\gamma_j,\overline{\tau_j}}\)=\Delta \overline B_{\ve,\gamma_j,\overline{\tau_j}}=\Delta B_{\ve,\gamma_j,\tau_j},\quad \text{in }B\(\overline{\tau_j},r_\ve\).$$ Since $\Phi_{\ve,\gamma,\tau,\theta}\perp B_{\ve,\gamma_j,\tau_j}$ in $H^1_0\(\Omega\)$ and $\Delta B_{\ve,\gamma_j,\tau_j}=0$ in $\Omega\setminus B\(\overline{\tau_j},r_\ve\)$, we have
\begin{equation}\label{orthog}
\int_{B\(\overline{\tau_j},r_\ve\)} \Delta B_{\ve,\gamma_j,\tau_j} \Phi_{\ve,\gamma,\tau,\theta}dx=\int_\Omega\<\nabla B_{\ve,\gamma_j,\tau_j},\nabla \Phi_{\ve,\gamma,\tau,\theta}\>dx =0,
\end{equation}
and by Proposition~\ref{Pr10},
$$\gamma_j \overline B_{\ve,\gamma_j,\overline{\tau_j}}-\overline B_{\ve,\gamma_j,\overline{\tau_j}}^2= \bigO\(1+t_{\gamma_j}\(\cdot -\overline{\tau_j}\)\),\quad \text{in }B\(\overline{\tau_j},r_\ve\).$$
so that with a change of variables and Propositions~\ref{Pr8} and~\ref{Pr14}, we get
\begin{align*}
(C_1)_j&=\bigO\(\gammae^2 r_\ve \int_{B\(\overline{\tau_j},r_\ve\)} \exp\big(\overline{B}_{\ve,\gamma_j,\overline{\tau_j}}^2\big) \left| \Phi_{\ve,\gamma,\tau,\theta}\right|dx\)\\
&\quad + \bigO\(\frac{\ln \gammae}{\gammae^2}\int_{B\(\overline{\tau_j},r_\ve\)}\exp\big(\overline B_{\ve,\gamma_j,\overline{\tau_j}}^2\big)\(1+t_{\gamma_j}\(x-\overline{\tau_j}\)\)\left| \Phi_{\ve,\gamma,\tau,\theta}\right|dx \)\allowdisplaybreaks\\
&=\bigO\(r_\ve \left\|\nabla \Phi_{\ve,\gamma,\tau,\theta}\right\|_{L^2}\)+\bigO\(\frac{\ln \gammae}{\gammae^4}\|\nabla \Phi_{\ve,\gamma,\tau\theta}\|_{L^2}\)=\smallo\(\frac{\delta_\ve \ln\gammae}{\gammae^5}\).
\end{align*}
Note that we crucially used the orthogonality condition \eqref{orthog} to gain a factor $\gammae^{-2}$. Again, with a change of variables and Proposition~\ref{Pr14}, we bound
\begin{align*}
(C_2)_j&=-\int_{B\(\overline{\tau_j},r_\ve\)} f_\ve'\(\overline B_{\ve,\gamma_j,\overline{\tau_j}}\)\partial_{\gamma_i}\big[F^{\(j\)}_{\ve,\gamma,\tau}\big] \Phi_{\ve,\gamma,\tau,\theta}dx\\
&=\bigO\(\int_{B\(\overline{\tau_j},r_\ve\)} f_\ve'\(\overline B_{\ve,\gamma_j,\overline{\tau_j}}\)\left|x-\overline{\tau_j}\right| \left|\Phi_{\ve,\gamma,\tau,\theta}\right|dx\)\allowdisplaybreaks\\
&=\bigO\(r_\ve \int_{B\(\overline{\tau_j},r_\ve\)} f_\ve'\(\overline B_{\ve,\gamma_j,\overline{\tau_j}}\) \left|\Phi_{\ve,\gamma,\tau,\theta}\right|dx\)=\bigO\(r_\ve \|\nabla \Phi _{\ve,\gamma,\tau,\theta}\|_{L^2}\)=\smallo\(\frac{\delta_\ve \ln\gammae}{\gammae^5}\).
\end{align*}
Similarly, for some exponent $a>0$ (which plays no role),
\begin{align}\label{EqC}
(C_3)_j&=\bigO\Bigg(\int_{B\(\overline{\tau_j},r_\ve\)}\gammae^3\exp\big(\overline B_{\ve,\gamma_j,\overline{\tau_j}}^2\big)\(\(\left|x-\overline{\tau_j}\right|+\left|R_j\right|\)\left|\partial_{\gamma_i}\[\overline{B}_j\]\right|+\left|R_j\right| \left|\partial_{\gamma_i}\[R_j\]\right|\)\nonumber\\
&\qquad \times\left|\Phi_{\ve,\gamma,\tau,\theta}\right|dx\Bigg)=\bigO\(\(\delta_\ve+r_\ve\)\gammae^a \int_{B\(\overline{\tau_j},r_\ve\)}\exp\big(\overline B_{\ve,\gamma_j,\overline{\tau_j}}^2\big)\left|\Phi_{\ve,\gamma,\tau,\theta}\right|dx\)\nonumber\\
&=\bigO\(\(\delta_\ve+r_\ve\)\gammae^{a-2}\|\nabla \Phi_{\ve,\gamma,\tau,\theta}|\|_{L^2}\)=\smallo\(\frac{\delta_\ve \ln\gammae}{\gammae^5}\).
\end{align}
As for $(C')_j$, in analogy with \eqref{EqB'} (with $\Phi_{\ve,\gamma,\tau}$ instead of $B_{\ve,\gamma_j,\tau_j}$), using \eqref{Estannuli} together with the H\"older and Poincar\'e inequalities, we bound
\begin{align*}
(C')_j&=\bigO\(\int_{\Omega^j_\ve}f'\(U_{\ve,\gamma,\tau}\)\left|\Phi_{\ve,\gamma,\tau,\theta}\right|dx \)
= \bigO\(\|f'\(U_{\ve,\gamma,\tau}\)\|_{L^2(\Omega^j_\ve)}  \|\Phi_{\ve,\gamma,\tau,\theta}\|_{L^2(\Omega)}\)\\
&=\bigO\(\overline{\mu}_\ve^{\frac{1}{2}(3\delta_0-2\delta_0^2)+\smallo(1)} \|\nabla \Phi_{\ve,\gamma,\tau,\theta}\|_{L^2(\Omega)}\)=\smallo\(\frac{\delta_\ve \ln\gammae}{\gammae^5}\).
\end{align*}
Summing over $j$, we arrive at
\begin{equation}\label{EqCfinal}
(C)=\sum_{j=1}^k\[(C_1)_j+(C_2)_j+(C_3)_j+(C')_j\]=\smallo\(\frac{\delta_\ve \ln\gammae}{\gammae^5}\).
\end{equation}

As for $(D)$, recalling that $\left|Z_{0,i}\right|=\bigO\(1\)$, we bound
\begin{align*}
(D)&=\bigO\(\int_{\Omega}\left|U_{\ve,\gamma,\tau}\right|^3 \exp\big(U_{\ve,\gamma,\tau}^2\big)\(\Phi_{\ve,\gamma,\tau,\theta}^2+ \sum_{i=1}^k\theta_i^2\gammae^2\) dx\)\allowdisplaybreaks\\
&=\bigO\(\int_{\Omega}\gammae^3 \exp\big(U_{\ve,\gamma,\tau}^2\big) \Phi_{\ve,\gamma,\tau,\theta}^2dx \) + \sum_{i=1}^k \bigO\( \theta_i^2\gammae^2 \int_{\Omega}\left|U_{\ve,\gamma,\tau}\right|^3 \exp\big(U_{\ve,\gamma,\tau}^2\big) dx\)\\
&=:(D_1)+(D_2).
\end{align*}
We first claim that
\begin{equation}\label{EqD11}
\int_{\Omega} \exp\big(U^2_{\ve,\gamma,\tau}\big)dx =\bigO\(1\).
\end{equation}
Indeed, with the usual decomposition given by \eqref{EqUve}, we get
$$\int_{B\(\overline{\tau_j},r_\ve\)}\exp\big(U_{\ve,\gamma,\tau}^2\big)dx=\bigO\(\int_{B\(\overline{\tau_j},r_\ve\)}\exp\big(\overline B_{\ve,\gamma_j,\overline{\tau_j}}^2\big)dx\)=\bigO\(\frac{1}{\gammae^2}\),$$
thanks to the usual change of variables and Proposition~\ref{Pr12}. Then, summing over $j=1,\dots,k$ and also using \eqref{Pr4Eq3b}, we obtain \eqref{EqD11}. Then we immediately estimate
$$(D_2)= \bigO\(\theta_i^2 \gammae^5\)=\smallo\(\frac{\delta_\ve \ln\gammae}{\gammae^5}\).$$
As for $(D_1)$, from H\"older's inequality and \eqref{Pr4Eq3b}, we have
\begin{align*}
(D_1'):=\int_{\Omega_{r_\ve,\tau}}\gammae^3 \exp\big(U_{\ve,\gamma,\tau}^2\big) \Phi_{\ve,\gamma,\tau,\theta}^2dx&=\bigO\(\gammae^3\big\|\exp\big( U_{\ve,\gamma,\tau}^2\big)\mathbf{1}_{\Omega_{r_\ve,\tau}}\big\|_{L^p}\left\|\Phi_{\ve,\gamma,\tau,\theta}\right\|_{L^{2p'}}^2\)\\
&=\bigO\(\gammae^3 \left\|\nabla \Phi_{\ve,\gamma,\tau,\theta}\right\|_{L^2}^2\)=\smallo\(\frac{\delta_\ve \ln\gammae}{\gammae^5}\),
\end{align*}
where $p$ is sufficiently small and $p'$ is the conjugate exponent of $p$. Moreover, with Proposition~\ref{Pr14}, and the same change of variables used to estimate $(C_1)_j$, we obtain
\begin{align}\label{EqD1''}
(D_1)_j:=\int_{B\(\overline{\tau_j},r_\ve\)} \gammae^3 \exp\big(U_{\ve,\gamma,\tau}^2\big) \Phi_{\ve,\gamma,\tau,\theta}^2dx&=\bigO\(\gammae^3 \int_{B\(\overline{\tau_j},r_\ve\)}\exp\big(\overline B_{\ve,\gamma_j,\overline{\tau_j}}^2\big) \Phi_{\ve,\gamma,\tau,\theta}^2dx \)\nonumber\\
&=\bigO\(\gammae \left\|\nabla \Phi_{\ve,\gamma,\tau,\theta}\right\|_{L^2}^2\)=\smallo\(\frac{\delta_\ve \ln\gammae}{\gammae^5}\).
\end{align}
Summing up, we conclude
\begin{equation}\label{EqDfinal}
(D)=\sum_{j=1}^k(D_1)_j+(D_1')+(D_2)=\smallo\(\frac{\delta_\ve \ln\gammae}{\gammae^5}\).
\end{equation}
Now, putting together \eqref{EqAfinal}, \eqref{EqA'final}, \eqref{EqBfinal}, \eqref{EqCfinal} and \eqref{EqDfinal}, we conclude.
\endproof

\proof[Proof of \eqref{Pr9LemEq2}] We consider now \eqref{bigexpansion} with $Z= B_{\ve,\gamma_i,\tau_i}$ and estimate the terms from $(A)$ to $(D)$. 
Using \eqref{Taylor}, we get
\begin{align*}
(A)_j&=\int_{B\(\overline{\tau_j},r_\ve\)}\underbrace{\[\Delta\overline B_{\ve,\gamma_j,\overline{\tau_j}}-\lambda_{\ve,j}f\(\overline B_{\ve,\gamma_j,\overline{\tau_j}}\)\]}_{=0}B_{\ve,\gamma_i,\tau_i}dx\\
&\quad   - \int_{B\(\overline{\tau_j},r_\ve\)}\lambda_\ve\(h_\ve-h_\ve\(\overline{\tau_j}\)\)f\(\overline B_{\ve,\gamma_j,\overline{\tau_j}}\) B_{\ve,\gamma_i,\tau_i}dx\\
&\quad - \int_{B\(\overline{\tau_j},r_\ve\)} \lambda_\ve h_\ve f'\(\overline B_{\ve,\gamma_j,\overline{\tau_j}}\) R_j B_{\ve,\gamma_i,\tau_i} dx+\bigO\(\int_{B\(\overline{\tau_j},r_\ve\)} \gammae^4\exp\big(\overline B_{\ve,\gamma_j,\overline{\tau_j}}^2\big) R_j^2 \),
\end{align*}
where $R_j$ is as in \eqref{EqRive}. Similarly as in \eqref{eqAib}, we reduce to 
\begin{align*}
(A)_j&=- \int_{B\(\overline{\tau_j},r_\ve\)} \lambda_{\ve,j}f'\(\overline B_{\ve,\gamma_j,\overline{\tau_j}}\) E^{\(j\)}_{\ve,\gamma,\tau} B_{\ve,\gamma_i,\tau_i} dx\\
&\quad+ \bigO\(\int_{B\(\overline{\tau_j},r_\ve\)}\overline\gamma_\ve^4\exp\big(\overline{B}_{\ve,\gamma_i,\overline{\tau_i}}^2\big) \(\left|x-\overline{\tau_j}\right|+\delta_\ve^2\)dx \).
\end{align*}
In the case $j=i$ we use that
\begin{equation}\label{EqAbis1}
B _{\ve,\gamma_i,\tau_i}=\overline{B}_{\ve,\gamma_i,\tau_i}\(1+\bigO\(\frac{\ln\gammae}{\gammae^2}\)\),\quad  \text{in } B\(\overline{\tau_i},r_\ve\),
\end{equation}
(see Claim \ref{Claim1}) and with the usual change of variables, taking \eqref{EqBeBj} and Propositions~\ref{Pr12} and~\ref{Pr13} into account, we obtain
\begin{align*}
(A)_i&=-\(1+\bigO\(\frac{\ln\gammae}{\gammae^2}\)\)E^{\(i\)}_{\ve,\gamma,\tau}\int_{B\(\overline{\tau_i},r_\ve\)} \lambda_{\ve,j}f'\(\overline B_{\ve,\gamma_i,\overline{\tau_i}}\)  \overline{B}_{\ve,\gamma_i,\tau_i}dx +\smallo\(\frac{\delta_\ve}{\gammae^2}\)\allowdisplaybreaks\\
&=-\(1+\bigO\(\frac{\ln\gammae}{\gammae^2}\)\) E^{\(i\)}_{\ve,\gamma,\tau}\int_{B(0,\sqrt{\lambda_{\ve,i}} r_\ve)} f'\(\overline B_{\gamma_i}\) \overline B_{\gamma_i}dx+\smallo\(\frac{\delta_\ve}{\gammae^2}\)\\
&=-\(8\pi+\bigO\(\frac{\ln\gammae}{\gammae^2}\)\) \gammae E^{\(i\)}_{\ve,\gamma,\tau}+\smallo\(\frac{\delta_\ve}{\gammae^2}\)= -8\pi\gammae E^{\(i\)}_{\ve,\gamma,\tau}+\smallo\(\frac{\delta_\ve}{\gammae^2}\).
\end{align*}
For the case $j\ne i$, we use that $B_{\ve,\gamma_i,\tau_i}=\bigO\(\(\ln\gammae\)/\gammae\)$ in $B(\overline{\tau_j},r_\ve)$ and with the same computations, we obtain
$$(A)_j=\bigO\(\frac{\ln\gammae}{\gammae^2}\) |E^{\(j\)}_{\ve,\gamma,\tau}|\int_{B(0,\sqrt{\lambda_{\ve,j}} r_\ve)} f'\(\overline B_{\gamma_j}\)dx+\smallo\(\frac{\delta_\ve}{\gammae^2}\)=\smallo\(\frac{\delta_\ve}{\gammae^2}\),$$
so that summing up we conclude
\begin{equation}\label{EqAbisfinal}
(A)=\sum_{j=1}^k(A)_j=-8\pi \gammae E^{\(i\)}_{\ve,\gamma,\tau} +\smallo\(\frac{\delta_\ve}{\gammae^2}\).
\end{equation}
As for the annuli, similarly as in \eqref{EqA'final}, we bound
\begin{equation}\label{EqA'bisfinal}
(A')=\int_{\cup_{j=1}^k\Omega^j_\ve} \(\chi_{\ve,\tau}-1\) f_\ve\(U_{\ve,\gamma,\tau}\)B_{\ve,\gamma,\tau}dx=\bigO\(\overline{\mu}_\ve^{3\delta_0-2\delta_0^2+\smallo\(1\)}\)=\smallo\(\frac{\delta_\ve}{\gammae^2}\).
\end{equation}

We now turn to the estimate of $(B)$. Using a Taylor expansion, together with \eqref{expUR}, \eqref{EqUve} and \eqref{EqRive}, we write
\begin{align*}
(B)_{jm}&:=\theta_j \int_{B\(\overline{\tau_m},r_\ve\)} \[\Delta  B_{\ve,\gamma_j,\tau_j}-f_\ve'\(U_{\ve,\gamma,\tau}\)B_{\ve,\gamma_j,\tau_j}\] B_{\ve,\gamma_i,\tau_i}dx\allowdisplaybreaks\\
&=\theta_j \int_{B\(\overline{\tau_m},r_\ve\)} \[\delta_{jm}\Delta  \overline{B}_{\ve,\gamma_j,\overline{\tau_j}}-f_\ve'\(\overline B_{\ve,\gamma_m,\overline{\tau_m}}\)B_{\ve,\gamma_j,\tau_j}\] B_{\ve,\gamma_i,\tau_i}dx\\
&\quad +\bigO\(\left|\theta_j\right|\int_{B\(\overline{\tau_m},r_\ve\)}\gammae^3 \exp\big(\overline B_{\ve,\gamma_m,\overline{\tau_m}}^2\big)R_mB_{\ve,\gamma_i,\tau_i}dx\).
\end{align*}
With Propositions~\ref{Pr12} and~\ref{Pr13}, we estimate the last term as $\smallo\(\delta_\ve/\gammae^2\)$.
For $j=m=i$, still with Proposition~\ref{Pr12}, we compute, keeping \eqref{EqAbis1} in mind
\begin{multline*}
\theta_i \int_{B\(\overline{\tau_i},r_\ve\)} \[\lambda_{\ve,i}f\big(\overline{B}_{\ve,\gamma_i,\overline{\tau_i}}\big)-f_\ve'\(\overline B_{\ve,\gamma_i,\overline{\tau_i}}\)B_{\ve,\gamma_i,\tau_i}\] B_{\ve,\gamma_i,\tau_i}dx \\
=-8\pi \theta_i\gammae^2+\bigO\(\left|\theta_i\right|\gammae\)=-8\pi \theta_i\gammae^2+\smallo\(\frac{\delta_\ve}{\gammae^2}\),
\end{multline*}
while for $j\ne m$, or $j\ne i$ a similar computation based on Proposition~\ref{Pr12} and \eqref{EqAbis1} gives $(B)_{jm}= \smallo\(\delta_\ve/\gammae^2\).$
Considering the integral in $\Omega_{r_\ve,\tau}$, where $\Delta B_{\ve,\gamma_j,\tau_j}=0$ for every $j=1,\dots, k$, we estimate with the help of \eqref{Pr4Eq3b},
\begin{align*}
(B')_{jm}&:=\theta_j \int_{B\(\overline{\tau_m},R_\ve\)\setminus B\(\overline{\tau_m},r_\ve\)} f_\ve'\(U_{\ve,\gamma,\tau}\)B_{\ve,\gamma_j,\tau_j} B_{\ve,\gamma_i,\tau_i}dx\\
&=\bigO\(\left|\theta_j\right|\gammae^3\int_{B\(\overline{\tau_m},R_\ve\)\setminus B\(\overline{\tau_m},r_\ve\)}  \exp\big(U_{\ve,\gamma,\tau}^2\big)dx\)=\smallo\(\frac{\delta_\ve}{\gammae^2}\),
\end{align*}
where $R_\ve=\exp\(-\gammae\)$ and, since $B_{\ve,\gamma_j,\tau_j}=\bigO\(1\)$ in $\Omega\setminus B\(\overline{\tau_j},r_\ve\)$, still with \eqref{Pr4Eq3b}, we get
\begin{align*}
(B'')_{jm}&:=\theta_j \int_{\Omega_{R_\ve,\tau}} f_\ve'\(U_{\ve,\gamma,\tau}\)B_{\ve,\gamma_j,\tau_j} B_{\ve,\gamma_i,\tau_i}dx=\bigO\(\left|\theta_j\right|\)=\smallo\(\frac{\delta_\ve}{\gammae^2}\),
\end{align*}
In conclusion, we have proven that
\begin{equation}\label{EqBbisfinal}
(B)=\sum_{j=1}^k\[(B)_{jm}+(B')_{jm}+(B'')_{jm}\]=-8\pi \theta_i\gammae^2+\smallo\(\frac{\delta_\ve}{\gammae^2}\).
\end{equation}

To estimate $(C)$, we integrate by parts to obtain
\begin{align*}
(C)&=\int_\Omega \[\Delta B_{\ve,\gamma_i,\tau_i}-f_\ve'\(U_{\ve,\gamma,\tau}\)B_{\ve,\gamma_i,\tau_i} \]\Phi_{\ve,\gamma,\tau,\theta} dx=-\int_\Omega f_\ve'\(U_{\ve,\gamma,\tau}\)B_{\ve,\gamma_i,\tau_i} \Phi_{\ve,\gamma,\tau,\theta} dx,
\end{align*}
where we also used that $\Phi_{\ve,\gamma,\tau,\theta} \perp B_{\ve,\gamma_i,\tau_i}$ in $H^1_0$. Using \eqref{f'BR}, we write
\begin{align*}
(C_1)_j&:=-\int_{B\(\overline{\tau_j},r_\ve\)} f_\ve'\(U_{\ve,\gamma,\tau}\)B_{\ve,\gamma_i,\tau_i} \Phi_{\ve,\gamma,\tau,\theta} dx\allowdisplaybreaks\\
&=-\int_{B\(\overline{\tau_j},r_\ve\)} f_\ve'\(\overline B_{\ve,\gamma_j,\overline{\tau_j}}\)B_{\ve,\gamma_i,\tau_i} \Phi_{\ve,\gamma,\tau,\theta} dx\\
&\quad +\underbrace{\bigO\(\int_{B\(\overline{\tau_j},r_\ve\)}\gammae^3 \exp\big(\overline B_{\ve,\gamma_j,\overline{\tau_j}}^2\big)|R_jB_{\ve,\gamma_i,\tau_i}\Phi_{\ve,\gamma,\tau,\theta}|\) }_{=\smallo\(\delta_\ve/\gammae^2\)\text{ as in \eqref{EqC}}}.
\end{align*}
Then, recalling that $f'\(s\)=\(1+2s^2\)\exp\(s^2\)=2sf\(s\)+\exp\(s^2\)$, which gives 
\begin{align*}
(C_1)_j&=-2\int_{B\(\overline{\tau_j},r_\ve\)}\overline B_{\ve,\gamma_j,\overline{\tau_j}}f_\ve\(\overline B_{\ve,\gamma_j,\overline{\tau_j}}\)B_{\ve,\gamma_i,\tau_i}\Phi_{\ve,\gamma,\tau,\theta}dx\\
&\quad +\bigO\(\gammae \int_{B\(\overline{\tau_j},r_\ve\)} \exp\big(\overline B_{\ve,\gamma_j,\overline{\tau_j}}^2\big)\left|\Phi_{\ve,\gamma,\tau,\theta}\right|dx \) +\smallo\(\frac{\delta_\ve}{\gammae^2}\),
\end{align*}
and using Proposition~\ref{Pr14}, we obtain
$$\gammae \int_{B\(\overline{\tau_j},r_\ve\)} \exp\big(\overline B_{\ve,\gamma_j,\overline{\tau_j}}^2\big)\left|\Phi_{\ve,\gamma,\tau,\theta}\right|dx= \bigO\(\frac{\left\|\nabla \Phi_{\ve,\gamma,\tau,\theta}\right\|_{L^2}}{\gammae}\)=\smallo\(\frac{\delta_\ve}{\gammae^2}\).$$
Using Proposition~\ref{Pr14} again, we simplify
\begin{align*}
&(C_1)_i=2\gamma_i^2 \int_{B\(\overline{\tau_i},r_\ve\)}\lambda_\ve h_\ve \(\overline{\tau_i}\)f\(\overline B_{\ve,\gamma_i,\overline{\tau_i}}\)\Phi_{\ve,\gamma,\tau,\theta}dx\\
&\quad\qquad +\bigO\(\gammae^2\int_{B\(\overline{\tau_i},r_\ve\)}\left|x-\overline{\tau_i}\right|f\(\overline B_{\ve,\gamma_i,\overline{\tau_i}}\)\left|\Phi_{\ve,\gamma,\tau,\theta}\right|dx\)\allowdisplaybreaks\\
&\quad\qquad +\bigO\(\int_{B\(\overline{\tau_i},r_\ve\)}\(1+t_{\gamma_i}\(x-\overline{\tau_i}\)\)f\(\overline B_{\ve,\gamma_i,\overline{\tau_i}}\)\left|\Phi_{\ve,\gamma,\tau,\theta}\right|dx\) +\smallo\(\frac{\delta_\ve}{\gammae^2}\)\\
&\quad=2\gamma_i^2 \int_{B\(\overline{\tau_i},r_\ve\)}\Delta \overline B_{\ve,\gamma_i,\overline{\tau_i}}\Phi_{\ve,\gamma,\tau,\theta}dx+\bigO\(  \(r_\ve +\frac{1}{\gammae^2}\) \left\|\nabla \Phi_{\ve,\gamma,\tau,\theta}\right\|_{L^2}\) +\smallo\(\frac{\delta_\ve}{\gammae^2}\),
\end{align*}
where the last integral vanishes thanks to $\Delta \overline B_{\ve,\gamma_i,\overline{\tau_i}}= \Delta B_{\ve,\gamma_i,\tau_i}$ and to the condition $B_{\ve,\gamma_i,\tau_i}\perp \Phi_{\ve,\gamma,\tau,\theta}$. A similar computation holds on $B\(\overline{\tau_j},r_\ve\)$, where we can use that $B_{\ve,\gamma_i,\tau_i}=\bigO\(1\)$ if $j\ne i$. Hence
$$(C_1)=\sum_{j=1}^k(C_1)_j= \smallo\(\frac{\delta_\ve}{\gammae^2}\).$$
With \eqref{Pr4Eq3b} and the H\"older and Poincar\'e inequalities, we now bound
\begin{align*}
(C_2)_j&:=-\sum_{j=1}^k\int_{B\(\overline{\tau_j},R_\ve\)\setminus B\(\overline{\tau_j},r_\ve\)}f_\ve'\(U_{\ve,\gamma,\tau}\)B_{\ve,\gamma_i,\tau_i} \Phi_{\ve,\gamma,\tau,\theta} dx\allowdisplaybreaks\\
&=\sum_{j=1}^k\bigO\(\gammae^3\big\|\exp\big(U_{\ve,\gamma,\tau}^2\big)\mathbf{1}_{B\(\overline{\tau_j},R_\ve\)\setminus B\(\overline{\tau_j},r_\ve\)}\big\|_{L^p}\left\| \Phi_{\ve,\gamma,\tau,\theta}\right\|_{L^{p'}} \)\\
&=\bigO\( \frac{\left\|\nabla \Phi_{\ve,\gamma,\tau,\theta}\right\|_{L^2}}{\gammae^{a-3}}\)=\smallo\(\frac{\delta_\ve}{\gammae^2}\),
\end{align*}
upon choosing $a>3$. Again with  \eqref{Pr4Eq3b} and the H\"older and Poincar\'e inequalities,  and observing that $U_{\ve,\gamma,\tau}=\bigO\(1\)$ in $\Omega_{R_\ve,\tau}$,
we get
\begin{multline*}
(C_3):=-\int_{\Omega_{R_\ve,\tau}}f_\ve'\(U_{\ve,\gamma,\tau}\)B_{\ve,\gamma_i,\tau_i} \Phi_{\ve,\gamma,\tau,\theta} dx\\
=\bigO\(\big\|\exp\big(U_{\ve,\gamma,\tau}^2\big)B_{\ve,\gamma_i,\tau_i}\mathbf{1}_{\Omega_{R_\ve,\tau}}\big\|_{L^p}\left\|\Phi_{\ve,\gamma,\tau,\theta}\right\|_{L^{p'}}\)=\bigO\(\frac{\left\|\nabla \Phi_{\ve,\gamma,\tau,\theta}\right\|_{L^2}}{\gammae}\)=\smallo\(\frac{\delta_\ve}{\gammae^2}\),
\end{multline*}
where $p$ is sufficiently small and $p'$ is the conjugate exponent of $p$. Adding up, we conclude
\begin{equation}\label{EqCbisfinal}
(C)=(C_1)+(C_2)+(C_3)=\smallo\(\frac{\delta_\ve}{\gammae^2}\).
\end{equation}

Finally the estimate
\begin{equation}\label{EqDbisfinal}
(D)=\smallo\(\frac{\delta_\ve}{\gammae^2}\)
\end{equation}
follows exactly as the analog estimate in the proof of \eqref{Pr9LemEq1} (replacing $Z_{0,i,\ve,\gamma,\tau}$ by $B_{\ve,\gamma_i,\tau_i})$, since all the terms contain $\theta_i^2$ or $\left\|\nabla \Phi_{\ve,\gamma,\tau,\theta}\right\|_{L^2}^2$, which actually allows an estimate of the form $(D)=\bigO\(\delta_\ve/\gammae^a\)$ for every $a\ge 0$. Now, putting together \eqref{EqAbisfinal}, \eqref{EqA'bisfinal}, \eqref{EqBbisfinal}, \eqref{EqCbisfinal} and \eqref{EqDbisfinal}, we conclude.
\endproof

\proof[Proof of \eqref{Pr9LemEq3}] 
We now use \eqref{bigexpansion} with $Z=Z_{1,i,\ve,\gamma,\tau}$, and again we need to estimate the terms from $(A)$ to $(D)$. 

We start with some estimates of $Z_{1,i}:=Z_{1,i,\ve,\gamma,\tau}$. From Claim~\ref{Claim1}, we have
\begin{align*}
\partial_{\tau_i}\[B_{\ve,\gamma_i,\tau_i}\]&=\partial_{\tau_i}\[\overline B_{\ve,\gamma_i,\overline{\tau_i}}\]-\partial_{\tau_i}\[C_{\ve,\gamma_i,\tau_i}\]+ \partial_{\tau_i}\[A_{\ve,\gamma_i,\tau_i}H(\cdot,\overline{\tau_i})\]\\
&=\partial_{\tau_i}\[\overline B_{\ve,\gamma_i,\overline{\tau_i}}\]+\bigO\(\frac{1}{\gammae}\),\quad \text{in }  B\(\overline \tau_i,r_\ve\).
\end{align*}
Now, recalling \eqref{EqUegt} and using \eqref{Pr4Eq3}, we write
\begin{equation}\label{EqZ1iar}
Z_{1,i}=\partial_{\tau_i}\[B_{\ve,\gamma_i,\tau_i}\] +\partial_{\tau_i}\[\Psi_{\ve,\gamma,\tau}\]
=\partial_{\tau_i}\[\overline B_{\ve,\gamma_i,\overline{\tau_i}}\]+\bigO\(\frac{1}{\gammae}\)=:Z_{1,i}^a+Z_{1,i}^r,
\end{equation}
in $B\(\overline{\tau_i},r_\ve\)$, and with \eqref{EqBeBj} and Proposition~\ref{Pr10}, we estimate
\begin{equation}\label{EqZ1i}
Z^a_{1,i}=\frac{2}{\gamma_i} \frac{\lambda_{\ve,i}\(x_1-\tau_i\)}{\mu_i^2+\lambda_{\ve,i}\left|x-\overline{\tau_i}\right|^2}+\bigO\(\frac{1}{\gammae^3}\frac{1}{\mu_i+\left|x-\overline{\tau_i}\right|}\), \quad\text{in }B\(\overline{\tau_i},r_\ve\),
\end{equation}
where $\mu_i:=\mu_{\gamma_i}=\overline{\mu}_\ve^{1+\smallo\(1\)}$ is given by \eqref{defmugamma},
while directly from the definition of $B_{\ve,\gamma_i,\tau_i}$, Claim~\ref{Claim1} and \eqref{Pr4Eq3}, we also obtain
\begin{equation}\label{EqZ1i2}
Z_{1,i}= \frac{(2+\smallo(1))\(x_1-\tau_i\)}{\gammae \left|x-\overline{\tau_i}\right|^2}+\bigO\(\frac{1}{\gammae}\),\quad \text{in }\Omega\setminus B\(\overline{\tau_i},r_\ve\),
\end{equation}
which can be specialized to 
\begin{equation}\label{EqZ1i3}
Z_{1,i}= \bigO\(\frac{1}{\gammae d_\ve}\),\quad \text{in } B\(\overline{\tau_j},r_\ve\),\quad\text{for }j\ne i.
\end{equation}
Let us also write from \eqref{Pr6Eq0},
\begin{equation}\label{EqFiegt}
F^{\(i\)}_{\ve,\gamma,\tau}\(x\)=\Lambda^{\(i\)}_{\ve,\tau}\(x_1-\tau_i\)+\smallo\(\frac{\left|x-\overline{\tau_i}\right|}{\gammae d_\ve}\)\quad\text{in } B\(\overline{\tau_i},r_\ve\),
\end{equation}
where
$$\Lambda^{\(i\)}_{\ve,\tau}:= a_0 l \tau_i^{l-1}-\frac{2}{\gammae}\sum_{j\ne i}\frac{1}{\tau_i-\tau_j}=\bigO\(\frac{1}{\gammae d_\ve}\).$$

With the help of \eqref{EqUve}, as in \eqref{EqAj2}, we can write
\begin{align*}
&(A)_j= -\int_{B\(\overline{\tau_j},r_\ve\)}\lambda_\ve\(h_\ve -h_\ve\(\overline{\tau_j}\)\)f\(\overline{B}_{\ve,\gamma_j,\overline{\tau_j}}\)Z_{1,i}dx \\
&-\int_{B\(\overline{\tau_j},r_\ve\)}\lambda_{\ve,j} f'\(\overline{B}_{\ve,\gamma_j,\overline{\tau_j}}\)R_j Z_{1,i}dx-\int_{B\(\overline{\tau_j},r_\ve\)}\lambda_\ve \(h_\ve-h_\ve\(\overline{\tau_j}\)\) f'\(\overline{B}_{\ve,\gamma_j,\overline{\tau_j}}\)R_j Z_{1,i}dx \\
&+\bigO\(\int_{B\(\overline{\tau_j},r_\ve\)}\overline\gamma_\ve^3\exp \big(\overline{B}_{\ve,\gamma_j,\overline{\tau_j}}^2\big) R_j^2\left|Z_{1,i}\right|dx \)=:(A_1)_j+(A_2)_j+(A_3)_j+(A_4),
\end{align*}
where $R_j$ is as in \eqref{EqRive}. 

We start with the main order term, which turns out to be the one involving $F^{\(i\)}_{\ve,\gamma,\tau}$ and which we write, using \eqref{EqZ1i} and \eqref{EqFiegt}, as
\begin{align*}
(A_2^F)_i&:=-\int_{B\(\overline{\tau_i},r_\ve\)}  f'_\ve\big(\overline{B}_{\ve,\gamma_i,\overline{\tau_i}}\big)F^{\(i\)}_{\ve,\gamma,\tau} Z_{1,i}dx\allowdisplaybreaks\\
&=-\frac{\Lambda_{\ve,\tau}^{\(i\)}}{\gamma_i} \int_{B\(\overline{\tau_i},r_\ve\)} \lambda_{\ve,i}f'\big(\overline{B}_{\ve,\gamma_i,\overline{\tau_i}}\big)\bigg(\frac{2\lambda_{\ve,i}\(x_1-\tau_i\)^2}{\mu_i^2+\lambda_{\ve,i}\left|x-\overline{\tau_i}\right|^2}+\bigO\(\frac{1}{\gammae}\) \bigg)dx \\
&\quad+\smallo\(\frac{1}{\gammae^2d_\ve}\int_{B\(\overline{\tau_i},r_\ve\)} \lambda_{\ve,i}f'\big(\overline{B}_{\ve,\gamma_i,\overline{\tau_i}}\big) dx\).
\end{align*}
With the usual change of variables $\sqrt{\lambda_{\ve,i}}\(x-\overline{\tau_i}\)=y$ and using \eqref{EqBeBj} and Proposition~\ref{Pr12} together with $\gamma_i=\gammae\(1+\smallo\(1\)\)$ and $\Lambda_{\ve,\tau}^{\(i\)}=\bigO\(1/\(\gammae d_\ve\)\)$, we get
\begin{align*}
(A_2^F)_i&=-\frac{\Lambda_{\ve,\tau}^{\(i\)}}{\gamma_i} \int_{B(0,\sqrt{\lambda_{\ve,i}}r_\ve)}f'\(\overline B_{\gamma_i}\)\bigg(\frac{2y_1^2}{\mu_i^2+\left|y\right|^2}+\smallo\(1\) \bigg)dy+ \smallo\(\frac{1}{\gammae^2d_\ve}\)\\
&=-\frac{(4\pi+\smallo(1))\Lambda_{\ve,\tau}^{\(i\)}}{\gamma_i}+\smallo\(\frac{1}{\gammae^2d_\ve}\)=-\frac{4\pi\Lambda_{\ve,\tau}^{\(i\)}}{\gammae} +\smallo\(\frac{1}{\gammae^2 d_\ve}\), 
\end{align*}
For $j\ne i$, using \eqref{EqZ1i3} and Proposition~\ref{Pr13}, we get
\begin{align*}
(A_2^F)_j&= \bigO\(\int_{B\(\overline{\tau_j},r_\ve\)} f'\(\overline{B}_{\ve,\gamma_j,\overline{\tau_j}}\)\big|F^{\(j\)}_{\ve,\gamma,\tau}\big|\left|Z_{1,i}\right|dx\)\allowdisplaybreaks\\
&=\bigO\(\int_{B\(\overline{\tau_j},r_\ve\)} f'\(\overline{B}_{\ve,\gamma_j,\overline{\tau_j}}\) \frac{\left|x-\overline{\tau_j}\right|}{\gammae d_\ve}\frac{1}{\gammae d_\ve}dx\)\\
&=\bigO\(\frac{1}{\gammae^2d_\ve^2}\int_{B(0,\sqrt{\lambda_{\ve,j}}r_\ve)}f'\(\overline{B}_{\gamma_j}\) \left|y\right|dy \)=\smallo\(\frac{1}{\gammae^2 d_\ve}\).
\end{align*}
Using \eqref{EqZ1iar}, canceling the integral of the anti-symmetric term and using Proposition~\ref{Pr12}, we get
\begin{align*}
(A_2^E)_i&:=-\int_{B\(\overline{\tau_i},r_\ve\)}  f'_\ve\big(\overline{B}_{\ve,\gamma_i,\overline{\tau_i}}\big)E^{\(i\)}_{\ve,\gamma,\tau} (Z_{1,i}^a+Z_{1,i}^r)dx\allowdisplaybreaks \\
&=\bigO\(\frac{\big|E^{\(i\)}_{\ve,\gamma,\tau}\big|}{\gammae}\int_{B\(\overline{\tau_i},r_\ve\)}  f'_\ve\big(\overline{B}_{\ve,\gamma_i,\overline{\tau_i}}\big) dx\)\\
&=\bigO\(\frac{\delta_\ve \ln\gammae}{\gammae^4}\int_{B(0,\sqrt{\lambda_{\ve,i}}r_\ve)}  f'_\ve\(\overline{B}_{\gamma_i}\) dx\)=\smallo\(\frac{1}{\gammae^2 d_\ve}\).
\end{align*}
When $j\ne i$, we have thanks to \eqref{EqZ1i3},
\begin{align*}
(A_2^E)_j&=\bigO\(\int_{B\(\overline{\tau_j},r_\ve\)}  f'_\ve\(\overline{B}_{\ve,\gamma_j,\overline{\tau_j}}\) |E^{\(j\)}_{\ve,\gamma,\tau}|\left|Z_{1,i}\right|dx\)\\
&=\bigO\(\frac{\delta_\ve \ln\gammae}{\gammae^3}\frac{1}{\gammae d_\ve}\int_{B(0,\sqrt{\lambda_{\ve,j}}r_\ve)}  f'_\ve\(\overline{B}_{\gamma_j}\) dx\)=\smallo\(\frac{1}{\gammae^2 d_\ve}\).
\end{align*}

We now estimate $(A_1)$. Using that $h_\ve-h_\ve\(\overline{\tau_i}\)=\bigO\(\left|x-\overline{\tau_i}\right|\)$ in $B\(\overline{\tau_i},r_\ve\)$, by \eqref{EqZ1i}, we have
\begin{equation}\label{EqheZ1i}
\left|(h_\ve-h_\ve\(\overline{\tau_i}\))Z_{1,i}\right|=\bigO\(\frac{1}{\gammae}\), \quad \text{in } B\(\overline{\tau_i},r_\ve\),
\end{equation}
and with Proposition~\ref{Pr12}, we estimate
\begin{align*}
(A_1)_i=\bigO\(\frac{1}{\gammae} \int_{B\(\overline{\tau_i},r_\ve\)} f\big(\overline{B}_{\ve,\gamma_i,\overline{\tau_i}}\big)dx \)&=\bigO\(\frac{1}{\gammae} \int_{B(0,\sqrt{\lambda_{\ve,i}}r_\ve)} f\(\overline{B}_{\gamma_i}\)dy \)\\
&=\bigO\(\frac{1}{\gammae^2}\)=\smallo\(\frac{1}{\gammae^2 d_\ve}\).
\end{align*}
Observe that this says that thanks to $d_\ve=\smallo\(1\)$, the term $\nabla h_\ve\(\overline\tau_i\)$ does not play a role, contrary to what happens when the blow-up points are separated by a finite distance. For $j\ne i$, with \eqref{EqZ1i3}, Proposition~\ref{Pr13} and the usual change of variables we obtain
$$(A_1)_j=\bigO\(\frac{1}{\gammae d_\ve} \int_{B\(\overline{\tau_j},r_\ve\)} f\(\overline{B}_{\ve,\gamma_j,\overline{\tau_j}}\) \left|x-\overline{\tau_j}\right|dx \)=\smallo\(\frac{1}{\gammae^2 d_\ve}\).$$

Similarly, one can bound with \eqref{EqZ1i},
\begin{align*}
(A_3)_i&=\bigO\(\int_{B\(\overline{\tau_i},r_\ve\)}\left|x-\overline{\tau_i}\right|f'\(\overline{B}_{\ve,\gamma_i,\overline{\tau_i}}\)\left|R_i\right|\frac{1}{\gammae \left|x-\overline{\tau_i}\right|}dx \)\\
&=\bigO\(\frac{1}{\gammae}\int_{B\(\overline{\tau_i},r_\ve\)}f'\(\overline{B}_{\ve,\gamma_i,\overline{\tau_i}}\) \(\frac{\delta_\ve\ln\gammae}{\gammae^3}+\frac{\left|x-\overline{\tau_i}\right|}{\gammae d_\ve}\) dx \)=\smallo\(\frac{1}{\gammae^2 d_\ve}\),
\end{align*}
where we also used Propositions~\ref{Pr12} and~\ref{Pr13}. For $j\ne i$, an easier estimate holds, using \eqref{EqZ1i3} instead of \eqref{EqZ1i}, and $h_\ve-h_\ve\(\tau_i\)=\bigO\(r_\ve\)$, so that
$$(A_3)_j=\bigO\(\frac{r_\ve}{\gammae d_\ve}\int_{B\(\overline{\tau_j},r_\ve\)}f'\(\overline{B}_{\ve,\gamma_j,\overline{\tau_j}}\) \(\frac{\delta_\ve\ln\gammae}{\gammae^3}+\frac{\left|x-\overline{\tau_j}\right|}{\gammae d_\ve}\) dx \)=\smallo\(\frac{1}{\gammae^2 d_\ve}\),$$

As for $(A_4)_i$, using that
$\big|E^{\(i\)}_{\ve,\gamma,\tau}\big|=\smallo\(\delta_\ve^2\(\ln\gammae\)^2/\gammae^6\)=\bigO\big(\overline{\mu}_\ve^{2\delta_1+1+\smallo\(1\)}\big),$
and $\gammae^a=\bigO\big(\overline{\mu}_\ve^{\smallo\(1\)}\big)$ for every $a\in \R$, we bound
\begin{align*}
(A_4^E)_i&=\bigO\(\overline{\mu}_\ve^{2\delta_1+1+\smallo\(1\)} \int_{B\(\overline{\tau_i},r_\ve\)}\exp\big(\overline{B}_{\ve,\gamma_i,\overline{\tau_i}}^2\big) \frac{1}{\mu_i+\left|x-\overline{\tau_i}\right|}  dx \)\\
&=\bigO\(\overline\mu_\ve^{2\delta_1+\smallo\(1\)}\int_{B(0,\sqrt{\lambda_{\ve,i}}r_\ve)}\exp\big(\overline B_{\gamma_i}^2\big) dx\)= \bigO\big(\overline\mu_\ve^{2\delta_1+\smallo\(1\)}\big)= \smallo\(\frac{1}{\gammae^2 d_\ve}\),
\end{align*}
and, similarly, for $j\ne i$,
$$(A_4^E)_j=\bigO\(\overline{\mu}_\ve^{2\delta_1+1+\smallo\(1\)}\int_{B\(\overline{\tau_j},r_\ve\)}\exp\big(\overline{B}_{\ve,\gamma_i,\overline{\tau_i}}^2\big)\frac{1}{\gammae d_\ve} dx\) =\smallo\(\frac{1}{\gammae^2 d_\ve}\).$$
Using that $F^{\(j\)}_{\ve,\gamma,\tau}=\bigO\(r_\ve/\(\gammae d_\ve\)\)$, similarly as in the case of $(A_4^E)_j$, we obtain
$(A_4^F)_j=\smallo\(\frac{1}{\gammae^2 d_\ve}\),$
including the case $j=i$. Summing over $j$, we obtain
\begin{equation}\label{EqAterfinal}
(A)=\sum_{j=1}^k (A)_j= -\frac{4\pi \Lambda^{\(i\)}_{\ve,\tau}}{\gammae}+ \smallo\(\frac{1}{\gammae^2 d_\ve}\).
\end{equation}
It remains to show that all the remaining terms are $\smallo\(1/(\gammae^2 d_\ve)\)$.

Let us now estimate $(A')$. By \eqref{Pr4Eq4b},  \eqref{Estannuli}, \eqref{EqZ1i2} and \eqref{EqZ1i3}, we have
\begin{align}\label{EqA'terfinal}
(A')&=\bigO\(\sum_{j=1}^k\int_{\Omega_\ve^j} \left|f_\ve\(U_{\ve,\gamma,\tau}\)\right|\left|Z_{1,i}\right|dx\)=\bigO\(\frac{\overline{\mu}_\ve^{-2\delta_0^2+\smallo\(1\)}r_\ve^2}{\gammae}\)\nonumber\\ &=\bigO\(\overline\mu_\ve^{2\delta_0-2\delta_0^2+\smallo\(1\)}\)=\smallo\(\frac{1}{\gammae^2 d_\ve}\),
\end{align}
absorbing powers of $\gammae$ in the term $\overline\mu_\ve^{\smallo\(1\)}$ and using that $2\delta_0-2\delta_0^2>0$.

We now estimate $(B)$. Since $\Delta B_{\ve,\gamma_j,\tau_j}=\Delta \overline{B}_{\ve,\gamma_j,\overline{\tau_j}}\textbf{1}_{B\(\overline{\tau_j},r_\ve\)}$, we have
$$(B^\dagger)_j:=\theta_j\int_\Omega \Delta B_{\ve,\gamma_j,\tau_j} Z_{1,i}dx =\theta_j\int_{B\(\overline{\tau_j},r_\ve\)}\Delta \overline{B}_{\ve,\gamma_j,\overline{\tau_j}} Z_{1,i}dx.$$
Then for $j\ne i$, together with \eqref{EqZ1i3} we obtain
$$\theta_j\int_{B\(\overline{\tau_j},r_\ve\)}\Delta \overline{B}_{\ve,\gamma_j,\overline{\tau_j}} Z_{1,i}dx =\bigO\(\frac{\left|\theta_j\right|}{\gammae d_\ve}\int_{B\(\overline{\tau_j},r_\ve\)}f\(\overline{B}_{\ve,\gamma_j,\overline{\tau_j}}\)dx\)=\smallo\(\frac{1}{\gammae^2d_\ve}\). $$
For $j=i$, we use the anti-symmetry to obtain 
\begin{multline*}
\theta_i\int_{B\(\overline{\tau_i},r_\ve\)}\Delta \overline{B}_{\ve,\gamma_i,\overline{\tau_i}} Z_{1,i}dx=\theta_i\int_{B\(\overline{\tau_i},r_\ve\)}\lambda_{\ve,i} f\big(\overline{B}_{\ve,\gamma_i,\overline{\tau_i}}\big)\overline{B}_{\ve,\gamma_i,\overline{\tau_i}}\( Z_{1,i}^a+Z_{1,i}^r\)dx\\
=\theta_i\int_{B\(\overline{\tau_i},r_\ve\)}\lambda_{\ve,i} f\big(\overline{B}_{\ve,\gamma_i,\overline{\tau_i}}\big) \overline{B}_{\ve,\gamma_i,\overline{\tau_i}}D_{\tau_i}\Psi_{\ve,\gamma,\tau}dx=\smallo\(\frac{1}{\gammae^2d_\ve}\).
\end{multline*}
In order to estimate the second term in the integral in $(B)_j$, we start with the integral away from the blow-up points, and using \eqref{Pr4Eq3c}, the definition of $B_{\ve,\gamma_j,\tau_j}$ and \eqref{Sec32Eq3}, we get 
\begin{multline*}
(B'_j)=\theta_j\int_{\Omega_{r_\ve,\tau}} f'_\ve\(U_{\ve,\gamma,\tau}\)B_{\ve,\gamma_j,\tau_j}Z_{1,i} dx=\bigO\big(\left|\theta_j\right|\big\|f'_\ve\(U_{\ve,\gamma,\tau}\)Z_{1,i}\mathbf{1}_{\Omega_{r_\ve,\tau}}\big\|_{L^p}\\
\times\big\|B_{\ve,\gamma_j,\tau_j}\mathbf{1}_{\Omega\backslash B\(\overline{\tau_j},r_\ve\)}\big\|_{L^{p'}}\big)=\bigO\(\left|\theta_j\right|\gammae\left\|\frac{1}{\gammae}\ln\frac{C}{|\cdot-\overline{\tau_j}|}\right\|_{L^{p'}}\)=\smallo\(\frac{1}{\gammae^2d_\ve}\),
\end{multline*}
where $p$ is sufficiently small and $p'$ is the conjugate exponent of $p$. It remains to estimate
$$(B)_{jm}:=\theta_j\int_{B\(\overline{\tau_m},r_\ve\)}B_{\ve,\gamma_j,\tau_j}f_\ve'\(U_{\ve,\gamma,\tau}\)Z_{1,i}dx.$$
For $m\ne i$, it easily follows from \eqref{EqZ1i3} and Proposition~\ref{Pr12} that
$$(B)_{jm}=\bigO\(\frac{\left|\theta_j\right|}{d_\ve}  \int_{B\(\overline{\tau_m},r_\ve\)}f_\ve'\(U_{\ve,\gamma,\tau}\)dx \) =\smallo\(\frac{1}{\gammae^2d_\ve}\).$$
The case $m=i$ is more subtle.
Using \eqref{f'BR} to split
\begin{multline}\label{EqBji}
(B)_{ji}=\theta_j\int_{B\(\overline{\tau_i},r_\ve\)}B_{\ve,\gamma_j,\tau_j}f_\ve'\big(\overline{B}_{\ve,\gamma_i,\overline{\tau_i}}\big)Z_{1,i}dx\\
+\bigO\(\left|\theta_j\right|\gammae^3 \int_{B\(\overline{\tau_i},r_\ve\)} \exp\big(\overline{B}_{\ve,\gamma_i,\overline{\tau_i}}^2\big)\left|R_i\right|\left|Z_{1,i}\right| dx\)=:(B_1)_{ji}+(B_2)_{ji}.
\end{multline}
Now, writing
$$B_{\ve,\gamma_i,\tau_i}=B_{\ve,\gamma_i,\tau_i}^s+B_{\ve,\gamma_i,\tau_i}^r,$$
where
\begin{align*}
B_{\ve,\gamma_i,\tau_i}^s&=\overline{B}_{\ve,\gamma_i,\overline{\tau_i}}- C_{\ve,\gamma_i,\tau_i} + A_{\ve,\gamma_i,\tau_i}H\(\overline{\tau_i},\overline{\tau_i}\),\\
B_{\ve,\gamma_i,\tau_i}^r&=A_{\ve,\gamma_i,\tau_i}\(H\(\cdot,\overline{\tau_i}\)-H\(\overline{\tau_i},\overline{\tau_i}\)\)=\bigO\(\frac{|\cdot-\overline{\tau_i}|}{\gammae}\),\quad \text{in }B\(\overline{\tau_i},r_\ve\),
\end{align*}
and also using \eqref{EqZ1iar} and \eqref{EqheZ1i}, we get
\begin{multline}\label{EqBii}
(B_1)_{ii}=\theta_i\int_{B\(\overline{\tau_i},r_\ve\)}B_{\ve,\gamma_i,\tau_i}^s \lambda_{\ve,i} f'\big(\overline{B}_{\ve,\gamma_i,\overline{\tau_i}}\big)Z_{1,i}^adx\\
+\bigO\bigg(\left|\theta_i\right|\int_{B\(\overline{\tau_i},r_\ve\)}f'\big(\overline{B}_{\ve,\gamma_i,\overline{\tau_i}}\big)\big(1+\gammae\underbrace{\left|x-\overline{\tau_j}\right|\left|Z_{1,i}\right|}_{=\bigO\(1/\gammae\)}\big)dx \bigg) =\smallo\(\frac{1}{\gammae^2d_\ve}\),
\end{multline}
where we used skew-symmetry to cancel the first integral, and the usual change of variables to estimate the second one with Proposition~\ref{Pr12}. When $j\ne i$, a similar approach gives analog results, with the splitting of $B_{\ve,\gamma_j,\tau_j}=B_{\ve,\gamma_j,\tau_j}^s+B_{\ve,\gamma_j,\tau_j}^r\(x\)$, where
\begin{align*}
B_{\ve,\gamma_j,\tau_j}^s&=A_{\ve,\gamma_j,\tau_j}G\(\overline{\tau_j},\overline{\tau_i}\),\\
B_{\ve,\gamma_j,\tau_j}^r&=A_{\ve,\gamma_j,\tau_j}\(G\(\overline{\tau_j},x\)-G\(\overline{\tau_j},\overline{\tau_i}\)\) =\bigO\(\frac{\left|x-\overline{\tau_i}\right|}{\gammae d_\ve}\),\quad \text{in }B\(\overline{\tau_i},r_\ve\),
\end{align*}
which allows to cancel the symmetric term and obtain
$$(B_1)_{ji}=\smallo\(\frac{1}{\gammae^2d_\ve}\)$$ 
As for $(B_2)_{ji}$, the term involving $F^{\(i\)}_{\ve,\gamma,\tau}$ can be estimated using a similar approach as in \eqref{EqBii}, since $F^{\(i\)}_{\ve,\gamma,\tau}\(x\)=\bigO\(\left|x-\overline{\tau_i}\right|/(\gammae d_\ve)\)$ in $B\(\overline{\tau_i},r_\ve\)$. In the term involving $E^{\(i\)}_{\ve,\gamma,\tau}=\bigO\(\delta_\ve \ln\gammae / \gammae^3\)$, we use the estimate
\begin{equation}\label{EqZ1iexp}
\left|Z_{1,i}\right|=\bigO\(\frac{1}{\gammae \(\mu_{\gamma_i} +\left|x-\overline{\tau_i}\right|\)}\)=\bigO\(\frac{1}{\overline{\mu}_\ve^{1+\smallo\(1\)}}\)
\end{equation}
to finally obtain 
$$(B_2)_{ji}=\bigO\(\frac{\left|\theta_j\right|\delta_\ve}{\overline\mu_\ve^{1+\smallo\(1\)}}\)=\smallo\(\frac{1}{\gammae^2d_\ve}\).$$
Summing up, we conclude
\begin{equation}\label{EqBterfinal}
(B)=\sum_{j=1}^k\[(B^\dagger)_j+(B')_j+\sum_{m=1}^k(B)_{jm}\]=\smallo\(\frac{1}{\gammae^2 d_\ve}\).
\end{equation}

To bound the term $(C)$, let us start by observing that $\Phi_{\ve,\gamma,\tau,\theta}\perp Z_{1,i}$ implies
$$(C_1):=\int_{\Omega}\Delta \Phi_{\ve,\gamma,\tau,\theta} Z_{1,i}dx=0,$$
so that it remains to bound 
$$(C_2):=- \int_{\Omega}f'_\ve\(U_{\ve,\gamma,\tau}\) \Phi_{\ve,\gamma,\tau,\theta} Z_{1,i}dx.$$
Observe that a rough estimate on $B\(\overline{\tau_i},r_\ve\)$ using $\left|Z_{1,i}\right|=\bigO\( 1/\overline{\mu}_\ve\)$ would lead to an exponentially large error term. Therefore we have to be more subtle and use again the Sobolev--Poincar\'e estimates which follow from $\Phi_{\ve,\gamma,\tau,\theta}\perp  B_{\ve,\gamma_i,\tau_i}$. 
We start by noticing that by \eqref{Pr4Eq3c} and the Sobolev embedding, we have
\begin{multline*}
(C_2^*):=- \int_{\Omega_{r_\ve,\tau}}f'_\ve\(U_{\ve,\gamma,\tau}\) \Phi_{\ve,\gamma,\tau,\theta} Z_{1,i}dx=\bigO\big(\big\|f'\(U_{\ve,\gamma,\tau}\) Z_{1,i}\mathbf{1}_{\Omega_{r_\ve,\tau}}\big\|_{L^p}\\
\times\left\|\Phi_{\ve,\gamma,\tau,\theta}\right\|_{L^{p'}}\big)=\bigO\(\gammae\left\|\nabla \Phi_{\ve,\gamma,\tau,\theta}\right\|_{L^2}\)=\bigO\(\frac{\delta_\ve \ln\gammae}{\gammae}\)=\smallo\(\frac{1}{\gammae^2d_\ve}\).
\end{multline*}
For $j\ne i$, we bound with \eqref{EqZ1i3} and Proposition~\ref{Pr14} (which we can use thanks to \eqref{orthog}),
\begin{align}\label{EqC2j}
(C_2^\dagger)_j&:=-\int_{B\(\overline{\tau_j},r_\ve\)}f'_\ve\(U_{\ve,\gamma,\tau}\)Z_{1,i}\Phi_{\ve,\gamma,\tau,\theta}dx\nonumber\allowdisplaybreaks\\
&=\bigO\(\frac{1}{\gammae d_\ve}\int_{B\(\overline{\tau_j},r_\ve\)}f'_\ve\(U_{\ve,\gamma,\tau}\)\left|\Phi_{\ve,\gamma,\tau,\theta}\right|dx\)\nonumber\\
&=\bigO\(\frac{\left\|\nabla \Phi_{\ve,\gamma,\tau,\theta}\right\|_{L^2}}{\gammae d_\ve}\)=\bigO\(\frac{\delta_\ve\ln\gammae}{\gammae^3 d_\ve}\)=\smallo\(\frac{1}{\gammae^2d_\ve}\).
\end{align}
We are left with $(C_2^\dagger)_i$ which we expand as in \eqref{f'BR}, giving
\begin{multline}\label{EqC2i}
(C_2^\dagger)_i=-\int_{B\(\overline{\tau_i},r_\ve\)}f'_\ve\big(\overline{B}_{\ve,\gamma_i,\overline{\tau_i}}\big) Z_{1,i}\Phi_{\ve,\gamma,\tau,\theta}dx\\
+\bigO\(\gammae^3\int_{B\(\overline{\tau_i},r_\ve\)} \exp\big(\overline{B}_{\ve,\gamma_i,\overline{\tau_i}}^2\big)\big|\big(E^{(i)}_{\ve,\gamma,\tau}+F^{(i)}_{\ve,\gamma,\tau}\big)Z_{1,i}\Phi_{\ve,\gamma,\tau,\theta}\big|dx\)
\end{multline}
The remainder term in \eqref{EqC2i} can be estimated as follows. By \eqref{EqZ1i} and \eqref{EqFiegt}, we get $|F^{(i)}_{\ve,\gamma,\tau}Z_{1,i}|=\bigO\(1/\(\gammae^2 d_\ve\)\)$, and we use Proposition~\ref{Pr14} to obtain an error term of order 
$$\bigO\(\frac{\|\nabla \Phi_{\ve,\gamma,\tau,\theta}\|_{L^2}}{\gammae d_\ve}\)=\smallo\(\frac{1}{\gammae^2d_\ve}\).$$ 
As regards the term involving $E^{(i)}_{\ve,\gamma,\tau}$, using \eqref{EqZ1iexp} and Proposition~\ref{Pr14}, we obtain an error term of order
$$\bigO\(\frac{\delta_\ve \|\nabla \Phi_{\ve,\gamma,\tau,\theta}\|_{L^2}}{\overline{\mu}_\ve^{1+\smallo(1)}}\)=\bigO\(\overline{\mu}_\ve^{2\delta_1+\smallo(1)}\)=\smallo\(\frac{1}{\gammae^2 d_\ve}\).$$
The first integral in \eqref{EqC2i} can be estimated by using \eqref{EqZ1iar} together with the estimate $\big(h_\ve-h_\ve\(\tau_i\)\big)\partial_{\tau_i}\[\overline B_{\ve,\gamma_i,\overline{\tau_i}}\]=\bigO\(1/\gamma_\ve\)$ to obtain
\begin{multline*}
\int_{B\(\overline{\tau_i},r_\ve\)}f'_\ve\big(\overline{B}_{\ve,\gamma_i,\overline{\tau_i}}\big) Z_{1,i}\Phi_{\ve,\gamma,\tau,\theta}dx=\int_{B\(\overline{\tau_i},r_\ve\)}\lambda_{\ve,i}f'\big(\overline{B}_{\ve,\gamma_i,\overline{\tau_i}}\big) \partial_{\tau_i}\[\overline{B}_{\ve,\gamma_i,\overline{\tau_i}}\]\Phi_{\ve,\gamma,\tau,\theta}dx\\
+\bigO\(\frac{1}{\gammae} \int_{B\(\overline{\tau_i},r_\ve\)}f'\big(\overline{B}_{\ve,\gamma_i,\overline{\tau_i}}\big) \left|\Phi_{\ve,\gamma,\tau,\theta}\right|dx\)=:-(C_2^\ddagger)_i+(C_2^r)_i.
\end{multline*}
The remainder term $(C_2^r)_i$ can be handled as in \eqref{EqC2j}, giving
$$(C_2^r)_i=\bigO\(\frac{\left\|\nabla\Phi_{\ve,\gamma,\tau,\theta}\right\|_{L^2}}{\gammae}\)=\smallo\(\frac{1}{\gammae^2d_\ve}\).$$

Then we are left with the term $(C_2^\ddagger)_i$, which is actually more subtle to bound. Let us first rewrite it as
$$(C_2^\ddagger)_i = - \int_{B\(\overline{\tau_i},r_\ve\)}\lambda_{\ve,i}\Delta Z_{1,i}\Phi_{\ve,\gamma,\tau,\theta}dx,$$
using that
$$\Delta Z_{1,i}=\partial_{\tau_i}\[\Delta U_{\ve,\gamma,\tau}\]=\partial_{\tau_i}\[\Delta \overline{B}_{\ve,\gamma_i,\overline{\tau_i}}\]=\lambda_{\ve,i}\partial_{\tau_i}\[f\(\overline{B}_{\ve,\gamma,\overline{\tau_i}}\)\],\quad \text{in }B\(\overline{\tau_i},r_\ve\).$$
In order to estimate $(C_2^\ddagger)_i$ we start by observing that the orthogonality condition $Z_{1,i}\perp \Phi_{\ve,\gamma,\tau,\theta}$ and integration by parts imply
\begin{multline}\label{EqZ1iint}
0=\int_{\Omega}\<\nabla Z_{1,i},\nabla \Phi_{\ve,\gamma,\tau,\theta}\>dx=\int_{B\(\overline{\tau_i},r_\ve\)}\Delta Z_{1,i}\, \Phi_{\ve,\gamma,\tau,\theta}dx\\
+\int_{\Omega\backslash B\(\overline{\tau_i},r_\ve\)}\Delta Z_{1,i}\, \Phi_{\ve,\gamma,\tau,\theta}dx+\int_{\partial B\(\overline{\tau_i},r_\ve\)}\(\partial_\nu Z_{1,i}^{int}-\partial_\nu Z_{1,i}^{ext}\) \Phi_{\ve,\gamma,\tau,\theta}d\sigma.
\end{multline}
Here $\nu$ denotes the exterior normal to $\partial B\(\overline{\tau_i},r_\ve\)$ and
$$Z_{1,i}^{int}:= Z_{1,i}|_{\overline{B\(\overline{\tau_i},r_\ve\)}},\quad Z_{1,i}^{ext}:= Z_{1,i}|_{\Omega\setminus B\(\overline{\tau_i},r_\ve\)}.$$
Note that the boundary integral in \eqref{EqZ1iint} is in general non-zero because $B_{\ve,\gamma_i,\tau_i}$ is $C^1$ but not smooth across $\partial B\(\overline{\tau_i},r_\ve\)$.
Now we reduced the estimate of  $(C_2^\ddagger)_i$ to 
$$(C_2^\ddagger)_i =\int_{\Omega\backslash B\(\overline{\tau_i},r_\ve\)}\Delta Z_{1,i}\, \Phi_{\ve,\gamma,\tau,\theta}dx+\int_{\partial B\(\overline{\tau_i},r_\ve\)}\(\partial_\nu Z_{1,i}^{int}-\partial_\nu Z_{1,i}^{ext}\) \Phi_{\ve,\gamma,\tau,\theta}d\sigma.$$
Now using that
\begin{align*}
\Delta Z_{1,i}&=\partial_{\tau_i}\[\Delta U_{\ve,\gamma,\tau}\]=\partial_{\tau_i}\[\chi_{\ve,\tau}f_\ve\(U_{\ve,\gamma,\tau}\)\]\\
&=\partial_{\tau_i}\[\chi_{\ve,\tau}\]f_\ve\(U_{\ve,\gamma,\tau}\)+\chi_{\ve,\tau}f'_\ve\(U_{\ve,\gamma,\tau}\)\partial_{\tau_i}\[U_{\ve,\gamma,\tau}\],\quad \text{in }B\(\overline{\tau_i},r_\ve\).
\end{align*}
from \eqref{Pr4Eq3c}, we obtain
$$\big\|\Delta Z_{1,i}\mathbf{1}_{\Omega\backslash B\(\overline{\tau_i},r_\ve\)}\big\|_{L^p}=\bigO\(\gammae\) $$
for some $p>1$, hence with the H\"older and Sobolev inequalities
\begin{align*}
\int_{\Omega\backslash B\(\overline{\tau_i},r_\ve\)}\Delta Z_{1,i}\, \Phi_{\ve,\gamma,\tau,\theta}dx&=\bigO\(\big\|\Delta Z_{1,i}\mathbf{1}_{\Omega\backslash B\(\overline{\tau_i},r_\ve\)}\big\|_{L^p}\left\|\Phi_{\ve,\gamma,\tau,\theta}\right\|_{L^{p'}} \)\\
&=\bigO\(\gammae\left\|\nabla\Phi_{\ve,\gamma,\tau,\theta}\right\|_{L^2}\)=\smallo\(\frac{1}{\gammae d_\ve}\).
\end{align*}
Observe that $D_\tau\[\Psi_{\ve,\gamma,\tau}\]\in C^1\(\overline{\Omega}\)$, by elliptic estimates (the function $\chi_{\ve,\tau}$ in Proposition~\ref{Pr4} is smooth), hence we get
\begin{equation}\label{EqZ1iint-ext}
\partial_\nu Z_{1,i}^{int}-\partial_\nu Z_{1,i}^{ext}=\partial_\nu\partial_{\tau_i}\[B_{\ve,\gamma_i,\tau_i}^{int}\]-\partial_\nu\partial_{\tau_i}\[B_{\ve,\gamma_i,\tau_i}^{ext}\]
\end{equation}
Using the definition of $B_{\ve,\gamma_i,\tau_i}$ and \eqref{Sec32Eq5}, we compute
\begin{align*}
\partial_\nu\partial_{\tau_i}\[B_{\ve,\gamma_i,\tau_i}^{int}\]&=\partial_{\tau_i}\[\partial_\nu B_{\ve,\gamma_i,\tau_i}^{int}\]=\partial_{\tau_i}\[\partial_{\nu}\overline{B}_{\ve,\gamma_i,\overline{\tau_i}}\] +\partial_{\tau_i}\[\partial_\nu(A_{\ve,\gamma_i,\tau_i} H\(\cdot, \overline{\tau_i}\))\]\\
&=\partial_{\tau_i}\[\partial_{\nu} \overline{B}_{\ve,\gamma_i,\overline{\tau_i}}\]+\bigO\(\frac{1}{\gammae}\).
\end{align*}
Similarly,
\begin{align*}
\partial_\nu\partial_{\tau_i}\[B_{\ve,\gamma_i,\tau_i}^{ext}\]&=\partial_{\tau_i}\[\frac{A_{\ve,\gamma_i,\tau_i}}{2\pi}\partial_\nu\(\ln\frac{1}{\left|x-\overline{\tau_i}\right|}\)\]+\partial_{\tau_i}\[\partial_\nu\(A_{\ve,\gamma_i,\tau_i} H\(\cdot, \overline{\tau_i}\)\)\]\\
&=-\partial_{\tau_i}\[\frac{A_{\ve,\gamma_i,\tau_i}}{2\pi\left|x-\overline{\tau_i}\right|}\]+\bigO\(\frac{1}{\gammae}\).
\end{align*}
Now in order to compute the difference of the two terms in \eqref{EqZ1iint-ext}, set
$$v_{\ve,\gamma_i,\tau_i}\(x\):=\frac{A_{\ve,\gamma_i,\tau_i}}{2\pi}\ln \frac{1}{\left|x-\overline{\tau_i}\right|}$$
and Note that $v_{\ve,\gamma_i,\tau_i}'\(r_\ve\)=\overline{B}_{\ve,\gamma_i,\overline{\tau_i}}'\(r_\ve\)$ by the definitions in Section~\ref{Sec31}, where with a little abuse of notation, we use the prime to denote the radial derivative from $\overline{\tau_i}$.
Then, with a similar abuse of notation
\begin{align*}
-\overline{B}_{\ve,\gamma_i,\overline{\tau_i}}''\(r_\ve\)-\frac{\overline{B}_{\ve,\gamma_i,\overline{\tau_i}}'\(r_\ve\)}{r_\ve}&=\Delta \overline{B}_{\ve,\gamma_i,\overline{\tau_i}}\(r_\ve\)=\lambda_{\ve,i} f\(\overline{B}_{\ve,\gamma_i,\overline{\tau_i}}\(r_\ve\)\),\\
-v_{\ve,\gamma_i,\tau_i}''\(r_\ve\)-\frac{v_{\ve,\gamma_i,\tau_i}'\(r_\ve\)}{r_\ve}&=\Delta v_{\ve,\gamma_i,\tau_i}\(r_\ve\)=0,
\end{align*}
and subtracting we finally estimate
\begin{align}\label{EqZ1iint-ext2}
\left| \partial_\nu Z_{1,i}^{int}-\partial_\nu Z_{1,i}^{ext}\right|&=\bigO\(\left|\overline{B}_{\ve,\gamma_i,\overline{\tau_i}}''\(r_\ve\)- v_{\ve,\gamma_i,\tau_i}''\(r_\ve\)\right|\)+\bigO\(\frac{1}{\gammae}\)\nonumber\\
&=\bigO\(f\(\overline{B}_{\ve,\gamma_i,\overline{\tau_i}}\(r_\ve\)\)\)+\bigO\(\frac{1}{\gammae}\)=\bigO\(\frac{1}{\overline{\mu}_\ve^{2\delta_0^2+\smallo\(1\)}}\).
\end{align}
We now claim that
\begin{equation}\label{EqPhibordo}
\left\|\Phi_{\ve,\gamma,\tau,\theta}\right\|_{L^1\(\partial B\(\overline{\tau_i},r_\ve\)\)}=\bigO\(r_\ve \sqrt{\ln\frac{1}{r_\ve}}+\left\|\nabla\Phi_{\ve,\gamma,\tau,\theta}\right\|_{L^2}\)=\bigO\big(\overline{\mu}_\ve^{\delta_0+\smallo\(1\)}\big).
\end{equation}
This, together with \eqref{EqZ1iint-ext2} allows to bound
\begin{align*}
\int_{\partial B\(\overline{\tau_i},r_\ve\)}\(\partial_\nu Z_{1,i}^{int}-\partial_\nu Z_{1,i}^{ext}\) \Phi_{\ve,\gamma,\tau,\theta}d\sigma&=\bigO\(\frac{\left\|\Phi_{\ve,\gamma,\tau,\theta}\right\|_{L^1\(\partial B\(\overline{\tau_i},r_\ve\)\)}}{\overline{\mu}_\ve^{2\delta_0^2+\smallo\(1\)}}\)\\
&=\bigO\(\overline{\mu}_\ve^{\delta_0-2\delta_0^2+\smallo\(1\)}\)=\smallo\(\frac{1}{\gammae^2 d_\ve}\).
\end{align*}
This completes the estimates of $(C_2^\ddagger)_i$, hence
\begin{equation}\label{EqCterfinal}
(C)=(C_1)+(C_2^\ast)+\sum_{j=1}^k\[(C_2^\dagger)_j+(C_2^\ddagger)_j\]=\smallo\(\frac{1}{\gammae^2 d_\ve}\).
\end{equation}
In order to prove \eqref{EqPhibordo}, set $\widetilde{\Phi}\(y\):=\Phi_{\ve,\gamma,\tau,\theta}\(\overline{\tau_i}+r_\ve y\)$. We then have
$$\big\|\nabla \widetilde{\Phi}\big\|_{L^2\(B\(0,1\)\)}=\left\|\nabla \Phi_{\ve,\gamma,\tau,\theta}\right\|_{L^2\(B\(\overline{\tau_i},r_\ve\)\)}.$$
By the trace inequality, we get
\begin{equation}\label{EqPhibordo2}
\frac{\left\|\Phi_{\ve,\gamma,\tau,\theta}\right\|_{L^1\(B\(\overline{\tau_i},r_\ve\)\)}}{r_\ve}=\big\|\widetilde{\Phi}\big\|_{L^1\(B\(0,1\)\)}=\bigO\(\big\|\nabla \widetilde{\Phi}\big\|_{L^2\(B\(0,1\)\)}+\left|\frac{1}{\left|B\(0,1\)\right|}\int_{B\(0,1\)} \widetilde{\Phi} dy \right| \),
\end{equation}
since the right-hand side contains a norm equivalent to the $H^1$-norm. Now, by the Jensen and Moser--Trudinger inequalities, we have
\begin{align*}
\exp\(\(\frac{1}{|B\(0,1\)|}\int_{B\(0,1\)} \widetilde{\Phi} dy\)^2 \) &\le \frac{1}{|B\(0,1\)|}\int_{B\(0,1\)}\exp\big(\widetilde{\Phi}^2\big)dy \\
&=\frac{1}{\pi r_\ve} \int_{B\(\overline{\tau_i},r_\ve\)}\exp\big(\Phi_{\ve,\gamma,\tau,\theta}^2\big)dx\le \frac{1}{\pi r_\ve}
\end{align*}
It follows that 
$$\left|\frac{1}{|B\(0,1\)|}\int_{B\(0,1\)} \widetilde{\Phi} dy \right| \le \sqrt{\ln\frac{1}{\pi r_\ve}}$$
and \eqref{EqPhibordo} follows at once from \eqref{EqPhibordo2}. This completes the proof of \eqref{EqCterfinal}.

We finally estimate
\begin{align*}
(D)&=\bigO\(\int_\Omega\left|U_{\ve,\gamma,\tau}\right|^3\exp\big(U_{\ve,\gamma,\tau}^2\big)\(\Phi_{\ve,\gamma,\tau,\theta}^2+ \sum_{j=1}^k \theta_j^2 B_{\ve,\gamma_j,\tau_j}^2\)\left|Z_{1,i}\right|dx\)\allowdisplaybreaks\\
&=\bigO\(\int_{\Omega}\left|U_{\ve,\gamma,\tau}\right|^3 \exp\big(U_{\ve,\gamma,\tau}^2\big) \Phi_{\ve,\gamma,\tau,\theta}^2\left|Z_{1,i}\right|dx \)\\
&\quad+ \sum_{j=1}^k \bigO\( \theta_j^2\gammae^2 \int_{\Omega}\left|U_{\ve,\gamma,\tau}\right|^3 \exp\big(U_{\ve,\gamma,\tau}^2\big)\left|Z_{1,i}\right|dx\)=:(D_1)+\sum_{j=1}^k(D_2)_j.
\end{align*}
For every $j\in\left\{1,\dotsc,k\right\}$, with the rough estimate
$$Z_{1,i}=\bigO\(\mu_{\gamma_i}^{-1}\)=\bigO\big(\overline{\mu}_\ve^{-1+\smallo\(1\)}\big),\quad \text{in }B\(\overline{\tau_j},r_\ve\),$$
we obtain as in \eqref{EqD1''}
\begin{align*}
(D_1)_j&:=\int_{B\(\overline{\tau_j},r_\ve\)}\left|U_{\ve,\gamma,\tau}\right|^3 \exp\big(U_{\ve,\gamma,\tau}^2\big)\Phi_{\ve,\gamma,\tau,\theta}^2\left|Z_{1,i}\right|dx\allowdisplaybreaks\\
&=\bigO\(\frac{\gammae^3}{\overline{\mu}_\ve^{1+\smallo\(1\)}} \(\int_{B\(\overline{\tau_j},r_\ve\)} \exp\big(\overline B_{\ve,\gamma_j,\tau_j}^2\big)\Phi_{\ve,\gamma,\tau,\theta}^2dx\)\)\\
&=\bigO\( \frac{\left\|\nabla \Phi_{\ve,\gamma,\tau,\theta}\right\|_{L^2}^2}{\overline{\mu}_\ve^{1+\smallo\(1\)}}\)=\bigO\(\frac{\delta_\ve^2}{\overline{\mu}_\ve^{1+\smallo\(1\)}}\)=\bigO\big(\overline{\mu}_\ve^{2\delta_1+\smallo\(1\)}\big)=\smallo\(\frac{1}{\gammae^2d_\ve}\).
\end{align*}
Similarly, with \eqref{Pr4Eq3c},
\begin{multline*}
(D_1'):=\int_{\Omega_{r_\ve,\tau}}\left|U_{\ve,\gamma,\tau}\right|^3 \exp\big(U_{\ve,\gamma,\tau}^2\big)\Phi_{\ve,\gamma,\tau,\theta}^2\left|Z_{1,i}\right|dx=\bigO\Big(\gammae\big\|f'\(U_{\ve,\gamma,\tau}\)Z_{1,i} \mathbf{1}_{\Omega_{r_\ve,\tau}}\big\|_{L^p}\\
\times\left\|\Phi_{\ve,\gamma,\tau,\theta}\right\|^2_{L^{2p'}}\Big)=\bigO\(\gammae^2\left\|\nabla\Phi_{\ve,\gamma,\tau,\theta}\right\|_{L^2}^2\)=\smallo\(\frac{1}{\gammae^2 d_\ve}\).
\end{multline*}
As for the terms involving $\theta_j$, we can use the rough estimate $Z_{1,i}=\bigO\big(\overline \mu_\ve^{-1+\smallo\(1\)}\big)$ to get
\begin{align*}
(D_2)_j&:=\theta_j^2\gammae^2\int_{B\(\overline{\tau_m},r_\ve\)}\left|U_{\ve,\gamma,\tau}\right|^3\exp\big(U_{\ve,\gamma,\tau}^2\big)\left|Z_{1,i}\right|dx\\
&=\bigO\(\frac{\theta_j^2\gammae^5}{\overline{\mu}_\ve^{1+\smallo\(1\)}} \(\int_{B\(\overline{\tau_m},r_\ve\)} \exp\big(\overline B_{\ve,\gamma_m,\tau_m}^2\big)dx\)\)
=\bigO\big(\overline{\mu}_\ve^{2\delta_1+\smallo\(1\)}\big)=\smallo\(\frac{1}{\gammae^2d_\ve}\),
\end{align*}
while in $\Omega_{r_\ve,\tau}$, we can use \eqref{Pr4Eq3c} with $p=1$ to obtain 
$$(D_2')_j:=\theta_j^2\gammae^2\int_{\Omega_{r_\ve,\tau}} \left|U_{\ve,\gamma,\tau}\right|^3\exp\big(U_{\ve,\gamma,\tau}^2\big)\left|Z_{1,i}\right|dx=\bigO(\theta_j^2\gammae^4)=\smallo\(\frac{1}{\gammae^2 d_\ve}\).$$
Summing up, we obtain
\begin{equation}\label{EqDterfinal}
(D)=\sum_{j=1}^k(D_1)_j+(D_1')+\sum_{j=1}^k\[(D_2)_j+(D_2')_j\]=\smallo\(\frac{1}{\gammae^2 d_\ve}\).
\end{equation}
Now, \eqref{EqAterfinal}, \eqref{EqA'terfinal}, \eqref{EqBterfinal}, \eqref{EqCterfinal} and \eqref{EqDterfinal} allow us to conclude.
\endproof

\proof[Proof of Proposition~\ref{Pr9}]
We claim that for $\delta$ and  $\ve$ small enough, we can find $(\gamma_\ve,\theta_\ve,\tau_\ve)\in P^k_\ve\(\delta\)$ such that
\begin{align}\label{Pr9Eq1b}
\<\widetilde{R}_{\ve,\gamma_\ve,\tau_\ve,\theta_\ve},Z_{0,i,\varepsilon,\gamma_\ve,\tau_\ve}\>_{H^1_0}=\<\widetilde{R}_{\ve,\gamma_\ve,\tau_\ve,\theta_\ve},B_{\ve,\gamma_\ve,\tau_\ve}\>_{H^1_0}&=\<\widetilde{R}_{\ve,\gamma_\ve,\tau_\ve,\theta_\ve},Z_{1,i,\varepsilon,\gamma_\ve,\tau_\ve}\>_{H^1_0}=0,
\end{align}
for $i=1,\dots,k$, so that
$$\Pi_{\ve,\gamma_\ve,\tau_\ve}\(U_{\ve,\gamma_\ve,\tau_\ve,\theta_\ve}+\Phi_{\ve,\gamma_\ve,\tau_\ve,\theta_\ve}-\Delta^{-1}\(\lambda_\ve h_\ve f\(U_{\ve,\gamma_\ve,\tau_\ve,\theta_\ve}+\Phi_{\ve,\gamma_\ve,\tau_\ve,\theta_\ve}\)\)\)=0,$$
hence, together with Proposition~\ref{Pr8}, $U_{\ve,\gamma_\ve,\tau_\ve,\theta_\ve}+\Phi_{\ve,\gamma_\ve,\tau_\ve,\theta_\ve}$ is a solution to \eqref{Pr9Eq1}.
For every $\tau\in T^k_\ve\(\delta\)$ and $\gamma\in \widehat{\Gamma}^k_\ve\(\tau\)$, let us set $\hat\gamma:=\gamma-\overline{\gamma}_\ve\(\tau\)$ and 
$$\widehat{\Gamma}^k_\ve:=\left\{\hat\gamma=\(\hat\gamma_1,\dotsc,\hat\gamma_k\)\in(0,\infty)^k:\,\left|\hat\gamma_i\right|<\frac{\delta_\ve}{\gammae},\,\forall i\in\{i,\dots,k\}\right\},$$
and define $\(L_\ve, M_\ve, N_\ve\):\widehat P^k_\ve\(\delta\) :=\widehat{\Gamma}^k_\ve \times \Theta^k_\ve\times T^k_\ve\(\delta\) \to \R^{3k}$ as
\begin{align*}
L_{\ve}^i\(\hat\gamma,\tau,\theta\)&:= -\frac{1}{8\pi}\<\widetilde{R}_{\ve,\gamma,\tau,\theta},Z_{0,i,\varepsilon,\gamma,\tau}\>_{H^1_0}\\
&= \sum_{j=1}^k \partial_{\gamma_i}\big[E^{\(j\)}_{\ve,\gamma,\tau}\big]\big(E^{\(j\)}_{\ve,\gamma,\tau} +\theta_j \gammae \big)- \frac{E^{\(i\)}_{\ve,\gamma,\tau}}{2\gammae^2}+\smallo\(\frac{\delta_\ve \ln\overline{\gamma}_\ve}{\overline{\gamma}_\ve^5}\),\allowdisplaybreaks\\
M_{\ve}^i\(\hat\gamma,\tau,\theta\)&:=-\frac{1}{8\pi\gammae}\<\widetilde{R}_{\ve,\gamma,\tau,\theta},B_{\varepsilon,\gamma_i,\tau_i}\>_{H^1_0}=E^{\(i\)}_{\ve,\gamma,\tau}+\theta_i\gammae +\smallo\(\frac{\delta_\ve}{\gammae^3}\),\allowdisplaybreaks\\
N_{\ve}^i\(\hat \gamma,\tau,\theta\)&:=-\frac{\gammae^2 d_\ve}{4\pi}\<\widetilde{R}_{\ve,\gamma,\tau,\theta},Z_{1,i,\varepsilon,\gamma,\tau}\>_{H^1_0}=a_0l\(\frac{\tau_i}{d_\ve}\)^{l-1}-\sum_{j\ne i}\frac{2d_\ve}{\tau_i-\tau_j}+\smallo\(1\)
\end{align*}
for $i=1,\dots k$, where the error terms in the right-hand sides are uniform for $\(\hat\gamma,\tau,\theta\)\in \widehat P^k_\ve\(\delta\)$. (Note that we wrote $\hat\gamma$ in the left-hand side and $\gamma$ instead of $\hat\gamma +\gammae\(\tau\)$ in the right-hand side for simplicity, so that for instance the terms $E^{\(j\)}_{\ve,\gamma,\tau}$ should be read as $E^{\(j\)}_{\ve,\hat \gamma+\gammae\(\tau\),\tau}$)

We claim that
\begin{equation}\label{Pr9Eq3}
\deg\big(\(L_{\ve},M_{\ve},N_{\ve}\),  \widehat P^k_\ve\(\delta\),0\big)\ne 0
\end{equation}
for $\delta$ and $\ve$ small (to be fixed), where $\deg$ denotes the Brouwer degree.
Let us consider the homotopy $\(L_\ve^t, M_\ve^t, N_\ve^t\):\widehat P^k_\ve\(\delta\)\to \R^{3k}$ with $L_\ve^t=\(L_\ve^{t,1},\dotsc,L_\ve^{t,k}\)$, etc. 
defined by
\begin{align*}
L_{\ve}^{t,i}&=\(1-t\) \overline{L}_{\ve}^i+ t L_{\ve}^i,\quad \overline{L}_{\ve}^i:=\sum_{j=1}^k \partial_{\gamma_i}\big[E^{\(j\)}_{\ve,\gamma,\tau}\big]\big(E^{\(j\)}_{\ve,\gamma,\tau} +\theta_j \gammae \big)- \frac{E^{\(i\)}_{\ve,\gamma,\tau}}{2\gammae^2},\allowdisplaybreaks\\
M_{\ve}^{t,i}&=\(1-t\)\overline{M}_{\ve}^i +t M_{\ve}^i,\quad \overline{M}_{\ve}^i:=E^{\(i\)}_{\ve,\gamma,\tau}+\theta_i\gammae,\allowdisplaybreaks\\
N_\ve^{t,i}&=\(1-t\)\overline{N}_{\ve}^i+tN_\ve^i,\quad \overline{N}_\ve^i:=a_0l\(\frac{\tau_i}{d_\ve}\)^{l-1}-\sum_{j\ne i}\frac{2d_\ve}{\tau_i-\tau_j}.
\end{align*}
for $i=1,\dots k$ and $t\in\[0,1\]$. We first show that $\(L_{\ve}^t, M_{\ve}^t,N_\ve^t\)\ne 0$ on $\partial \widehat P^k_\ve\(\delta\)$ for any $t\in [0,1]$ if $\ve>0$ is sufficiently small. Otherwise there would be a sequence $\ve_n \downarrow 0$ (which we still denote by $\ve$), $t_\ve\in [0,1]$ and
\begin{equation}\label{Pr9Eq3b}
\(\hat\gamma_\ve,\theta_\ve,\tau_\ve\)\in \partial \widehat P^k_\ve,\quad\text{i.e. }\hat\gamma_\ve\in\partial\widehat{\Gamma}^k_\ve,\text{ or }\theta_\ve\in\partial\Theta^k_\ve,\text{ or  }\tau_\ve\in\partial T^k_\ve\(\delta\),
\end{equation}
such that
$$\(L_{\ve}^{t_\ve}\(\hat \gamma_\ve,\theta_\ve,\tau_\ve\),M_{\ve}^{t_\ve}\(\hat \gamma_\ve,\theta_\ve,\tau_\ve\), N^{t_\ve}\(\hat\gamma_\ve,\theta_\ve,\tau_\ve\)\) =0.$$
Then, multiplying $M^{t_\ve,j}_{\ve}$ by $\partial_{\gamma_i}E^{\(j\)}_{\ve,\gamma_\ve,\tau_\ve}$, subtracting it from $L^{t_\ve,i}_{\ve}$ for $j=1,\dots,k$ and using Proposition~\ref{PrEive}, we obtain (upon multiplication by $2\gammae^2$)
\begin{equation}\label{Pr9Eq4}
E^{\(i\)}_{\ve,\gamma_\ve,\tau_\ve}= \smallo\(\frac{\delta_\ve \ln\overline{\gamma}_\ve}{\overline{\gamma}_\ve^3}\),\quad \text{for }i=1,\dots,k.
\end{equation}
Plugging \eqref{Pr9Eq4} into the equation for $M^{t_\ve,i}_{\ve}$, we then obtain
\begin{equation}\label{Pr9Eq4b}
\theta_\ve= \smallo\(\frac{\delta_\ve \ln\overline{\gamma}_\ve}{\overline{\gamma}_\ve^4}\),
\end{equation}
hence $\theta_{\ve}\not\in \partial \Theta^k_\ve$. Now \eqref{Pr9Eq4b}, the equation for $L^{t_\ve,i}_{\ve}$ and Proposition~\ref{PrEive} yield
\begin{equation}\label{Pr9Eq5}
\sum_{j=1}^k \partial_{\gamma_i}\big[E^{\(j\)}_{\ve,\gamma_\ve,\tau_\ve}\big]E^{\(j\)}_{\ve,\gamma_\ve,\tau_\ve} =\smallo\(\frac{\delta_\ve \ln^2\overline{\gamma}_\ve}{\overline{\gamma}_\ve^5}\),\quad \text{for }i=1,\dots,k.
\end{equation}
We can rewrite \eqref{Pr9Eq5} as
\begin{equation}\label{Pr9Eq6}
Q_\ve E_{\ve,\gamma_\ve,\tau_\ve} =\smallo\(\frac{\delta_\ve \ln\overline{\gamma}_\ve}{\overline{\gamma}_\ve^3}\),
\end{equation}
where, taking Proposition~\ref{PrEive} into account, $Q_\ve=\(Q_{\ve,ij}\)_{1\le i,j\le k}$ is a $k\times k$ matrix with $Q_{\ve,ij}=\mathcal{Q}_{ij}+\smallo\(1\)$ as $\ve\to 0$ and  
\begin{equation}\label{matrix}
\mathcal{Q}=\(\mathcal{Q}_{ij}\)_{1\le i,j\le k}:=\begin{pmatrix}
1 &1/l &\dots & 1/l\\
1/l & 1&\dots&1/l\\
\vdots & &\ddots & \vdots\\
1/l &1/l&\dots  &1
\end{pmatrix},
\quad 
E_{\ve,\gamma_\ve,\tau_\ve}=
\begin{pmatrix}
E_{\ve,\gamma_\ve,\tau_\ve}^{\(1\)}\\
\vdots\\
E_{\ve,\gamma_\ve,\tau_\ve}^{(k)}
\end{pmatrix}
\end{equation}
Now, since
$$\det Q_\ve=\det \mathcal{Q}+\smallo\(1\)=\(1+\frac{k-1}{l}\)\(1-\frac{1}{l}\)^{k-1} +\smallo\(1\)>0$$
for $\ve>0$ sufficiently small, uniformly with respect to $(\gamma,\theta,\tau)\in P^k_\ve\(\delta\)$, we can invert $Q_\ve$ in \eqref{Pr9Eq6} and get
\begin{equation}\label{Pr9Eq7}
E_{\ve,\gamma_\ve,\tau_\ve}^{\(i\)} =\smallo\(\frac{\delta_\ve \ln\overline{\gamma}_\ve}{\overline{\gamma}_\ve^3}\),\quad\text{for }i=1,\dots,k.
\end{equation}
On the other hand, still by Proposition~\ref{PrEive}, we have
$$E_{\ve,\gamma_\ve,\tau_\ve} =-2\frac{\ln\gammae}{\gammae^2} Q_\ve \hat \gamma_\ve +\smallo\(\left|\hat \gamma_\ve\right|\frac{\ln\gammae}{\gammae^2}\),$$
where we recall that $\hat \gamma=\gamma-\gammae\(\tau\)$.
Then, inverting $Q_\ve$ and using \eqref{Pr9Eq7} we end up with 
$\hat \gamma_\ve=\smallo\(\delta_\ve/\gammae\),$
which, for $\ve>0$ sufficiently small implies that $\hat \gamma_\ve \not\in\partial \overline\Gamma_\ve^k$.

Finally, writing $\hat \tau_\ve :=\tau_\ve/d_\ve$, we have
$N^i\(\hat \tau_\ve\) =\smallo\(1\)$,
where $N=(N^1,\dots,N^k)$ is as in \eqref{LehEq2}. On the other hand, $\tau_\ve \in \partial T^k_\ve\(\delta\)$ implies $\hat \tau_\ve \in \partial \widehat{T}^k\(\delta\)$, where
\begin{multline}\label{LehEq1}
\widehat{T}^k\(\delta\):=\Big\{y=\(y_1,\dotsc,y_k\)\in \R^k:\,-\frac{k}{\delta}<y_1<y_2<\dots<y_k<\frac{k}{\delta}\\
\text{and }|y_i-y_j|>\delta,\, \forall i,j\in\left\{1,\dotsc,k\right\},\,i\ne j\Big\},
\end{multline}
which is compact and this contradicts Lemma~\ref{Lemmah} for $\delta=\delta\(a_0,l,k\)>0$ sufficiently small such that $y^*\in \widehat{T}^k(\delta)$. Then we also have $\tau_\ve\not\in \partial T^k_\ve\(\delta\)$, which contradicts \eqref{Pr9Eq3b}.

We have therefore proven that $(L^{t}_{\ve}, M^{t}_{\ve},N^{t}_\ve)\ne 0$ on  $\partial \widehat{P}^k_\ve\(\delta\)$, for $\ve>0$ sufficiently small,  hence by homotopy invariance of the degree
\begin{equation}\label{Pr9Eq8}
\deg\big(\(L_{\ve},M_{\ve}, N_\ve\),  \widehat{P}^k_\ve\(\delta\),0\big)=\deg\big(\(\overline{L}_{\ve},\overline{M}_{\ve},\overline N_\ve\), \widehat{P}^k_\ve\(\delta\),0\big).
\end{equation}
The degree of $\(\overline{L}_{\ve},\overline{M}_{\ve},\overline{N}_\ve\)$ does not change upon multiplication by an invertible matrix with determinant $1$, namely if we consider
$$\begin{pmatrix}
\widetilde{L}_{\ve}\\
\overline{M}_{\ve}\\
\overline N_{\ve}\\
\end{pmatrix}
:=
\begin{pmatrix}
I_{k} & -D_\gamma\[ E_{\ve,\gamma,\tau}\]&0\\
0 & I_k&0\\
0&0& I_k
\end{pmatrix}
\begin{pmatrix}
\overline{L}_{\ve}\\
\overline{M}_{\ve}\\
\overline{N}_{\ve}
\end{pmatrix},
$$
where $D_\gamma\[ E_{\ve,\gamma,\tau}\]=\big(\partial_{\gamma_j}\big[E^{\(i\)}_{\ve,\gamma,\tau}\big]\big)_{1\le i,j\le k}$, $I_k$ is the $k\times k$ identity matrix and $\widetilde{L}_{\ve}:\widehat P^k_\ve\(\delta\)\to \R^k$, is defined by
$\widetilde{L}_{\ve}^{i} =-E^{\(i\)}_{\ve,\gamma,\tau}$,
for $i=1,\dots k$, we get
$$\deg\big(\(\overline{L}_{\ve},\overline{M}_{\ve},\overline N_\ve\), \widehat{P}^k_\ve\(\delta\),0\big)=\deg\big(\big(\widetilde{L}_{\ve},\overline{M}_{\ve},\overline N_\ve\big), \widehat{P}^k_\ve\(\delta\),0\big).$$
Expanding $E^{\(i\)}_{\ve,\gamma,\tau}$ as in Proposition~\ref{PrEive}, we do a final homotopy between $\big(\widetilde{L}_{\ve},\overline{M}_{\ve}\big)$ and $\big(L_{\ve}^*,M_{\ve}^*\big):\widehat P^k_\ve\(\delta\)\to \R^{2k}$, where 
$$L_{\ve}^{*i}=-\frac{2\ln \overline{\gamma}_{\ve}}{\overline{\gamma}_{\ve}^2}\(\hat\gamma_i+\sum_{j\ne i}\frac{\hat \gamma_j}{l}\),\quad M_{\ve}^{*i}=L_{\ve}^{*i}+\theta_i\gammae$$
for $i=1,\dots k$ (with the same method as above to prevent zeroes on $\partial \widehat P^k_\ve\(\delta\)$), so that
$$\deg\big(\big(\widetilde{L}_{\ve},\overline{M}_{\ve},\overline N_\ve\big), \widehat{P}^k_\ve\(\delta\),0\big)=\deg\big(\(L^*_{\ve},M^*_{\ve},\overline N_\ve\), \widehat{P}^k_\ve\(\delta\),0\big) $$
Using the matrix $\mathcal{Q}$ defined in \eqref{matrix}, we see that
\begin{equation}\label{Pr9Eq8b}
\begin{pmatrix}
L_{\ve}^*\\
M_{\ve}^*\\
\overline{N}_\ve
\end{pmatrix}
=
\begin{pmatrix}
-2\frac{\ln\gammae}{\gammae^2} \mathcal{Q}& 0&0\\
-2\frac{\ln\gammae}{\gammae^2} \mathcal{Q} &\gammae I_k&0\\
0&0&I_k
\end{pmatrix}
\begin{pmatrix}
\hat \gamma\\
\theta\\
\overline{N}_\ve
\end{pmatrix}.
\end{equation}
Since $\mathcal{Q}$ has positive determinant, if we call $\mathcal{A}$ the $3k\times 3k$ matrix on the right-hand side of \eqref{Pr9Eq8b} we have $\sign\(\det\mathcal{A}\)=(-1)^k$, and noticing that $\overline{N}_\ve$ only depends on $\tau$, we obtain
$$\mathcal{A}^{-1}\begin{pmatrix}
L_{\ve}^*\\
M_{\ve}^*\\
\overline{N}_\ve
\end{pmatrix}
= \mathrm{Id}\times \mathrm{Id}\times \overline{N}_\ve :\widehat{\Gamma}^k_\ve \times \Theta^k_\ve\times T^k_\ve\(\delta\) \to \R^{k}\times \R^k\times \R^k,$$
and using the product formula for the degree, we finally obtain
\begin{align}\label{Pr9Eq9}
&\deg\big(\(L_{\ve},M_{\ve}, N_\ve\), \widehat P^k_\ve\(\delta\),0\big)= \deg\big(\(L^*_{\ve,},M^*_{\ve},\overline{N}_\ve\),  \widehat{P}^k_\ve\(\delta\),0\big)\nonumber\\
&\qquad=(-1)^k\deg\big(\text{Id}, \widehat\Gamma^k_\ve,0\big) \deg\(\text{Id},\Theta^k_\ve,0\)\deg\(\overline{N}_\ve,T^k_\ve\(\delta\),0\)\nonumber\\
&\qquad=(-1)^k\deg\(\overline{N}_\ve,T^k_\ve\(\delta\),0\).
\end{align}
In order to compute the degree of $\overline{N}_\ve$, observe that $\overline N_\ve^i\(\tau\)=N^i\(\hat\tau_\ve\)$, where $\hat \tau_\ve=\tau/d_\ve$ and $N=\(N^1,\dotsc,N^k\)$ is as in \eqref{LehEq2}.
Moreover, since $\delta\in (0,1)$ was chosen such that $y^*\in \widehat{T}^k(\delta)$, with $y^*$ as in Lemma \ref{Lemmah}, it follows that 
$$\deg\big(\overline{N}_\ve, T^k_\ve\(\delta\),0\big)=\deg\big(N,\widehat{T}^k\(\delta\),0\big)=1.$$
We then conclude with \eqref{Pr9Eq9} that there exists $\(\hat \gamma_\ve,\theta_\ve,\tau_\ve\)\in \widehat{P}^k_\ve\(\delta\)$ such that $\(\gamma_\ve,\theta_\ve,\tau_\ve\)=\(\hat\gamma_\ve+\gammae\(\tau\),\theta_\ve,\tau_\ve\)\in P^k_\ve\(\delta\)$ solves \eqref{Pr9Eq1b}.
\endproof

\begin{lemma}\label{Lemmah} The function 
$$N:\widehat{T}^k(0):=\left\{y=\(y_1,\dotsc,y_k\)\in \R^k:\,y_1<y_2<\dots<y_k\right\}\to\R^k$$
given by
\begin{equation}\label{LehEq2}
N^i\(y_1,\dots, y_k\):=a_0ly_i^{l-1}-2\sum_{j\ne i}\frac{1}{y_i-y_j},\quad\text{for }i=1,\dots,k,
\end{equation}
has exactly one zero, which we call $y^*$. Moreover $\deg(H,\widehat{T}^k(0),0)=1$.
\end{lemma}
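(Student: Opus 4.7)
The plan is to recognize $N$ as the gradient of a strictly convex, coercive function on the convex domain $\widehat{T}^k(0)$, from which uniqueness of the zero and the degree computation will both follow at once.

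Specifically, I would define on $\widehat{T}^k(0)$ the potential
$$\Phi(y) := a_0 \sum_{i=1}^k y_i^l \;-\; 2\sum_{1\le i<j\le k} \ln(y_j - y_i),$$
which is smooth there (the $l$-th powers are smooth and the log arguments are positive). A direct computation yields $\partial_{y_i}\Phi = a_0 l y_i^{l-1} - 2\sum_{j\ne i}(y_i-y_j)^{-1} = N^i$, so $N = \nabla \Phi$. Computing the Hessian one finds, for every $v\in\R^k$,
$$v^T H\Phi(y)\, v \;=\; a_0\, l(l-1) \sum_{i=1}^k y_i^{l-2}\, v_i^2 \;+\; 2\sum_{i<j} \frac{(v_i - v_j)^2}{(y_j - y_i)^2}.$$
Since $l\in 2\N^\star$ is even and $a_0>0$, both terms are nonnegative. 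The second vanishes iff $v_i = v_j$ for all $i<j$, i.e.\ $v = c\mathbf{1}$; then the first reduces to $a_0\,l(l-1)\, c^2 \sum_i y_i^{l-2}$, which vanishes only if $c=0$ (since the $y_i$ are pairwise distinct, at most one of them can be zero, so the sum $\sum_i y_i^{l-2}$ is strictly positive). Hence $H\Phi(y)$ is positive definite everywhere on $\widehat{T}^k(0)$, so $\Phi$ is strictly convex on this convex set.

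Next I would establish coercivity on $\widehat{T}^k(0)$: as $y$ approaches the boundary (some consecutive $y_i$'s collapse), $-\ln(y_j - y_i)\to +\infty$ drives $\Phi\to+\infty$, while as $|y|\to\infty$ the polynomial term $a_0\sum_i y_i^l$ (with $l$ even and $a_0>0$) dominates the logarithmic growth, so $\Phi\to+\infty$ there as well. Combined with strict convexity, this forces $\Phi$ to attain a unique minimizer $y^*\in\widehat{T}^k(0)$, which is therefore the unique critical point, and hence the unique zero of $N$.

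For the degree, note that $DN(y^*) = H\Phi(y^*)$ is positive definite, so $y^*$ is a non-degenerate zero with $\det DN(y^*) > 0$. Taking any sublevel set $\{\Phi \le c\} \subset \widehat{T}^k(0)$ with $c$ large enough to contain $y^*$ in its interior, coercivity makes this set relatively compact in $\widehat{T}^k(0)$, and $N = \nabla\Phi \ne 0$ on its boundary (a level set of a strictly convex function away from its minimum). The Brouwer degree formula then gives
$$\deg(N,\widehat{T}^k(0),0) \;=\; \mathrm{sign}\det DN(y^*) \;=\; +1.$$
The main obstacle I anticipate is the kernel analysis in the strict convexity step when $l\ge 4$: the diagonal entries $y_i^{l-2}$ of the polynomial part of $H\Phi$ can degenerate at $y_i=0$, and the log part is only positive semi-definite along the diagonal $v = c\mathbf{1}$, so one must combine the two quadratic forms (and exploit that the $y_i$'s are distinct) to rule out a common non-trivial kernel element.
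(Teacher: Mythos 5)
Your proposal is correct and follows essentially the same approach as the paper: you recognize $N=\nabla\Phi$ for the same potential (the paper's $J$ is your $\Phi$, since $\tfrac12\sum_{i\ne j}\ln\tfrac{1}{(y_i-y_j)^2}=-2\sum_{i<j}\ln(y_j-y_i)$), establish strict convexity via positive definiteness of the Hessian, invoke coercivity to obtain a unique minimizer $y^*$, and compute the degree from $\det DN(y^*)>0$. Your kernel analysis (log part forces $v=c\mathbf{1}$, then the polynomial part kills $c$) is a slightly cleaner reorganization of the paper's (which first drops the log part and then rescues the degenerate direction), but both are equivalent and both quietly require $k\ge 2$ or $l=2$: for $k=1$ and $l\ge 4$ the polynomial part is the only contribution and it vanishes at $y^*=0$, so $\det DN(y^*)=0$ and the nondegeneracy claim fails, although the degree conclusion $\deg(N,\widehat{T}^1(0),0)=1$ still holds by a trivial one-variable argument since $N(y)=a_0ly^{l-1}$ is then strictly increasing.
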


\begin{proof}
We have that $N=\nabla J$, with
$$J\(y\)=a_0\sum_{i=1}^ky_i^l  +\frac{1}{2}\sum_{i\ne j}\ln \frac{1}{\(y_i-y_j\)^2},\quad\forall y=\(y_1,\dots,y_k\)\in \widehat{T}^k\(0\).$$
The Hessian $\nabla^2 J$ is positive definite on $\widehat{T}^k\(0\)$, since
\begin{align*}
\partial_{y_i}^2 J&=\partial_{y_i} N^i=a_0l(l-1)y_i^{l-2}+\sum_{j\ne i}\frac{2}{\(y_i-y_j\)^2},\allowdisplaybreaks\\
\partial_{y_i}\partial_{y_j} J&=\partial_{y_i} N^j=-\frac{2}{\(y_i-y_j\)^2},\quad\text{for }i\ne j,
\end{align*}
so that for every $\xi\in \R^k\setminus\{0\}$, using that $\xi_i^2+\xi_j^2\ge 2\xi_i\xi_j$, we get
\begin{align*}
\xi^T \nabla^2 J \xi&=\sum_{i=1}^k\xi_i^2\(a_0l\(l-1\)y_i^{l-2}+\sum_{j\ne i}\frac{2}{\(y_i-y_j\)^2}\) -\sum_{i=1}^k \sum_{j\ne i}\frac{2\xi_j\xi_i}{\(y_i-y_j\)^2}\\
&\ge \sum_{i=1}^k \xi_i^2a_0 l\(l-1\)y_i^{l-2},
\end{align*}
and, using that $y\in \widehat T^k\(0\)$ and $l\in2\N^*$, the right-hand side is positive, unless $\xi=\(0,\dots, \xi_{i_0},\dots 0\)$ and $y_{i_0}=0$ for some $i_0\in \left\{1,\dotsc,k\right\}$, in which case
$$\xi^T \nabla^2 J \xi = \sum_{j\ne i_0}\frac{2\xi_{i_0}^2}{(y_{i_0}-y_j)^2}>0.$$
Then $J$ is strictly convex in $\widehat{T}^k(0)$ and since $|J(y)|\to \infty$ as $y\to \partial \widehat{T}^k(0)$ or $|y|\to\infty$, $J$ has a minimum $y^*$, which is its only critical point and the only zero of $N$. Moreover $\det(\nabla N(y^*))=\det (\nabla^2J(y^*))>0$, hence $\deg(N,\widehat{T}^k(0),0)=1$.
\end{proof}

Finally, we can now conclude the proof of Theorems~\ref{Th1} and \ref{Th2}.

\proof[End of proof of Theorems~\ref{Th1} and \ref{Th2}]
It follows from Proposition~\ref{Pr9} that for small $\varepsilon>0$, $u_\varepsilon:=U_{\varepsilon,\gamma_\varepsilon,\tau_\varepsilon,\theta_\varepsilon}+\Phi_{\varepsilon,\gamma_\varepsilon,\tau_\varepsilon,\theta_\varepsilon}\in E_{h_\varepsilon,\beta_\varepsilon}$, where $\beta_\varepsilon:=\big\|\nabla u_\varepsilon\big\|_{L^2}^2$. We denote $\gamma_\varepsilon=\(\gamma_{1,\varepsilon},\dotsc,\gamma_{k,\varepsilon}\)$, $\tau_{\varepsilon}=\(\tau_{1,\varepsilon},\dotsc,\tau_{k,\varepsilon}\)$, $\theta_{\varepsilon}=\(\theta_{1,\varepsilon},\dotsc,\theta_{k,\varepsilon}\)$.
By using \eqref{Pr4Eq2} and \eqref{Pr8Eq2}, we obtain that $\Psi_{\varepsilon,\gamma_\varepsilon,\tau_\varepsilon},\Phi_{\varepsilon,\gamma_\varepsilon,\tau_\varepsilon,\theta_\varepsilon}\to0$ in $H^1_0\(\Omega\)$ as $\varepsilon\to0$. Since moreover $w_\varepsilon\to w_0$ in $C^1\(\overline\Omega\)$, $H\in C^1\(\overline\Omega\times \overline{\Omega}\)$, $\theta_{i,\varepsilon}\to0$ and $A_{\varepsilon,\gamma_{i,\varepsilon},\tau_{i,\varepsilon}}\to0$ for all $i\in\left\{1,\dotsc,k\right\}$, we obtain
\begin{multline}\label{Th1Eq1}
\big\|\nabla{u}_\varepsilon\big\|_{L^2}=\Big\|\nabla w_0+\sum_{i=1}^k\(1+\theta_{i,\varepsilon}\)\big(\nabla\overline{B}_{\varepsilon,\gamma_{i,\varepsilon},\overline{\tau_{i,\varepsilon}}}\mathbf{1}_{B\(\overline{\tau_{i,\varepsilon}},r_\varepsilon\)}\\
+A_{\varepsilon,\gamma_{i,\varepsilon},\tau_{i,\varepsilon}}\nabla G\(\cdot,\overline{\tau_{i,\varepsilon}}\)\mathbf{1}_{\Omega\backslash B\(\overline{\tau_{i,\varepsilon}},r_\varepsilon\)}\big)\Big\|_{L^2}+\smallo\(1\)
\end{multline}
as $\varepsilon\to0$. By integrating by parts, we obtain
\begin{equation}\label{Th1Eq2}
\left\|\nabla G\(\cdot,\overline{\tau_{i,\varepsilon}}\)\mathbf{1}_{\Omega\backslash B\(\overline{\tau_{i,\varepsilon}},r_\varepsilon\)}\right\|_{L^2}^2=-\int_{\partial B\(\overline{\tau_{i,\varepsilon}},r_\varepsilon\)}G\(\cdot,\overline{\tau_{i,\varepsilon}}\)\partial_\nu G\(\cdot,\overline{\tau_{i,\varepsilon}}\)d\sigma\sim\frac{1}{2\pi}\ln\frac{1}{r_\varepsilon}\sim\frac{\delta_0\overline\gamma_\varepsilon^2}{4\pi},
\end{equation}
\begin{align}\label{Th1Eq3}
\left<\nabla G\(\cdot,\overline{\tau_{i,\varepsilon}}\)\mathbf{1}_{\Omega\backslash B\(\overline{\tau_{i,\varepsilon}},r_\varepsilon\)},\nabla w_0\right>_{L^2}=\int_{\partial B\(\overline{\tau_{i,\varepsilon}},r_\varepsilon\)}w_0\partial_\nu G\(\cdot,\overline{\tau_{i,\varepsilon}}\)d\sigma&=\bigO\big(\left\|w_0\right\|_{C^0\(\partial B\(\overline{\tau_{i,\varepsilon}},r_\varepsilon\)\)}\big)\nonumber\\
&=\smallo\(1\)
\end{align}
and
\begin{multline}\label{Th1Eq4}
\Big<\nabla G\(\cdot,\overline{\tau_{i,\varepsilon}}\)\mathbf{1}_{\Omega\backslash B\(\overline{\tau_{i,\varepsilon}},r_\varepsilon\)},\nabla G\(\cdot,\overline{\tau_{j,\varepsilon}}\)\mathbf{1}_{\Omega\backslash B\(\overline{\tau_{j,\varepsilon}},r_\varepsilon\)}\Big>_{L^2}\\
=\int_{\partial B\(\overline{\tau_{i,\varepsilon}},r_\varepsilon\)\cup\partial B\(\overline{\tau_{j,\varepsilon}},r_\varepsilon\)}G\(\cdot,\overline{\tau_{i,\varepsilon}}\)\partial_\nu G\(\cdot,\overline{\tau_{j,\varepsilon}}\)d\sigma=\bigO\(\ln\frac{1}{d_\varepsilon}\)=\bigO\(\ln\overline\gamma_\varepsilon\)
\end{multline}
as $\varepsilon\to0$ for all $i,j\in\left\{1,\dotsc,k\right\}$, $i\ne j$, where $\nu$ and $d\sigma$ are the outward unit normal vector and volume element of $\partial B\(\overline{\tau_{i,\varepsilon}},r_\varepsilon\)\cup\partial B\(\overline{\tau_{j,\varepsilon}},r_\varepsilon\)$, respectively. On the other hand, since $\gamma_{i,\varepsilon}\sim\overline\gamma_{\varepsilon}$, by using \eqref{Pr10Eq1}, we obtain
\begin{align}\label{Th1Eq5}
\left\|\nabla\overline{B}_{\varepsilon,\gamma_{i,\varepsilon},\overline{\tau_{i,\varepsilon}}}\mathbf{1}_{B\(\overline{\tau_{i,\varepsilon}},r_\varepsilon\)}\right\|_{L^2}^2&=2\pi\int_0^{\sqrt{\lambda_\varepsilon h_\varepsilon\(\overline{\tau_{i,\varepsilon}}\)}r_\varepsilon}\big(\overline{B}_{\gamma_{i,\varepsilon}}'\(r\)\big)^2r\,dr\nonumber\\
&\sim\frac{8\pi}{\gamma_{i,\varepsilon}^2}\ln\(\frac{\sqrt{\lambda_\varepsilon h_\varepsilon\(\overline{\tau_{i,\varepsilon}}\)}r_\varepsilon}{\mu_{i,\varepsilon}}\)\sim4\pi\(1-\delta_0\)
\end{align}
for all $i\in\left\{1,\dotsc,k\right\}$, where $\mu_{i,\varepsilon}$ is defined by $\mu_{i,\varepsilon}^2:=4\gamma_{i,\varepsilon}^{-2}\exp\(-\gamma_{i,\varepsilon}^2\)$. For every $j\ne i$, by remarking that $B\(\overline{\tau_{i,\varepsilon}},r_\varepsilon\)\cap B\(\overline{\tau_{j,\varepsilon}},r_\varepsilon\)=\emptyset$ for small $\varepsilon>0$ and
$$\left\|A_{\varepsilon,\gamma_{j,\varepsilon},\tau_{j,\varepsilon}}\nabla G\(\cdot,\overline{\tau_{j,\varepsilon}}\)\mathbf{1}_{B\(\overline{\tau_{i,\varepsilon}},r_\varepsilon\)}\right\|_{C^0}=\bigO\(\frac{1}{\overline\gamma_\varepsilon d_\varepsilon}\)=\smallo\(1\),$$
we obtain
\begin{multline}\label{Th1Eq6}
\Big<\nabla\overline{B}_{\varepsilon,\gamma_{i,\varepsilon},\overline{\tau_{i,\varepsilon}}}\mathbf{1}_{B\(\overline{\tau_{i,\varepsilon}},r_\varepsilon\)},\nabla w_0+\sum_{j\ne i}\(1+\theta_{i,\varepsilon}\)\big(\nabla\overline{B}_{\varepsilon,\gamma_{i,\varepsilon},\overline{\tau_{i,\varepsilon}}}\mathbf{1}_{B\(\overline{\tau_{i,\varepsilon}},r_\varepsilon\)}+A_{\varepsilon,\gamma_{i,\varepsilon},\tau_{i,\varepsilon}}\nabla G\(\cdot,\overline{\tau_{i,\varepsilon}}\)\\
\times\mathbf{1}_{\Omega\backslash B\(\overline{\tau_{i,\varepsilon}},r_\varepsilon\)}\big)\Big>_{L^2}=\bigO\(\left\|\nabla\overline{B}_{\varepsilon,\gamma_{i,\varepsilon},\overline{\tau_{i,\varepsilon}}}\mathbf{1}_{B\(\overline{\tau_{i,\varepsilon}},r_\varepsilon\)}\right\|_{L^1}\)=\bigO\(\frac{r_\varepsilon}{\overline\gamma_\varepsilon}\)=\smallo\(1\)
\end{multline}
as $\varepsilon\to0$. Since moreover $\theta_\varepsilon\to0$, it follows from \eqref{Sec32Eq3} and \eqref{Th1Eq1}--\eqref{Th1Eq6} that
$$\big\|\nabla{u}_\varepsilon\big\|^2_{L^2}\longrightarrow\left\|\nabla w_0\right\|^2_{L^2}+4k\pi=\beta_0+4k\pi$$
as $\varepsilon\to0$. Standard elliptic theory gives that $\big\|{u}_\varepsilon\big\|_{L^\infty}\to\infty$ as $\varepsilon\to0$. Since $h_\varepsilon\to h_0$ in $C^2\(\overline\Omega\)$, we then obtain that $\beta_0+4k\pi$ is an unstable energy level of $I_{h_0}$. This ends the proof of Theorem~\ref{Th1}.

The above construction also proves Theorem~\ref{Th2} if $w_0$ is non-degenerate. Otherwise we apply a diagonal procedure. More precisely, thanks to Proposition \ref{Pr0}, for $\kappa\in (0,\kappa_0)$ and $\ve=\ve(\kappa)$ sufficiently small we construct $w_{\kappa,\ve(\kappa)}\in E_{\beta_{\kappa,\ve(\kappa)},h_{\kappa,\ve(\kappa)}}$, with $w_{\kappa,\ve(\kappa)}\to w_0$, $h_{\kappa,\ve(\kappa)}\to h$  in $C^2(\overline\Omega)$ and $\beta'_\kappa:=\beta_{\kappa,\ve(\kappa)}\to \beta_0$ as $\kappa\to 0$; we further construct
$$u_\kappa = w_\kappa +\sum_{i=1}^k(1+\theta_{\kappa,i}) B_{\kappa,\gamma_{\kappa,i},\tau_{\kappa,i}}+\Psi_{\kappa,\gamma_\kappa,\tau_\kappa}+\Phi_{\kappa,\gamma_\kappa,\theta_\kappa,\tau_\kappa}\in E_{h_\kappa,\beta_\kappa},
 $$
where each subscript $\kappa$ on the right-hand side actually means $(\kappa,\ve(\kappa))$, with $\ve(\kappa)>0$ sufficiently small so that
$$\|\nabla \Psi_{\kappa,\ve(\kappa),\gamma_{\kappa,\ve(\kappa)},\tau_{\kappa,\ve(\kappa)}}\|_{L^2}+ \|\nabla \Phi_{\kappa,\ve(\kappa),\gamma_{\kappa,\ve(\kappa)},\theta_{\kappa,\ve(\kappa)},\tau_{\kappa,\ve(\kappa)}}\|_{L^2}=\smallo(1) \quad \text{as }\kappa\to 0.$$
Up to renaming the indices, we conclude.
\endproof

\begin{rem}[Stable vs. positively stable energy levels]\label{RemStable} As in Definition \ref{Def1}, let $(u_\varepsilon)$ be a family of functions such that $u_\varepsilon\in E_{h_\varepsilon,\beta_\varepsilon}$ with $h_\varepsilon\to h>0$ in $C^2(\bar{\Omega})$ and $\beta_\varepsilon\to \beta>0$. In particular, $u_\varepsilon$ solves $(\mathcal{E}_{h_\varepsilon,\beta_\varepsilon})$ with $\lambda=\lambda_\varepsilon>0$ obtained from $h_\varepsilon$, $\beta_\varepsilon$ and $u_\varepsilon$ thanks to \eqref{Sec1Eq2}. As a simple claim, testing $(\mathcal{E}_{h_\varepsilon,\beta_\varepsilon})$ against $v>0$, first eigenfuntion of $\Delta$ with zero Dirichlet condition on $\partial\Omega$, the bound $\lambda_\varepsilon=O(1)$ is automatic when defining a \emph{positively} stable energy level in Definition \ref{Def1}. In the sign changing case, however, let us consider the following unstable situation: $u_\varepsilon$ goes uniformly to $0\not \in E_{h,\beta}$, while looking like a ($h$-weighted) Dirichlet eigenfunction associated to some large eigenvalue $\bar{\lambda}_\varepsilon\sim \lambda_\varepsilon\to +\infty$, but still having the given energy $\beta_\varepsilon\sim \beta>0$ as $\varepsilon\to 0$. Then, in order not to have an empty notion of \emph{stable energy level}, we further assume the bound $\lambda_\varepsilon=O(1)$ in Definition \ref{Def1}.
\end{rem}

\section{Proof of Proposition~\ref{Pr4}}\label{ProofPr4}

We fix $\varepsilon\in\(0,1\)$ and $\delta'\in\(0,1-\sqrt{2\delta_0}\)$. For every $p>1$, we define $\mathcal{W}^{2,p}_0\(\Omega\):=W^{2,p}\(\Omega\)\cap H^1_0\(\Omega\)$. Note that we have a compact embedding of $W^{2,p}\(\Omega\)$ into $H^1\(\Omega\)$ and $C^0\(\overline\Omega\)$ when $p>1$ and into $C^1\(\overline\Omega\)$ when $p>2$. For every $\varepsilon\in\(0,\varepsilon_0\)$ and $\tau\in T^k_{\varepsilon}\(\delta\)$, we let $L_{\varepsilon,\tau}:\mathcal{W}^{2,p}_0\(\Omega\)\to \mathcal{W}^{2,p}_0\(\Omega\)$ be the operator defined as
\begin{equation}\label{Pr4Eq4}
L_{\varepsilon,\tau}\(\Psi\)=\Psi-\Delta^{-1}\[\lambda_\varepsilon h_\varepsilon\chi_{\varepsilon,\tau}f'\(w_\varepsilon\)\Psi\]\qquad\forall\Psi\in\mathcal{W}^{2,p}_0\(\Omega\).
\end{equation}
As a first step, we prove that there exists a constant $C=C\(p,\delta\)>0$ such that 
\begin{equation}\label{Pr4Eq5}
\left\|\Psi\right\|_{W^{2,p}}\le C\left\|L_{\varepsilon,\tau}\(\Psi\)\right\|_{W^{2,p}}\qquad\forall\Psi\in\mathcal{W}^{2,p}_0\(\Omega\)
\end{equation}
so that in particular $L_{\varepsilon,\tau}$ is an isomorphism. We assume by contradiction that there exist sequences $\(\varepsilon_n,\tau_n,\Psi_n\)_{n}$ such that $\varepsilon_n\to0$, $\tau_n\in T_{\varepsilon_n}^k\(\delta\)$, $\Psi_n\in\mathcal{W}^{2,p}_0\(\Omega\)$ and 
\begin{equation}\label{Pr4Eq6}
\left\|\Psi_n\right\|_{W^{2,p}}=1\quad\text{and}\quad\left\|L_{\varepsilon_n,\tau_n}\(\Psi_n\)\right\|_{W^{2,p}}\to0
\end{equation}
as $n\to\infty$. In particular, we obtain that $\(\Psi_n\)_{n}$ converges, up to a subsequence, weakly in $W^{2,p}\(\Omega\)$ and strongly in $H^1_0\(\Omega\)$ and $C^0\(\overline\Omega\)$ to a function $\Psi_0$. By using the second part of \eqref{Pr4Eq6}, we obtain
\begin{equation}\label{Pr4Eq7}
\int_\Omega\left<\nabla\Psi_n,\nabla\phi\right>dx-\lambda_{\varepsilon_n}\int_\Omega h_{\varepsilon_n}\chi_{\varepsilon_n,\tau_n}f'\(u_{\varepsilon_n}\)\Psi_n\phi\,dx=\smallo\(1\)
\end{equation}
for all $\phi\in C^\infty_c\(\Omega\)$. Since $\lambda_{\varepsilon_n}h_{\varepsilon_n}\chi_{\varepsilon_n,\tau_n}f'\(u_{\varepsilon_n}\)$ is uniformly bounded and converges pointwise to $\lambda_0h_0f'\(u_0\)$ in $\Omega$ and $\Psi_n\to\Psi_0$ in $H^1_0\(\Omega\)$ and $C^0\(\overline\Omega\)$, by passing to the limit into \eqref{Pr4Eq7}, we obtain that $\Psi_0$ is a solution of the problem
\begin{equation*}
\left\{\begin{aligned}&\Delta\Psi_0=\lambda_0 h_0f'\(u_0\)\Psi_0&&\text{in }\Omega\\&\Psi_0=0&&\text{on }\partial\Omega.\end{aligned}\right.
\end{equation*}
Since $u_0$ is non-degenerate, it follows that $\Psi_0\equiv0$. By using \eqref{Pr4Eq6} together with standard  $L^p$-estimates for the Dirichlet problem (see Lemma~9.17 of Gilbarg--Trudinger~\cite{GilTru}), we then obtain 
\begin{align*}
\left\|\Psi_n\right\|_{W^{2,p}}&\le\left\|L_{\varepsilon_n,\tau_n}\(\Psi_n\)\right\|_{W^{2,p}}+\left\|\Delta^{-1}\[\lambda_{\varepsilon_n}h_{\varepsilon_n}\chi_{\varepsilon_n,\tau_n}f'\(u_{\varepsilon_n}\)\Psi_n\]\right\|_{W^{2,p}}\\
&=\smallo\(1\)+\bigO\(\left\|\lambda_{\varepsilon_n}h_{\varepsilon_n}\chi_{\varepsilon_n,\tau_n}f'\(u_{\varepsilon_n}\)\Psi_n\right\|_{L^p}\)=\smallo\(1\)
\end{align*}
as $n\to\infty$, which is in contradiction with \eqref{Pr4Eq6}. This ends the proof of \eqref{Pr4Eq5}.

Now, for every $\varepsilon\in\(0,\varepsilon_0\)$ and $\(\gamma,\tau\)\in T^k_{\varepsilon}\(\delta\)\times\Gamma_\varepsilon^k\(\delta'\)$, we let $N_{\varepsilon,\gamma,\tau},T_{\varepsilon,\gamma,\tau}:\mathcal{W}^{2,p}_0\(\Omega\)\to \mathcal{W}^{2,p}_0\(\Omega\)$ be the operators defined as
\begin{align*}
N_{\varepsilon,\gamma,\tau}\(\Psi\)&:=\Delta^{-1}\big[\lambda_\varepsilon h_\varepsilon\chi_{\varepsilon,\tau}\big(f\big(\widetilde{U}_{\varepsilon,\gamma,\tau}+\Psi\big)-f\(w_\varepsilon\)-f'\(w_\varepsilon\)\Psi\big)\big],\allowdisplaybreaks\\
T_{\varepsilon,\gamma,\tau}\(\Psi\)&:=L^{-1}_{\varepsilon,\tau}\(N_{\varepsilon,\gamma,\tau}\(\Psi\)-R_{\varepsilon,\tau}\)
\end{align*}
for all $\Psi\in \mathcal{W}^{2,p}_0\(\Omega\)$, where
$$R_{\varepsilon,\tau}:=w_\varepsilon-\Delta^{-1}\[\lambda_\varepsilon h_\varepsilon\chi_{\varepsilon,\tau}f\(w_\varepsilon\)\]=\Delta^{-1}\[\lambda_\varepsilon h_\varepsilon(1-\chi_{\varepsilon,\tau})f\(w_\varepsilon\)\].$$
Note that the problem \eqref{Pr4Eq1} can be rewritten as the fixed point equation $T_{\varepsilon,\gamma,\tau}\(\Psi\)=\Psi$. For every $C>0$ and $\varepsilon\in\(0,\varepsilon_0\)$, we define
$$V_\varepsilon\(C\):=\left\{\Psi\in \mathcal{W}^{2,p}_0\(\Omega\):\,\left\|\Psi\right\|_{W^{2,p}}\le C/\overline{\gamma}_\varepsilon\right\}.$$
We will prove that if $C$ is chosen large enough, then $T_{\varepsilon,\gamma,\tau}$ has a fixed point in $V_\varepsilon\(C\)$ for small $\varepsilon>0$. By using a standard $L^p$-estimate and since $\lambda_\varepsilon\to\lambda_0$, $h_\varepsilon\to h_0$ and $w_\varepsilon\to w_0$ in $C^0\(\overline\Omega\)$, we obtain
\begin{equation}\label{Pr4Eq8}
\left\|R_{\varepsilon,\tau}\right\|_{W^{2,p}}=\bigO\(\left\|\lambda_\varepsilon h_\varepsilon\(1-\chi_{\varepsilon,\tau}\)f\(w_\varepsilon\)\right\|_{L^p}\)=\bigO\(\left\|1-\chi_{\varepsilon,\tau}\right\|_{L^p}\)=\smallo\(1/\overline{\gamma}_\varepsilon\)
\end{equation}
as $\varepsilon\to0$, uniformly in $\tau\in T_\varepsilon^k\(\delta\)$. Similarly, for every $\Psi,\Psi_1,\Psi_2\in V_\varepsilon\(C\)$, we obtain
\begin{align}
&\left\|N_{\varepsilon,\gamma,\tau}\(\Psi\)\right\|_{W^{2,p}}=\bigO\big(\big\|\chi_{\varepsilon,\tau}\big(f\big(\widetilde{U}_{\varepsilon,\gamma,\tau}+\Psi\big)-f\(w_\varepsilon\)-f'\(w_\varepsilon\)\Psi\big)\big\|_{L^p}\big),\label{Pr4Eq9}\allowdisplaybreaks\\
&\left\|N_{\varepsilon,\gamma,\tau}\(\Psi_1\)-N_{\varepsilon,\gamma,\tau}\(\Psi_2\)\right\|_{W^{2,p}}=\bigO\big(\big\|\chi_{\varepsilon,\tau}\big(f\big(\widetilde{U}_{\varepsilon,\gamma,\tau}+\Psi_1\big)-f\big(\widetilde{U}_{\varepsilon,\gamma,\tau}+\Psi_2\big)\nonumber\\
&\hspace{250pt}-f'\(w_\varepsilon\)\(\Psi_1-\Psi_2\)\big)\big\|_{L^p}\big).\label{Pr4Eq10}
\end{align}
By applying the mean value theorem together with H\"older's inequality, it follows from \eqref{Pr4Eq9} and \eqref{Pr4Eq10} that
\begin{align}
&\left\|N_{\varepsilon,\gamma,\tau}\(\Psi\)\right\|_{W^{2,p}}=\bigO\bigg(\bigg\|\chi_{\varepsilon,\tau}f'\Big(w_\varepsilon+t_1\sum_{i=1}^kB_{\varepsilon,\gamma_i,\tau_i}+\Psi\Big)\sum_{i=1}^kB_{\varepsilon,\gamma_i,\tau_i}\bigg\|_{L^p}\nonumber\\
&\hspace{214pt}+\left\|\chi_{\varepsilon,\tau}f''\(w_\varepsilon+s_1\Psi\)\right\|_{L^p}\left\|\Psi\right\|_{C^0}^2\bigg),\label{Pr4Eq11}\allowdisplaybreaks\\
&\left\|N_{\varepsilon,\gamma,\tau}\(\Psi_1\)-N_{\varepsilon,\gamma,\tau}\(\Psi_2\)\right\|_{W^{2,p}}\nonumber\\
&\quad=\bigO\big(\big\|\chi_{\varepsilon,\tau}\big(f'\big(\widetilde{U}_{\varepsilon,\gamma,\tau}+\(1-s_2\)\Psi_1+s_2\Psi_2\big)-f'\(w_\varepsilon\)\big)\big\|_{L^p}\left\|\Psi_1-\Psi_2\right\|_{C^0}\big)\nonumber\allowdisplaybreaks\\
&\quad=\bigO\bigg(\bigg\|\chi_{\varepsilon,\tau}f''\Big(w_\varepsilon+t_2\sum_{i=1}^kB_{\varepsilon,\gamma_i,\tau_i}+t_2\(1-s_2\)\Psi_1+t_2s_2\Psi_2\Big)\nonumber\\
&\qquad\qquad\times\Big(\sum_{i=1}^kB_{\varepsilon,\gamma_i,\tau_i}+\(1-s_2\)\Psi_1+s_2\Psi_2\Big)\bigg\|_{L^p}\left\|\Psi_1-\Psi_2\right\|_{C^0}\bigg)\label{Pr4Eq12}
\end{align}
for some functions $s_1,s_2,t_1,t_2:\Omega\to\[0,1\]$. Since $0\le\chi_{\varepsilon,\tau}\le1$ in $\Omega$, $w_\varepsilon\to w_0$ in $C^0\(\overline\Omega\)$ and $\Psi\in V_\varepsilon\(C\)$, we obtain
\begin{equation}\label{Pr4Eq13}
\left\|\chi_{\varepsilon,\tau}f''\(w_\varepsilon+s_1\Psi\)\right\|_{L^p}=\bigO\(1\).
\end{equation}
For every $j\in\left\{1,\dotsc,k\right\}$, by using \eqref{Sec32Eq3}, we obtain  
\begin{equation}\label{Pr4Eq14}
B_{\varepsilon,\gamma_j,\tau_j}\(x\)=\frac{2}{\gamma_j}\(\ln\frac{1}{\left|x-\overline{\tau_j}\right|}+\bigO\(1\)\)
\end{equation}
uniformly in $x\in \Omega\backslash B\(\overline{\tau_j},r_\varepsilon\)$. We let $R_\varepsilon:=\exp\(-\overline\gamma_\varepsilon\)\gg r_\varepsilon$. Since $\chi_{\varepsilon,\tau}\equiv0$ in $B\(\overline{\tau_j},r_\varepsilon\)$, $0\le\chi_{\varepsilon,\tau}\le1$ in $\Omega$, $w_\varepsilon\to w_0$ in $C^0\(\overline\Omega\)$, $u_0\(0\)=0$, $\Psi,\Psi_1,\Psi_2\in V_\varepsilon\(C\)$ and $0\le s_1,s_2,t_1,t_2\le1$, it follows from \eqref{Pr4Eq14} that 
\begin{align}
&\bigg\|\chi_{\varepsilon,\tau}f'\Big(w_\varepsilon+t_1\sum_{i=1}^kB_{\varepsilon,\gamma_i,\tau_i}+\Psi\Big)\sum_{i=1}^kB_{\varepsilon,\gamma_i,\tau_i}\mathbf{1}_{B\(\overline{\tau_j},R_\varepsilon\)}\bigg\|_{L^p}^p\nonumber\\
&\qquad=\bigO\(\overline\gamma_\varepsilon^p\int_{r_\varepsilon}^{R_\varepsilon}f'\(\frac{2t_1}{\gamma_j}\ln\frac{1}{r}+\smallo\(1\)\)^prdr\)\nonumber\allowdisplaybreaks\\
&\qquad=\bigO\(\overline\gamma_\varepsilon^{p+2}\int_{2\overline\gamma_\varepsilon/\gamma_j^2}^{\delta_0\overline\gamma_\varepsilon^2/\gamma_j^2}f'\(t_1\gamma_js+\smallo\(1\)\)^{p}\exp\(-\gamma_j^2s\)ds\)\nonumber\allowdisplaybreaks\\
&\qquad=\bigO\(\overline\gamma_\varepsilon^{3p+2}\int_{2\overline\gamma_\varepsilon/\gamma_j^2}^{\delta_0\overline\gamma_\varepsilon^2/\gamma_j^2}\exp\(\(pt_1^2s-1\)s\gamma_j^2+\smallo\(\gamma_j\)\)ds\)\nonumber\\
&\qquad=\bigO\(\overline\gamma_\varepsilon^{3p+2}\int_{\delta_0/[\overline\gamma_\varepsilon\(1+\delta'\)^2]}^{\delta_0/\(1-\delta'\)^2}\exp\(\(pt_1^2s-1\)s\gamma_j^2+\smallo\(\gamma_j\)\)ds\)=\smallo\(1\),\label{Pr4Eq15}\allowdisplaybreaks\\
&\bigg\|\chi_{\varepsilon,\tau}f''\Big(w_\varepsilon+t_2\sum_{i=1}^kB_{\varepsilon,\gamma_i,\tau_i}+t_2\(1-s_2\)\Psi_1+t_2s_2\Psi_2\Big)\nonumber\\
&\qquad\quad\times\Big(\sum_{i=1}^kB_{\varepsilon,\gamma_i,\tau_i}+\(1-s_2\)\Psi_1+s_2\Psi_2\Big)\mathbf{1}_{B\(\overline{\tau_j},R_\varepsilon\)}\bigg\|_{L^p}^p\nonumber\\
&\qquad=\bigO\(\overline\gamma_\varepsilon^p\int_{r_\varepsilon}^{R_\varepsilon}f''\(\frac{2t_2}{\gamma_j}\ln\frac{1}{r}+\smallo\(1\)\)^prdr\)\nonumber\allowdisplaybreaks\\
&\qquad=\bigO\(\overline\gamma_\varepsilon^{p+2}\int_{2\overline\gamma_\varepsilon/\gamma_j^2}^{\delta_0\overline\gamma_\varepsilon^2/\gamma_j^2}f''\(t_2\gamma_js+\smallo\(1\)\)^{p}\exp\(-\gamma_j^2s\)ds\)\nonumber\allowdisplaybreaks\\
&\qquad=\bigO\(\overline\gamma_\varepsilon^{4p+2}\int_{2\overline\gamma_\varepsilon/\gamma_j^2}^{\delta_0\overline\gamma_\varepsilon^2/\gamma_j^2}\exp\(\(pt_2^2s-1\)s\gamma_j^2+\smallo\(\gamma_j\)\)ds\)\nonumber\\
&\qquad=\bigO\(\overline\gamma_\varepsilon^{4p+2}\int_{\delta_0/[\overline\gamma_\varepsilon\(1+\delta'\)^2]}^{\delta_0/\(1-\delta'\)^2}\exp\(\(pt_2^2s-1\)s\gamma_j^2+\smallo\(\gamma_j\)\)ds\)=\smallo\(1\)\label{Pr4Eq16}
\end{align}
as $\varepsilon\to0$, uniformly in $(\gamma,\tau)\in \Gamma_\varepsilon^k\(\delta'\)\times T_\varepsilon^k\(\delta\)$ and $\Psi,\Psi_1,\Psi_2\in V_\varepsilon(C)$, provided we choose $p$ such that $p\delta_0/\(1-\delta'\)^2-1<0$, i.e. $p<\(1-\delta'\)^2/\delta_0$. By using \eqref{Pr4Eq14}, we obtain 
\begin{multline}\label{Pr4Eq17}
\bigg\|\chi_{\varepsilon,\tau}f'\Big(w_\varepsilon+t_1\sum_{i=1}^kB_{\varepsilon,\gamma_i,\tau_i}+\Psi\Big)\sum_{i=1}^kB_{\varepsilon,\gamma_i,\tau_i}\mathbf{1}_{\Omega_{R_\varepsilon,\tau}}\bigg\|_{L^p}^p\\
=\bigO\bigg(\frac{1}{\overline\gamma_\varepsilon^p}\sum_{i=1}^k\int_{\Omega_{R_\varepsilon,\tau}}\left|\ln\left|x-\overline{\tau_i}\right|+\bigO\(1\)\right|^pdx\bigg)=\smallo\(1\)
\end{multline}
and, similarly,
\begin{multline}\label{Pr4Eq18}
\bigg\|\chi_{\varepsilon,\tau}f''\Big(w_\varepsilon+t_2\sum_{i=1}^kB_{\varepsilon,\gamma_i,\tau_i}+t_2\(1-s_2\)\Psi_1+t_2s_2\Psi_2\Big)\\
\times\Big(\sum_{i=1}^kB_{\varepsilon,\gamma_i,\tau_i}+\(1-s_2\)\Psi_1+s_2\Psi_2\Big)\mathbf{1}_{\Omega_{R_\varepsilon,\tau}}\bigg\|_{L^p}=\smallo\(1\)
\end{multline}
as $\varepsilon\to0$, uniformly in $(\gamma,\tau)\in \Gamma_\varepsilon^k\(\delta'\)\times T_\varepsilon^k\(\delta\)$ and $\Psi,\Psi_1,\Psi_2\in V_\varepsilon(C)$.

Note that similar estimates as in \eqref{Pr4Eq15}--\eqref{Pr4Eq17} yield \eqref{Pr4Eq3b}.

By putting together \eqref{Pr4Eq12}--\eqref{Pr4Eq18} and using the continuity of the embedding $W^{2,p}\(\Omega\)\hookrightarrow  C^0\(\overline{\Omega}\)$, we obtain
\begin{align}
\left\|N_{\varepsilon,\gamma,\tau}\(\Psi\)\right\|_{W^{2,p}}&=\smallo\big(\left\|\Psi\right\|^2_{W^{2,p}}\big),\label{Pr4Eq19}\allowdisplaybreaks\\
\left\|N_{\varepsilon,\gamma,\tau}\(\Psi_1\)-N_{\varepsilon,\gamma,\tau}\(\Psi_2\)\right\|_{W^{2,p}}&=\smallo\(\left\|\Psi_1-\Psi_2\right\|_{W^{2,p}}\)\label{Pr4Eq20}
\end{align}
as $\varepsilon\to0$, uniformly in $(\gamma,\tau)\in \Gamma_\varepsilon^k\(\delta'\)\times T_\varepsilon^k\(\delta\)$ and $\Psi,\Psi_1,\Psi_2\in V_\varepsilon(C)$. It follows from \eqref{Pr4Eq5}, \eqref{Pr4Eq8}, \eqref{Pr4Eq19} and \eqref{Pr4Eq20} that there exist $\varepsilon_1\(p,\delta,\delta'\)\in\(0,\varepsilon_0\)$ and $C=C\(p,\delta,\delta'\)>0$ (here we do not specify the dependence in $\delta_0$ as this number is considered to be fixed) such that for every $\varepsilon\in\(0,\varepsilon_1\(\delta\)\)$ and $(\gamma,\tau)\in \Gamma_\varepsilon^k\(\delta'\)\times T_\varepsilon^k\(\delta\)$, $T_{\varepsilon,\gamma,\tau}$ is a contraction mapping on $V_\varepsilon\(C\)$. By the fixed point theorem, we then obtain that there exists a unique solution $\Psi_{\varepsilon,\gamma,\tau}\in V_\varepsilon\(C\)$ to the problem \eqref{Pr4Eq1}. By fixing a number $p$ such that $2<p<\(1-\delta_0\)^2/\delta_0$, the first inequality in \eqref{Pr4Eq2} then follows from the continuity of the embedding $W^{2,p}\(\Omega\)\hookrightarrow C^1\(\overline\Omega\)$. By using the Moser--Trudinger inequality together with standard elliptic regularity theory, we obtain that $\Psi_{\varepsilon,\gamma,\tau}\in C^{l,\alpha}\(\Omega\)\cap C^2\(\overline\Omega\)$. Furthermore, by symmetry of $\Omega$, $w_\varepsilon$, $h_\varepsilon$, $\chi_{\varepsilon,\tau}$ and $\widetilde{U}_{\varepsilon,\gamma,\tau}$, we obtain that $\Psi_{\varepsilon,\gamma,\tau}$ is even in $x_2$ and by using the continuous differentiability of $\widetilde{U}_{\varepsilon,\gamma,\tau}$ and $\chi_{\varepsilon,\tau}$ in $\(\gamma,\tau\)$, we obtain that $\Psi_{\varepsilon,\gamma,\tau}$ is continuously differentiable in $\(\gamma,\tau\)$.

Now, we prove the second inequality in \eqref{Pr4Eq2}. For $i\in\left\{1,\dotsc,k\right\}$, by differentiating \eqref{Pr4Eq1} in $\gamma_i$, we obtain
\begin{multline}\label{Pr4Eq21}
\Delta\[L_{\varepsilon,\tau}\(\partial_{\gamma_i}\[\Psi_{\varepsilon,\gamma,\tau}\]\)\]=\lambda_\varepsilon h_\varepsilon\chi_{\varepsilon,\tau}f'\big(\widetilde{U}_{\varepsilon,\gamma,\tau}+\Psi_{\varepsilon,\gamma,\tau}\big)\partial_{\gamma_i}\big[\widetilde{U}_{\varepsilon,\gamma,\tau}\big]\\
+\lambda_\varepsilon h_\varepsilon\chi_{\varepsilon,\tau}\big(f'\big(\widetilde{U}_{\varepsilon,\gamma,\tau}+\Psi_{\varepsilon,\gamma,\tau}\big)-f'\(w_\varepsilon\)\big)\partial_{\gamma_i}\[\Psi_{\varepsilon,\gamma,\tau}\],
\end{multline}
where $L_{\varepsilon,\tau}$ is as in \eqref{Pr4Eq4}. By using \eqref{Pr4Eq5} and \eqref{Pr4Eq21} together with a standard $L^p$-estimate and since $\lambda_\varepsilon\to\lambda_0$ and $h_\varepsilon\to h_0$ in $C^0\(\overline\Omega\)$, we then obtain
\begin{multline}\label{Pr4Eq22}
\left\|\partial_{\gamma_i}\[\Psi_{\varepsilon,\gamma,\tau}\]\right\|_{W^{2,p}}=\bigO\bigg(\left\|\chi_{\varepsilon,\tau}f'\big(\widetilde{U}_{\varepsilon,\gamma,\tau}+\Psi_{\varepsilon,\gamma,\tau}\big)\partial_{\gamma_i}\big[\widetilde{U}_{\varepsilon,\gamma,\tau}\big]\right\|_{L^p}\\
+\left\|\chi_{\varepsilon,\tau}\big(f'\big(\widetilde{U}_{\varepsilon,\gamma,\tau}+\Psi_{\varepsilon,\gamma,\tau}\big)-f'\(w_\varepsilon\)\big)\partial_{\gamma_i}\[\Psi_{\varepsilon,\gamma,\tau}\]\right\|_{L^p}\bigg).
\end{multline}
By using \eqref{Sec32Eq4}, we obtain  
\begin{equation}\label{Pr4Eq23}
\partial_{\gamma_i}\big[\widetilde{U}_{\varepsilon,\gamma,\tau}\big]=\frac{2}{\gamma_i^2}\(\ln\left|x-\overline{\tau_i}\right|+\bigO\(1\)\)
\end{equation}
uniformly in $x\in \Omega\backslash B\(\overline{\tau_i},r_\varepsilon\)$. By using \eqref{Pr4Eq14} and \eqref{Pr4Eq23} and proceeding as in \eqref{Pr4Eq15}--\eqref{Pr4Eq18}, we obtain 
\begin{multline}\label{Pr4Eq24}
\bigg\|\chi_{\varepsilon,\tau}f'\big(\widetilde{U}_{\varepsilon,\gamma,\tau}+\Psi_{\varepsilon,\gamma,\tau}\big)\partial_{\gamma_i}\big[\widetilde{U}_{\varepsilon,\gamma,\tau}\big]\bigg\|_{L^p}^p=\bigO\bigg(\int_{r_\varepsilon}^{R_\varepsilon}f'\(\frac{2}{\gamma_i}\ln\frac{1}{r}+\smallo\(1\)\)^prdr\\
+\frac{1}{\gamma_i^{2p}}\int_{\Omega_{R_\varepsilon,\tau}}\left|\ln\left|x-\overline{\tau_i}\right|+\bigO\(1\)\right|^pdx\bigg)=\bigO\(\frac{1}{\overline\gamma_\varepsilon^{2p}}\)
\end{multline}
uniformly in $(\gamma,\tau)\in \Gamma_\varepsilon^k\(\delta'\)\times T_\varepsilon^k\(\delta\)$, provided $p$ is chosen so that $p<\(1-\delta'\)^2/\delta_0$. On the other hand, by applying the mean value theorem together with H\"older's inequality, we obtain
\begin{multline}\label{Pr4Eq25}
\left\|\chi_{\varepsilon,\tau}\big(f'\big(\widetilde{U}_{\varepsilon,\gamma,\tau}+\Psi_{\varepsilon,\gamma,\tau}\big)-f'\(w_\varepsilon\)\big)\partial_{\gamma_i}\[\Psi_{\varepsilon,\gamma,\tau}\]\right\|_{L^p}\le\bigg\|\partial_{\gamma_i}\[\Psi_{\varepsilon,\gamma,\tau}\]\bigg\|_{C^0}\\
\times\bigg\|\chi_{\varepsilon,\tau}f''\Big(w_\varepsilon+t\sum_{i=1}^kB_{\varepsilon,\gamma_i,\tau_i}+t\Psi_{\varepsilon,\gamma,\tau}\Big)\Big(\sum_{i=1}^kB_{\varepsilon,\gamma_i,\tau_i}+\Psi_{\varepsilon,\gamma,\tau}\Big)\bigg\|_{L^p}
\end{multline}
for some function $t:\Omega\to\[0,1\]$. By using \eqref{Pr4Eq14} and proceeding as in \eqref{Pr4Eq15}--\eqref{Pr4Eq18}, we obtain 
\begin{equation}\label{Pr4Eq26}
\bigg\|\chi_{\varepsilon,\tau}f''\Big(w_\varepsilon+t\sum_{i=1}^kB_{\varepsilon,\gamma_i,\tau_i}+t\Psi_{\varepsilon,\gamma,\tau}\Big)\Big(\sum_{i=1}^kB_{\varepsilon,\gamma_i,\tau_i}+\Psi_{\varepsilon,\gamma,\tau}\Big)\bigg\|_{L^p}^p=\smallo\(1\)
\end{equation}
as $\varepsilon\to0$, uniformly in $(\gamma,\tau)\in \Gamma_\varepsilon^k\(\delta'\)\times T_\varepsilon^k\(\delta\)$, provided we choose $p$ such that $p<\(1-\delta'\)^2/\delta_0$. By putting together \eqref{Pr4Eq22} and \eqref{Pr4Eq24}--\eqref{Pr4Eq26}, we obtain
\begin{equation}\label{Pr4Eq27}
\left\|\partial_{\gamma_i}\[\Psi_{\varepsilon,\gamma,\tau}\]\right\|_{W^{2,p}}=\bigO\(\frac{1}{\overline\gamma_\varepsilon^2}\)+\smallo\(\left\|\partial_{\gamma_i}\[\Psi_{\varepsilon,\gamma,\tau}\]\right\|_{C^0}\)
\end{equation}
as $\varepsilon\to0$, uniformly in $(\gamma,\tau)\in \Gamma_\varepsilon^k\(\delta'\)\times T_\varepsilon^k\(\delta\)$. By choosing $p$ such that
$$2<p<\frac{\(1-\delta_0\)^2}{\delta_0}$$
and using the continuity of the embedding $W^{2,p}\(\Omega\)\hookrightarrow  C^{1}\(\overline\Omega\)$, the second inequality in \eqref{Pr4Eq2} then follows from \eqref{Pr4Eq27}.

Now, we prove \eqref{Pr4Eq3}. For every $i\in\left\{1,\dotsc,k\right\}$, by differentiating \eqref{Pr4Eq1} in $\tau_i$, we obtain
\begin{multline}\label{Pr4Eq28}
\Delta\[L_{\varepsilon,\tau}\(\partial_{\tau_i}\[\Psi_{\varepsilon,\gamma,\tau}\]\)\]=\lambda_\varepsilon h_\varepsilon f\big(\widetilde{U}_{\varepsilon,\gamma,\tau}+\Psi_{\varepsilon,\gamma,\tau}\big)\partial_{\tau_i}\big[\chi_{\varepsilon,\tau}\big]+\lambda_\varepsilon h_\varepsilon\chi_{\varepsilon,\tau}f'\big(\widetilde{U}_{\varepsilon,\gamma,\tau}+\Psi_{\varepsilon,\gamma,\tau}\big)\\
\times\partial_{\tau_i}\big[\widetilde{U}_{\varepsilon,\gamma,\tau}\big]+\lambda_\varepsilon h_\varepsilon\chi_{\varepsilon,\tau}\big(f'\big(\widetilde{U}_{\varepsilon,\gamma,\tau}+\Psi_{\varepsilon,\gamma,\tau}\big)-f'\(w_\varepsilon\)\big)\partial_{\tau_i}\[\Psi_{\varepsilon,\gamma,\tau}\],
\end{multline}
where $L_{\varepsilon,\tau}$ is as in \eqref{Pr4Eq4}. By using \eqref{Pr4Eq5} and \eqref{Pr4Eq28} together with a standard $L^p$--estimate and since $\lambda_\varepsilon\to\lambda_0$ and $h_\varepsilon\to h_0$ in $C^0\(\overline\Omega\)$, we obtain
\begin{multline}\label{Pr4Eq29}
\left\|\partial_{\tau_i}\[\Psi_{\varepsilon,\gamma,\tau}\]\right\|_{W^{2,p}}=\bigO\bigg(\left\|f\big(\widetilde{U}_{\varepsilon,\gamma,\tau}+\Psi_{\varepsilon,\gamma,\tau}\big)\partial_{\tau_i}\big[\chi_{\varepsilon,\tau}\big]\right\|_{L^p}+\Big\|\chi_{\varepsilon,\tau}f'\big(\widetilde{U}_{\varepsilon,\gamma,\tau}+\Psi_{\varepsilon,\gamma,\tau}\big)\\
\times\partial_{\tau_i}\big[\widetilde{U}_{\varepsilon,\gamma,\tau}\big]\Big\|_{L^p}+\left\|\chi_{\varepsilon,\tau}\big(f'\big(\widetilde{U}_{\varepsilon,\gamma,\tau}+\Psi_{\varepsilon,\gamma,\tau}\big)-f'\(w_\varepsilon\)\big)\partial_{\tau_i}\[\Psi_{\varepsilon,\gamma,\tau}\]\right\|_{L^p}\bigg).
\end{multline}
It is easy to see that
\begin{equation}\label{Pr4Eq30}
\partial_{\tau_i}\big[\chi_{\varepsilon,\tau}\big]=\bigO\(\frac{1}{r_\varepsilon^2}\mathbf{1}_{A\(\overline{\tau_i},r_\varepsilon,r_\varepsilon+r_\varepsilon^2\)}\)
\end{equation}
uniformly in $\Omega$. By using \eqref{Sec32Eq3} and \eqref{Sec32Eq5} and since $\delta'<1-\sqrt{2\delta_0}$, we obtain
\begin{equation}\label{Pr4Eq31}
\partial_{\tau_i}\big[\widetilde{U}_{\varepsilon,\gamma,\tau}\big]=\frac{2}{\gamma_i}\(1+\smallo\(1\)\)\frac{x_1-\tau_i}{\left|x-\overline{\tau_i}\right|^2}+\bigO\(\frac{1}{\gammae}\)
\end{equation}
uniformly in $x=\(x_1,x_2\)\in \Omega\backslash B\(\overline{\tau_i},r_\varepsilon\)$. By using \eqref{Pr4Eq14}, \eqref{Pr4Eq30}, \eqref{Pr4Eq31} and proceeding as in \eqref{Pr4Eq15}--\eqref{Pr4Eq18}, we obtain 
\begin{align}
&\left\|f\big(\widetilde{U}_{\varepsilon,\gamma,\tau}+\Psi_{\varepsilon,\gamma,\tau}\big)\partial_{\tau_i}\big[\chi_{\varepsilon,\tau}\big]\right\|^p_{L^p}\nonumber\\
&\qquad=\bigO\bigg(\frac{1}{r_\varepsilon^{2p}}\int_{r_\varepsilon}^{r_\varepsilon+r_\varepsilon^2}f\(\frac{2}{\gamma_i}\ln\frac{1}{r}\(1+\smallo\(1\)\)+\bigO\(\frac{1}{\gammae}\)\)^prdr\bigg)\nonumber\\
&\qquad=\bigO\(\frac{\overline\gamma_\varepsilon^{p+2}}{r_\varepsilon^{2p}}\int^{\delta_0\overline\gamma_\varepsilon^2/\gamma_i^2}_{\delta_0\overline\gamma_\varepsilon^2/\gamma_i^2-\(2/\gamma_i^2\)\ln\(1+r_\varepsilon\)}\exp\(\(ps-1\)s\gamma_i^2\(1+\smallo\(1\)\)\)ds\)\nonumber\allowdisplaybreaks\\
&\qquad=\bigO\(\frac{\overline\gamma_\varepsilon^p}{r_\varepsilon^{2p}}\ln\(1+r_\varepsilon\)\exp\(\(p\delta_0\frac{\overline\gamma_\varepsilon^2}{\gamma_i^2}-1\)\delta_0\overline\gamma_\varepsilon^2+\smallo\(\overline\gamma_\varepsilon^2\)\)\)\nonumber\allowdisplaybreaks\\
&\qquad=\bigO\(\overline\gamma_\varepsilon^p\exp\(p\delta_0^2\frac{\overline\gamma_\varepsilon^4}{\gamma_i^2}+\(p-\frac{3}{2}\)\delta_0\overline\gamma_\varepsilon^2+\smallo\(\overline\gamma_\varepsilon^2\)\)\)\nonumber\\
&\qquad=\bigO\(\overline\gamma_\varepsilon^p\exp\(\(\frac{p\delta_0}{\(1-\delta'\)^2}+p-\frac{3}{2}\)\delta_0\overline\gamma_\varepsilon^2+\smallo\(\overline\gamma_\varepsilon^2\)\)\)=\smallo\(\frac{1}{\overline\gamma_\varepsilon^p}\),\label{Pr4Eq32}\allowdisplaybreaks\\
&\left\|\chi_{\varepsilon,\tau}f'\big(\widetilde{U}_{\varepsilon,\gamma,\tau}+\Psi_{\varepsilon,\gamma,\tau}\big)\partial_{\tau_i}\big[\widetilde{U}_{\varepsilon,\gamma,\tau}\big]\right\|_{L^p}^p=\bigO\Bigg(\frac{1}{\gamma_i^p}\int_{\Omega_{R_\varepsilon,\tau}}\(\frac{1}{\left|x-\overline{\tau_i}\right|}+1\)^pdx\nonumber\\
&\qquad\quad+\frac{1}{\gamma_i^p}\int_{r_\varepsilon}^{R_\varepsilon}f'\(\frac{2}{\gamma_i}\ln\frac{1}{r}\(1+\smallo\(1\)\)+\bigO\(\frac{1}{\gammae}\)\)^pr^{1-p}dr\Bigg)\allowdisplaybreaks\nonumber\\
&=\bigO\(\frac{1}{\overline\gamma_\varepsilon^p}+\gamma_i^{p+2}\int_{\delta_0/[\overline\gamma_\varepsilon\(1+\delta'\)^2]}^{\delta_0/\(1-\delta'\)^2}\exp\(\(ps+\frac{p}{2}-1\)s\gamma_i^2+\smallo\(\gamma_i^2\)\)ds\)=\bigO\(\frac{1}{\overline\gamma_\varepsilon^p}\)\label{Pr4Eq33}
\end{align}
as $\varepsilon\to0$, uniformly in $(\gamma,\tau)\in \Gamma_\varepsilon^k\(\delta'\)\times T_\varepsilon^k\(\delta\)$, provided we choose $p$ such that 
\begin{multline*}
\frac{p\delta_0}{\(1-\delta'\)^2}+p-\frac{3}{2}<0\quad\text{and}\quad\frac{p\delta_0}{\(1-\delta'\)^2}+\frac{p}{2}-1<0,\\
\text{i.e. }\max\(\frac{1}{2}+\frac{\delta_0}{\(1-\delta'\)^2},\frac{2}{3}\(1+\frac{\delta_0}{\(1-\delta'\)^2}\)\)<\frac{1}{p}<1,
\end{multline*}
which is possible since $\delta'<1-\sqrt{2\delta_0}$. Note that in this case, we cannot choose $p>2$ and so $W^{2,p}\(\Omega\)$ does not embed into $C^1\(\overline\Omega\)$. Furthermore, by proceeding as in \eqref{Pr4Eq25}--\eqref{Pr4Eq26}, we obtain 
\begin{equation}\label{Pr4Eq34}
\left\|\chi_{\varepsilon,\tau}\big(f'\big(\widetilde{U}_{\varepsilon,\gamma,\tau}+\Psi_{\varepsilon,\gamma,\tau}\big)-f'\(w_\varepsilon\)\big)\partial_{\tau_i}\big[\Psi_{\varepsilon,\gamma,\tau}\big]\right\|_{L^p}=\smallo\(\left\|\partial_{\tau_i}\big[\Psi_{\varepsilon,\gamma,\tau}\big]\right\|_{C^0}\)
\end{equation}
as $\varepsilon\to0$, uniformly in $(\gamma,\tau)\in \Gamma_\varepsilon^k\(\delta'\)\times T_\varepsilon^k\(\delta\)$. By putting together \eqref{Pr4Eq29}, \eqref{Pr4Eq32}, \eqref{Pr4Eq33} and \eqref{Pr4Eq34}, we obtain
\begin{equation}\label{Pr4Eq35}
\left\|\partial_{\tau_i}\big[\Psi_{\varepsilon,\gamma,\tau}\big]\right\|_{W^{2,p}}=\bigO\(\frac{1}{\overline\gamma_\varepsilon}\)+\smallo\(\left\|\partial_{\tau_i}\big[\Psi_{\varepsilon,\gamma,\tau}\big]\right\|_{C^0}\)
\end{equation}
as $\varepsilon\to0$, uniformly in $(\gamma,\tau)\in \Gamma_\varepsilon^k\(\delta'\)\times T_\varepsilon^k\(\delta\)$. By using the continuity of the embeddings of $W^{2,p}\(\Omega\)$ into $C^0\(\overline\Omega\)$ and $H^1\(\Omega\)$, \eqref{Pr4Eq3} then follows from \eqref{Pr4Eq35}.

Note that \eqref{Pr4Eq32} corresponds to the first identity in \eqref{Pr4Eq3c}, while the second one follows from \eqref{Pr4Eq33} together with the already proven \eqref{Pr4Eq3} and \eqref{Pr4Eq3b}, which yield
\begin{align*}
\left\|f'\(U_{\ve,\gamma,\tau}\) D_{\tau_i}\Psi_{\ve,\gamma,\tau}\mathbf{1}_{\Omega_{r_\ve,\tau}}\right\|_{L^p}&=\bigO\(\gammae^2\left\|\exp\big(U_{\ve,\gamma,\tau}^2\big)\mathbf{1}_{\Omega_{r_\ve,\tau}}\right\|_{L^p}\left\|D_{\tau_i}\Psi_{\ve,\gamma,\tau}\right\|_{C^0}\)=\bigO\(\gammae\).
\end{align*}
 This ends the proof of Proposition~\ref{Pr4}.
\endproof

\section{Expansions of the bubble and its derivatives}\label{App}

In this section we give a precise asymptotic analysis of spherical solutions, and prove some useful consequences.

\begin{proposition}\label{Pr10}
For every $\gamma>0$, let $\overline{B}_{\gamma}$ be the unique radial solution to the problem 
$$\left\{\begin{aligned}&\Delta\overline{B}_{\gamma}=f\(\overline{B}_{\gamma}\)&&\text{in }\R^2\\&\overline{B}_{\gamma}\(0\)=\gamma,&&\end{aligned}\right.$$ 
where $f\(s\):=s\exp\(s^2\)$ for all $s\in\R$. Set
\begin{equation}\label{defmugamma}
\mu_\gamma^2:=4\gamma^{-2}\exp\(-\gamma^2\)\quad\text{and}\quad t\(r\):=\ln\(1+r^2\)\quad\forall r\ge0
\end{equation}
and let $\varphi$ be the unique radial solution to the problem
$$\left\{\begin{aligned}&\Delta\varphi=4e^{-2t}\(t^2 -t+2\varphi\) &&\text{in }\R^2\\&\varphi\(0\)=0.&&\end{aligned}\right.$$
Then 
$$\overline{B}_{\gamma}\(r\)=\gamma-\frac{t\(r/\mu_\gamma\)}{\gamma}+\frac{\varphi\(r/\mu_\gamma\)}{\gamma^3}+D_\gamma\(r/\mu_\gamma\),$$
where
\begin{equation}\label{Pr10Eq1}
D_\gamma\(r\)=\bigO\(\frac{t\(r\)}{\gamma^5}\)\quad\text{and}\quad D'_\gamma\(r\)=\bigO\(\frac{1}{\gamma^5r}\)
\end{equation}
as $\gamma\to\infty$, uniformly in $r\in\(0,\mu_\gamma^{\delta-1}\)$, $\delta\in\(0,1\)$ fixed. Furthermore, $\varphi\(r\)\sim-t\(r\)$ and $\varphi'\(r\)\sim-t'\(r\)$ as $r\to\infty$.
\end{proposition}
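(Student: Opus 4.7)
The plan is to substitute the stated ansatz into the ODE for $\overline B_\gamma$, reduce matters to an inhomogeneous linear equation for $D_\gamma$ driven by an explicit source, and close the estimate by a bootstrap built on variation of parameters for the linearized Liouville operator. I first recall from~\cites{ManMar,MalMar,DruThi} that $v_\gamma\(s\):=\overline B_\gamma\(\mu_\gamma s\)$ admits the leading expansion $v_\gamma\(s\)=\gamma-t\(s\)/\gamma+\smallo\(1/\gamma\)$ uniformly on $\(0,\mu_\gamma^{\delta-1}\)$, together with a matching derivative bound. The function $\varphi$ itself is well-defined as the unique radial solution with $\varphi\(0\)=\varphi'\(0\)=0$ of $L\varphi=4e^{-2t}\(t^2-t\)$, where $L:=\Delta-8e^{-2t}$; crucially, $L$ annihilates $Z_0\(s\):=\(1-s^2\)/\(1+s^2\)$ (reflecting the scaling invariance of $\Delta t=4e^{-2t}$), and admits a second linearly independent radial solution $Z_0^\ast$ with $Z_0^\ast\(s\)\sim c_0\ln s$ as $s\to\infty$.

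Substituting $v_\gamma=\gamma-t/\gamma+\varphi/\gamma^3+D_\gamma$ into $-v_\gamma''-v_\gamma'/s=\mu_\gamma^2 v_\gamma\exp\(v_\gamma^2\)$, expanding
\begin{equation*}
\exp\(v_\gamma^2\)=e^{\gamma^2}e^{-2t}\exp\big(\(t^2+2\varphi\)/\gamma^2+2\gamma D_\gamma+\text{smaller}\big),
\end{equation*}
and using $\mu_\gamma^2 e^{\gamma^2}=4/\gamma^2$, the $\gamma^{-1}$ and $\gamma^{-3}$ terms on both sides cancel in view of the defining equations for $t$ and $\varphi$, yielding
\begin{equation*}
LD_\gamma=\frac{4e^{-2t}}{\gamma^5}\[\varphi-t\(t^2+2\varphi\)+\tfrac12\(t^2+2\varphi\)^2\]+R_\gamma\(s,D_\gamma\),\quad D_\gamma\(0\)=D_\gamma'\(0\)=0,
\end{equation*}
with $R_\gamma=\bigO\(\gamma e^{-2t}D_\gamma^2+e^{-2t}D_\gamma/\gamma^2\)$ plus strictly smaller contributions in $1/\gamma$. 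The asymptotic $\varphi\(s\)\sim -t\(s\)$ then follows by writing the variation-of-parameters representation of $\varphi$ in terms of the Wronskian pair $\(Z_0,Z_0^\ast\)$: the source $4e^{-2t}\(t^2-t\)$ is integrable against the relevant weights because the integrands decay like $\(\ln\rho\)^3/\rho^3$, and explicit evaluation of the leading constant (combined with $Z_0^\ast\sim c_0\ln s$ and $t\sim 2\ln s$) produces the claimed equivalence; differentiating the representation gives $\varphi'\sim -t'$.

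The heart of the argument, and the main obstacle, is the bound on $D_\gamma$ itself. Running the same variation-of-parameters formula on the $L D_\gamma$ equation, I close by bootstrap: assuming a priori $\left|D_\gamma\(s\)\right|\le Ct\(s\)/\gamma^5$, the principal source is pointwise $\bigO\(e^{-2t}\(1+t^4\)/\gamma^5\)$ (using the just-proven $\varphi\sim -t$), while the nonlinear contribution $\gamma e^{-2t}D_\gamma^2=\bigO\(e^{-2t}t^2/\gamma^9\)$ is strictly subordinate to it wherever $t\gtrsim\gamma^{-2}$, which is automatic on $\(c_0,\mu_\gamma^{\delta-1}\)$ for any fixed $c_0>0$; near $s=0$, where $t\(s\)=\bigO\(s^2\)$ and $\varphi\(s\)=\bigO\(s^4\)$, the source is smooth and the bound comes from a direct Taylor expansion. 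The cutoff $s<\mu_\gamma^{\delta-1}$ is essential: it forces $t\(s\)/\gamma^2<1-\delta+\smallo\(1\)$ and thereby keeps all higher-order terms $\(\(t^2+2\varphi\)/\gamma^2\)^k$ in the exponential expansion strictly controlled, so the formal expansion is valid and the bootstrap closes. Finally $D_\gamma'\(r\)=\bigO\(1/\(\gamma^5 r\)\)$ is extracted from the radial identity $\(sD_\gamma'\)'=-s\Delta D_\gamma$, integrated from the origin and combined with the pointwise estimate on $LD_\gamma$. The hardest step is the careful bookkeeping of the many cross-terms in $v_\gamma\exp\(v_\gamma^2\)$ needed to isolate the correct source at order $1/\gamma^5$ and to verify that all remainders fit into the bootstrap on the full range up to $\mu_\gamma^{\delta-1}$.
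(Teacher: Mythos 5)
Your proposal is correct in substance, but takes a genuinely different route from the paper's own proof, which simply cites Druet--Thizy~\cite{DruThi} (Claim 5.1 and estimates (5.8)--(5.9) there) and checks that their validity window covers $0\le r\le\mu_\gamma^\delta$. What you do instead is give a self-contained proof: substitute the ansatz, cancel the $\gamma^{-1}$ and $\gamma^{-3}$ terms against the defining equations for $t$ and $\varphi$, reduce to a linear equation $LD_\gamma=\text{source}+R_\gamma$ with $L=\Delta-8e^{-2t}$, and close by a bootstrap built on the radial Green's representation / variation of parameters with the kernel $\overline Z_0$, together with the asymptotics $\varphi\sim -t$ extracted from the Wronskian formula and an explicit integral evaluation. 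This is exactly the technique the paper itself deploys for the \emph{parallel} statement, Proposition~\ref{Pr11} (the $\gamma$-derivative $Z_{0,\gamma}$), where the authors set up the remainder $E_\gamma$, derive $\Delta E_\gamma = e^{-2t+t^2/\gamma^2}\big(\bigO(|E_\gamma|)+\bigO((1+t^4)/\gamma^4)\big)$, and close with the continuity/divergence-theorem bootstrap on $[T,R_\gamma]$; for the $\psi\sim t$ asymptotics they invoke the same \cite{ManMar}*{Lemmas 15--16} lemma that you are implicitly reproducing. So your approach buys a proof of Proposition~\ref{Pr10} from scratch, mirroring the Proposition~\ref{Pr11} template, instead of deferring to the cited reference.

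Two small slips, neither of which affects the order of magnitude or the conclusion. First, with the paper's sign convention $\Delta:=-\partial^2_{x_1}-\partial^2_{x_2}$, the Liouville identity reads $\Delta t=-4e^{-2t}$, not $\Delta t=4e^{-2t}$ as your parenthetical claims; the cancellation in your expansion only works with the correct sign, so this is a typo rather than a structural error. Second, your explicit source term $\frac{4e^{-2t}}{\gamma^5}\big[\varphi-t(t^2+2\varphi)+\tfrac12(t^2+2\varphi)^2\big]$ drops the cross-term $-2t\varphi/\gamma^4$ arising from the product $2\cdot(-t/\gamma)\cdot(\varphi/\gamma^3)$ in the square $v_\gamma^2$; the correct bracket is $\varphi-t(t^2+2\varphi)-2t\varphi+\tfrac12(t^2+2\varphi)^2$. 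Since $\varphi=\bigO(1+t)$, the missing contribution is again $\bigO(e^{-2t}(1+t^2)/\gamma^5)$, so the bound $D_\gamma=\bigO(t/\gamma^5)$ and the rest of the bootstrap are unaffected.
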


\proof
This was originally proven in~\cite{DruThi}, see Claim 5.1 and estimates (5.8) and (5.9) in particular (note that the function $B_\gamma$ in~\cite{DruThi} corresponds to the function $\overline{B}_\gamma$ via the relation $B_\gamma\(r\)=\overline{B}_\gamma(r/2)$). The estimates (5.8)--(5.9) in~\cite{DruThi} are valid as long as $0\le t(r/\mu_\gamma)\le\gamma^2-T_\gamma$, where $T_\gamma$ is chosen so that $\gamma^ke^{-T_\gamma}=\smallo\(1\)$ as $\gamma\to\infty$ for every $k\ge 0$. It is not difficult to see that this condition is satisfied uniformly for $0\le r\le \mu_\gamma^\delta$, for any fixed $\delta>0$.
\endproof

With regard to the derivative of $\overline{B}_{\gamma}$ with respect to $\gamma$, we obtain the following:

\begin{proposition}\label{Pr11}
Let $\overline{B}_{\gamma}$, $\mu_\gamma$, $t$ and $\varphi$ be as in Proposition~\ref{Pr10}. Set  $\overline{Z}_0\(r\):=\frac{1-r^2}{1+r^2}$ and let $\psi$ be the unique radial solution to the problem
$$\left\{\begin{aligned}
&\Delta\psi=4e^{-2t}\(\overline{Z}_0\(1-4t+2t^2+4\varphi\)+2\psi\)&&\text{in }\R^2\\
&\psi\(0\)=0.&&
\end{aligned}\right.$$
Then 
\begin{equation*}
Z_{0,\gamma}\(r\):=\partial_\gamma\[\overline{B}_{\gamma}\(r\)\]=\overline{Z}_0(r/\mu_\gamma)+\frac{\psi\(r/\mu_\gamma\)}{\gamma^2}+E_\gamma(r/\mu_\gamma),
\end{equation*}
where
\begin{equation}\label{Pr11Eq1}
E_\gamma\(r\)=\bigO\(\frac{1+t\(r\)}{\gamma^4}\)\quad\text{and}\quad E_\gamma'\(r\)=\bigO\(\frac{1}{\gamma^4r}\) 
\end{equation}
as $\gamma\to\infty$, uniformly in $r\in\(0,\mu_\gamma^{\delta-1}\)$, $\delta\in\(0,1\)$ fixed. 
Furthermore, $\psi\(r\)\sim t\(r\)$ and $\psi'\(r\)\sim t'\(r\)$ as $r\to\infty$.
\end{proposition}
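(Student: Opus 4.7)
The plan is to differentiate the defining ODE in $\gamma$ and rerun the asymptotic matching of Proposition~\ref{Pr10}. Differentiating $\Delta\overline B_\gamma=f(\overline B_\gamma)$ yields $\Delta Z_{0,\gamma}=f'(\overline B_\gamma)Z_{0,\gamma}$ with initial data $Z_{0,\gamma}(0)=1$, $Z_{0,\gamma}'(0)=0$. After rescaling to bubble coordinates $s:=r/\mu_\gamma$, $\widetilde Z_{0,\gamma}(s):=Z_{0,\gamma}(\mu_\gamma s)$, $\widetilde B_\gamma(s):=\overline B_\gamma(\mu_\gamma s)$, the equation becomes the radial problem
\begin{equation*}
\Delta_s\widetilde Z_{0,\gamma}=\mu_\gamma^2 f'(\widetilde B_\gamma)\widetilde Z_{0,\gamma},\qquad \widetilde Z_{0,\gamma}(0)=1,\quad \widetilde Z_{0,\gamma}'(0)=0.
\end{equation*}

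Using $f'(u)=(1+2u^2)e^{u^2}$, $\mu_\gamma^2 e^{\gamma^2}=4/\gamma^2$, and the expansion of Proposition~\ref{Pr10}, I would check that
\begin{equation*}
\mu_\gamma^2 f'(\widetilde B_\gamma)=8e^{-2t}+\frac{4e^{-2t}(1-4t+2t^2+4\varphi)}{\gamma^2}+\mathcal R_\gamma,
\end{equation*}
with a remainder $\mathcal R_\gamma$ controlled by the $(1+t)$-growth of $\varphi$ and the sharp bound on $D_\gamma$. A direct computation shows that $\overline Z_0(s)=(1-s^2)/(1+s^2)$ satisfies $\Delta\overline Z_0=8e^{-2t}\overline Z_0$, so the leading order cancels. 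The function $\psi$ of the statement is designed precisely to solve $\Delta\psi-8e^{-2t}\psi=4e^{-2t}\overline Z_0(1-4t+2t^2+4\varphi)$, hence also cancels the $1/\gamma^2$ order. Since $\overline Z_0(0)=1$ and $\psi(0)=\psi'(0)=0$ by radial symmetry, the error $\widetilde E_\gamma(s):=\widetilde Z_{0,\gamma}(s)-\overline Z_0(s)-\psi(s)/\gamma^2$ satisfies
\begin{equation*}
\Delta_s\widetilde E_\gamma=\mu_\gamma^2 f'(\widetilde B_\gamma)\widetilde E_\gamma+\widetilde S_\gamma,\qquad \widetilde E_\gamma(0)=\widetilde E_\gamma'(0)=0,
\end{equation*}
where $\widetilde S_\gamma$ is built from the discarded higher-order terms and is of admissible size once one knows $\psi=\bigO(1+t)$ (read off from the ODE defining it).

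To close the pointwise bound, I would invert the radial operator by variation of parameters, using $\overline Z_0$ and a second linearly independent radial solution of $\Delta u=8e^{-2t}u$ (obtainable from $\overline Z_0$ by reduction of order, with a $\log s$ singularity at~$0$). The difference between the perturbed operator $\Delta_s-\mu_\gamma^2 f'(\widetilde B_\gamma)$ and the unperturbed one $\Delta_s-8e^{-2t}$ is of size $\bigO(1/\gamma^2)$ and is absorbed by a Gronwall bootstrap on the resulting Volterra integral equation, exactly parallel to the proof of Proposition~\ref{Pr10} in~\cite{DruThi}. This yields both pointwise bounds in~\eqref{Pr11Eq1}. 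The asymptotics $\psi\sim t$ and $\psi'\sim t'$ at infinity then drop out from the same Green's-function representation applied to the $\psi$ equation: since $\overline Z_0\to -1$ and, by Proposition~\ref{Pr10}, $\varphi\sim -t$ as $r\to\infty$, the forcing has dominant size $8t\,e^{-2t}$, which after convolution against the Green's function of $\Delta-8e^{-2t}$ produces the announced logarithmic growth $\psi\sim t$ and derivative $\psi'\sim t'$.

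The main technical obstacle will be the uniformity of these estimates all the way up to $s=\mu_\gamma^{\delta-1}\to\infty$, where $t(s)$ can become as large as $\sim(1-\delta)\gamma^2$. In this regime the exponential factor $e^{\widetilde B_\gamma^2-\gamma^2}$ cannot be Taylor-expanded naively: one must write $\widetilde B_\gamma^2-\gamma^2=-2t+(t^2+2\varphi)/\gamma^2+\textrm{rest}$ and bound the resulting multiplicative factor directly using the precise $(1+t)$ control of $\varphi$ and the sharpness of the remainder $D_\gamma$ from Proposition~\ref{Pr10}, exactly as in the delicate portion of the argument of~\cite{DruThi}. Once this is done, the Gronwall loop closes and yields~\eqref{Pr11Eq1}.
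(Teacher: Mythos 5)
Your decomposition of the problem, the expansion of $\mu_\gamma^2 f'(\widetilde B_\gamma)$, the recognition that $\overline Z_0$ cancels the leading order and $\psi$ is designed to kill the $1/\gamma^2$ order, and the Gronwall-type bootstrap on the error $\widetilde E_\gamma$ all mirror the paper's argument; the paper performs the bootstrap not via a Volterra kernel/variation of parameters but via the divergence theorem (expressing $2\pi r E_\gamma'(r)$ as $2\pi T E_\gamma'(T)+\int_{B(0,r)\setminus B(0,T)}\Delta E_\gamma\,dx$ and running a continuity argument on a threshold $R_\gamma$), but these are essentially the same idea. You also correctly identify the only delicate point, the control of the exponential factor $e^{-2t+t^2/\gamma^2}$ up to $s=\mu_\gamma^{\delta-1}$, and the paper resolves it exactly as you anticipate (using $-2t+t^2/\gamma^2\le -(1+\delta)t$ uniformly on that range).

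The one place where your argument has a genuine gap is the claim that $\psi(r)\sim t(r)$ and $\psi'(r)\sim t'(r)$ as $r\to\infty$. Your reasoning is that, as $r\to\infty$, $\overline Z_0\to -1$ and $\varphi\sim -t$, so the forcing is dominated by ``$8te^{-2t}$''. Two objections. First, the asymptotics are wrong: with $g:=\overline Z_0(1-4t+2t^2+4\varphi)$ one gets $g\sim -2t^2$, so the forcing $4e^{-2t}g\sim -8t^2e^{-2t}$, not $8te^{-2t}$. Second, and more importantly, the logarithmic coefficient of $\psi$ cannot be read off from the asymptotics of the forcing alone. The correct statement (see \cite{ManMar}*{Lemmas~15--16} or \cite{DruThi}*{Lemma~5.1}) is that a radial solution of $\Delta\psi=4e^{-2t}(g+2\psi)$ with $g=\bigO((\ln r)^k)$ satisfies $\psi(r)=\beta\ln r+\ldots$ with $\beta=\tfrac{2}{\pi}\int_{\R^2}\overline Z_0\,e^{-2t}g\,dx$; this is a \emph{global} pairing of the forcing against the bounded solution $\overline Z_0$ of the linearized homogeneous equation, not a local asymptotic. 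In our case $\beta$ is the sum of four integrals, $\tfrac{2}{\pi}(\tfrac{\pi}{3}-\tfrac{16\pi}{9}+\tfrac{70\pi}{27}-\tfrac{4\pi}{27})=2$, and these cancellations are not visible from the asymptotics of $g$. Without actually computing $\beta$, you cannot conclude $\psi\sim t$ (which requires $\beta=2$) as opposed to, say, $\psi\sim c\,t$ for some other $c$. So the last sentence of your argument needs to be replaced by an invocation of the quoted lemma and an explicit computation of the integral.
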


\proof
We easily see that
$$\left\{\begin{aligned}
&\Delta Z_{0,\gamma}=f'\big(\overline{B}_\gamma\big)Z_{0,\gamma}&&\text{in }B(0,\mu_\gamma^\delta)\\
&Z_{0,\gamma}\(0\)=1,&&
\end{aligned}\right.$$
with $f\(s\)=se^{s^2}$. Set 
$$E_\gamma\(r\):=Z_{0,\gamma}\(\mu_\gamma r\)-\overline{Z}_0\(r\)-\frac{\psi\(r\)}{\gamma^2}$$
and observe that
$$\Delta \overline{Z}_0=8e^{-2t}\overline{Z}_0,$$
so that
\begin{equation}\label{Pr11Eq4}
\left\{\begin{aligned}
&\Delta E_\gamma=\mu_\gamma^2 f'\(\overline{B}_\gamma\(\mu_\gamma \cdot\)\)Z_{0,\gamma}\(\mu_\gamma \cdot\)-8e^{-2t}\overline{Z}_0-\frac{\Delta \psi}{\gamma^2}&&\text{in }B\(0,\mu_\gamma^{\delta-1}\)\\
&E_\gamma\(0\)=0.&&
\end{aligned}\right.
\end{equation}
In order to expand the right-hand side of \eqref{Pr11Eq4} we use \eqref{Pr10Eq1}, $\varphi=\bigO\(1+t\)$ and recalling that $\mu_\gamma^2\gamma^2e^{\gamma^2}=4$, we find
\begin{align}
f'\(\overline{B}_\gamma\(\mu_\gamma \cdot\)\)&=\big(1+2\overline{B}_\gamma^2\(\mu_\gamma \cdot\)\big)\exp\big(\overline{B}_\gamma^2\(\mu_\gamma \cdot\)\big)\notag\allowdisplaybreaks\\
&=\[1+2\(\gamma-\frac{t}{\gamma}+\frac{\varphi}{\gamma^3}+\bigO\(\frac{t}{\gamma^5}\)\)^2\]  e^{\(\gamma-\frac{t}{\gamma}+\frac{\varphi}{\gamma^3}+\bigO\(\frac{t}{\gamma^5}\)\)^2}\notag\\
&=\frac{4e^{-2t}}{\mu_\gamma^2}\[\frac{1}{\gamma^2}+2-\frac{4t}{\gamma^2}+\bigO\(\frac{1+t^2}{\gamma^4}\)\] e^{\frac{t^2}{\gamma^2}} e^{\frac{2\varphi}{\gamma^2}+\bigO\(\frac{1+t^2}{\gamma^4}\)}.\label{Pr11Eq5}
\end{align}
Using that $e^s=1+s+\bigO(s^2)e^s$ for $s>0$, we write
$$e^{\frac{t^2}{\gamma^2}}=1+\frac{t^2}{\gamma^2}+\bigO\(\frac{t^4}{\gamma^4}\) e^{\frac{t^2}{\gamma^2}},$$
and using that $t=\bigO\(\gamma^2\)$ uniformly on $\(0,\mu_\gamma^{\delta-1}\)$,
$$e^{\frac{2\varphi}{\gamma^2}+\bigO\(\frac{1+t^2}{\gamma^4}\)}=1+\frac{2\varphi}{\gamma^2}+\bigO\(\frac{1+t^2}{\gamma^4}\).$$
We now multiply and reorder, using that $\exp\(t^2/\gamma^2\)\ge 1$, to obtain
\begin{align*}
f'\(\overline{B}_\gamma\(\mu_\gamma\cdot\)\)&=\frac{4e^{-2t}}{\mu_\gamma^2}\(2+\frac{1}{\gamma^2}\(1-4t+4\varphi\)+\bigO\(\frac{1+t^4}{\gamma^4}\)\)e^{\frac{t^2}{\gamma^2}}\\
&=\frac{4e^{-2t}}{\mu_\gamma^2}\(2+\frac{1}{\gamma^2}\(1-4t+2t^2+4\varphi\)\)+\frac{e^{-2t+\frac{t^2}{\gamma^2}}}{\mu_\gamma^2}\bigO\(\frac{1+t^4}{\gamma^4}\).
\end{align*}
Together with \eqref{Pr11Eq4} and using that $\psi=\bigO\(1+t\)$ (as we shall prove later), we now estimate
\begin{align*}
\Delta E_\gamma
&=\mu_\gamma^2\(f'\(\overline{B}_\gamma\(\mu_\gamma\cdot\)\)\overline{Z}_{0}+\frac{f'\(\overline{B}_\gamma\(\mu_\gamma\cdot\)\)\psi}{\gamma^2}+  f'\(\overline{B}_\gamma\(\mu_\gamma\cdot\)\)E_\gamma\)\\
&\quad -8e^{-2t}\overline{Z}_0 -4e^{-2t}\(\frac{\overline{Z}_0}{\gamma^2}\(1-4t+2t^2+4\varphi\) +\frac{2\psi}{\gamma^2} \)\\
&=\mu_\gamma^2f'\(\overline{B}_\gamma\(\mu_\gamma\cdot\)\)E_\gamma +e^{-2t+\frac{t^2}{\gamma^2}}\bigO\(\frac{1+t^4}{\gamma^4}\).
\end{align*}
We now go back to \eqref{Pr11Eq5} and, still using that $t=\bigO\(\gamma^2\)$ on $B\(0,\mu_\gamma^{\delta-1}\)$, we bound
$$f'\(\overline{B}_\gamma\(\mu_\gamma\cdot\)\)=\bigO\(\frac{1}{\mu_\gamma^2}e^{-2t+\frac{t^2}{\gamma^2}}\),$$
so that
\begin{equation}\label{Pr11Eq5b}
\Delta E_\gamma=e^{-2t+\frac{t^2}{\gamma^2}}\(\bigO\(\left|E_\gamma\right|\)+\bigO\(\frac{1+t^4}{\gamma^4}\) \).
\end{equation}
Multiplying by $\gamma^4$ and using ODE theory, we see that
\begin{equation*}
\gamma^4E_\gamma\longrightarrow \widetilde E_\infty\quad \text{in }C^1_{\loc}\(\R^2\).\
\end{equation*}
In particular, for any fixed $T>0$ and for $\gamma$ large ($\gamma\ge \gamma_0\(T\)$), we have
\begin{equation}\label{Pr11Eq5c}
\left|E_\gamma\right|\le  \frac{C\(T\)}{\gamma^4}\quad\text{and}\quad\left|E'_\gamma\right|\le  \frac{C'\(T\)}{\gamma^4}\quad \text{on }\[0,T\].
\end{equation}
From now on, it is understood that $\gamma\ge \gamma_0\(T\)$, so that \eqref{Pr11Eq5c} holds.
In order to prove \eqref{Pr11Eq1}, observe that the first identity in \eqref{Pr11Eq1} follows from the second one and \eqref{Pr11Eq5c} by integration over $\[T,r\]$. Then, for $T,M>0$ to be chosen later, set
$$R_\gamma:=\sup\left\{r\in \(T,\mu_\gamma^{\delta-1}\]:\,\left|E_\gamma'\(\rho\)\right|\le \frac{M}{\gamma^4\rho},\,\forall \rho\in \[T,r\]\right\}.$$
We shall prove that for $T$ and $M$ suitable, we have $R_\gamma=\mu_\gamma^{\delta-1}$ for every $\gamma$ sufficiently large.

Arguing by contradiction, assume that $R_\gamma<\mu_\gamma^{\delta-1}$,
so that in particular
\begin{equation}\label{Pr11Eq5e}
\left|E'_\gamma(R_\gamma)\right|=\frac{M}{R_\gamma \gamma^4}.
\end{equation}
By definition of $R_\gamma$, using \eqref{Pr11Eq5c} and integrating, we get
\begin{equation}
\left|E_\gamma\(r\)\right|\le \left|E_\gamma\(T\)\right|+\int_T^r \frac{M}{\gamma^4\rho}d\rho\le \frac{C\(T\)}{\gamma^4}+\frac{Mt\(r\)}{2\gamma^4} \label{Pr11Eq6}  \quad \text{on }\[T,R_\gamma\].
\end{equation}
With the divergence theorem, \eqref{Pr11Eq5b} and \eqref{Pr11Eq6}, we now bound for $t\in [T,R_\gamma]$,
\begin{align}
\left|2\pi r E'_\gamma\(r\)\right|&\le\left|2\pi TE'_\gamma\(T\)\right|+\int_{B\(0,r\)\setminus B\(0,T\)}|\Delta E_\gamma\(x\)dx|\nonumber\\
&\le \frac{2\pi TC'\(T\)}{\gamma^4}+\int_{B\(0,r\)\setminus B\(0,T\)}e^{-2t+\frac{t^2}{\gamma^2}}\(\widetilde C|E_\gamma|+\widetilde C\(\frac{1+t^4}{\gamma^4}\) \)dx\nonumber\allowdisplaybreaks\\
&\le\frac{2\pi TC'\(T\)}{\gamma^4}+ \frac{\widetilde CM}{2\gamma^4}\int_{B\(0,r\)\setminus B\(0,T\)}e^{-2t+\frac{t^2}{\gamma^2}} t dx\nonumber\\
&\quad  + \frac{1}{\gamma^4}\int_{B\(0,r\)\setminus B\(0,T\)}e^{-2t+\frac{t^2}{\gamma^2}} \widetilde C\(C\(T\)+1+t^4\)dx\nonumber\\
&=:\frac{2\pi T C'\(T\)}{\gamma^4}+\frac{(I_\gamma)}{\gamma^4}+\frac{(II_\gamma)}{\gamma^4}.\label{Pr11Eq6b}
\end{align}
Observing that
$$-2t+\frac{t^2}{\gamma^2}\le -\(1+\delta\)t\quad\text{and}\quad e^{-\frac{\delta}{2}t}t^k =\bigO\(1\)\quad \text{on } B\(0,\mu_\gamma^{\delta-1}\),\quad \forall k\ge 0,$$
we bound
\begin{align*}
\int_{B\(0,\mu_\gamma^{\delta-1}\)\setminus B\(0,T\)}e^{-2t+\frac{t^2}{\gamma^2}} t^k dx
&=\bigO\(\int_{B\(0,T\)^c}e^{-\(1+\frac{\delta}{2}\)t}dx\)=\smallo_T\(1\),
\end{align*}
with $\smallo_T\(1\)\to 0$ as $T\to\infty$. We can therefore choose $T$ sufficiently large (independent of $M$) so that
$$(I_\gamma)\le \frac{\pi M}{2}.$$
Then, choosing $M$ sufficiently large (depending on $T$), so that
$$2\pi T C'\(T\) +(II_\gamma)\le \frac{\pi M}{2} $$
and dividing by $2\pi$ in \eqref{Pr11Eq6b}, we finally obtain
$$r\left|E'\(r\)\right|\le \frac{M}{2\gamma^4},\quad \forall r\in\[T,R_\gamma\],$$
which for $r=R_\gamma$ is a contradiction to \eqref{Pr11Eq5e}. Therefore $R_\gamma=\mu_\gamma^{\delta-1}$.

To prove that $\psi\(r\)\sim ~t\(r\)$ and $\psi'\(r\)\sim ~t'\(r\)$ as $r\to\infty$, we recall from~\cite{ManMar}*{Lemmas~15 and~16} (see also~\cite{DruThi}*{Lemma 5.1}) that if $\psi$ is radially symmetric and solves
$$\Delta\psi=4e^{-2t}\(g+2\psi\),$$ 
with $g\(r\)=\bigO\big(\(\ln r\)^k\big)$ as $r\to\infty$ for some $k\ge 1$, then
$$\psi\(r\)= \beta \ln r+\bigO\(r\),\quad \psi'\(r\)=\frac{\beta}{r}+\bigO\(\frac{\(\ln r\)^k}{r^3}\), \quad \beta:=\frac{2}{\pi}\int_{\R^2}\overline{Z}_0e^{-2t}gdx,$$
as $r\to\infty$.
With $g=\overline Z_0\(1-4t+2t^2+4\varphi\)$ we compute
\begin{align*}
&\int_{\R^2}\overline{Z}_0^2e^{-2t}dx=\frac{\pi}{3},\quad \int_{\R^2}\overline{Z}_0^2e^{-2t}4t dx=\frac{16\pi}{9},\allowdisplaybreaks\\
&\int_{\R^2}\overline{Z}_0^2e^{-2t}2t^2dx=\frac{70\pi}{27}\quad\text{and}\quad\int_{\R^2}\overline{Z}_0^2e^{-2t}4\varphi dx=-\frac{4\pi}{27},
\end{align*}
so that $\beta=2$.
\endproof

Let us see a few consequences of the above estimates.

\begin{proposition}\label{Pr12} 
Let $\overline{B}_{\gamma}$, $\mu_\gamma$, $t$, $\overline{Z}_0$ and $Z_{0,\gamma}$ be as in Propositions~\ref{Pr10} and~\ref{Pr11}. Given $\delta\in \(0,1\)$, $a,b\ge 0$ and $t_\gamma\(r\)=t(r/\mu_\gamma)$, we have
\begin{equation}
\int_{B\(0,r\)}\exp\big(\overline B_\gamma^2\big)\overline B_\gamma^b\(1+\bigO\(\frac{t_\gamma}{\gamma^2}\)\)^a dx=4\pi\gamma^{b-2} +\bigO\(\gamma^{b-4}\),\label{Pr12Eq1}
\end{equation}
as $\gamma\to\infty$, uniformly for $\gamma\mu_\gamma \le r\le\mu_\gamma^\delta$.
Moreover,
\begin{equation}\label{Pr12Eq3}
\int_{B\(0,r\)}f'\(\overline B_\gamma\)  Z_{0,\gamma} dx=\frac{-4\pi +\smallo\(1\)}{\gamma^2}
\end{equation}
as $\gamma\to\infty$, uniformly for $\gamma \mu_\gamma=\smallo\(r\)$ and $r\le\mu_\gamma^\delta$, and
\begin{equation}\label{Pr12Eq4}
\int_{B\(0,r\)}f'\(\overline B_\gamma\(x\)\)  \frac{2x_1^2}{\mu_\gamma^2+\left|x\right|^2} dx=4\pi+\bigO\(\frac{1}{\gamma^2}\)
\end{equation}
as $\gamma\to\infty$, uniformly for $\gamma\mu_\gamma \le r\le\mu_\gamma^\delta$.
\end{proposition}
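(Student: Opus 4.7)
The proof of all three items rests on the rescaling $x = \mu_\gamma y$, which sends $B(0,r)$ to $B(0,r/\mu_\gamma)$, and on exploiting the key identity $\mu_\gamma^2 e^{\gamma^2} = 4/\gamma^2$ together with the sharp expansions from Propositions~\ref{Pr10} and~\ref{Pr11}. Throughout I will also use the convention $\Delta = -\partial_{x_1}^2 - \partial_{x_2}^2$ adopted in the paper, so that the divergence theorem on a radial integrand $u$ reads $\int_{B(0,r)} \Delta u\, dx = -2\pi r u'(r)$.

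For (\ref{Pr12Eq1}), I would expand $\overline{B}_\gamma^2 = \gamma^2 - 2t + (t^2 + 2\varphi)/\gamma^2 + \bigO((1+t^2)/\gamma^4)$ using Proposition~\ref{Pr10} and combine this with $\overline{B}_\gamma^b = \gamma^b(1 + \bigO(t/\gamma^2))^b$ and the identity for $\mu_\gamma^2 e^{\gamma^2}$ to rewrite the scaled integrand as $4\gamma^{b-2}\,e^{-2t(y)}\, Q_\gamma(y)$, with a factor $Q_\gamma$ that equals $1+\bigO((1+t^2)/\gamma^2)$ where the expansion is valid. The main-order integral is $\int_{\R^2} 4 e^{-2t}\, dy = 8\pi \int_0^\infty r/(1+r^2)^2\, dr = 4\pi$, giving the leading $4\pi \gamma^{b-2}$. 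For the remainder, I would split the integration region in two: on $\{t(y) \le \gamma\}$ all the $1/\gamma^2$-corrections in $Q_\gamma$ are controlled uniformly, and integrating them against $e^{-2t}$ (which produces finite radial moments $\int e^{-2t} t^k\,dy$) yields the desired $\bigO(\gamma^{b-4})$ error; on $\{t(y) > \gamma\}$ I would use the key inequality $-2t + t^2/\gamma^2 \le -(1+\delta/2)t$ from the proof of Proposition~\ref{Pr11} to get pointwise exponential decay $e^{-(1+\delta/2)t}$, which yields a super-polynomially small contribution $\bigO(\gamma^{b-N})$ for any $N$. Finally the tail $|y| > r/\mu_\gamma \ge \gamma$ contributes $\bigO(\gamma^{b-2}/\gamma^2) = \bigO(\gamma^{b-4})$ since $\int_{|y|>R} 4e^{-2t}\,dy = 4\pi/(1+R^2)$.

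For (\ref{Pr12Eq3}), no explicit expansion of the integrand is needed: since $\overline{B}_\gamma$ is radial, differentiating the equation satisfied by $\overline{B}_\gamma$ gives $\Delta Z_{0,\gamma} = f'(\overline{B}_\gamma) Z_{0,\gamma}$, and the divergence theorem reduces the integral to $-2\pi r Z_{0,\gamma}'(r)$. Substituting the decomposition $Z_{0,\gamma}(r) = \overline{Z}_0(r/\mu_\gamma) + \psi(r/\mu_\gamma)/\gamma^2 + E_\gamma(r/\mu_\gamma)$ from Proposition~\ref{Pr11}, differentiating, and using the asymptotics $\overline{Z}_0'(\rho) \sim -4/\rho^3$, $\psi'(\rho) \sim 2/\rho$ as $\rho \to \infty$ together with $E_\gamma'(\rho) = \bigO(1/(\gamma^4 \rho))$ yields
\[
-2\pi r Z_{0,\gamma}'(r) = \frac{8\pi \mu_\gamma^2}{r^2} - \frac{4\pi}{\gamma^2} + \bigO(1/\gamma^4).
\]
The hypothesis $\gamma \mu_\gamma = \smallo(r)$ gives $\mu_\gamma^2/r^2 = \smallo(1/\gamma^2)$, producing the advertised $(-4\pi + \smallo(1))/\gamma^2$.

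For (\ref{Pr12Eq4}), the rescaling turns the kernel into $2y_1^2/(1+|y|^2)$ (the $\mu_\gamma^2$ in numerator and denominator cancel), and I would import the sharp expansion $\mu_\gamma^2 f'(\overline{B}_\gamma(\mu_\gamma y)) = 8 e^{-2t(y)} + \bigO(e^{-(1+\delta/2)t(y)}/\gamma^2)$ already established in the proof of Proposition~\ref{Pr11}. The leading integral is
\[
\int_{\R^2} \frac{16 y_1^2 e^{-2t(y)}}{1+|y|^2}\, dy = \int_{\R^2} \frac{16\, y_1^2}{(1+|y|^2)^3}\, dy,
\]
which by the symmetry $y_1^2 \leftrightarrow |y|^2/2$ and the substitution $u = 1+r^2$ equals $4\pi$; the tail $|y|>r/\mu_\gamma \ge \gamma$ of this integrand, together with the $\bigO(\cdot/\gamma^2)$ remainder in the expansion of $\mu_\gamma^2 f'$, both contribute $\bigO(1/\gamma^2)$.

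The main technical obstacle is confined to item (\ref{Pr12Eq1}): the Taylor expansion of $\exp((t^2 + 2\varphi)/\gamma^2)$ fails to be uniformly small when $t$ approaches its maximal order $\sim (1-\delta)\gamma^2$ near $\partial B(0, \mu_\gamma^\delta)$. The clean remedy, already explained above, is to trade a Taylor argument in the bulk region $\{t \le \gamma\}$ for a pure exponential-decay argument in $\{t > \gamma\}$, using the inequality $-2t + t^2/\gamma^2 \le -(1+\delta/2)t$ that makes the contribution from the outer region super-polynomially small.
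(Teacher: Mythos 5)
Your proposal is correct and follows essentially the same route as the paper's proof: rescale by $\mu_\gamma$, use the sharp expansions of $\overline{B}_\gamma$ and $Z_{0,\gamma}$ from Propositions~\ref{Pr10} and~\ref{Pr11} together with the identity $\mu_\gamma^2\gamma^2 e^{\gamma^2}=4$, exploit the bound $t_\gamma/\gamma^2\le 1-\delta+\smallo(1)$ on $B(0,r)$ to obtain an integrable envelope of the form $e^{-(1+\delta')t}$ for the error, and for \eqref{Pr12Eq3} reduce to a boundary term via the divergence theorem using $\Delta Z_{0,\gamma}=f'(\overline{B}_\gamma)Z_{0,\gamma}$. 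The organizational difference you introduce for \eqref{Pr12Eq1} (splitting $\{t\le\gamma\}$ vs.\ $\{t>\gamma\}$ rather than one uniform estimate) is cosmetic, and your claimed error $\bigO(1/\gamma^4)$ in \eqref{Pr12Eq3} is a slight overstatement (the $\psi'$ asymptotic only gives $\smallo(1/\gamma^2)$), but this is harmless since the statement only requires $(-4\pi+\smallo(1))/\gamma^2$.
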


\proof 
Using Proposition~\ref{Pr10} and noticing that $\varphi=\bigO\(1+t\)$, $t_\gamma =\bigO\(\gamma^2\)$ in $B\(0,r\)$ for $r\le\mu_\gamma^\delta$, we write
\begin{equation}\label{Pr12Eq5}
\exp\big(\overline B_\gamma^2\big)=e^{\[\gamma-\frac{t_\gamma}{\gamma}+\bigO\(\frac{1+t_\gamma}{\gamma^3}\)\]^2}=e^{\gamma^2}e^{-2t_\gamma}e^{\frac{t_\gamma^2}{\gamma^2}+\bigO\(\frac{1+t_\gamma}{\gamma^2}\)}=\frac{4e^{-2t_\gamma}}{\mu_\gamma^2 \gamma^2}\(1+\bigO\(\frac{t_\gamma^2}{\gamma^2}\)e^{\frac{t_\gamma^2}{\gamma^2}}\),
\end{equation}
where we used the inequality $|e^x-1|\le \left|x\right|e^{\left|x\right|}$ to estimate
$$\bigg|e^{\bigO\(\frac{t_\gamma^2}{\gamma^2}\)}-1 \bigg|=\bigO\(\frac{t_\gamma^2}{\gamma^2}\)e^{\frac{t_\gamma^2}{\gamma^2}}.$$
Further, we use Proposition~\ref{Pr10} together with $\(1+x\)^a=1+\bigO\(x\)$ uniformly for $x=\bigO\(1\)$, to bound 
\begin{align}\label{Pr12Eq5bis}
\overline{B}_\gamma^b\(1+\bigO\(\frac{t_\gamma}{\gamma^2}\)\)^a&=\(\gamma+\bigO\(\frac{1+t_\gamma}{\gamma}\)\)^b\(1+\bigO\(\frac{t_\gamma}{\gamma^2}\)\)^a\nonumber\\
&=\gamma^b\(1+\bigO\(\frac{1+t_\gamma}{\gamma}\)\)^{a+b}=\gamma^b\(1+\bigO\(\frac{1+t_\gamma^2}{\gamma^2}\)\).
\end{align}
We can then estimate the left-hand side of \eqref{Pr12Eq1} as
\begin{multline*}
\int_{B\(0,r\)}\frac{4e^{-2t_\gamma}}{\mu_\gamma^2 \gamma^{2-b}}\(1+\bigO\(\frac{1+t_\gamma^2}{\gamma^2}e^{\frac{t_\gamma^2}{\gamma^2}}\)\)dx\\
=\int_{B\(0,r/\mu_\gamma\)}\frac{4e^{-2t}}{\gamma^{2-b}}\(1+\bigO\(\frac{1+t^2}{\gamma^2}e^{\frac{t^2}{\gamma^2}}\)\)dx=\frac{4\pi}{\gamma^{2-b}}\int_0^{r/\mu_\gamma}\frac{2\rho}{\(1+\rho^2\)^2}d\rho\\
+\frac{1}{\gamma^{4-b}} \int_{B\(0,r/\mu_\gamma\)}\bigO\(\(1+t^2\) e^{-t\(2-\frac{t}{\gamma^2}\)}\)dx=:\(I\)_\gamma+(II)_\gamma.
\end{multline*}
Using that $r\ge \gamma\mu_\gamma$, one computes
$$\(I\)_\gamma=\frac{4\pi}{\gamma^{2-b}}\(1+\bigO\(\gamma^{-2}\)\),$$
and using that $0\le t/\gamma^2\le \(1-\delta+\smallo\(1\)\)$ in $B\(0,r/\mu_\gamma\)$ as $\gamma\to\infty$, uniformly for $r\le\mu_\gamma^\delta$, and observing that $\(1+t^2\)e^{-\(1+\delta'\)t}\in L^1\(\R^2\)$ for every $\delta'>0$, one has
\begin{equation}\label{Pr12Eq6}
(II)_\gamma=\bigO\(\frac{1}{\gamma^{4-b}}\int_{B\(0,r/\mu_\gamma\)}\(1+t^2\)e^{-t\(1+\delta+\smallo\(1\)\)}\)=\bigO\(\frac{1}{\gamma^{4-b}}\)
\end{equation}
as $\gamma\to\infty$, uniformly for $r\le\mu_\gamma^\delta$, so that \eqref{Pr12Eq1} is proven.


In order to prove \eqref{Pr12Eq3} we use Proposition~\ref{Pr11} to expand $Z_{0,\gamma}$ and compute
\begin{align*}
\int_{B\(0,r\)}f'\(\overline B_\gamma\)Z_{0,\gamma}dx&=\int_{B\(0,r\)}\partial_\gamma\[f\(\overline B_\gamma\)\] dx=\int_{B\(0,r\)}\Delta Z_{0,\gamma} dx\allowdisplaybreaks\\
&=\int_{B\(0,r\)}\Delta \(\overline Z_0\(\frac{x}{\mu_\gamma}\)+\frac{1}{\gamma^2}\psi\(\frac{x}{\mu_\gamma}\)+E_\gamma\(\frac{x}{\mu_\gamma}\)\) dx\\
&=-\frac{2\pi r}{\mu_\gamma}\(\overline Z_0'\(\frac{r}{\mu_\gamma}\) +\frac{1}{\gamma^2}\psi'\(\frac{r}{\mu_\gamma}\)+E_\gamma'\(\frac{r}{\mu_\gamma}\)\).
\end{align*}
A direct computation shows
$$\frac{r}{\mu_\gamma}\overline Z_0'\(\frac{r}{\mu_\gamma}\)=\bigO\(\frac{\mu_\gamma^2}{r^2}\)=\smallo\(\frac{1}{\gamma^2}\)$$
as $\gamma\to\infty$, uniformly for $\gamma\mu_\gamma=\smallo\(r\)$. Using that 
$$\psi'\(s \)=t'\(s\)\(1+\smallo\(1\)\)=\frac{2s}{1+s^s}=\frac{2}{s}\(1+\smallo\(1\)\)\quad \text{as } s\to \infty,$$
we obtain
$$\frac{r}{\mu_\gamma}\psi'\(\frac{r}{\mu_\gamma}\)= 2+\smallo\(1\).$$
Finally, from the second part of \eqref{Pr11Eq1}, we infer
$$\frac{r}{\mu_\gamma}E_\gamma'\(\frac{r}{\mu_\gamma}\)=\bigO\(\frac{1}{\gamma^4}\)=\smallo\(\frac{1}{\gamma^2}\).$$
Summing up, \eqref{Pr12Eq3} follows at once.

It remains to prove \eqref{Pr12Eq4}. Using \eqref{Pr12Eq5} and \eqref{Pr12Eq5bis}, we write
\begin{align*}
&\int_{B\(0,r\)}f'\(\overline B_\gamma\(x\)\)  \frac{2x_1^2}{\mu_\gamma^2+\left|x\right|^2} dx\\
&=4\gamma^2 e^{\gamma^2}\int_{B\(0,r\)}\(1+\bigO\(\frac{1+t_\gamma}{\gamma^2}\)\)e^{-3t_\gamma}\(1+\bigO\(\frac{t_\gamma^2}{\gamma^2}\)e^{\frac{t_\gamma^2}{\gamma^2}}\)\(\frac{y_1}{\mu_\gamma}\)^2dy\allowdisplaybreaks\\
&=\int_{B\(0,r/\mu_\gamma\)}16e^{-3t}y_1^2dy+\bigO\(\int_{B\(0,r/\mu_\gamma\)} \frac{1+t^2}{\gamma^2}e^{-t\(3-t/\gamma^2\)}y_1^2dy\)=:\(I\)_\gamma+(II)_\gamma
\end{align*}
To compute $\(I\)_\gamma$, we observe that its value does not change if we replace $y_1$ with $y_2$, so that
\begin{align*}
\(I\)_\gamma&=\frac{1}{2}\int_{B\(0,r/\mu_\gamma\)}16e^{-3t}\left|x\right|^2dy=16\pi\[-\frac{1+2\rho^2}{4\(1+\rho^2\)^2}\]_{\rho=0}^{r/\mu_\gamma}=4\pi +\smallo\(1\)
\end{align*}
as $\gamma\to\infty$, uniformly for $r\ge\gamma\mu_\gamma$. The term $(II)_\gamma$ can be estimated as in \eqref{Pr12Eq6} since $y_1^2\le e^t$, so that
$$(II)_\gamma=\bigO\(\int_{B\(0,r/\mu_\gamma\)} \frac{t^2}{\gamma^2}e^{-t\(1+\delta+\smallo\(1\)\)}dx\)=\bigO\(\frac{1}{\gamma^2}\)$$
as $\gamma\to\infty$, uniformly for $r\le\mu_\gamma^\delta$.
\endproof

\begin{proposition}\label{Pr13} 
Let $\overline{B}_{\gamma}$ and $\mu_\gamma$ be as in Proposition~\ref{Pr10}. Given $\delta_0\in\(0,1/2\)$, we have
\begin{equation}\label{Pr13Eq1}
\int_{B\(0,r\)}\exp\big(\overline B_\gamma\(x\)^2\big)\left|x\right|dx =\bigO\big(\mu_\gamma^{3\delta_0-2\delta_0^2+\smallo\(1\)}\big)
\end{equation}
as $\gamma\to\infty$, uniformly for $r=\bigO\(\mu_\gamma^{\delta_0}\)$.
\end{proposition}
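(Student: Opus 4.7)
The plan is to exploit the precise expansion of $\overline{B}_\gamma$ from Proposition~\ref{Pr10}, rescale via $y=x/\mu_\gamma$, and then reduce the resulting two-dimensional integral to a one-dimensional one via the substitution $u=\ln(1+\rho^2)$. The condition $\delta_0<1/2$ is used precisely to ensure that the one-dimensional integral is controlled by a boundary layer of width $O(1)$ at the upper endpoint, and this is where the exponent $3\delta_0-2\delta_0^2$ of the statement arises.

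Concretely, applying Proposition~\ref{Pr10} with any fixed $\delta\in(\delta_0,1)$ and using $\varphi = O(1+t)$ yields
$$\overline{B}_\gamma(\mu_\gamma y)^2 = \gamma^2 - 2t(y) + \frac{t(y)^2}{\gamma^2} + O\!\left(\frac{1+t(y)}{\gamma^2}\right)$$
uniformly for $|y|\le C\mu_\gamma^{\delta_0-1}$, where $t(y)=\ln(1+|y|^2)$. In this range $t(y)\le 2(1-\delta_0)|\ln\mu_\gamma|+O(1)$, and, combined with the identity $\mu_\gamma^2\gamma^2 e^{\gamma^2}=4$ from \eqref{defmugamma}, this gives the pointwise bound
$$\exp\!\bigl(\overline{B}_\gamma(\mu_\gamma y)^2\bigr)\le \frac{C}{\mu_\gamma^2\gamma^2}\,e^{-2t(y)+t(y)^2/\gamma^2}.$$
Changing variables $y=x/\mu_\gamma$, passing to polar coordinates, and substituting $u=\ln(1+\rho^2)$ — so that $\rho\,d\rho=\tfrac12 e^u\,du$ and $\rho=(e^u-1)^{1/2}\le e^{u/2}$ — reduces the original integral to a bound of the form
$$\int_{B(0,r)}\exp(\overline{B}_\gamma^2)\,|x|\,dx\le \frac{C\mu_\gamma}{\gamma^2}\int_0^{T_\gamma}e^{-u/2+u^2/\gamma^2}\,du,\qquad T_\gamma:=\ln\!\bigl(1+(r/\mu_\gamma)^2\bigr).$$

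The remaining task is to estimate the one-dimensional integral. The exponent $\Phi(u):=-u/2+u^2/\gamma^2$ is convex with minimum at $u_\ast=\gamma^2/4$. Using the standard asymptotics $\gamma^2\sim 2|\ln\mu_\gamma|$ and $T_\gamma\le 2(1-\delta_0)|\ln\mu_\gamma|+O(1)$, the hypothesis $\delta_0<1/2$ guarantees both $u_\ast<T_\gamma$ and
$$\Phi'(T_\gamma)=-\tfrac12+2T_\gamma/\gamma^2\ge \tfrac32-2\delta_0+o(1)>0,$$
so $\Phi$ is strictly increasing in a full neighbourhood of the upper endpoint with derivative bounded away from zero. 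The elementary substitution $s=T_\gamma-u$ then gives
$$\int_0^{T_\gamma}e^{\Phi(u)}\,du\le \frac{C}{\tfrac32-2\delta_0}\,e^{\Phi(T_\gamma)}.$$
A direct calculation with $T_\gamma=2(1-\delta_0)|\ln\mu_\gamma|(1+o(1))$ and $\gamma^2=2|\ln\mu_\gamma|(1+o(1))$ yields
$$\Phi(T_\gamma)=(1-\delta_0)(1-2\delta_0)|\ln\mu_\gamma|(1+o(1))=(1-3\delta_0+2\delta_0^2)|\ln\mu_\gamma|(1+o(1)),$$
hence $e^{\Phi(T_\gamma)}=\mu_\gamma^{3\delta_0-2\delta_0^2-1+o(1)}$. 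Multiplying by the prefactor $\mu_\gamma/\gamma^2=\mu_\gamma^{1+o(1)}$ produces the claimed bound $O\!\bigl(\mu_\gamma^{3\delta_0-2\delta_0^2+o(1)}\bigr)$, uniformly in $r\in(0,C\mu_\gamma^{\delta_0}]$.

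The main obstacle — really a book-keeping point — is recognizing that the exponent of the statement coincides exactly with the value $\Phi(T_\gamma)$ produced by the substitution, and checking that $\delta_0<1/2$ is precisely the condition under which the integral is dominated by a boundary layer near $u=T_\gamma$ (rather than, say, a Gaussian contribution from the interior minimum $u_\ast$ of $\Phi$); once this picture is in place, the estimate is a one-dimensional Laplace-type computation.
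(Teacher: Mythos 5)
Your proof is correct, and the final exponent $3\delta_0-2\delta_0^2$ comes out exactly as it should. The route you take is genuinely different in its second half from the paper's. Both proofs start the same way: expand $\exp(\overline B_\gamma^2)$ via Proposition~\ref{Pr10} to get a factor $\mu_\gamma^{-2}\gamma^{-2}e^{-t_\gamma(2-t_\gamma/\gamma^2)}$, then rescale $y=x/\mu_\gamma$. The paper then finishes in one step: it uses the uniform bound $t/\gamma^2\le 1-\delta_0+\smallo(1)$ on $B(0,r/\mu_\gamma)$ to replace the variable exponent $2-t/\gamma^2$ by the fixed exponent $1+\delta_0+\smallo(1)$, reducing everything to the elementary power integral $\int_{B(0,R)}|y|\,(1+|y|^2)^{-1-\delta_0+\smallo(1)}dy = \bigO(R^{1-2\delta_0+\smallo(1)})$ with $R=r/\mu_\gamma$. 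You instead push the exponent $t^2/\gamma^2$ all the way through by passing to polar coordinates and substituting $u=\ln(1+\rho^2)$, arriving at $\int_0^{T_\gamma}e^{\Phi(u)}du$ with $\Phi(u)=-u/2+u^2/\gamma^2$, and then carry out a one-dimensional Laplace-type estimate. Your approach makes visible why the dominant contribution sits at the boundary $u=T_\gamma$ (the interior critical point $u_\ast=\gamma^2/4$ is a minimum of $\Phi$ and gives a negligible Gaussian piece), and in principle gives a sharper asymptotic constant; the paper's approach is shorter because freezing the exponent lets one avoid the Laplace analysis entirely. One small caveat: the step ``$\int_0^{T_\gamma}e^{\Phi}\,du\le Ce^{\Phi(T_\gamma)}$'' deserves a word, since $\Phi'$ vanishes at $u_\ast$ and is not bounded below on all of $[0,T_\gamma]$; the clean way is to split at $u_\ast$, note $\int_0^{u_\ast}e^{\Phi}du\le u_\ast=\bigO(\gamma^2)=\mu_\gamma^{\smallo(1)}$ since $\Phi\le\Phi(0)=0$ there, and on $[u_\ast,T_\gamma]$ write $\Phi(T_\gamma-s)=\Phi(T_\gamma)-\Phi'(T_\gamma)s+s^2/\gamma^2$ exactly, observe $s^2/\gamma^2\le\Phi'(T_\gamma)s/2$ for $0\le s\le T_\gamma-u_\ast$, and integrate the resulting $e^{-\Phi'(T_\gamma)s/2}$. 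The stray polynomial factors in $\gamma$ are all absorbed into $\mu_\gamma^{\smallo(1)}$, so your conclusion stands.
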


\proof 
Let $t_\gamma$ be as in Proposition~\ref{Pr11}. Using Proposition~\ref{Pr10}, we write
\begin{align}\label{Pr13Eq3}
\exp\big(\overline B_\gamma^2\big)&=\exp\(\[\gamma-\frac{t_\gamma}{\gamma}+\bigO\(\frac{1+t_\gamma}{\gamma^3}\)\]^2\)=e^{\gamma^2}e^{-2t_\gamma+\frac{t_\gamma^2}{\gamma^2}}e^{\bigO\(\frac{1+t_\gamma}{\gamma^2}\)}\nonumber\\
&=\bigO\(\frac{e^{-t_\gamma\(2-t_\gamma/\gamma^2\)}}{\mu_\gamma^2 \gamma^2}\),\quad \text{for } r=\bigO\(\mu_\gamma^{\delta_0}\),
\end{align}
Then, using that 
\begin{equation}\label{Pr13Eq4}
t_\gamma\(r\)\le\(1-\delta_0+\smallo\(1\)\)\gamma^2,\quad\text{for }r=\bigO\(\mu_\gamma^{\delta_0}\),
\end{equation}
together with a change of variables, we get 
\begin{align*}
&\int_{B\(0,r\)}\exp\big(\overline B_\gamma\(x\)^2\big)\left|x\right|dx=\bigO\(\int_{B\(0,r\)}\frac{e^{-t_\gamma\(2-t_\gamma/\gamma^2\)}}{\mu_\gamma^2 \gamma^2}\left|x\right|dx\)\allowdisplaybreaks\\
&\qquad=\bigO\(\mu_\gamma\int_{B\(0,r/\mu_\gamma\)}\frac{\left|y\right|dy}{\big(1+\left|y\right|^2\big)^{2-t/\gamma^2}}\)=\bigO\(\mu_\gamma\int_{B\(0,r/\mu_\gamma\)}\frac{\left|y\right|dy}{\big(1+\left|y\right|^2\big)^{1+\delta_0+\smallo\(1\)}}\)\\
&\qquad=\bigO\(\mu_\gamma \(\frac{r}{\mu_\gamma}\)^{1-2\delta_0+\smallo\(1\)}\)=\bigO\(\mu_\gamma^{3\delta_0-2\delta_0^2+\smallo\(1\)} \)
\end{align*}
as $\gamma\to\infty$, uniformly for $r=\bigO\(\mu_\gamma^{\delta_0}\)$, which proves \eqref{Pr13Eq1}.
%
\endproof

\section{Poincar\'e--Sobolev inequalities}\label{SecPS}

The standard Poincar\'e--Sobolev inequality on $\Sph^2$ says that for every $p\in [1,\infty)$ there exists $C_p>0$ such that for every $\phi \in H^1(\Sph^2)$ with $\int_{\Sph^2}\phi dv_{\Sph^2}=0$, we have
\begin{equation}\label{PSA}
\int_{\Sph^2} \left|\phi\right|^p dv_{\Sph^2} \le C_p\(\int_{\Sph^2} \left|\nabla \phi\right|^2 dv_{\Sph^2}\)^\frac{p}{2}.
\end{equation}
Pulling back the spherical metric onto $\R^2$, we can also rewrite \eqref{PSA} as 
\begin{equation}\label{PSB}
\int_{\R^2}  \left|\phi\right|^p e^{-2t} dx \le C_p\(\int_{\R^2} \left|\nabla \phi\right|^2 dx\)^\frac{p}{2},
\end{equation}
for every $\phi \in D^{1,2}\(\R^2\)$ such that $\int_{\R^2} \phi  e^{-2t}dx=0$, where $t\(x\):=\ln\big(1+\left|x\right|^2\big)$, so that $4e^{-2t\(x\)}=4\big(1+\left|x\right|^2\big)^{-2}$ is the conformal factor of the pull-back metric.

\smallskip
We will need a perturbed version of \eqref{PSB}, where we replace $e^{-2t}$ with suitable scaled versions of $\exp\big(\overline B_\gamma^2\big)$.

\begin{lemma}\label{Pr14Lem} 
Let $(\chi_\ve)_{\ve>0}$ be a sequence of functions in $\R^2$ such that for every $q>1$, we have $\chi_\ve\to\chi_0$ as $\varepsilon\to0$ in $L^q\(\R^2,e^{-2t}dx\)$, i.e.
$$\int_{\R^2}\left|\chi_\ve-\chi_0\right|^q e^{-2t}dx\longrightarrow 0$$
for some function $\chi_0$ in $\R^2$ and further assume that
\begin{equation}\label{Pr14LEq1}
\int_{\R^2} \chi_0 e^{-2t}dx \ne 0.
\end{equation}
Then, for every $p\in\[1,\infty\)$, there exists a constant $C>0$ (depending on $p$ and $(\chi_\ve)$) such that for $\ve>0$ small enough, the following holds:
\begin{equation}\label{PSB'}
\int_{\R^2} \left|\phi\right|^p e^{-2t}dx \le C\(\int_{\R^2}\left|\nabla\phi\right|^2 dx \)^\frac{p}{2}
\end{equation}
for every $\phi\in D^{1,2}\(\R^2\)$ such that $\int_{\R^2}\phi \chi_\ve e^{-2t}dx=0$.
\end{lemma}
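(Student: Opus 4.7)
The plan is to reduce \eqref{PSB'} to the unperturbed Poincar\'e--Sobolev inequality \eqref{PSB} by subtracting a weighted average. For $\phi\in D^{1,2}\(\R^2\)$ with $\int_{\R^2}\phi\chi_\ve e^{-2t}dx=0$, I would define
$$\bar\phi:=\frac{\int_{\R^2}\phi e^{-2t}dx}{\int_{\R^2}e^{-2t}dx},$$
noting that $\int_{\R^2}e^{-2t}dx<\infty$. Then $\phi-\bar\phi$ is orthogonal to constants with respect to $e^{-2t}dx$, so \eqref{PSB} directly yields
$$\|\phi-\bar\phi\|_{L^p\(\R^2,e^{-2t}dx\)}\le C_p^{1/p}\|\nabla\phi\|_{L^2}\qquad\text{for every }p\in\[1,\infty\).$$
By the triangle inequality, the target estimate \eqref{PSB'} reduces to proving $|\bar\phi|\le C\|\nabla\phi\|_{L^2}$ uniformly for $\ve$ small.

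To control $\bar\phi$, I would use the prescribed orthogonality to rewrite
$$\bar\phi\int_{\R^2}\chi_\ve e^{-2t}dx=\int_{\R^2}\bar\phi\,\chi_\ve e^{-2t}dx=-\int_{\R^2}\(\phi-\bar\phi\)\chi_\ve e^{-2t}dx.$$
Applying H\"older's inequality in the weighted space $L^2\(\R^2,e^{-2t}dx\)$ and using \eqref{PSB} with $p=2$, the right-hand side is bounded by
$$\left|\int_{\R^2}\(\phi-\bar\phi\)\chi_\ve e^{-2t}dx\right|\le\|\phi-\bar\phi\|_{L^2\(e^{-2t}dx\)}\|\chi_\ve\|_{L^2\(e^{-2t}dx\)}\le C\|\nabla\phi\|_{L^2}\|\chi_\ve\|_{L^2\(e^{-2t}dx\)},$$
and the hypothesis $\chi_\ve\to\chi_0$ in $L^2\(\R^2,e^{-2t}dx\)$ provides a uniform bound on $\|\chi_\ve\|_{L^2\(e^{-2t}dx\)}$. (Using $L^2$--$L^2$ pairing rather than $L^p$--$L^{p'}$ avoids any trouble at $p=1$, where $p'=\infty$ would otherwise be needed.)

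The only nontrivial step is ruling out that the prefactor $\int\chi_\ve e^{-2t}dx$ could vanish or degenerate as $\ve\to0$. This is where hypothesis \eqref{Pr14LEq1} and the quantifier ``for $\ve$ small enough'' enter: since the convergence $\chi_\ve\to\chi_0$ in $L^q\(\R^2,e^{-2t}dx\)$ for any fixed $q>1$ implies, by H\"older against the integrable weight $e^{-2t}$, that
$$\int_{\R^2}\chi_\ve e^{-2t}dx\longrightarrow\int_{\R^2}\chi_0 e^{-2t}dx\ne0,$$
there exists $c>0$ and $\ve_0>0$ such that $\big|\int\chi_\ve e^{-2t}dx\big|\ge c$ for all $\ve\in\(0,\ve_0\)$. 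Dividing through then gives $|\bar\phi|\le c^{-1}C\|\nabla\phi\|_{L^2}\|\chi_\ve\|_{L^2\(e^{-2t}dx\)}\le C'\|\nabla\phi\|_{L^2}$.

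Combining the two estimates gives
$$\|\phi\|_{L^p\(e^{-2t}dx\)}\le\|\phi-\bar\phi\|_{L^p\(e^{-2t}dx\)}+|\bar\phi|\(\int_{\R^2}e^{-2t}dx\)^{1/p}\le C\|\nabla\phi\|_{L^2},$$
which is \eqref{PSB'}. No step looks technically demanding; the whole argument is essentially standard functional analysis, and the main point is simply that the nondegeneracy condition \eqref{Pr14LEq1} survives under the $L^q$-convergence hypothesis, which is where the restriction on $\ve$ comes from.
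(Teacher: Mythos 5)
Your proposal is correct, but it takes a genuinely different route from the paper. The paper argues by contradiction: assuming a normalizing sequence $(\phi_\ve)$ with $\int |\phi_\ve|^p e^{-2t}dx = 1$, $\int\phi_\ve\chi_\ve e^{-2t}dx=0$ and $\|\nabla\phi_\ve\|_{L^2}\to0$, it pulls $\phi_\ve$ back to $\Sph^2$ via the stereographic projection, extracts a weak limit in $H^1(\Sph^2)$ which by lower semicontinuity must be a nonzero constant, and then passes to the limit in the orthogonality constraint to contradict $\int\chi_0 e^{-2t}dx\neq 0$. Your argument is direct: subtract the $e^{-2t}$-weighted mean $\bar\phi$, apply the known estimate \eqref{PSB} to $\phi-\bar\phi$, and control $|\bar\phi|$ through the orthogonality against $\chi_\ve$ together with the nondegeneracy of $\int\chi_\ve e^{-2t}dx$ for small $\ve$. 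Both are valid; yours is more elementary, avoids compactness and contradiction, and makes the dependence of $C$ on $(\chi_\ve)$ (through $\sup_\ve\|\chi_\ve\|_{L^2(e^{-2t}dx)}$ and $\inf_\ve|\int\chi_\ve e^{-2t}dx|$) explicit, while the paper's argument is shorter to state and leans on standard weak-compactness machinery that is already in use elsewhere in the same section. The one point worth being careful about is that applying \eqref{PSB} to $\phi-\bar\phi$ requires the constant $\bar\phi$ to be admissible in that estimate; since \eqref{PSB} is pulled back from the spherical inequality \eqref{PSA}, which holds for all of $H^1(\Sph^2)$ (and in particular for constants), this is unproblematic and is implicitly taken for granted in the paper as well, where the same $\phi_0=$ constant appears as a weak limit.
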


\proof
Assume by contradiction that there exists a sequence $(\phi_\ve)_\ve$ in $D^{1,2}\(\R^2\)$ such that
\begin{equation}\label{Pr14LEq2}
\int_{\R^2}|\phi_\ve|^pe^{-2t}dx=1, \quad \int_{\R^2}\phi_\ve \chi_\ve e^{-2t}dx=0,\quad \lim_{\ve\to 0}\int_{\R^2}|\nabla \phi_\ve|^2dx= 0.
\end{equation}
Let $\Pi:\Sph^2\to\R^2$ be the stereographic projection. By the first equation in \eqref{Pr14LEq2},  the average of $\phi_\ve\circ \Pi$ on $\Sph^2$ is bounded, so by the Sobolev--Poincar\'e inequality and weak compactness, up to a subsequence, $\phi_\ve\circ\Pi\to \phi_0\circ\Pi$ strongly in $L^{q'}(\Sph^2)$, in $L^p(\Sph^2)$, and weakly in $H^1(\Sph^2)$, for some function $\phi_0\in L^p(\R^2,e^{-2t}dx)$. By lower-semicontinuity of the Dirichlet integral we get $\|\nabla(\phi_0\circ\Pi)\|_{L^2(\Sph^2)}=\|\nabla \phi_0\|_{L^2(\R^2)}=0$, so that $\phi_0$ is constant, non-zero since $\|\phi_0\circ\Pi\|_{L^p(\Sph^2)}=1$. Then, we obtain
$$0=\int_{\R^2}\phi_\ve \chi_\ve  e^{-2t} dx\to \int_{\R^2}\phi_0 \chi_0 e^{-2t}dx \quad \Rightarrow \quad \int_{\R^2}\chi_0 e^{-2t}dx=0,$$
contradicting our assumption.
\endproof

\begin{proposition}\label{Pr14}
Let $\overline{B}_\gamma$, $\mu_\gamma$ and $t_\gamma$ be as in Propositions~\ref{Pr10} and~\ref{Pr11}. Let $\phi \in D^{1,2}\(\R^2\)$ be such that 
\begin{equation}\label{Pr14Eq1}
\int_{B_r} f\big(\overline{B}_\gamma\big)\phi dx =0
\end{equation}
for $r$ such that $\mu_\gamma=\smallo\(r\)$ and $r =\bigO\(\mu_\gamma^{\delta_0}\)$ for some $\delta_0\in \(0,1\)$. Then for every $p\in [1,\infty)$, we have
\begin{equation}\label{Pr14Eq2}
\int_{B\(0,r\)} \exp\big(\overline B_\gamma^2\big)\(1+t_\gamma\) \left|\phi\right|^p dx=\bigO\(\frac{1}{\gamma^2}\(\int_{\R^2}\left|\nabla\phi\right|^2dx\)^{\frac{p}{2}}\).
\end{equation}
\end{proposition}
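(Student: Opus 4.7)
My plan is to rescale $\phi$ at the scale $\mu_\gamma$ of the bubble, use Proposition~\ref{Pr10} to recast \eqref{Pr14Eq2} as a weighted Sobolev--Poincar\'e inequality on $\R^2$ with the spherical weight $e^{-2t}$, and then apply Lemma~\ref{Pr14Lem} to the rescaled function $\tilde\phi\(y\):=\phi\(\mu_\gamma y\)$. By conformal invariance of the Dirichlet integral in dimension two, $\|\nabla\tilde\phi\|_{L^2\(\R^2\)}=\|\nabla\phi\|_{L^2\(\R^2\)}$, so the $p$-th power of $\|\nabla\phi\|_{L^2}$ in the right-hand side of \eqref{Pr14Eq2} is preserved after rescaling, and the prefactor $\gamma^{-2}$ will come out of the asymptotics of $\exp\(\overline B_\gamma^2\)$.

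The key input is the expansion
$$\mu_\gamma^2\exp\(\overline B_\gamma\(\mu_\gamma y\)^2\)=\frac{4\,e^{-2t\(y\)}}{\gamma^2}\(1+\bigO\(\tfrac{t\(y\)^2}{\gamma^2}\)e^{t\(y\)^2/\gamma^2}\)$$
already used in \eqref{Pr12Eq5} in the proof of Proposition~\ref{Pr12}, which is valid uniformly on $B\(0,r/\mu_\gamma\)$ since $t\(y\)\le\(1-\delta_0+\smallo\(1\)\)\gamma^2$ there. After the change of variables $x=\mu_\gamma y$ and one H\"older inequality against the measure $e^{-2t}dy$ with exponents $q\in\(1,2/\(1-\delta_0\)\)$ and $q':=q/\(q-1\)$, the left-hand side of \eqref{Pr14Eq2} is bounded by $\frac{C}{\gamma^2}\|\tilde\phi\|^p_{L^{pq'}\(\R^2,e^{-2t}dy\)}$ times a uniformly bounded factor of the form $\|\(1+t\)\bigl(1+\bigO\(t^2/\gamma^2\)e^{t^2/\gamma^2}\bigr)\|_{L^q\(\R^2,e^{-2t}dy\)}$ (extended by $0$ outside $B\(0,r/\mu_\gamma\)$), the latter being bounded precisely because $q\(1-\delta_0\)<2$ and because on the domain of integration one has $t^2/\gamma^2\le\(1-\delta_0+\smallo\(1\)\)t$, so that the integrand decays like $e^{-\(2-q\(1-\delta_0\)+\smallo\(1\)\)t}$ up to a polynomial factor in $t$.

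It then remains to control $\|\tilde\phi\|_{L^{pq'}\(\R^2,e^{-2t}dy\)}$ by $\|\nabla\tilde\phi\|_{L^2}$. Using $\mu_\gamma^2\gamma^2 e^{\gamma^2}=4$, the hypothesis \eqref{Pr14Eq1} is equivalent to $\int_{\R^2}\chi_\gamma\tilde\phi\,e^{-2t}dy=0$ where
$$\chi_\gamma\(y\):=\mathbf{1}_{B\(0,r/\mu_\gamma\)}\(y\)\,\frac{\overline B_\gamma\(\mu_\gamma y\)}{\gamma}\exp\(\overline B_\gamma\(\mu_\gamma y\)^2-\gamma^2+2t\(y\)\).$$
Proposition~\ref{Pr10} gives $\chi_\gamma\to 1$ pointwise on $\R^2$, and the analysis of the previous paragraph shows that this convergence also holds in $L^q\(\R^2,e^{-2t}dy\)$ for any $q<2/\(1-\delta_0\)$. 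Since $\int_{\R^2}e^{-2t}dy=\pi\ne0$, Lemma~\ref{Pr14Lem} (whose proof only requires convergence in \emph{some} $L^q$ with $q>1$, in order to carry out the H\"older step) applies at the exponent $pq'$ and delivers $\|\tilde\phi\|_{L^{pq'}\(\R^2,e^{-2t}dy\)}\le C\|\nabla\tilde\phi\|_{L^2}$; combined with the previous paragraph this yields \eqref{Pr14Eq2}.

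The main technical subtlety is the joint control of the $\(1+t_\gamma\)$ factor in \eqref{Pr14Eq2} and of the correction $\exp\(t^2/\gamma^2\)$ on the outer part of $B\(0,r/\mu_\gamma\)$; together they restrict the usable H\"older exponent to $q<2/\(1-\delta_0\)$ both in the bound of the integrand and in the convergence of $\chi_\gamma$. Fortunately $\delta_0\in\(0,1\)$ guarantees $2/\(1-\delta_0\)>2$, so some admissible $q>1$ always exists, and the Moser--Trudinger / Sobolev embedding $H^1\(\Sph^2\)\hookrightarrow L^s\(\Sph^2\)$ (valid for every finite $s$) makes Lemma~\ref{Pr14Lem} available at the required exponent $pq'$ for any fixed $p\in\[1,\infty\)$.
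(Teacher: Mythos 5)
Your argument follows exactly the paper's route: rescale $\phi$ by $\mu_\gamma$, use the Proposition~\ref{Pr10} expansion to identify $\mu_\gamma^2\exp\big(\overline B_\gamma(\mu_\gamma\cdot)^2\big)$ with $\frac{4}{\gamma^2}e^{-2t}$ up to a factor of the form $e^{t^2/\gamma^2+\bigO((1+t)/\gamma^2)}$, rewrite the orthogonality \eqref{Pr14Eq1} as $\int_{\R^2}\chi_\gamma\tilde\phi\,e^{-2t}\,dy=0$ with $\chi_\gamma\to1$ in a suitable $L^q(e^{-2t}dy)$, and combine H\"older's inequality with Lemma~\ref{Pr14Lem}. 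You also correctly observe that the proof of Lemma~\ref{Pr14Lem} only needs convergence of $\chi_\gamma$ in \emph{one} $L^q$ with $q>1$, which is what makes the application legitimate even though $\chi_\gamma$ fails to converge in $L^q$ for $q$ too large.

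One quantitative slip: the admissible range of H\"older exponents is $1\le q<\tfrac{1}{1-\delta_0}$, not $q<\tfrac{2}{1-\delta_0}$. Since $e^{-st}\,dx\sim(1+|x|^2)^{-s}\,dx$ is integrable in $\R^2$ precisely when $s>1$, the bound $|\chi_\gamma|^qe^{-2t}\lesssim(1+t)^qe^{-(2-q(1-\delta_0)+\smallo(1))t}$ requires $2-q(1-\delta_0)>1$, i.e.\ $q(1-\delta_0)<1$, and similarly for the $(1+t)$ factor from \eqref{Pr14Eq2}. This does not affect your conclusion, because $\delta_0\in(0,1)$ still gives $\tfrac{1}{1-\delta_0}>1$ and hence an admissible $q>1$; it is only the stated threshold that is off by a factor of two.
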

\begin{proof}
With Proposition~\ref{Pr10} we can rewrite condition \eqref{Pr14Eq1} as
\begin{align*}
0&=\gamma\int_{B\(0,r\)} \(1+\bigO\(\frac{1+t_\gamma}{\gamma^2}\)
\)e^{\gamma^2}e^{-2t_\gamma +\frac{t_\gamma^2}{\gamma^2}+\bigO\(\frac{1+t_\gamma}{\gamma^2}\)} \phi dx\\
&=\frac{4}{\gamma}\int_{B\(0,r/\mu_\gamma\)} \(1+\bigO\(\frac{1+t}{\gamma^2}\)
\)e^{-2t +\frac{t^2}{\gamma^2}+\bigO\(\frac{1+t}{\gamma^2}\)} \phi (\mu_\gamma\cdot) dx=\frac{4}{\gamma}\int_{\R^2}e^{-2t}\chi_\gamma \tilde \phi dy,
\end{align*}
where $\widetilde{\Phi}\(y\) =\phi(\mu_\gamma y)$, and we claim that
\begin{multline}\label{Pr14Eq3}
\chi_\gamma:= \mathbf{1}_{B\(0,r/\mu_\gamma\)}\(1+\bigO\(\frac{1+t}{\gamma^2}\)\)e^{\frac{t^2}{\gamma^2}+\bigO\(\frac{1+t}{\gamma^2}\)}\longrightarrow \chi_0\equiv 1\\
\text{in }L^q(\R^2,e^{-2t}dx)\text{ for }1\le  q< \frac{1}{1-\delta_0}.
\end{multline}
Indeed, it is clear that $\chi_\gamma\to \chi_0$ pointwise, while we can uniformly bound $\chi_\gamma$ by a function in $L^q\(\R^2, e^{-2t}dx\)$ as follows. By using \eqref{Pr13Eq4}, we obtain
$$\chi_\gamma=\bigO\Big(e^{\frac{t^2}{\gamma^2}}\Big)=\bigO\big(e^{t\(1-\delta_0+\smallo\(1\)\)}\big),\quad\text{so that }\chi_\gamma^q=\bigO\big(e^{t q\(1-\delta_0+\smallo\(1\)\)}\big).$$
On the other hand,
\begin{align*}
\int_{\R^2}e^{t q\(1-\delta_0+\smallo\(1\)\)}e^{-2t}dx&=\int_{\R^2}e^{-t \(2-q+q\delta_0+\smallo\(1\)\)}dx=\int_{\R^2}\bigO\(\frac{1}{1+\left|x\right|^{4-2q+2q\delta_0+\smallo\(1\)}}\)dx\\
&=\bigO\(1\)\quad \text{for } 4-2q+2q\delta_0>2,\quad \text{i.e. }1\le q<\frac{1}{1-\delta_0},
\end{align*}
so that \eqref{Pr14Eq3} follows by dominated convergence.

We can then apply Lemma~\ref{Pr14Lem} to $\widetilde{\Phi}$, so that \eqref{PSB'} holds. On the other hand, for any $r\in [1,\infty)$,
\begin{multline}\label{Pr14Eq4}
\int_{B\(0,r\)}\exp\big(\overline B_\gamma^2\big)\(1+t_\gamma\) \left|\phi\right|^p dx\\
=\frac{1}{\gamma^2}\int_{B\(0,r/\mu_\gamma\)} e^{-2t+\frac{t^2}{\gamma^2}+\bigO\(\frac{1+t}{\gamma^2}\)}\(1+t\) |\tilde \phi|^p dx=\frac{1}{\gamma^2}\int_{\R^2}\tilde\chi_\gamma |\tilde \phi|^p e^{-2t}dx,
\end{multline}
where, as in \eqref{Pr14Eq3} we have
$$\tilde \chi_\gamma:= \mathbf{1}_{B\(0,r/\mu_\gamma\)}\(1+t\)e^{\frac{t^2}{\gamma^2}+\bigO\(\frac{1+t}{\gamma^2}\)} \longrightarrow 1+t \quad \text{in }L^q(\R^2,e^{-2t}dx)$$
for $q<1/\(1-\delta_0\)$, and with H\"older's inequality, we obtain
\begin{align*}
\int_{\R^2}\tilde\chi_\gamma |\tilde \phi|^p e^{-2t}dx\le \(\int_{\R^2}\tilde \chi_\gamma^qe^{-2t}dx\)^\frac{1}{q}\(\int_{\R^2}|\tilde \phi|^{pq'}e^{-2t}dx\)^\frac{1}{q'}&=\bigO\(\(\int_{\R^2}|\nabla \tilde \phi|^2\)^\frac{p}{2}\)\\
&=\bigO\(\(\int_{\R^2}\left|\nabla \phi\right|^2\)^\frac{p}{2}\).
\end{align*}
Substituting into \eqref{Pr14Eq4}, we then obtain \eqref{Pr14Eq2}.
\end{proof}

\section{Proof of Claim~\ref{Claim1}}

From \eqref{Sec31Eq1}, \eqref{Pr10Eq1} and the divergence theorem, we get
\begin{equation}
A_{\varepsilon,\gamma_i,\tau_i}=-2\pi r_\varepsilon\overline{B}'_{\varepsilon,\gamma_i,\overline{\tau_i}}\(r_\varepsilon\)=-2\pi r_\varepsilon\sqrt{\lambda_\varepsilon h_\varepsilon\(\overline{\tau}_i\)}\overline{B}'_{\gamma_i}\big(\sqrt{\lambda_\varepsilon h_\varepsilon\(\overline{\tau}_i\)}r_\varepsilon\big)=\frac{4\pi}{\gamma_i}+\bigO\(\frac{1}{\gammae^3}\).\label{Sec32Eq7}
\end{equation} 
Considering that
$$\ln \lambda_\varepsilon=\bigO\(1\)\quad\text{and}\quad\ln\mu_{\gamma_i}=-\frac{1}{2}\gamma_i^2-\ln\gamma_i+\bigO\(1\),$$
from \eqref{Pr10Eq1}, we infer
\begin{align}\label{Sec32Eq8}
\overline{B}_{\gamma_i}\big(\sqrt{\lambda_\varepsilon h_\varepsilon\(\overline{\tau}_i\)}r_\varepsilon\big)&=\gamma_i-\frac{2\ln(r_\varepsilon/\mu_{\gamma_i})+\ln(\lambda_\varepsilon h_\varepsilon\(\overline{\tau}_i\) )}{\gamma_i}+\bigO\(\frac{1}{\gamma_i}\)\nonumber\\
&=-\frac{2\ln r_\varepsilon}{\gamma_i}-\frac{2\ln\gamma_i}{\gamma_i}+\bigO\(\frac{1}{\gammae}\),
\end{align}
which together with \eqref{Sec31Eq2} and \eqref{Sec32Eq7} gives
$$C_{\varepsilon,\gamma_i,\tau_i} =-\frac{2\ln\gamma_i}{\gamma_i}+\bigO\(\frac{1}{\gammae}\)=-\frac{2\ln\overline{\gamma}_\varepsilon}{\gamma_i}+\bigO\(\frac{1}{\gammae}\).$$
This proves \eqref{Sec32Eq3}. Further, Proposition~\ref{Pr11} gives
\begin{align*}
\partial_{\gamma_i}\[A_{\varepsilon,\gamma_i,\tau_i}\]&=-2\pi r_\varepsilon\sqrt{\lambda_\varepsilon h_\varepsilon\(\overline{\tau}_i\)}\partial_{\gamma_i}\big[\overline{B}'_{\gamma_i}\big(\sqrt{\lambda_\varepsilon h_\varepsilon\(\overline{\tau}_i\)}r_\varepsilon\big)\big]\\
&=-\frac{4\pi}{\gamma_i^2}+\bigO\(\frac{\mu_{\gamma_i}^2}{r_\varepsilon^3}\)+\bigO\(\frac{1}{\gammae^4}\)=-\frac{4\pi}{\gamma_i^2}+\bigO\(\frac{1}{\gammae^4}\).
\end{align*} 
Similarly,
$$\partial_{\gamma_i}[\overline B_{\gamma_i}\big(\sqrt{\lambda_\varepsilon h_\varepsilon\(\overline{\tau}_i\)}r_\varepsilon\big)]=\frac{2\ln r_\varepsilon}{\gamma_i^2}+\frac{2\ln \gamma_i}{\gamma_i^2}+ \bigO\(\frac{1}{\gammae^2}\),$$
so that
\begin{align*}
\partial_{\gamma_i}\[C_{\varepsilon,\gamma_i,\tau_i}\]&=\partial_{\gamma_i}[\overline B_{\gamma_i}\big(\sqrt{\lambda_\varepsilon h_\varepsilon\(\overline{\tau}_i\)}r_\varepsilon\big)]+\frac{1}{2\pi}\partial_{\gamma_i}[A_{\varepsilon,\gamma_i,\tau_i}] \ln r_\varepsilon\\
&=\frac{2\ln \gamma_i}{\gamma_i^2}+ \bigO\(\frac{1}{\gammae^2}\) =\frac{2\ln \overline\gamma_\varepsilon}{\gamma_i^2}+ \bigO\(\frac{1}{\gammae^2}\),
\end{align*}
which proves \eqref{Sec32Eq4}. To prove \eqref{Sec32Eq5}, we observe that
\begin{align}\label{Sec32Eq9}
A_{\varepsilon,\gamma_i,\tau_i}=\lambda_\varepsilon h_\varepsilon\(\overline{\tau_i}\)\int_{B\(\overline{\tau_i},r_\varepsilon\)}f\(\overline{B}_{\varepsilon,\gamma_i,\overline{\tau_i}}\)dx&=2\pi\lambda_\varepsilon h_\varepsilon\(\overline{\tau_i}\)\int_0^{r_\varepsilon}f\big(\overline{B}_{\gamma_i}\big(\sqrt{\lambda_\varepsilon h_\varepsilon\(\overline{\tau_i}\)}r\big)\big)rdr\nonumber\\
&=2\pi\int_0^{\sqrt{\lambda_\varepsilon h_\varepsilon\(\overline{\tau_i}\)}r_\varepsilon}f\big(\overline{B}_{\gamma_i}\(r\)\big)rdr.
\end{align}
By differentiating \eqref{Sec32Eq9} in $\tau_i$, we obtain
\begin{equation}\label{Sec32Eq10}
\partial_{\tau_i}\[A_{\varepsilon,\gamma_i,\tau_i}\]=\pi\lambda_\varepsilon\partial_{x_1}h_\varepsilon\(\overline{\tau_i}\)r_\varepsilon^2f\big(\overline{B}_{\gamma_i}\big(\sqrt{\lambda_\varepsilon h_\varepsilon\(\overline{\tau_i}\)}r_\varepsilon\big)\big).
\end{equation}
By using \eqref{Sec32Eq8} together with the definition of $r_\varepsilon$, and using that $\gamma_i\ge\(1-\delta'\)\gammae$, we obtain
\begin{align}\label{Sec32Eq11}
r_\varepsilon^2f\big(\overline{B}_{\gamma_i}\big(\sqrt{\lambda_\varepsilon h_\varepsilon\(\overline{\tau_i}\)}r_\varepsilon\big)&=\bigO\(r_\varepsilon^2\overline\gamma_\varepsilon\exp\(\frac{4}{\gamma_i^2}\(\ln r_\varepsilon+\ln \gamma_i+\bigO\(1\)\)^2\)\)\nonumber\allowdisplaybreaks\\
&=\bigO\(\overline\gamma_\varepsilon\exp\(2\ln r_\varepsilon\(1+2\frac{\ln r_\varepsilon}{\gamma_i^2}+4\frac{\ln \gamma_i}{\gamma_i^2}\)\)\)\nonumber\allowdisplaybreaks\\
&=\bigO\(\exp\(-\delta_0\overline\gamma_\varepsilon^2\(1-\delta_0\frac{\overline\gamma_\varepsilon^2}{\gamma_i^2}+\smallo\(1\)\)+\ln\gammae\)\)\nonumber\\
&=\bigO\(\exp\(-\delta_0\overline\gamma_\varepsilon^2\(1-\frac{\delta_0}{\(1-\delta'\)^2}+\smallo\(1\)\)\)\)=\smallo\(\frac{1}{\gammae^a}\)
\end{align}
uniformly in $(\gamma,\tau)\in \Gamma_\varepsilon^k\(\delta'\)\times T_\varepsilon^k\(\delta\)$ for all $a\ge0$, provided $\delta'<1-\sqrt{\delta_0}$. By using \eqref{Sec32Eq10} and \eqref{Sec32Eq11} and since $\lambda_\varepsilon\to\lambda_0$ and $h_\varepsilon\to h_0$ in $C^1\(\overline\Omega\)$, we obtain the first part of \eqref{Sec32Eq5}. By differentiating $C_{\varepsilon,\gamma_i,\tau_i}$ in $\tau_i$ and using \eqref{Pr10Eq1}, \eqref{Sec32Eq10} and \eqref{Sec32Eq11}, we then obtain
\begin{multline*}
\partial_{\tau_i}\[C_{\varepsilon,\gamma_i,\tau_i}\]=\frac{\sqrt{\lambda_\varepsilon}\partial_{x_1}h_\varepsilon\(\overline{\tau}_i\)r_\varepsilon}{2\sqrt{h_\varepsilon\(\overline{\tau}_i\)}}\overline{B}'_{\gamma_i}\big(\sqrt{\lambda_\varepsilon h_\varepsilon\(\overline{\tau}_i\)}r_\varepsilon\big)\\
-\frac{1}{2\pi}\partial_{\tau_i}\[A_{\varepsilon,\gamma_i,\tau_i}\]\ln\frac{1}{r_\varepsilon}=-\frac{\partial_{x_1}h_\varepsilon\(\overline{\tau_i}\)}{h_\varepsilon\(\overline{\tau_i}\)\gamma_i}+\bigO\(\frac{\left|\partial_{x_1}h_\varepsilon\(\overline{\tau_i}\)\right|}{\gammae^3}\),
\end{multline*}
which gives the second part of \eqref{Sec32Eq5}. This ends the proof of Claim~\ref{Claim1}.
\endproof

\end{document}